\documentclass[12pt,a4paper]{article}
\usepackage{hyperref}
\usepackage{mathtools}
\usepackage[nottoc]{tocbibind}
%,numbib
% \usepackage{polski}
\usepackage[utf8x]{inputenc}
\usepackage{amsfonts,longtable,amssymb,amsmath,latexsym,dsfont,mathabx}
\usepackage[main=english,polish]{babel}
\usepackage{wrapfig}
\usepackage[title]{appendix}
\usepackage{amsthm, amsmath, amssymb, amsfonts, amscd, amscd, geometry}
\newsavebox{\ssa}
\usepackage{bbm}
\usepackage{float}
\usepackage{tikz}
\usepackage{xcolor} 
\usepackage{lipsum}
\usepackage{cjhebrew}
\usepackage[math]{cellspace}
% \cellspacetoplimit 4pt
% \cellspacebottomlimit 4pt
% \usepackage{cleveref}
% \AfterEndEnvironment{figure}{\noindent\ignorespaces}
% \AfterEndEnvironment{table}{\noindent\ignorespaces}
% \AfterEndEnvironment{proof}{\noindent\ignorespaces}
% \AfterEndEnvironment{theorem}{\noindent\ignorespaces}
% \AfterEndEnvironment{corollary}{\noindent\ignorespaces}
% \AfterEndEnvironment{definition}{\noindent\ignorespaces}
% \AfterEndEnvironment{lemma}{\noindent\ignorespaces}
\setlength\parindent{0pt}

\DeclareFontFamily{U}{rcjhbltx}{}
\DeclareFontShape{U}{rcjhbltx}{m}{n}{<->rcjhbltx}{}
\DeclareSymbolFont{hebrewletters}{U}{rcjhbltx}{m}{n}

\geometry{left=0.85in}
\geometry{right=0.85in}
\geometry{top=0.85in}
\geometry{bottom=0.85in}

\setlength{\marginparwidth}{1.5cm}

\DeclareMathOperator{\ulam}{Ulam}
\DeclareMathOperator{\pulam}{pseudo-Ulam}
\DeclareMathOperator{\culam}{c-Ulam}
\DeclareMathOperator{\eone}{E_1}
\DeclareMathOperator{\etwo}{E_2}
\DeclareMathOperator{\oone}{O_1}
\DeclareMathOperator{\otwo}{O_2}

\DeclareMathSymbol{\kaf}{\mathord}{hebrewletters}{107}
\DeclareMathSymbol{\vav}{\mathord}{hebrewletters}{119}
\DeclareMathSymbol{\lamed}{\mathord}{hebrewletters}{108}
\DeclareMathSymbol{\mem}{\mathord}{hebrewletters}{163}

\newcommand{\floor}[1]{\left\lfloor#1\right\rfloor}
\newcommand{\ceil}[1]{\left\lceil#1\right\rceil}
\newcommand{\abs}[1]{\left|#1\right|}
\newcommand{\norm}[1]{\left\|#1\right\|}

\makeatletter
\newcounter{savesection}
\newcounter{apdxsection}
\renewcommand\appendix{\par
  \setcounter{savesection}{\value{section}}%
  \setcounter{section}{\value{apdxsection}}%
  \setcounter{subsection}{0}%
  \gdef\thesection{\@Alph\c@section}}
\newcommand\unappendix{\par
  \setcounter{apdxsection}{\value{section}}%
  \setcounter{section}{\value{savesection}}%
  \setcounter{subsection}{0}%
  \gdef\thesection{\@arabic\c@section}}
\makeatother

\newtheorem{theorem}{Theorem}[section]
\newtheorem{corollary}[theorem]{Corollary}

\newtheorem{lemma}[theorem]{Lemma}
\newtheorem{prop}[theorem]{Proposition}

\theoremstyle{definition}
\newtheorem{definition}[theorem]{Definition}
\newtheorem{remark}[theorem]{Remark}
\newtheorem{conjecture}[theorem]{Conjecture}
\setcounter{secnumdepth}{2}
\newtheorem{case}{Case}
\newtheorem{subcase}{Case}
\numberwithin{subcase}{case}
% \theorempostwork{\setcounter{case}{0}}

%\usepackage[colorlinks]{hyperref}
% \usepackage[colorinlistoftodos]{todonotes}
\usepackage{verbatim}

\usepackage[capitalize]{cleveref}
	\crefname{claim}{Claim}{Claims}
	\Crefname{claim}{Claim}{Claims}
	\crefname{app-corollary}{Corollary}{Corollaries}
	\Crefname{app-corollary}{Corollary}{Corollaries}
	\crefname{app-definition}{Definition}{Definitions}
	\Crefname{app-definition}{Definition}{Definitions}
	\crefname{figure}{Figure}{Figures}
	\Crefname{figure}{Figure}{Figures}
	\crefname{lemma}{Lemma}{Lemmata}
	\Crefname{lemma}{Lemma}{Lemmata}
	\crefname{definition}{Definition}{Definitions}
	\Crefname{definition}{Definition}{Definitions}
	\crefname{app-lemma}{Lemma}{Lemmata}
	\Crefname{app-lemma}{Lemma}{Lemmata}
	\crefname{app-proposition}{Proposition}{Proposition}
	\Crefname{app-proposition}{Proposition}{Proposition}
	\crefname{app-theorem}{Theorem}{Theorems}
	\Crefname{app-theorem}{Theorem}{Theorems}
	 %Implements the Oxford comma for non-compressed lists in \cref
	 %Implements n-dash for compressed lists in \cref
	
\usepackage{amsmath}
\usepackage{color,colortbl}
\usepackage[center]{caption}
\usepackage{subcaption}

\def \N {\mathbb{N}}

\def \Z {\mathbb{Z}}

\def \LL {\mathcal{L}}

\def \PP {\mathcal{P}}

\def \UU {\mathcal{U}}

\def \e {\varepsilon}

\def \l {\lambda}

\providecommand{\keywords}[1]
{
  \small	
  \textbf{\textit{Keywords---}} #1
}

\title{On fractal patterns in Ulam words}

\author{Andrei Mandelshtam\footnote{Department of Mathematics, Stanford University, Stanford, CA 94305, USA}\hspace{0.1in}\footnote{\href{mailto:andman@stanford.edu}{andman@stanford.edu}}}
\date{}

\begin{document}
\maketitle
\selectlanguage{english}

\begin{abstract}
%In 1964, \begin{otherlanguage}{polish}Stanis"law Ulam \end{otherlanguage} introduced a curious type of sequence with many unusual yet unproven properties related to periodicity, self-similarity, density, and concentration. In 2017, Steinerberger and Kravitz generalized the Ulam sequence to other settings, observing and proving similar properties. In particular, Ulam words with one $1$ form a Sierpinski gasket. 
% We initiate the study of fractal patterns in Ulam words. 
% We show that already a seemingly simple set of Ulam words -- those with two $1$'s -- possess an intricate intrinsic structure. We create a logarithmic-time algorithm to determine whether any given such word is Ulam, uncovering properties such as biperiodicity and various parity conditions, as well as sharp bounds on the number of $0$'s outside the two $1$'s. We also discover and prove that sets of Ulam words indexed by the number $y$ of $0$'s between the two $1$'s have an inherent dual hierarchical structure, determined by the arithmetic properties of $y.$ In particular, this allows us to construct an infinite family of self-similar fractals $\Tilde{U}(y)$ indexed by the set of $2$-adic integers $y,$ containing for example the outward Sierpinski gasket as $\Tilde{U}(-1).$
Ulam words are binary words defined recursively as follows: the length-$1$ Ulam words are $0$ and $1$, and a binary word of length $n$ is Ulam if and only if it is expressible uniquely as a concatenation of two shorter, distinct Ulam words. We discover, fully describe, and prove a surprisingly rich structure already in the set of Ulam words containing exactly two $1$'s. In particular, this leads to a complete description of such words and a logarithmic-time algorithm to determine whether a binary word with two $1$'s is Ulam. Along the way, we uncover delicate parity and biperiodicity properties, as well as sharp bounds on the number of $0$'s outside the two $1$'s. We also show that sets of Ulam words indexed by the number $y$ of $0$'s between the two $1$'s have intricate tensor-based hierarchical structures determined by the arithmetic properties of $y$. This allows us to construct an infinite family of self-similar Ulam-word-based fractals indexed by the set of $2$-adic integers, containing the outward Sierpinski gasket as a special case.
% 
% Generalizing this structure, we obtain fractal shapes related in particular to the Sierpinski gasket.
% \lipsum[15] \lipsum[16]
\end{abstract}
\keywords{Ulam sequence, logarithmic-time algorithm, hierarchy, fractal structure}

% \tableofcontents

\section{Introduction}
Many integer sequences have curious properties that are difficult to understand. In 1964, \begin{otherlanguage}{polish}%Stanis"law 
Ulam \end{otherlanguage} introduced one such sequence---the \textit{Ulam sequence} \cite{ulam-paper}. It is inductively defined with $U_1=1,U_2=2,$ and $U_n$ being the smallest positive integer not yet in the sequence that is expressible \textit{uniquely} as a sum of two distinct previous terms. The first few terms of the sequence are $1,2,3,4,6,8,11,\dots.$ This sequence displays several interesting periodicity and concentration properties that cannot be due to random sampling. In 2015, Steinerberger performed a Fourier analysis on the first ten million terms of the sequence and discovered that there is a value $\l$ such that all but four terms of the Ulam sequence fall into the central-half interval modulo $\l$ \cite{hidden-signal}. Moreover, it is conjectured that for every $\e>0$ all but finitely many Ulam numbers fall within $\e$ of the central third. These empirical observations give rise to an almost linear-time algorithm to calculate successive Ulam numbers and also explain some curious facts, such as that there are only four pairs of Ulam numbers differing by $1$ \cite{ulam-gaps}.

More recently, it has become of interest to consider Ulam-type sequences and sets in different contexts. For instance, Ulam sets in two dimensions display a fascinating mixture of order and chaos \cite{noah-stefan}. Ulam-type multiplicative sets can also be defined on the complex plane, which naturally leads to examining Ulam sets in groups $\Z\times\Z/n\Z.$ In contrast with the original Ulam sequence, Ulam sets in $\Z\times\Z/n\Z$ can be finite \cite{noah-stefan}.

In this paper, we will look specifically at the \textit{binary Ulam set} $\UU,$ a set defined on the free group on two letters, conveniently denoted by $0$ and $1.$ 
\begin{definition}[\cite{student-paper}]
Define $\UU(1)=\{0,1\}.$ For each $n>1,$ inductively define $\UU(n)$ as the set of all length-$n$ words that are expressible uniquely as a concatenation of two distinct words in $\UU(i)$ and $\UU(j)$ for some $i,j<n.$ Then define $\UU=\bigcup\limits_{n=1}^\infty\UU(n).$ Words in $\UU$ are called \textit{Ulam words}.
\end{definition}
\begin{figure}[H]
    \centering
     \begin{subfigure}[b]{0.32\textwidth}
         \centering
         \includegraphics[width=\textwidth]{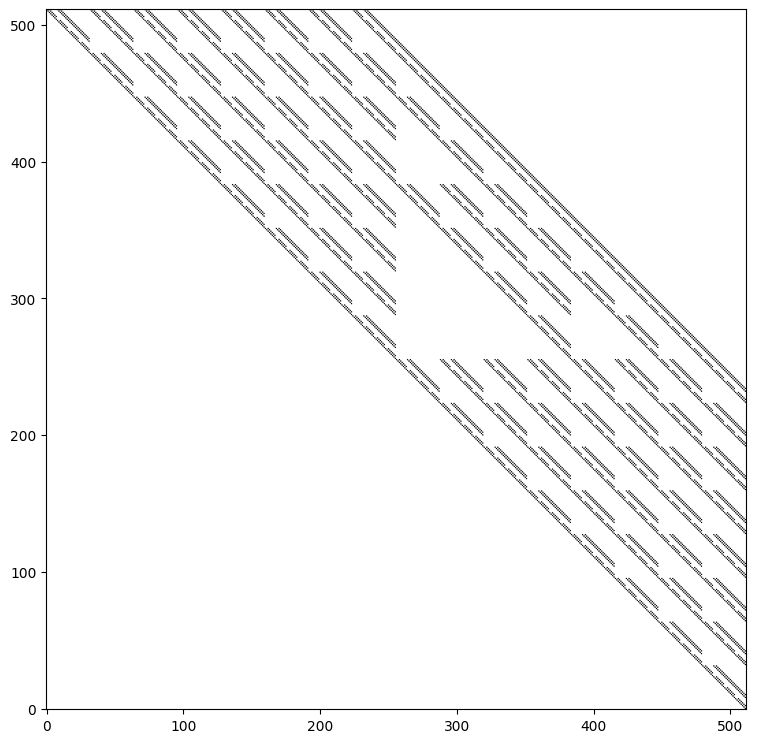}
         \caption{$(x,z)$ such that $0^x10^{276}10^z$ is Ulam}
         \label{fig:276}
     \end{subfigure}
     \hfill
    %  \hspace{-0.9in}
     \begin{subfigure}[b]{0.32\textwidth}
         \centering
         \includegraphics[width=\textwidth]{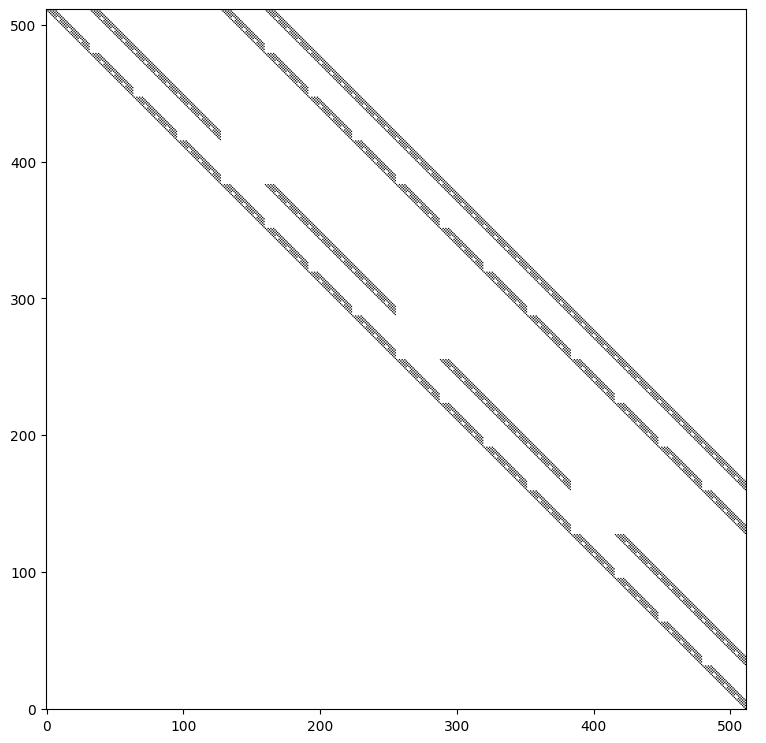}
         \caption{$(x,z)$ such that $0^x10^{344}10^z$ is Ulam}
         \label{fig:344}
     \end{subfigure}
     \hfill
    %  \hspace{-0.9in}
     \begin{subfigure}[b]{0.32\textwidth}
         \centering
         \includegraphics[width=\textwidth]{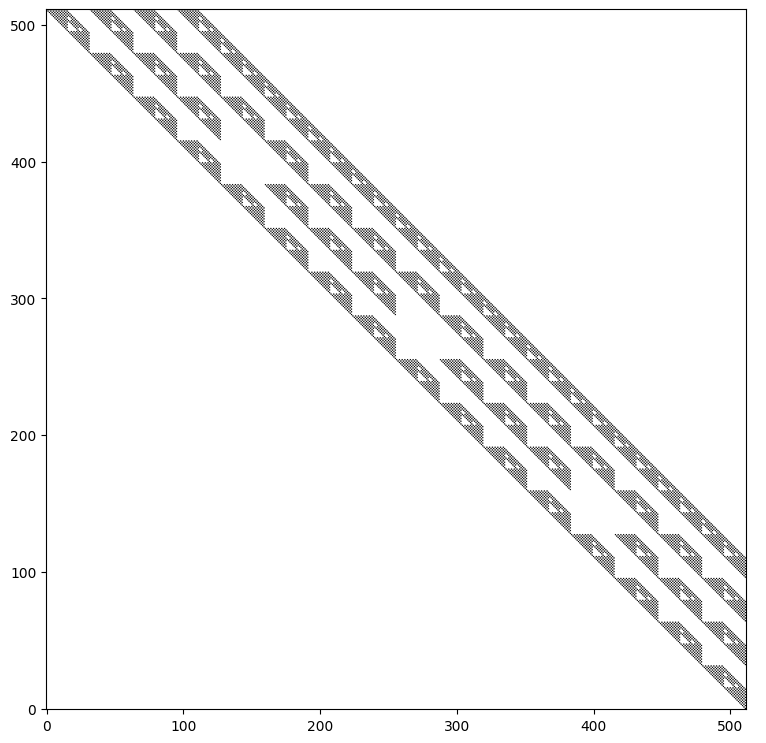}
         \caption{$(x,z)$ such that $0^x10^{400}10^z$ is Ulam}
         \label{fig:400}
     \end{subfigure}
        \caption{Fractal-type structures arising in the study of Ulam words with two $1$'s. For complete descriptions of these phenomena, see \Cref{hierarchy}.}
        \label{fig:intro ulam}
\end{figure}
% Ulam words are similar to the classical Ulam sequence in many ways. For example, both have (conjectured) nonzero asymptotic density - the Ulam sequence has one approximately equal to $0.07398$ and the binary Ulam sequence has one approximately equal to $0.2.$
Observe that the set of Ulam words is constructed in batches. This is possible because two words of lengths $a,b>0$ can only be concatenated to form a word of length strictly greater than both $a$ and $b.$ Thus, the Ulamness of all words of any given length $n>0$ is independent. As we will see, it is reasonable to consider other types of batches to construct Ulam words. Some batch decompositions do have elements within one batch interacting with one another, but in a much more controlled fashion. For example, if we layer words based on the number of $1$'s (or $0$'s) that they contain, then the Ulamness of a word $w$ with $k$ $1$'s depends only on that of words with fewer $1$'s and two special words with $k$ $1$'s---one formed by removing the leftmost $0$ of $w$ and the other formed by removing the rightmost $0$ (\Cref{main-theorem}).

Ulam words differ in many ways from the classical Ulam sequence. For instance, the above batch-like structure of Ulam words allows for easy discovery of classes of words that are known to be or not to be Ulam (e.g., palindromes of odd length \cite{student-paper}), whereas Ulam numbers can only be constructed linearly, and thereby it is much harder to determine any of their persistent or forbidden properties.

% At the same time, however, there is the idea of `thinning' that both the classical Ulam sequence and Ulam words undergo. The Ulam sequence begins fairly densely populated but terms become increasingly sparse, so much so that Ulam initially conjectured that the sequence's asymptotic density is zero \cite{ulam-paper}. Since then, however, further calculations seem to imply that the density is actually nonzero, equal to approximately $0.07398$ \cite{ulam-gaps}. Similarly, the set of Ulam words undergoes a thinning. Words of length near $20$ are concentrated with a density around $0.2,$ whereas a length of $40$ already decreases the density to $0.17,$ and a length of $80$ decreases it only further to $0.13.$ A length of $200$ decreases the density to $0.09$ (we calculated the four, previously unknown, densities via a probabilistic algorithm, see \Cref{conjecture}). Perhaps, as with the Ulam sequence, further calculations may lead us to conclude that the density is in fact nonzero. However, our empirical observations suggest that Ulam words in fact decrease polynomially in density, which may be related to fractal shapes that appear in their study (as we will discuss further). %{\color{red}{perhaps remove this part, add to either preliminary information or analysis after proving sierpinski thing}}

The layers inherent to the binary Ulam set suggest that it may be possible to explicitly describe its elements. However, even the counts of Ulam words of each given length do not seem to follow any known patterns \cite[A337361]{oeis}. Nonetheless, restricting to specific types of words reveals very interesting properties of the binary Ulam set. For example, we may choose to focus on the number of instances of a letter in Ulam words. Note that Ulamness is preserved under the symmetries of reversing and taking complements, so we may consider specifically the count of $1$'s. A word with no $1$'s (or no $0$'s) is Ulam if and only if its length is $1$ (i.e., it is $0$ or $1$). More interestingly, a word with exactly one $1,$ i.e. of the form $\underbrace{00\cdots0}_m1\underbrace{0\cdots00}_n=:0^m10^n$ is Ulam if and only if $\binom{m+n}{m}$ is odd \cite{student-paper}, or, equivalently, if the binary representations of $m$ and $n$ have no entries both equal to $1$ (\Cref{lucas}). By symmetry, this also gives a characterization of all Ulam words with a single $0.$ We also see that this provides an algorithm of logarithmic complexity (based on the length of a word) to calculate whether the given word of such a type is Ulam. 

In this paper, we consider the significantly more complicated case of Ulam words with two $1$'s. We discover and fully describe fascinating hierarchical structures embedded within the study of these words, as well as a family of exceptions that we are also able to completely describe. We construct an algorithm to completely classify all Ulam words with two $1$'s, which works in logarithmic time to determine the Ulamness of any particular word, a significant improvement over the $O(n^3)$ time required for general words of length $n.$% (\Cref{conjecture}).

As it turns out, Ulam words with two $1$'s can be determined more simply than just checking all possible concatenations. In fact, aside from $0$'s appended from either side, the Ulamness of a word with two $1$'s is determined by splitting it between the two $1$'s and checking the resulting Ulamness of the two words with one $1$ each.

Then, in order to gain a better understanding of general Ulam sets, we can look at them in finer detail---through what we call \textit{c-Ulam labelings}. Precisely, $\culam[y]:\N^2\to\{0,1,2\}$ labels every coordinate $(x,z)$ with one of three `caveman' labels: $0,$ meaning zero representations of $0^x10^y10^z$ as a concatenation of Ulam words with one $1$; $1,$ meaning one such representation; and $2,$ meaning two or more. %Note that we also consider in this coloring representations by concatenations of equal words. %It is also simple in most cases to go from a $\culam$ labeling to the actual Ulam set; if $y$ is not a Zumkeller number, then $\ulam[y]$ is simply the set of $(x,z)$ such that $x+z$ is odd and $\culam[y](x,z)=1.$ Here are examples of some $\culam$ colorings (with a square marked $0$ being colored in white, $1$ in grey, and $2$ in black):
% In order to state the most general theorems, we look at the Ulam set graphically. We fix a $y$ and label each point $(x,z)$ in $\N^2$ with an integer from $0$ to $2,$ indicating how many representations the word $0^x10^y10^z$ has as a concatenation of Ulam words with one $1$ ($2$ or more representations are denoted by $2$). Call this labeling $\textit{c-Ulam}[y].$ 
\begin{remark}
The word `culam' (\cjRL{kwlm}) means `everybody' in Hebrew. This can be interpreted as sets $\culam[y]$ being responsible for the entire rich structure of Ulam words with two $1$'s.
%us finding a complete description of $\culam[y]$ for all $y.$
\end{remark}
Let $\textit{Ulam}[y]\subset\N^2$ be the set of points $(x,z)$ for which the corresponding word $0^x10^y10^z$ is Ulam. Then $\ulam[y]$ may seem to consist of $(x,z)$ labeled with $1$ by $\culam[y].$ However, we should also take care of representations either consisting of two equal words or those where one word is $0$ and the other has both $1$'s. It turns out, however, that the structure of $\ulam[y]$ can be easily deduced by that of $\culam[y].$ The exact structure depends on whether or not the binary representation of $y$ possesses a special property.
\begin{definition}\cite[A089633]{oeis}
A positive integer $y$ is called a \textit{Zumkeller number} if it has at most one $0$ in its binary representation.
\end{definition}
\begin{remark}
The name for these numbers was introduced by Shevelev in 2011 \cite{first-zumkeller-paper}.
\end{remark}
\begin{theorem}\label{culam-to-ulam}
    If $y$ is not a Zumkeller number, then $\ulam[y]$ is simply the set of $(x,z)$ where $x+z\equiv1\pmod2$ and $\culam[y](x,z)=1.$
\end{theorem}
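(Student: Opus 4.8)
The plan is to translate ``$0^x10^y10^z$ is Ulam'' into a statement about the middle count $M$ that underlies $\culam[y]$, together with the two corrections already anticipated in the discussion above (factorizations into \emph{equal} words, and factorizations of the shape ``$0$ next to a word carrying both $1$'s''), and then to make both corrections vanish using the parity dichotomy for two-$1$ words and the structure of $\culam[y]$ along the diagonal $x+z=y$.

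\medskip

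\emph{Step 1: the reduction.} Fix $w=0^x10^y10^z$. In any factorization $w=uv$ into Ulam words the two $1$'s are split between $u$ and $v$, and the only Ulam word with no $1$ is the letter $0$; so (cf.\ \Cref{main-theorem}) the factorizations fall into exactly three families: $0\mid 0^{x-1}10^y10^z$ (present iff $x\ge 1$ and the right factor is Ulam), $0^x10^y10^{z-1}\mid 0$ (present iff $z\ge 1$ and the left factor is Ulam), and $0^x10^a\mid 0^{y-a}10^z$ with $0\le a\le y$ and both factors Ulam. Let $L,R\in\{0,1\}$ be the indicators of the first two families, and let $M$ count the admissible $a$'s in the third. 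By \Cref{lucas}, $M=\#\{a : 0\le a\le y,\ x\mathbin{\&}a=0 \text{ and } z\mathbin{\&}(y-a)=0\}$ (bitwise AND), whence $\culam[y](x,z)=\min(2,M)$, so $\culam[y](x,z)=1$ iff $M=1$. A middle factorization has $u=v$ only when $a=z$ and $y-a=x$, which forces $x+z=y$ and $x\mathbin{\&}z=0$; writing $\varepsilon\in\{0,1\}$ for the indicator of this event, the number of factorizations of $w$ into two \emph{distinct} Ulam words is $L+R+M-\varepsilon$, and these three families are pairwise disjoint (their left factors carry $0$, $1$, $2$ ones respectively), so $w$ is Ulam iff $L+R+M-\varepsilon=1$.

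\medskip

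\emph{Step 2: parity, and killing $L,R,\varepsilon$.} Here I would feed in the fact (from the earlier analysis of parities of two-$1$ words) that for $y$ not Zumkeller, $0^x10^y10^z$ can be Ulam only when $x+z$ is odd. If $x+z$ is even, $w$ is then not Ulam, which agrees with the asserted set (it lies in $x+z\equiv 1$). If $x+z$ is odd, apply the same fact to $0^{x-1}10^y10^z$ and $0^x10^y10^{z-1}$ — two-$1$ words with the same (still non-Zumkeller) $y$ and even outer-sum $x+z-1$ — to get $L=R=0$; hence $w$ is Ulam iff $M-\varepsilon=1$. If $\varepsilon=0$ this is exactly $M=1$, i.e.\ $\culam[y](x,z)=1$, as wanted. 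If $\varepsilon=1$, then $x\mathbin{\&}z=0$ and $x+z=y$, and I invoke the diagonal structure of $\culam[y]$ (part of \Cref{hierarchy}): along the line $x+z=y$ with $x\mathbin{\&}z=0$ one has $M=2^{t}$, where $t$ is the number of $0$-digits in the binary expansion of $y$, and non-Zumkellerness gives $t\ge 2$, so $M\ge 4$; then $M-\varepsilon\ge 3\ne 1$ and $M\ne 1$, so both ``$w$ is Ulam'' and ``$\culam[y](x,z)=1$'' are false and the equivalence holds vacuously. Assembling the cases gives $\ulam[y]=\{(x,z):x+z\equiv 1\pmod 2,\ \culam[y](x,z)=1\}$.

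\medskip

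The factorization bookkeeping and the $\varepsilon$-analysis are routine; the genuinely substantive inputs are the parity dichotomy for non-Zumkeller $y$ and the diagonal identity $M=2^{t}$, and I expect the former to be the main obstacle. It is invisible at the level of $M$ alone — that count takes even values over points with $x+z$ odd and with $x+z$ even alike — so it must come from the finer recursive/biperiodicity structure of $\culam[y]$, where the non-Zumkeller hypothesis is exactly what forces the relevant carry behaviour in the base-$2$ expansion of $y$ to be uniform.
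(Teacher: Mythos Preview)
Your proposal is correct and follows the paper's own route: set up the $L+R+M-\varepsilon$ bookkeeping, invoke the parity dichotomy (the paper's \Cref{checkerboard}) to kill $L$ and $R$ when $x+z$ is odd, and then dispose of the equal-factor case on the diagonal $x+z=y$. The only divergence is in that last step: the paper uses the elementary \Cref{x+z-case}, which explicitly builds two additional \emph{distinct} middle factorizations (so $M\ge 3$ suffices), whereas you invoke the exact count $M=2^{t}$ and attribute it to \Cref{hierarchy}; that formula is true but is not what the hierarchy theorem says, and the weaker $M\ge 3$ from \Cref{x+z-case} is both easier and already enough.
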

% On the other hand, when $y$ is a Zumkeller number, the issue of words needing to be distinct begins to play a role. We fully describe how to determine $\ulam[y]$ for such $y$ in \Cref{special-case}.
Now, it is not difficult to observe that:
\begin{lemma}\label[lemma]{culam-biperiodic}
$\culam[y]$ is biperiodic with biperiod $2^{k+1}\times2^{k+1},$ where $2^k\leq y<2^{k+1}.$
\end{lemma}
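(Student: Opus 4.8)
The plan is to unwind the definition of $\culam[y]$ into a purely bitwise condition and then observe that only boundedly many binary digits of $x$ and $z$ can possibly matter. Recall that $\culam[y](x,z)$ is the number of ways, truncated at $2$, of writing $0^x10^y10^z$ as a concatenation $w_1w_2$ of two Ulam words each containing exactly one $1$. Since $0^x10^y10^z$ has exactly two $1$'s and each $w_i$ must be nonempty with a single $1$, the cut separating $w_1$ from $w_2$ is forced to fall strictly inside the middle block $0^y$: it writes $y=a+b$ with $a,b\ge0$, $w_1=0^x10^a$, and $w_2=0^b10^z$. By the characterization of one-$1$ Ulam words (\Cref{lucas}), $0^x10^a$ is Ulam exactly when $\binom{x+a}{x}$ is odd, i.e.\ when the binary expansions of $x$ and $a$ have no digit equal to $1$ in a common position; write $x\perp a$ for this relation. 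Likewise $0^b10^z$ is Ulam iff $b\perp z$. Hence
\[
\culam[y](x,z)=\min\!\left(2,\ \bigl|\{(a,b)\in\N^2:\ a+b=y,\ x\perp a,\ b\perp z\}\bigr|\right).
\]

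First I would invoke the hypothesis $2^k\le y<2^{k+1}$: for any admissible pair we have $0\le a,b\le a+b=y<2^{k+1}$, so the binary expansions of $a$ and of $b$ are supported on positions $0,1,\dots,k$ only. Therefore the predicate $x\perp a$ is unaffected by any binary digit of $x$ in position $\ge k+1$, so it depends on $x$ only through $x\bmod 2^{k+1}$; symmetrically $b\perp z$ depends on $z$ only through $z\bmod 2^{k+1}$. Consequently the solution set $\{(a,b):a+b=y,\ x\perp a,\ b\perp z\}$, and hence its cardinality capped at $2$, is invariant under $x\mapsto x+2^{k+1}$ and under $z\mapsto z+2^{k+1}$. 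This is precisely the claimed biperiodicity of $\culam[y]$ with biperiod $2^{k+1}\times2^{k+1}$.

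I do not expect a genuine obstacle here: the heart of the argument is the elementary fact that a nonnegative integer below $2^{k+1}$ has no binary digits beyond position $k$, combined with the observation that a one-$1$/one-$1$ factorization of $0^x10^y10^z$ must split the $0^y$ block. The only points requiring a little care are verifying that no such factorization can place a $1$ outside its intended half (so that the count really is the number of solutions of $a+b=y$ subject to the two disjointness constraints) and applying \Cref{lucas} in its one-$1$ form separately to $0^x10^a$ and to $0^b10^z$; both are routine. Note that this argument does not assert that $2^{k+1}$ is the \emph{minimal} period, only that it is a period in each coordinate, which is all the statement requires.
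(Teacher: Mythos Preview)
Your argument is correct and matches the paper's own reasoning: the paper introduces this lemma as ``not difficult to observe'' and, in the proof of \Cref{biperiodic}, uses exactly your observation that $a,\,y-a\le y<2^{k+1}$ have binary digits only in positions $0,\dots,k$, so the Lucas disjointness conditions $x\perp a$ and $(y-a)\perp z$ depend only on $x,z\bmod 2^{k+1}$. There is nothing to add.
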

Yet, again, it is not immediately evident how we can translate this to $\ulam[y].$ The next theorem shows us that indeed we can, but the exact result depends on whether $y$ is Zumkeller.
\begin{theorem}\label{intro-biperiodic}
Suppose that $2^k\leq y<2^{k+1}.$
\begin{itemize}
    \item The set $\ulam[y]$ is biperiodic if and only if $y$ is not a Zumkeller number, and in this case it has a biperiod of $2^{k+1}\times2^{k+1}.$
    \item If $y$ is a Zumkeller number, then $\ulam[y]$ is eventually biperiodic with a biperiod of $2^{k+1}\times2^{k+1}$; moreover, the only aperiodicities (called ``impurities") occur in the bottom-left block: $(x,z)$ satisfying $x,z<2^{k+1}.$
\end{itemize}
    % Suppose that $y$ is not a Zumkeller number and $2^k\leq y<2^{k+1}.$ Then the set $\ulam[y]$ is biperiodic with biperiod $2^{k+1}\times2^{k+1}.$
\end{theorem}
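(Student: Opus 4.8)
The plan is to split the statement into three parts: (I) if $y$ is not Zumkeller then $\ulam[y]$ is biperiodic with biperiod $2^{k+1}\times2^{k+1}$; (II) if $y$ is Zumkeller then $\ulam[y]$ coincides off the block $\{x,z<2^{k+1}\}$ with a fixed $2^{k+1}\times2^{k+1}$-biperiodic set; and (III) if $y$ is Zumkeller then $\ulam[y]$ has no biperiod. Keeping $2^k\le y<2^{k+1}$ as given, write $u(x,z)=\ind[(x,z)\in\ulam[y]]$, $W_{x,z}:=0^x10^y10^z$, and set $B:=\{(x,z)\in\N^2:x+z\equiv1\pmod 2\text{ and }\culam[y](x,z)=1\}$. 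Since $B=\{x+z\text{ odd}\}\cap\{\culam[y]=1\}$, the first factor being invariant under $2^{k+1}\Z\times2^{k+1}\Z$ (indeed under $2\Z\times2\Z$) and the second under $2^{k+1}\Z\times2^{k+1}\Z$ by \Cref{culam-biperiodic}, and a lattice fixing two sets setwise fixes their intersection, $B$ is $2^{k+1}\times2^{k+1}$-biperiodic. Part (I) is then exactly \Cref{culam-to-ulam}, which says $\ulam[y]=B$; and part (II) is precisely the assertion that $\ulam[y]$ differs from the biperiodic set $B$ only inside the block, i.e.\ the claimed eventual biperiodicity with impurities confined there. So the real work is (II) and (III), and for both I would first upgrade \Cref{main-theorem} to an exact recursion.

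A factorization $W_{x,z}=w_1w_2$ into two Ulam words distributes the two $1$'s as $1{+}1$, $0{+}2$ or $2{+}0$, and the three families of cut positions are pairwise disjoint. A $1$-free word is Ulam only if it is the single letter $0$, so the $0{+}2$ (resp.\ $2{+}0$) factorization occurs iff $x\ge1$ and $(x{-}1,z)\in\ulam[y]$ (resp.\ $z\ge1$ and $(x,z{-}1)\in\ulam[y]$). The $1{+}1$ factorizations are the cuts inside $0^y$; with $s(x,z):=\#\{b:0\le b\le y,\ \binom{x+b}{x}\text{ and }\binom{y-b+z}{z}\text{ both odd}\}$ the number of those giving two Ulam words (\Cref{lucas}), the number of \emph{unordered} pairs produced is $s(x,z)$ minus $1$ exactly when $x+z=y$ and $\binom{y}{x}$ is odd (the one palindromic cut $w_1=w_2=0^x10^z$). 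Because $\culam[y](x,z)=\min(s(x,z),2)$ — so $\culam[y](x,z)=1\iff s(x,z)=1$ — this yields
\begin{multline*}
(x,z)\in\ulam[y]\iff s(x,z)-\ind\big[x+z=y,\ \tbinom{y}{x}\text{ odd}\big]\\
{}+\ind[x\ge1]\,u(x{-}1,z)+\ind[z\ge1]\,u(x,z{-}1)=1,
\end{multline*}
determining $\ulam[y]$ by induction on $x+z$. Two consequences of \Cref{lucas} will recur: since $b\le y<2^{k+1}$, the parity of $\binom{x+b}{x}$ depends only on $x\bmod2^{k+1}$, so $s$ is $2^{k+1}$-periodic in each variable; and on the two columns adjacent to the block boundary, $s(2^{k+1},z)=\#\{0\le c\le y:c,z\text{ bit-disjoint}\}\ge1$ (with equality forcing $z$ odd) while $s(2^{k+1}-1,z)=\ind[y,z\text{ bit-disjoint}]\le1$.

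For (II) I would prove by induction on $x+z$ that $u(x,z)=\ind[(x,z)\in B]$ for all $(x,z)$ with $x\ge2^{k+1}$ or $z\ge2^{k+1}$; off the block $x+z\ne y$, so the palindromic term drops and $s(x,z)$ is the full count. In the step at $(x,z)$ with, say, $x\ge2^{k+1}$ (the case $z\ge2^{k+1}$ follows by the reversal symmetry of $\ulam[y]$ and $B$), both neighbours $(x{-}1,z),(x,z{-}1)$ are again off the block with smaller coordinate sum, \emph{except} when $x=2^{k+1}$ and $z<2^{k+1}$, where $(x{-}1,z)$ lies in the block and is uncontrolled. Away from this \emph{seam} the inductive hypothesis applies to both neighbours; since $x-1+z$ and $x+z-1$ have parity opposite to $x+z$, their $u$-values vanish whenever $x+z$ is odd, and a case analysis against $s$ at $(x,z)$ and its two predecessors (all controlled by \Cref{lucas} and the $2^{k+1}$-periodicity of $s$) reproduces the defining rule of $B$. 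On the seam I would argue from the column formulas: $W_{2^{k+1},z}$ can be Ulam only if $s(2^{k+1},z)=1$ (which forces $z$ odd) and both corrections vanish; the right-peel neighbour $(2^{k+1},z{-}1)$ has $z-1$ even, hence $s\ge2$ there, so it is not Ulam; and the left-peel neighbour $(2^{k+1}-1,z)$ must be shown non-Ulam for odd $z$, which follows by running the recursion down the column $x=2^{k+1}-1$ where $s\le1$ and, for odd $z$, the palindromic term is absent. This matches $u(2^{k+1},z)$ with $\ind[z\text{ odd},\ s(2^{k+1},z)=1]=\ind[(2^{k+1},z)\in B]$, proving (II). I expect (II) to be the main obstacle, and within it the seam analysis to be the genuinely nonroutine step: off the seam the induction is largely formal, but on it one must quantify precisely how the lone uncontrolled boundary-$0$ correction combines with the central-block count, and this is exactly where the Zumkeller shape of $y$ (a run of $1$'s with at most one internal $0$) is felt — it is what makes $\culam[y]$ hit $1$ often enough near the corner to create impurities yet keeps them from escaping the block.

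Finally (III): when $y$ is Zumkeller, a short computation with \Cref{lucas} exhibits a point of the block where $\ulam[y]$ and $B$ disagree (prototype $y=1$: $(1,0),(0,1)\in B\setminus\ulam[1]$; in general a point of the antidiagonal $x+z=y$, where the palindromic correction is active, works). Fix such an impurity $(x_0,z_0)$ with $x_0,z_0<2^{k+1}$. If $\ulam[y]$ had a biperiod, it would be fixed by $L\Z\times L\Z$ for some $L$ that is a common multiple of its two periods and of $2^{k+1}$; then $u(x_0,z_0)=u(x_0{+}L,z_0)=\ind[(x_0{+}L,z_0)\in B]=\ind[(x_0,z_0)\in B]$ — the middle equality by (II) since $x_0+L\ge2^{k+1}$, the last since $B$ is $2^{k+1}$-periodic and $2^{k+1}\mid L$ — contradicting the choice of $(x_0,z_0)$. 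Hence $\ulam[y]$ is not biperiodic, and (I)–(III) together establish the theorem.
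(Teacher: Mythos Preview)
Your overall plan (I)–(III) is sound and Part (I) and the contradiction argument in (III) are fine. But your treatment of Part (II) has a real gap, and the paper takes a quite different route here.

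\textbf{The gap.} In your off-block induction, two places are underspecified. First, ``away from the seam'' with $x+z$ even you need
\[
s(x,z)+\ind[s(x{-}1,z)=1]+\ind[s(x,z{-}1)=1]\ne 1,
\]
which you dismiss as ``a case analysis''. This is exactly the content of the paper's checkerboard lemmas (\Cref{pseudo-ulam-even,checkerboard}), whose proofs are not routine: when $s(x,z)=1$ one must build bijections forcing $s(x{-}1,z)=s(x,z{-}1)=1$, and when $s(x,z)=0$ one must construct a bijection between representations of $w(x{-}1,y,z)$ and $w(x,y,z{-}1)$. Second, on the seam you claim $(2^{k+1}{-}1,z)$ is not Ulam for odd $z$ ``by running the recursion down the column $x=2^{k+1}{-}1$''. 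That recursion reads
\[
u(2^{k+1}{-}1,z)=1\iff s(2^{k+1}{-}1,z)+u(2^{k+1}{-}2,z)+u(2^{k+1}{-}1,z{-}1)=1,
\]
so each step drags in $u(2^{k+1}{-}2,z)$, which lives in the uncontrolled block; the column is not self-contained. Worse, the bare claim is false for $y=2^{k+1}{-}2$: by \Cref{almost-checkerboard}, $(2^{k+1}{-}1,1)\in\ulam[2^{k+1}{-}2]$. It happens that $s(2^{k+1},1)\ge 2$ there, so $z=1$ is not among your ``$s=1$'' seam points, but your column argument does not see this. The clean fix is to invoke the checkerboard lemma directly: for $y\ne 2^{k+1}{-}2$, $(2^{k+1}{-}1,z)$ with $z$ odd has even coordinate sum and hence $u=0$ by \Cref{checkerboard}; for $y=2^{k+1}{-}2$ the only exception is $z=1$, handled as above via \Cref{almost-checkerboard}.

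\textbf{Comparison with the paper.} The paper does \emph{not} attempt your direct off-block induction. For Zumkeller $y$ it instead proves the full explicit description of $\ulam[y]$ (\Cref{special-case}), from which both eventual $2^{k+1}\times 2^{k+1}$-biperiodicity and the location of all impurities are read off instantly; the corollary then just cites \Cref{biperiodic,special-case}. Your approach is more economical in spirit---you try to avoid computing the block---but in practice you still need the checkerboard machinery that underlies \Cref{culam-to-ulam}, and you must handle $y=2^{k+1}{-}2$ separately anyway. If you simply quote \Cref{checkerboard,almost-checkerboard} at the two problem spots above, your (II) goes through and you get a slightly shorter alternative proof of eventual biperiodicity that does not require the full \Cref{special-case}.
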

% Now that $\ulam[y]$ has been fully described for all Zumkeller numbers $y,$ we can fully ignore this case and assume that the set is fully biperiodic.
%In particular, together these two cases imply biperiodicty or almost-biperiodicity of $\ulam[y],$ a result we prove separately in \Cref{biperiodic}. 
% Almost-biperiodicity (except for the bottom-leftmost block) also holds in the case when $y$ is a Zumkeller number, which is once again fully understood by \Cref{special-case}. 
This biperiodicity is useful in a number of other properties, one of which is particularly intriguing.
\begin{theorem}\label{odd-even-equal-intro}
If $y$ is odd and not a Zumkeller number, then $\ulam[y]=\ulam[y-1].$ These sets also coincide for Zumkeller $y$ outside of points $(x,z)$ that are impurities as in \Cref{intro-biperiodic}.
\end{theorem}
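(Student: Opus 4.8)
\textit{Proof proposal.} The plan is to push everything down to the level of the c-Ulam labelings and prove the single clean identity
\[
\culam[y](x,z)=\culam[y-1](x,z)\qquad\text{whenever }x+z\text{ is odd,}
\]
valid for every odd $y$. Granting this, the theorem follows quickly. If $y$ is odd and not a Zumkeller number, then passing from $y$ to $y-1$ flips only bit $0$ (from $1$ to $0$), so $y-1$ has exactly one more zero bit than $y$ and is in particular also not Zumkeller; hence \Cref{culam-to-ulam} applies to both, giving $\ulam[y]=\{(x,z):x+z\equiv1\ (\mathrm{mod}\ 2),\ \culam[y](x,z)=1\}$ and the analogous formula for $y-1$, and the displayed identity forces these two sets to coincide. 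If $y$ is a Zumkeller number, the same identity combined with the dictionary expressing $\ulam[y]$ in terms of $\culam[y]$ (the Zumkeller refinement of \Cref{culam-to-ulam}, under which $\ulam[y]$ differs from $\{(x,z):x+z\ \text{odd},\ \culam[y](x,z)=1\}$ only at the impurities of \Cref{intro-biperiodic}, all lying in the bottom-left block $x,z<2^{k+1}$) shows that $\ulam[y]$ and $\ulam[y-1]$ agree off those impurities.

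To establish the identity, first translate $\culam$ into bit-combinatorics via \Cref{lucas}: a representation of $0^x10^y10^z$ as a concatenation of two Ulam words with one $1$ each must cut the central $0$-block, so it is exactly a pair $(a,b)$ of nonnegative integers with $a+b=y$ for which the binary expansions of $x$ and $a$ share no common $1$-bit (so that $0^x10^a$ is Ulam) and those of $b$ and $z$ share no common $1$-bit (so that $0^b10^z$ is Ulam); thus $\culam[y](x,z)$ is the number of such pairs, truncated at $2$, and likewise for $y-1$. Now fix $(x,z)$ with $x+z$ odd; using the reversal symmetry $\culam[y](x,z)=\culam[y](z,x)$ we may assume $x$ is odd and $z$ is even. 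Then in every admissible pair $(a,b)$ for $0^x10^y10^z$, disjointness from $x$ forces $a$ even, whence $b=y-a$ is odd; and in every admissible pair $(a',b')$ for $0^x10^{y-1}10^z$ we get $a'$ even and $b'$ even. Define $\phi(a,b)=(a,b-1)$ and $\psi(a',b')=(a',b'+1)$: because $z$ is even, clearing or setting the bottom bit of the right gap does not disturb disjointness from $z$ (and the left gap is untouched), and the parity count above makes $\phi$ and $\psi$ well-defined and mutually inverse. So the two families of representations are in bijection, their sizes coincide, and therefore so do their truncations at $2$.

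The routine points to watch are the small and boundary cases ($y=1$, which is Zumkeller and excluded from the first assertion; checking $2^k\le y-1<2^{k+1}$ once $k\ge1$ so that \Cref{culam-biperiodic} gives both sets the same biperiod; and the possibility that $y-1$ is itself Zumkeller, which happens precisely when $y=2^{k+1}-1$, so that both sets carry impurities in the same bottom-left block). The one genuinely delicate step is the Zumkeller case of the reduction: I expect the main work to be confirming that the difference between ``$\culam[y]=1$ on the odd anti-diagonal'' and the true set $\ulam[y]$ is \emph{exactly} the impurity set of \Cref{intro-biperiodic}, so that no new discrepancy between $\ulam[y]$ and $\ulam[y-1]$ is created outside that block; this is where one must invoke the body's refined $\culam$-to-$\ulam$ correspondence rather than \Cref{culam-to-ulam} itself.
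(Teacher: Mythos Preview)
Your proposal is correct and follows essentially the same route as the paper: you reduce to counting representations on the odd anti-diagonals, assume without loss of generality that $x$ is odd and $z$ is even, and build the bijection $(a,b)\mapsto(a,b-1)$ by toggling the low bit of the right gap, exactly as the paper does in \Cref{odd-even-equal}. The only cosmetic difference is that you phrase the count as $\culam[y](x,z)=\culam[y-1](x,z)$ and invoke \Cref{culam-to-ulam}, whereas the paper speaks of ``pseudo-Ulam'' and invokes \Cref{checkerboard}; these are the same statement. For the Zumkeller clause, the paper does not isolate a general ``Zumkeller refinement of \Cref{culam-to-ulam}'' as you anticipate but instead reads the agreement off the explicit descriptions in \Cref{zumkeller}, so your flagged ``delicate step'' is handled there by direct computation rather than by a single correspondence lemma.
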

This result is surprising because the corresponding $\culam$ labelings look quite different for odd and even $y,$ as shown in \Cref{self-referencer-odd-even}.
\begin{figure}[H]\label[figure]{self-referencer-odd-even}
    \centering
    \captionsetup[subfigure]{labelformat=empty}
    % \subfloat[\Cref{self-referencer-odd-even}: Here and in the future, squares labeled $0$ are in white, $1$ in gray, and $2$ in black.]{
    \begin{subfigure}[b]{0.4\textwidth}
         \centering
         \includegraphics[width=\textwidth]{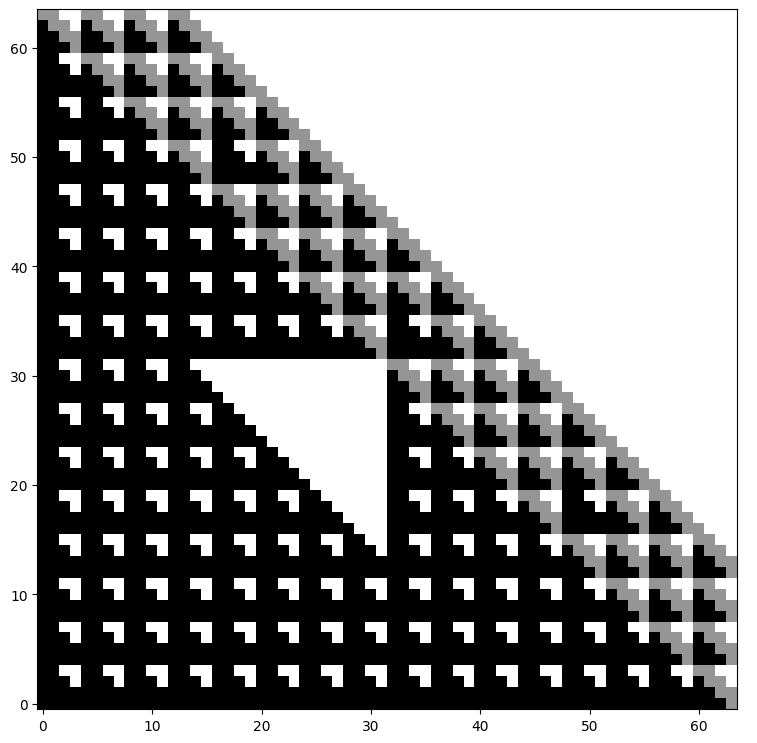}
         \caption{$\culam[50]$}
         \label{fig:culam50}
     \end{subfigure}
    %  \hfill
    %  \hspace{1.0437545in}
     \begin{subfigure}[b]{0.4\textwidth}
         \centering
         \includegraphics[width=\textwidth]{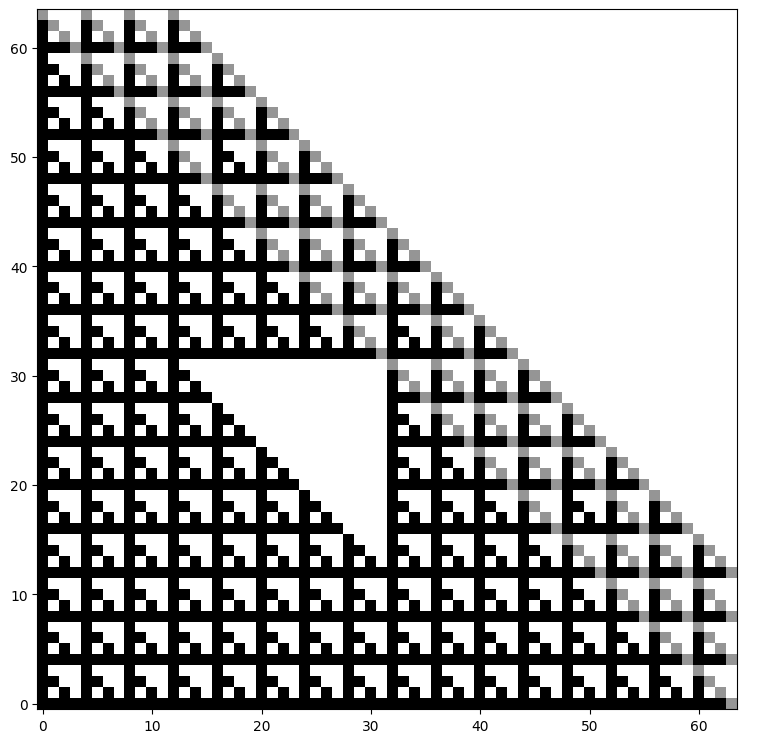}
         \caption{$\culam[51]$}
         \label{fig:culam51}
     \end{subfigure}
    \caption{Here and in the future, squares labeled $0$ are in white; $1$ in gray; and $2$ in black.}
    \label{fig:odd-even}
\end{figure}
\begin{figure}[H]\ContinuedFloat
    \centering
    \captionsetup[subfigure]{labelformat=empty}
%     \noindent\subfloat{
    \begin{subfigure}[b]{0.4\textwidth}
        \centering
        \includegraphics[width=\textwidth]{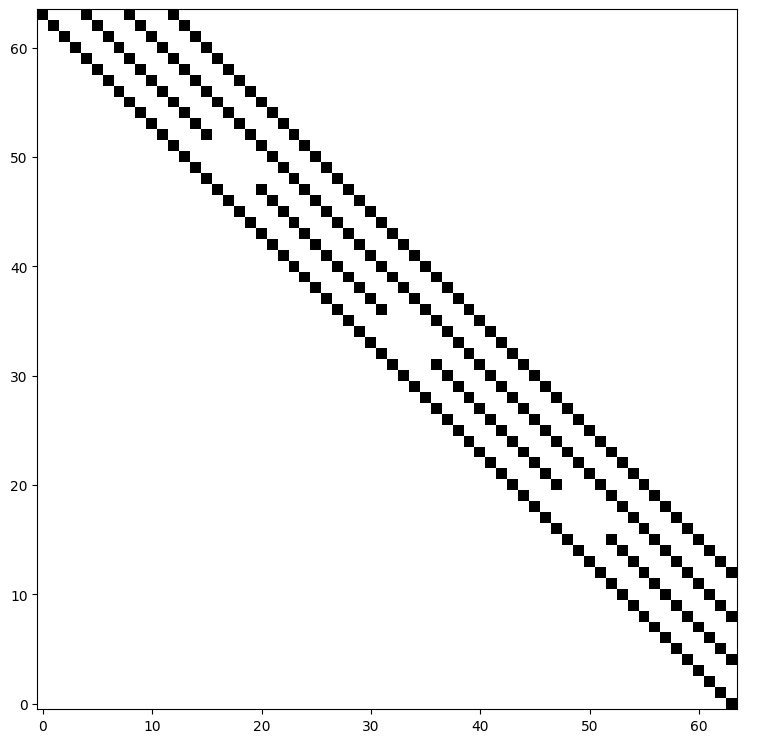}
        \caption{$\ulam[50]$}
        \label{fig:50}
    \end{subfigure}
    % \hfill
    % \hspace{-1.0437545in}
    \begin{subfigure}[b]{0.4\textwidth}
        \centering
        \includegraphics[width=\textwidth]{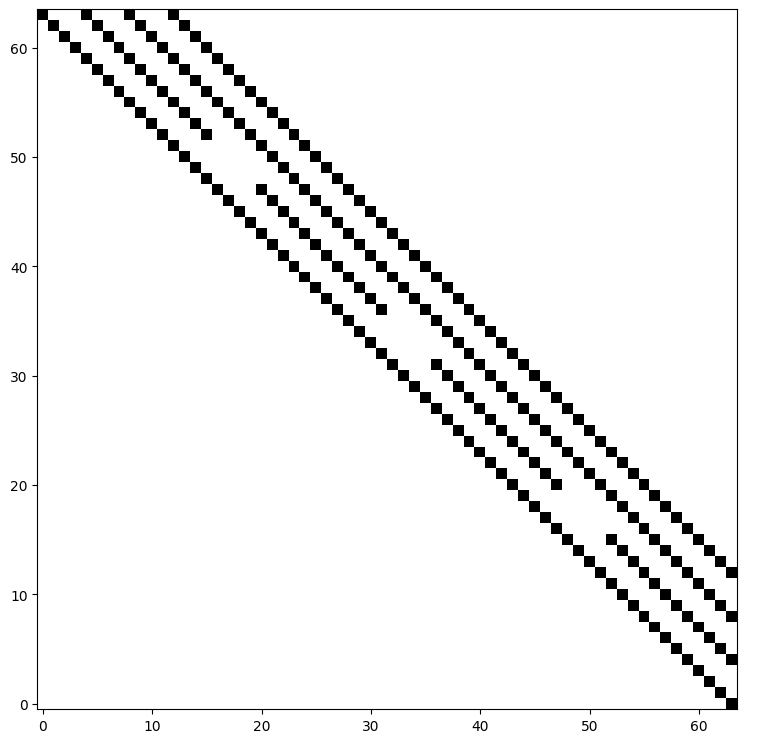}
        \caption{$\ulam[51]$}
        \label{fig:51}
    \end{subfigure}
    % }
    % \qquad
\end{figure}
Indeed, more points seem to be labeled $1$ in $\culam[50]$ than in $\culam[51].$ Yet, \Cref{odd-even-equal-intro} implies that the \textit{final} configurations of points with exactly one representation as a concatenation of (distinct) Ulam words are the same.

Note that all above descriptions of $\ulam[y]$ list Zumkeller $y$ with $x,z<2^{k+1}$ as exceptions. This corresponds to the bottom left block of $\ulam[y],$ which we are able to fully describe for all such $y.$
\begin{theorem}[The case of Zumkeller $y$]\label{zumkeller}
    \hfill
    \begin{itemize}
        \item The set $\ulam[2^{k+1}-1]$ consists of $(x,z)$ such that $x+z\equiv-1\pmod{2^{k+1}}$ and $x+z\neq2^{k+1}-1.$
        \item The set $\ulam[2^{k+1}-2]$ consists of $(x,z)$ such that either $x+z\equiv-1\pmod{2^{k+1}}$ and $x+z\neq2^{k+1}-1$; or $x+z=2^{k+1}-2$ and $x,z\equiv0\pmod2$; or $x+z=2^{k+1}$ and $x,z>0.$
        \item Suppose $y=2^{k+1}-2^a-1$ is a Zumkeller number for some $0<a<k.$ Then $\ulam[y]$ is the set of $(x,z)$ such that either $x+z=y$ and $x\pmod{2^{a+1}}<2^a$; or $x+z\equiv-1\pmod{2^{k+1}}$; or $x\pmod{2^{k+1}}+z\pmod{2^{k+1}}=2^{k+1}+2^a-1.$
        % 
        % the reductions $x'$ and $z',$ respectively, of $x$ and $z$ modulo $2^{k+1}$ satisfy either $x'+z'=2^{k+1}-1$ or $x'+z'=2^{k+1}+2^a-1.$
    \end{itemize}
\end{theorem}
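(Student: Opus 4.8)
We outline a proof. Fix $k$, write $y=2^{k+1}-2^a-1$ (allowing $a=0$ for $y=2^{k+1}-2$ and reading $y=2^{k+1}-1$ as the degenerate ``no $0$-digit'' case; always $2^k\le y<2^{k+1}$), for $n\in\N$ put $n'=n\bmod 2^{k+1}$, and call $m,n\in\N$ \emph{digit-disjoint} if their binary expansions share no $1$. \textbf{Step 1: a closed form for $\culam[y]$.} By \Cref{lucas} a word $0^m10^n$ is Ulam iff $m,n$ are digit-disjoint, so splitting $0^x10^y10^z$ between its two $1$'s gives
\[
\culam[y](x,z)=\min\!\Bigl(2,\ \#\{(u,v)\in\N^2:u+v=y,\ u\text{ digit-disjoint from }x,\ v\text{ digit-disjoint from }z\}\Bigr).
\]
The Zumkeller hypothesis rigidly restricts the binary addition $u+v=y$: a short induction on digit positions shows that for $y=2^{k+1}-1$ no carry occurs, so $u,v$ must partition the $1$-digits of $y$, while for $y=2^{k+1}-2^a-1$ with $0\le a\le k-1$ the only carry possible is a single one generated at digit $a$ (by $u_a=v_a=1$) and propagating into digit $a+1$. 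Carrying out the resulting finite case analysis yields explicit formulas for $\culam[y]$ in terms of $x',z'$ and, in the last case, their residues modulo $2^{a+1}$; for instance, for $y=2^{k+1}-1$ one gets $\culam[y](x,z)=1\iff x'+z'=2^{k+1}-1$, and $\culam[y](x,z)=0\iff x',z'$ are not digit-disjoint.

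\textbf{Step 2: away from the block $B=\{x,z<2^{k+1}\}$.} By \Cref{intro-biperiodic} all aperiodicities of $\ulam[y]$ lie in $B$, so on the complement of $B$ I would argue by induction exactly as in the proof of \Cref{culam-to-ulam}. The key point is that outside $B$ no word $0^x10^y10^z$ splits into two \emph{equal} Ulam words (such a split forces $x+z=y<2^{k+1}$, so $(x,z)\in B$), hence the only representations beyond the splits into two one-$1$ words counted by $\culam[y]$ are the $0$-prefix $0\cdot(\text{word})$ and $0$-suffix $(\text{word})\cdot 0$ ones, which \Cref{main-theorem} reduces to strictly shorter two-$1$ words; chasing the recursion gives $(x,z)\in\ulam[y]\iff\culam[y](x,z)=1$ for $(x,z)\notin B$ (the ``$x+z$ odd'' clause of \Cref{culam-to-ulam} being automatic here, since by Step 1 the anti-diagonals on which $\culam[y]=1$ are odd). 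Combined with Step 1, this produces the periodic part of each claimed set: the anti-diagonals $x+z\equiv-1\pmod{2^{k+1}}$ in all three cases, together with $x'+z'=2^{k+1}+2^a-1$ in the third.

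\textbf{Step 3: inside $B$.} Here I would invoke \Cref{main-theorem} directly: $(x,z)\in\ulam[y]$ iff
\[
N(x,z)+\ind\bigl[x\ge1,\ (x-1,z)\in\ulam[y]\bigr]+\ind\bigl[z\ge1,\ (x,z-1)\in\ulam[y]\bigr]=1,
\]
where $N(x,z)$ counts the splits of $0^x10^y10^z$ into two \emph{distinct} one-$1$ Ulam words, namely the uncapped count of Step 1 minus $1$ when $x+z=y$ and $x,z$ are digit-disjoint (the unique symmetric split). Induct on $x+z$, with base case $(0,0)$, where $N=y+1\ge2$ so $(0,0)\notin\ulam[y]$. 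For $y=2^{k+1}-1$ the block is clean: inside $B$, $\culam[y](x,z)=1$ forces $x+z=2^{k+1}-1$, where the sole split into two one-$1$ words is the forbidden symmetric one, whereas $\culam[y](x,z)=2$ forces $x+z\ne y$, hence $N(x,z)\ge2$; either way the induction hypothesis kills the two indicator terms, so $B$ contains no Ulam point and $\ulam[2^{k+1}-1]=\{x+z\equiv-1\pmod{2^{k+1}}\}\setminus\{x+z=2^{k+1}-1\}$. For the other two families the same induction, now fed the fuller $\culam[y]$ formulas, yields exactly the claimed extra fragments: on the anti-diagonal $x+z=y$ the symmetric split is available precisely when digit $a$ of $x$ is $0$, that is $x\bmod 2^{a+1}<2^a$ (one checks the uncapped count is then $2$, so deleting the symmetric split leaves $N=1$ and $(x,z)\in\ulam[y]$), and for $y=2^{k+1}-2$ the further fragments $x+z=2^{k+1}-2$ with $x,z$ even, and $x+z=2^{k+1}$ with $x,z>0$, emerge by propagating Ulamness along the $0$-prefix/suffix terms.

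\textbf{The main obstacle.} Steps 1 and 2 are essentially mechanical once the carry analysis is set up. The real difficulty is Step 3 for the family $y=2^{k+1}-2^a-1$: near the anti-diagonal $x+z=y$, exactly where the biperiodicity of \Cref{intro-biperiodic} fails and the ``impurities'' are born, the forbidden symmetric split, the two $0$-shift terms, and the two periodic anti-diagonals all interact, and one must show their net effect is precisely the claimed fragment. I expect the cleanest route is a joint induction on $k$ and $x+z$, using \Cref{culam-biperiodic} and the odd/even comparison of \Cref{odd-even-equal-intro} to collapse the number of residue classes that have to be checked by hand; getting that bookkeeping exactly right is where essentially all the work lies.
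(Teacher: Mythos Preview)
Your overall architecture---compute $\culam[y]$ by a carry analysis, then control the update step---is the same as the paper's, and for parts 1 and 3 your sketch is close to what the paper does. But there are two genuine problems.

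\textbf{Circularity and order of steps.} In Step 2 you invoke \Cref{intro-biperiodic} to confine the aperiodicities to $B$, but in the paper \Cref{intro-biperiodic} is deduced as a corollary \emph{of} the very theorem you are proving (see the corollary immediately following \Cref{special-case}). More structurally, you cannot do ``outside $B$'' before ``inside $B$'': the recursion you write in Step 3 starts at $(0,0)\in B$ and builds outward, so the Ulamness of points with $x+z\ge 2^{k+1}$ depends on what happens inside $B$. The paper avoids this by first proving the global parity constraint (\Cref{checkerboard}: if $w(x,y,z)$ is Ulam and $y\neq 2^{k+1}-2$ then $x+z$ is odd), which makes the update step a no-op on odd anti-diagonals and lets biperiodicity of $\ulam[y]$ follow from that of $\culam[y]$ without needing \Cref{intro-biperiodic}. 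You should replace your appeal to \Cref{intro-biperiodic} by a direct appeal to \Cref{checkerboard} (valid for parts 1 and 3) and to \Cref{culam-biperiodic}.

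\textbf{Part 2 is where the real work is, not part 3.} You identify the ``main obstacle'' as the impurity analysis for $y=2^{k+1}-2^a-1$ with $0<a<k$, but that case is actually the tamer one: since $y$ is odd, \Cref{checkerboard} applies, so the update step is trivial and the whole question reduces to the $\culam$ count (the paper's appendix proof simply chops off the trailing $a$ digits to reduce to the $2^{k+1-a}-2$ case). The genuinely delicate case is $y=2^{k+1}-2$, where \Cref{checkerboard} \emph{fails}: there are Ulam words with $x+z$ even, and your one-line ``emerge by propagating Ulamness along the $0$-prefix/suffix terms'' hides a nontrivial four-stage argument. The paper's \Cref{almost-checkerboard} tracks the update step through the anti-diagonals $x+z=2^{k+1}-2,\,2^{k+1}-1,\,2^{k+1},\,2^{k+1}+1$ in turn, showing (i) on $x+z=2^{k+1}-2$ the equal-word split kills exactly the odd-$x$ points; (ii) this makes \emph{every} word on $x+z=2^{k+1}-1$ non-Ulam (two explicit representations); (iii) hence every word on $x+z=2^{k+1}$ with $x,z>0$ \emph{is} Ulam (a unique representation survives); and finally (iv) on $x+z=2^{k+1}+1$ the propagation dies out and the parity pattern resumes. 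None of this is visible in your sketch, and it is exactly the content you need to justify the two extra fragments in the statement of part 2.
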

Now that $\ulam[y]$ has been fully described for all Zumkeller numbers $y,$ we can fully ignore this case and assume that the set is fully biperiodic.
\begin{figure}[H]
    \centering
     \begin{subfigure}[b]{0.32\textwidth}
         \centering
         \includegraphics[width=\textwidth]{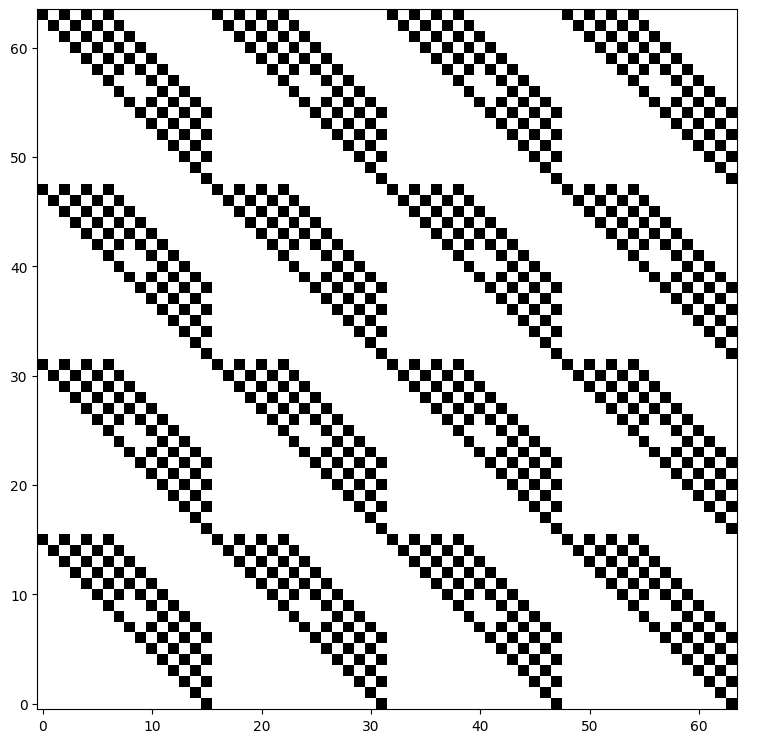}
         \caption{$\ulam[9]$}
         \label{fig:9}
     \end{subfigure}
     \hfill
    %  \hspace{-0.9in}
     \begin{subfigure}[b]{0.32\textwidth}
         \centering
         \includegraphics[width=\textwidth]{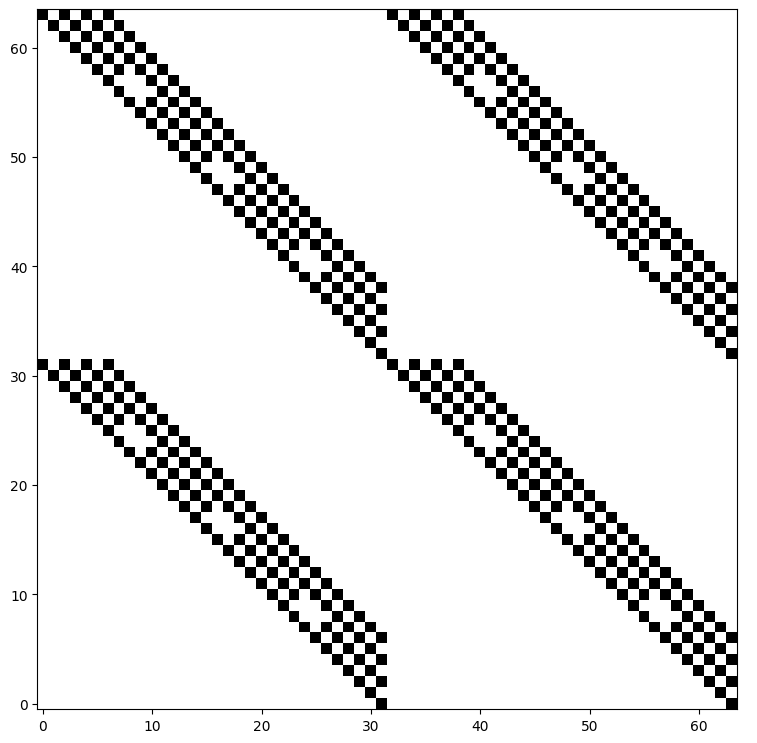}
         \caption{$\ulam[25]$}
         \label{fig:25}
     \end{subfigure}
     \hfill
    %  \hspace{-0.9in}
     \begin{subfigure}[b]{0.32\textwidth}
         \centering
         \includegraphics[width=\textwidth]{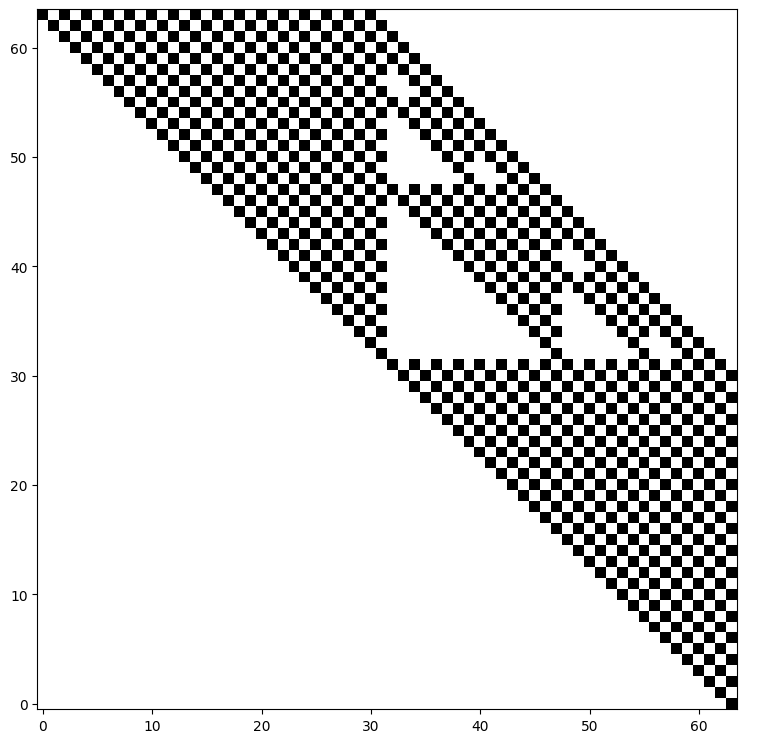}
         \caption{$\ulam[32]$}
         \label{fig:32}
     \end{subfigure}
        \caption{%$\ulam[y]$ for (a) $y=9,$ (b) $y=25,$ (c) $y=32.$}
        Biperiodic blocks of $\ulam[y]$ for various $y.$
        }
        \label{fig:three graphs}
\end{figure}
The more images we look at, the more evident it becomes that a large number of $0$'s in the binary representation of $y$ creates a more fractal-like structure. If there are $1$'s interspersed sporadically between the $0$'s, then the structure becomes even richer, looking like a fractal made out of fractals.

It is clear, then, that $\ulam$ labelings and, even more noticeably, $\culam$ labelings feature interesting fractal patterns. To describe them precisely, we introduce procedures of `adding' and `multiplying' $\culam$ labelings. However, since we are assigning the number $2$ to any word representable as a concatenation of Ulam words in more than one way, we replace simple addition and multiplication by a modified, `caveman'-style arithmetic:
\begin{definition}\label[definition]{caveman-operations}
For integers $m,n\in\{0,1,2\},$ define the operations $$
m\widehat{+}n:=\min(m+n,2)
$$
and $$
m\widehat{\cdot}n:=\min(m\cdot n,2).
$$
\end{definition}
In addition, our hierarchical description of $\culam[y]$ depends fundamentally on splitting into smaller squares and establishing a given structure on each. In particular, we are reminded of the tensor product operation, with $M\otimes N$ being a matrix tiled by blocks of the same dimensions as $N,$ and with $(i,j)$-th entry equal to $m_{ij}N.$ Analogously, we can define $M\widehat{\otimes}N$ being a matrix tiled by blocks $m_{ij}\widehat{\cdot}N.$ 

Since we are now working with tilings by finite matrices, it is useful to define the basic block used in these tilings. Precisely, let $B_d=\{0,1,2,\dots,2^d-1\}^2$ be the fundamental periodic block of $\culam[y]$ (for $y$ with $2^{d-1}\leq y<2^d$ as in \Cref{culam-biperiodic}). The following algorithm is then able to inductively construct $\culam[y],$ implying, in particular, that for any $y,$ Ulam words $0^x10^y10^z$ can also be determined in a logarithmic fashion.
\begin{theorem}\label{inside}
There are universal dyadic patterns $\eone,\etwo,\oone,\otwo$ such that for all $d$ they give labelings $\eone[d],\etwo[d],\oone[d],\otwo[d]:B_d\to\{0,1,2\}$ computable in $O(d)$ time and for any odd $y$ we have $$
\culam[2^dy]=\culam[y-1]\widehat{\otimes}\eone[d]\widehat{+}\culam[y]\widehat{\otimes}\etwo[d]
$$
and $$
\culam[2^dy+1]=\culam[y-1]\widehat{\otimes}\oone[d]\widehat{+}\culam[y]\widehat{\otimes}\otwo[d].
$$
% 
% 
% For every $d,$ there exist four $2^d\times2^d$ blocks, with any coordinate computable in $O(d)$ time, such that for any odd $y,$ the set $\culam[2^dy]$ is a tiling by linear combinations of two of these blocks corresponding to the labelings of respective points in $\culam[y-1]$ and $\culam[y].$ Similarly, $\culam[2^dy+1]$ is a tiling by linear combinations of the two other blocks. This gives rise to a nested fractal structure for $\culam[y]$ for any $y.$
% 
% If $y=2^dy'$ where $y'$ is odd and $2^k\leq y<2^{k+1},$ then aside from some notable exceptions, there exists a $2^m$
% 
% we can use the following reductive algorithm to determine whether $0^x10^y10^z$ is Ulam: \begin{enumerate}
%     \item Define $x',z'$ to be the reductions of $x$ and $z$ modulo $2^{k+1}$; $x''=\lfloor \frac{x'}{2^{d+1}}\rfloor$ and $z''=\lfloor \frac{z'}{2^{d+1}}\rfloor$; and $x'''$ and $z'''$ to be the reductions of $x$ (or $x'$) and $z$ (or $z'$), respectively, modulo $2^{d+1}.$ 
%     \item If $x''+z''=2^{k-d+1}-\frac{y'+1}{2},$ then $0^x10^y10^z$ is Ulam if and only if $x''',z'''<2^d$ and $0^{x'''+2^d}10^{2^d}10^{z'''+2^d}$ is Ulam.
%     \item Otherwise, assuming inequality, $0^x10^y10^z$ is Ulam if and only if %$x''+z''<2^{k-d+1}-\frac{y'+1}{2}$ and 
%     both words $0^{x'''}10^{2^d}10^{z'''}$ and $0^{x''}10^{y'}10^{z''}$ are Ulam.
% \end{enumerate}
\end{theorem}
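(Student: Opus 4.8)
The key preliminary is an explicit ``split formula'' for $\culam[y]$. Any way of writing $0^x10^y10^z$ as a concatenation of two Ulam words, each with a single $1$, must cut inside the central block $0^y$, say as $(0^x10^a)(0^{y-a}10^z)$ with $0\le a\le y$; by the single-$1$ characterization (\Cref{lucas}: $0^m10^n$ is Ulam iff $\binom{m+n}{m}$ is odd, i.e.\ iff the binary expansions of $m$ and $n$ have disjoint support, written $m\perp n$), this factorization is valid exactly when $x\perp a$ and $z\perp(y-a)$. Hence $\culam[y](x,z)=\min\bigl(N_y(x,z),2\bigr)$, where $N_y(x,z):=\#\{\,a:0\le a\le y,\ x\perp a,\ z\perp(y-a)\,\}$ is the uncapped count. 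Since both sides of each claimed identity are biperiodic with period $2^{d+k+1}$ (the left side by \Cref{culam-biperiodic}; the right side because $\culam[y\pm1]$ has period $2^{k+1}$ and is $\widehat\otimes$-combined with a $2^d\times 2^d$ pattern — the degenerate case $y=1$ using $\culam[0]\equiv1$), it suffices to check them pointwise on $B_{d+k+1}$. So fix $(X,Z)\in B_{d+k+1}$ and write $X=2^d\xi+\chi$, $Z=2^d\zeta+\psi$ with $\chi,\psi\in[0,2^d)$ and $\xi,\zeta\in[0,2^{k+1})$; here $(\xi,\zeta)$ is precisely the coarse index into $\culam[y\pm1]$ and $(\chi,\psi)$ the fine index into the dyadic pattern.

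Next, run over the cut point $A=2^d\alpha+\rho$ of the central block, $\rho\in[0,2^d)$, and compute the binary expansion of $Y-A$ (for $Y=2^dy$, resp.\ $Y=2^dy+1$), separating $\rho=0$ (and, in the odd case, also $\rho=1$) from the remaining $\rho$, for which a borrow makes $Y-A$ have low part $2^d-\rho$ (resp.\ $2^d+1-\rho$) and high part $y-\alpha-1$. Since bit-disjointness decouples across the $2^d$-boundary, every term factors into a condition on $(\chi,\psi,\rho)$ times a condition on $(\xi,\zeta,\alpha)$; summing over $\alpha$ recognizes $N_y(\xi,\zeta)$ (from the small-$\rho$ residues, where $\alpha$ runs to $y$) or $N_{y-1}(\xi,\zeta)$ (from the large-$\rho$ residues, where $\alpha$ runs to $y-1$). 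Carrying out this bookkeeping gives
\[
N_{2^dy}(X,Z)=N_y(\xi,\zeta)+f_d(\chi,\psi)\,N_{y-1}(\xi,\zeta),\qquad
N_{2^dy+1}(X,Z)=\bigl([\chi_0{=}0]+[\psi_0{=}0]\bigr)N_y(\xi,\zeta)+g_d(\chi,\psi)\,N_{y-1}(\xi,\zeta),
\]
where $[\,\cdot\,]$ is the Iverson bracket, $f_d(\chi,\psi)=\#\{\rho\in[1,2^d):\chi\perp\rho,\ \psi\perp(2^d{-}\rho)\}$ and $g_d(\chi,\psi)=\#\{\rho\in[2,2^d):\chi\perp\rho,\ \psi\perp(2^d{+}1{-}\rho)\}$. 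One then \emph{defines} $\etwo[d]\equiv1$, $\eone[d]:=\min(f_d,2)$, $\otwo[d](\chi,\psi):=[\chi_0{=}0]+[\psi_0{=}0]$, $\oone[d]:=\min(g_d,2)$, each valued in $\{0,1,2\}$ and computable in $O(d)$ time: the first and third are immediate, while $f_d$ and $g_d$ are governed by a constant-state digit recursion over the binary expansions of $\chi,\psi$ (for instance $f_d(\chi,\psi)=\sum_{j=0}^{d-1}[\chi_j{=}0][\psi_j{=}0]\prod_{i>j}\bigl([\chi_i{=}0]+[\psi_i{=}0]\bigr)$, evaluable via suffix products with capping at $2$; $g_d$ is analogous, one carry bit more delicate because the target $2^d{+}1$ is odd).

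Finally, these integer identities are upgraded to the ``caveman'' identities by the elementary lemma that for all non-negative integers $a,b,c,c'$,
\[
\bigl(\min(c,2)\,\widehat{\cdot}\,\min(a,2)\bigr)\ \widehat{+}\ \bigl(\min(c',2)\,\widehat{\cdot}\,\min(b,2)\bigr)=\min\bigl(ca+c'b,\,2\bigr),
\]
proved by checking the three cases $ca+c'b\in\{0\}$, $\{1\}$, $\{\ge2\}$. Applying it with $a=N_y(\xi,\zeta)$, $b=N_{y-1}(\xi,\zeta)$ and coefficient pairs $(1,\,f_d(\chi,\psi))$, resp.\ $([\chi_0{=}0]{+}[\psi_0{=}0],\,g_d(\chi,\psi))$, and recalling that the $(X,Z)$-entry of $\culam[y-1]\widehat{\otimes}P$ equals $\culam[y-1](\xi,\zeta)\,\widehat{\cdot}\,P(\chi,\psi)=\min(N_{y-1}(\xi,\zeta),2)\,\widehat{\cdot}\,P(\chi,\psi)$ (likewise for $\culam[y]$), yields exactly the two displayed formulas of the theorem. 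The one genuinely delicate step is the digit bookkeeping in the middle paragraph — making the borrow, the boundary values $\rho\in\{0,1\}$ and $A\in\{2^dy,2^dy+1\}$, and the $\alpha$-ranges $[0,y]$ versus $[0,y-1]$ line up so that the sums collapse cleanly to $N_y$ and $N_{y-1}$; the rest is routine.
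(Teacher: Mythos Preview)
Your proof is correct and follows essentially the same route as the paper: split the cut point $A$ according to its residue modulo $2^d$, observe that the residue-$0$ (resp.\ $\{0,1\}$) piece recovers $\culam[y]$ on the coarse coordinates while the remaining residues recover $\culam[y-1]$ times a count over the fine coordinates, and identify that count with $\eone[d]$ (resp.\ $\oone[d]$). The only presentational difference is that you work with uncapped counts $N_y$ throughout and cap once at the end via your explicit ``caveman lemma'', whereas the paper phrases the same factorization as bijections between representation sets and isolates the identification $f_d=\eone[d]$, $g_d=\oone[d]$ in a separate lemma (\Cref{culam-power-of-two}); your closed form $f_d(\chi,\psi)=\sum_{j}[\chi_j{=}0][\psi_j{=}0]\prod_{i>j}([\chi_i{=}0]+[\psi_i{=}0])$ is exactly what that lemma's proof establishes in disguise.
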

The patterns $\eone,\etwo,\oone,\otwo$ are presented in the following table:
\begin{table}[H]\label[table]{odd-even-patterns}
    \centering
    \begin{tabular}{c|cc|c}
        $\eone$ & \raisebox{-.5\height}{\includegraphics[width=0.2\textwidth]{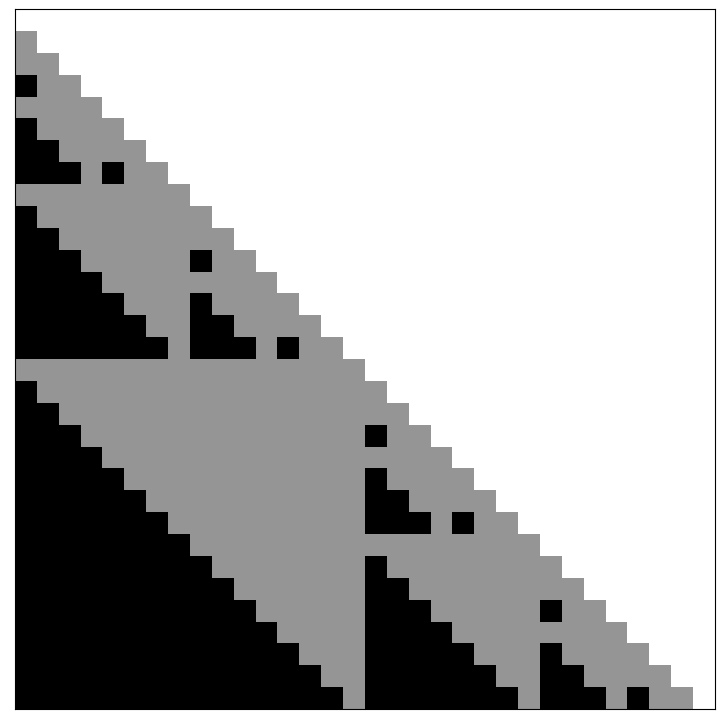}} & $\oone$ & \raisebox{-.5\height}{\includegraphics[width=0.2\textwidth]{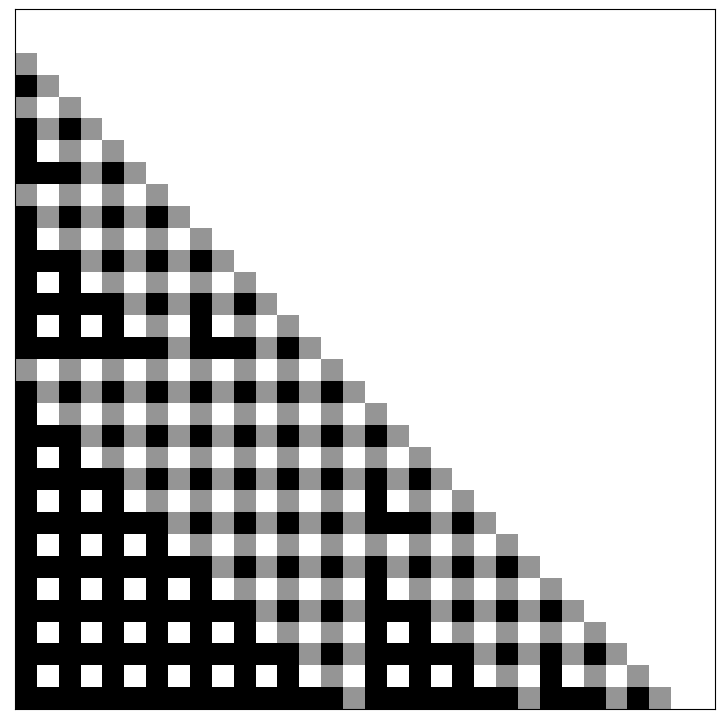}} \\
        $\etwo$ & \raisebox{-.5\height}{\includegraphics[width=0.2\textwidth]{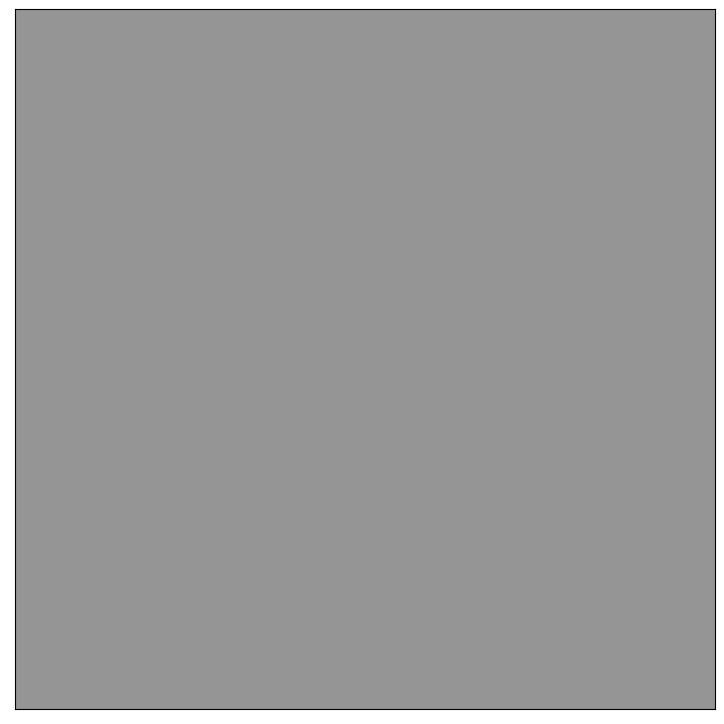}} & $\otwo$ & \raisebox{-.5\height}{\includegraphics[width=0.2\textwidth]{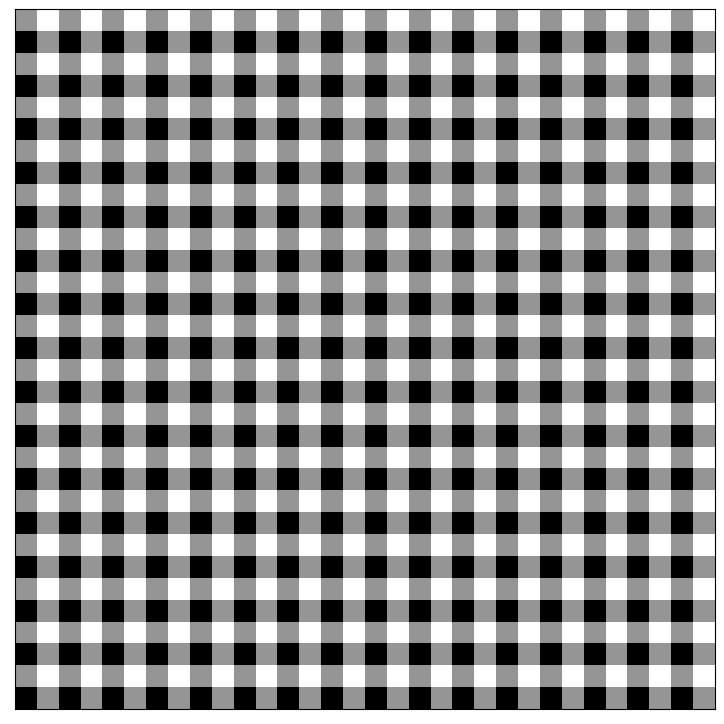}}
    \end{tabular}
    \caption{$E_i[d]$ and $O_i[d]$ for $d=5.$}
    \label{tensor-summands}
\end{table}
\noindent The tensor product operation implies that every combination of two squares in $\culam[y-1]$ and $\culam[y]$ gives two different unique patterns in $\culam[2^dy]$ and $\culam[2^dy+1].$ \Cref{building-blocks} provides the precise conversion chart implied by \Cref{inside}:
\begin{table}[H]%\label[table]{coloring-conversion}
    \centering
    \begin{tabular}{c|c|c|c}
         $\culam[y-1]$ & $\culam[y]$ & $\culam[2^dy]$ & $\culam[2^dy+1]$  \\
         \fcolorbox{black}{white}{\rule{0pt}{1pt}\rule{1pt}{0pt}} & \fcolorbox{black}{white}{\rule{0pt}{1pt}\rule{1pt}{0pt}} & \raisebox{-.5\height}{\includegraphics[width=0.12\textwidth]{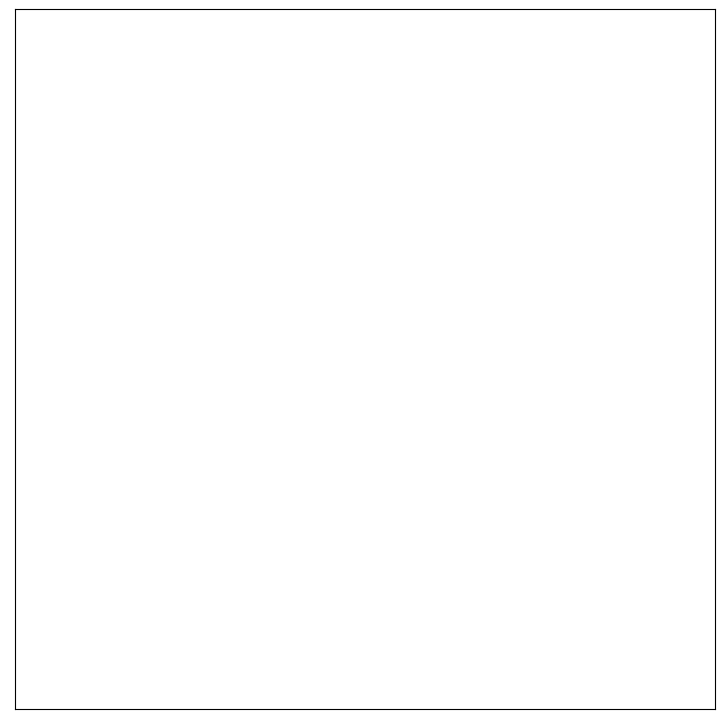}} & \raisebox{-.5\height}{\includegraphics[width=0.12\textwidth]{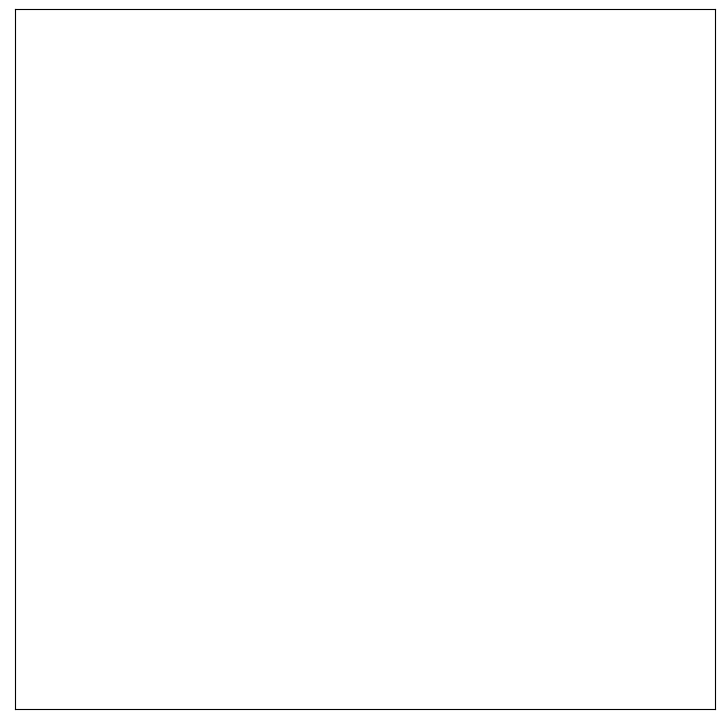}} \\
         \fcolorbox{black}{gray!60}{\rule{0pt}{1pt}\rule{1pt}{0pt}} & \fcolorbox{black}{white}{\rule{0pt}{1pt}\rule{1pt}{0pt}} & \raisebox{-.5\height}{\includegraphics[width=0.12\textwidth]{tile_10_even.png}} & \raisebox{-.5\height}{\includegraphics[width=0.12\textwidth]{tile_10_odd.png}} \\
         \fcolorbox{black}{gray!60}{\rule{0pt}{1pt}\rule{1pt}{0pt}} & \fcolorbox{black}{gray!60}{\rule{0pt}{1pt}\rule{1pt}{0pt}} & \raisebox{-.5\height}{\includegraphics[width=0.12\textwidth]{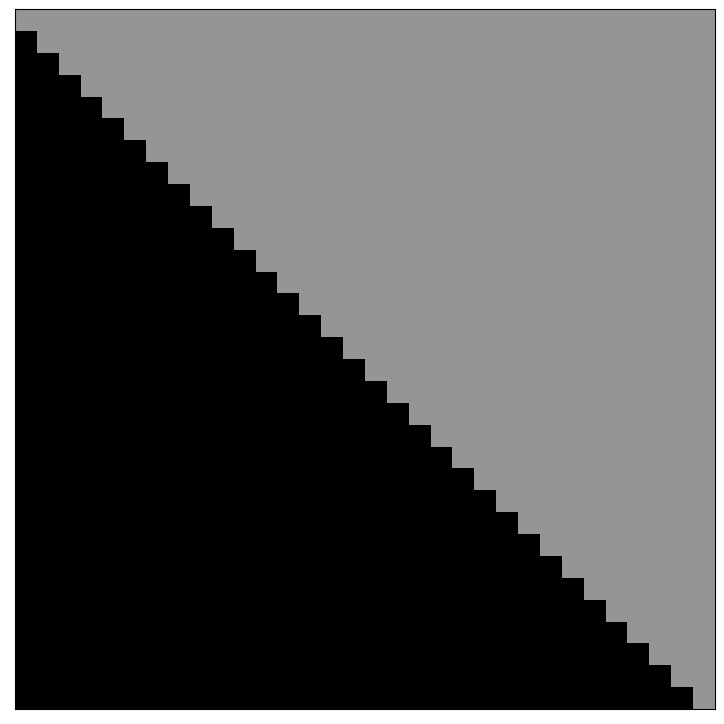}} & \raisebox{-.5\height}{\includegraphics[width=0.12\textwidth]{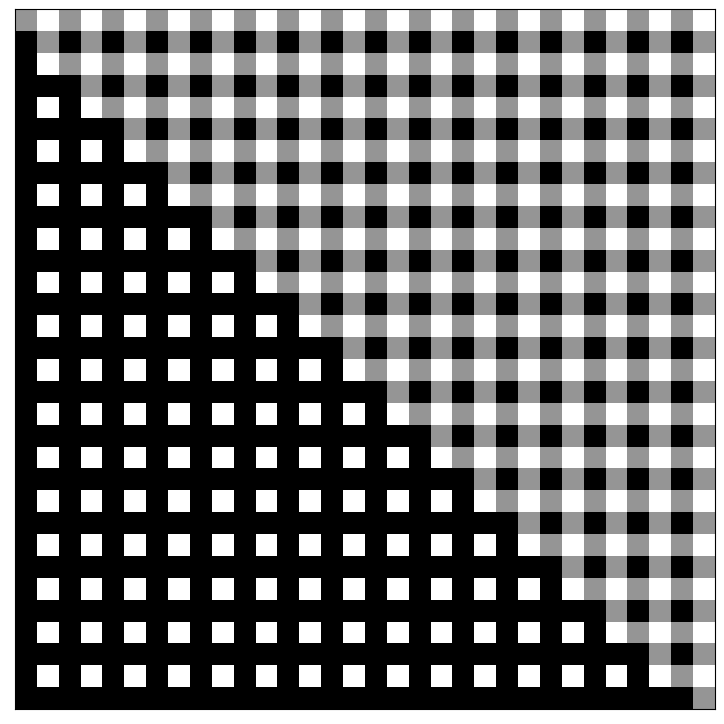}} \\
         \fcolorbox{black}{gray!60}{\rule{0pt}{1pt}\rule{1pt}{0pt}} & \fcolorbox{black}{black}{\rule{0pt}{1pt}\rule{1pt}{0pt}} & \raisebox{-.5\height}{\includegraphics[width=0.12\textwidth]{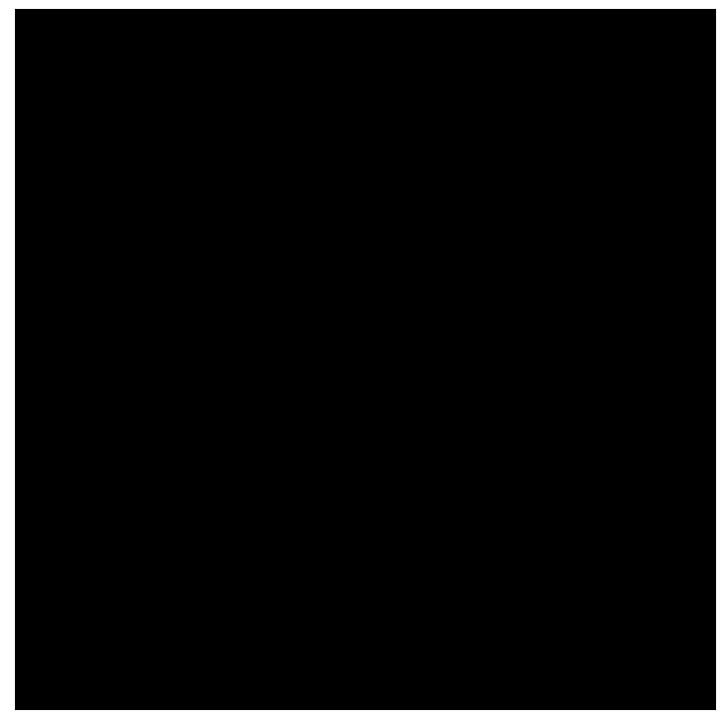}} & \raisebox{-.5\height}{\includegraphics[width=0.12\textwidth]{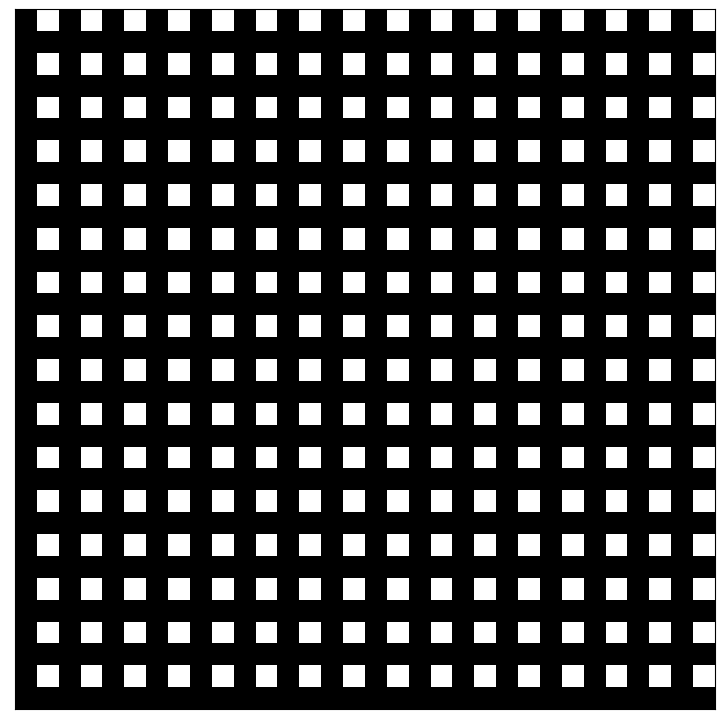}} \\
         \fcolorbox{black}{black}{\rule{0pt}{1pt}\rule{1pt}{0pt}} & \fcolorbox{black}{white}{\rule{0pt}{1pt}\rule{1pt}{0pt}} & \raisebox{-.5\height}{\includegraphics[width=0.12\textwidth]{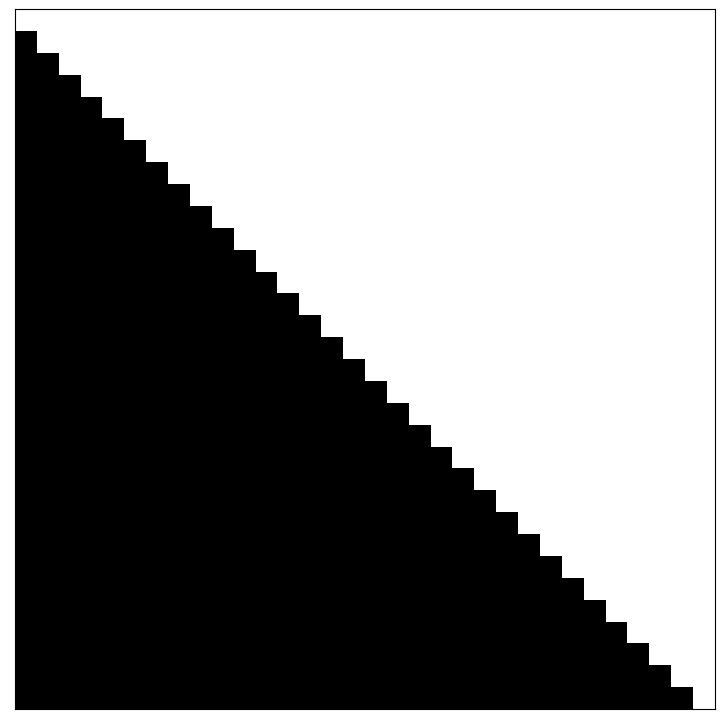}} & \raisebox{-.5\height}{\includegraphics[width=0.12\textwidth]{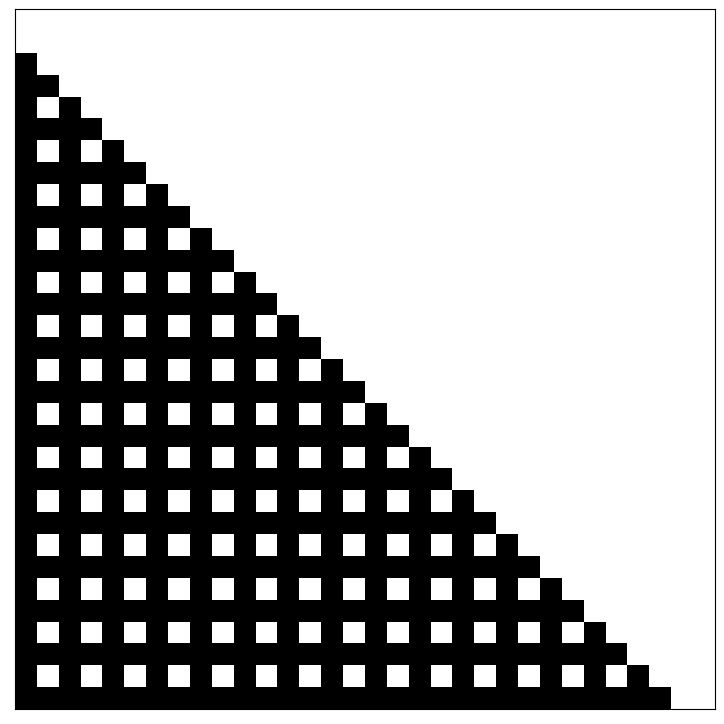}} \\
         \fcolorbox{black}{black}{\rule{0pt}{1pt}\rule{1pt}{0pt}} & \fcolorbox{black}{black}{\rule{0pt}{1pt}\rule{1pt}{0pt}} & \raisebox{-.5\height}{\includegraphics[width=0.12\textwidth]{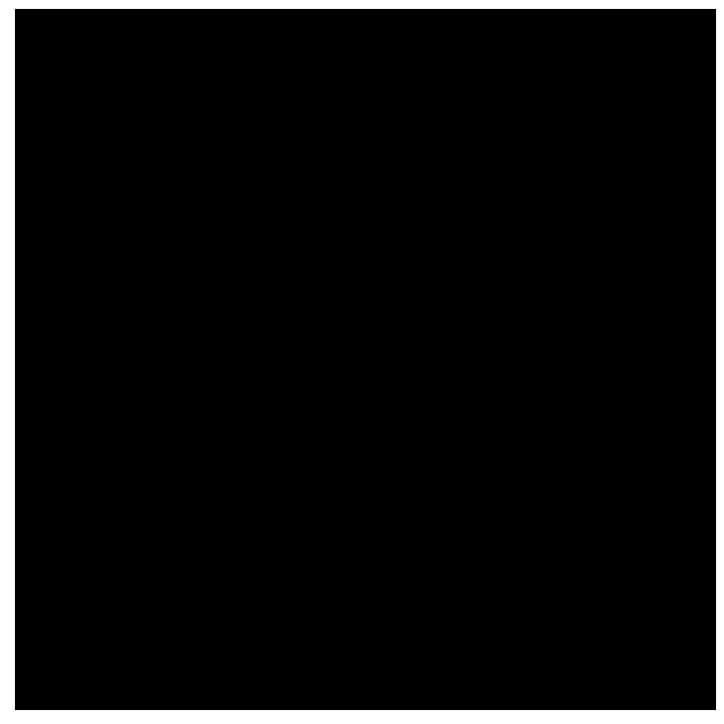}} & \raisebox{-.5\height}{\includegraphics[width=0.12\textwidth]{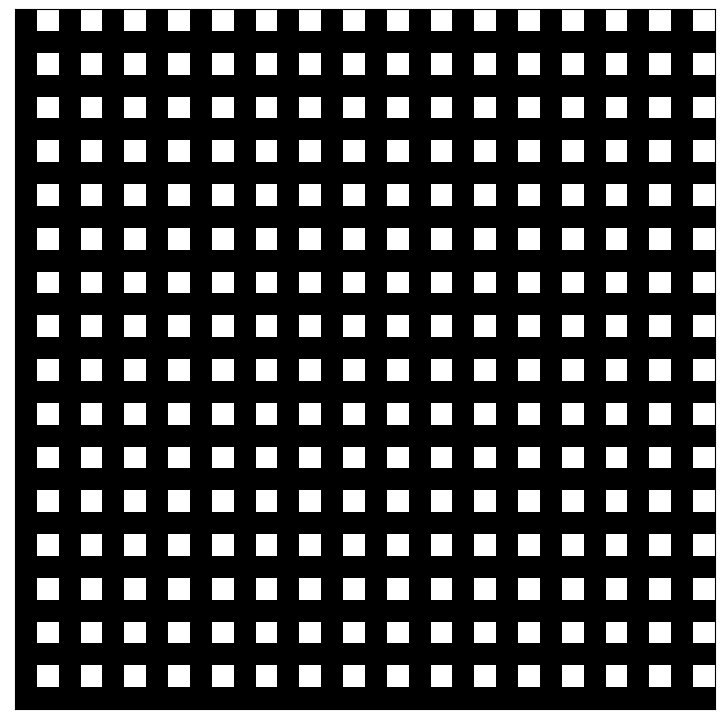}}
    \end{tabular}
    \caption{The labelings of $(x,z)$ in $\culam[2^dy]$ and $\culam[2^dy+1]$ given the labelings of $(x',z')$ in $\culam[y]$ and $\culam[y-1]$; $x'=\lfloor\frac{x}{2^d}\rfloor,$ $z'=\lfloor\frac{z}{2^d}\rfloor.$ Case shown: $d=5.$}
    \label{building-blocks}
\end{table}
For example, the periodic patterns of $\culam[2^d]$ and $\culam[2^d+1]$ are tiled by three blocks in the third and fourth columns, respectively:
\begin{figure}[H]
    \centering
     \begin{subfigure}[b]{0.32\textwidth}
         \centering
         \includegraphics[width=\textwidth]{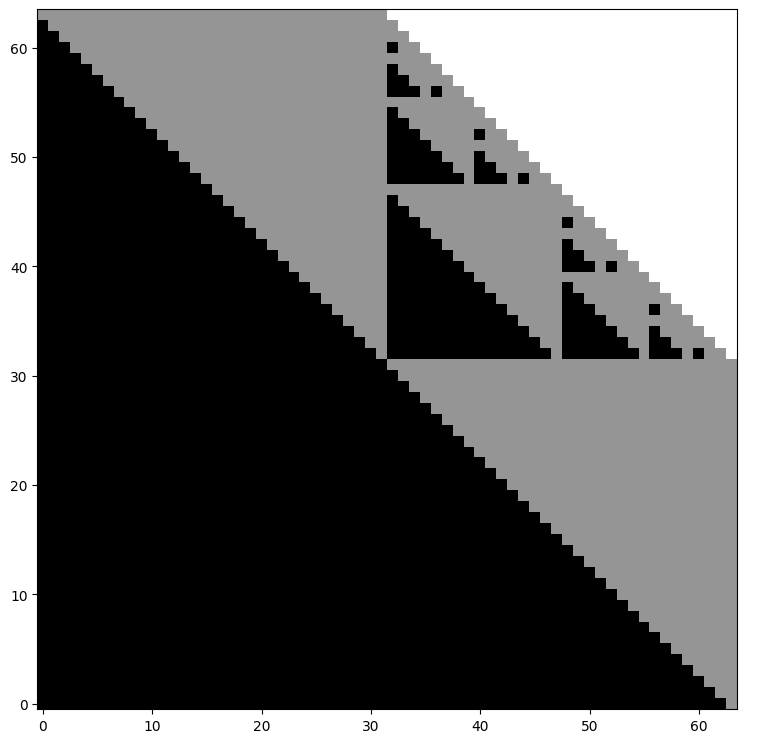}
         \caption{$\culam[32]$}
         \label{fig:power32}
     \end{subfigure}
    %  \hfill
    %  \hspace{-0.9in}
     \begin{subfigure}[b]{0.32\textwidth}
         \centering
         \includegraphics[width=\textwidth]{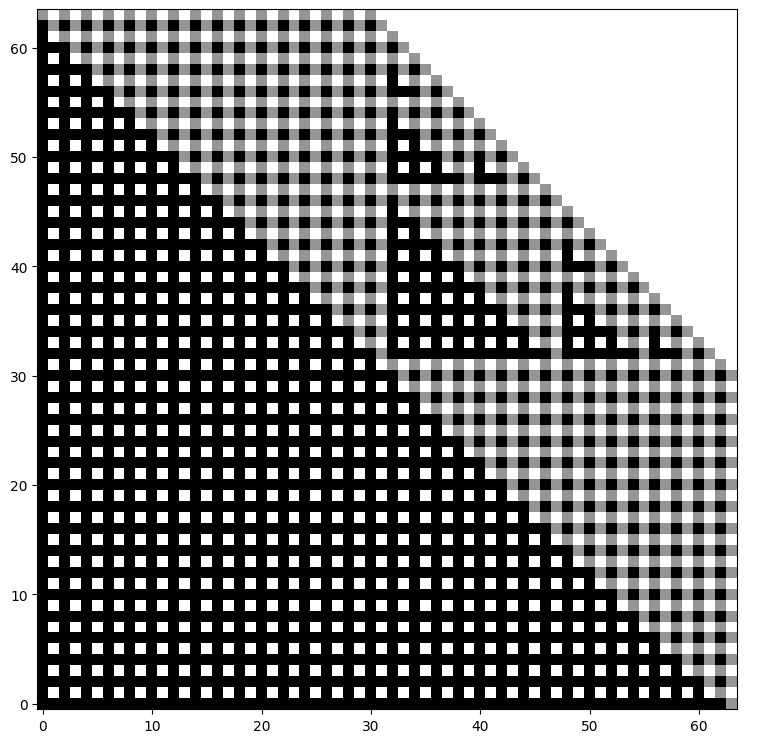}
         \caption{$\culam[33]$}
         \label{fig:power33}
     \end{subfigure}
        \label{fig:2^k-and-2^k+1}
\end{figure}
We can analogously flip our tensor operation. However, we must then restrict ourselves to single periodic blocks.
\begin{theorem}\label{outside}
Suppose $2^k\leq y<2^{k+1}.$ Define the labelings $L,S:B_k\to\{0,1,2\}$ by $$
L(x,z):=\culam[y](x,2^k+z)=\culam[y](2^k+x,z)
$$ 
and $$
S(x,z):=\culam[y](2^k+x,2^k+z).
$$ 
Then we have $$
\culam[2^\ell+y]_{\big|_{B_{\ell+1}}}=\culam[2^{\ell-k}]_{\big|_{B_{\ell-k+1}}}\widehat{\otimes}S\widehat{+}\culam[2^{\ell-k}+1]_{\big|_{B_{\ell-k+1}}}\widehat{\otimes}L.
$$
% where both are defined, i.e. on the set $\{0,1,2,\dots,2^{\ell+1}-1\}^2.$
\end{theorem}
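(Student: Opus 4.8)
The plan is to verify the identity entry-by-entry on $B_{\ell+1}$ (assuming $\ell\ge k+1$, which is what makes $B_{\ell+1}$ the fundamental period of $\culam[2^\ell+y]$). Recall that a one-$1$ word $0^u10^v$ is Ulam iff $\binom{u+v}{u}$ is odd, that is, iff $u$ and $v$ share no binary digit equal to $1$; write $u\mathbin{\&}v=0$ for this (\Cref{lucas}). Since every representation of $0^x10^y10^z$ as a concatenation of two one-$1$ Ulam words comes from cutting in the gap between the two $1$'s,
\[
\culam[m](u,w)=\min\bigl(N_m(u,w),\,2\bigr),\qquad N_m(u,w):=\#\{(a,b)\in\N^2:\ a+b=m,\ u\mathbin{\&}a=0,\ w\mathbin{\&}b=0\}.
\]
Fix $(x,z)\in B_{\ell+1}$ and write $x=2^ki+s$, $z=2^kj+t$ with $0\le s,t<2^k$ and $0\le i,j<2^{\ell-k+1}$, so $(i,j)\in B_{\ell-k+1}$. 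As the digits of $s$ occupy positions $<k$ and those of $i$ positions $\ge k$ (with $x<2^{\ell+1}$ capping everything at position $\ell$), the condition $x\mathbin{\&}a=0$ is equivalent to $s\mathbin{\&}(a\bmod 2^k)=0$ together with $i\mathbin{\&}\lfloor a/2^k\rfloor=0$, and likewise for $z$ and $b$.

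The heart of the argument is a carry analysis. Put $y_0:=y-2^k\in[0,2^k)$, so $2^\ell+y=2^k(2^{\ell-k}+1)+y_0$. In any pair $a+b=2^\ell+y$ the low parts satisfy $(a\bmod 2^k)+(b\bmod 2^k)\equiv y_0\pmod{2^k}$, hence equal $y_0$ (no carry out of position $k-1$) or $y_0+2^k$ (a carry); this forces the high parts $\lfloor a/2^k\rfloor+\lfloor b/2^k\rfloor$ to equal $2^{\ell-k}+1$ or $2^{\ell-k}$ respectively, and within each case the low-part and high-part conditions are independent. Hence
\[
N_{2^\ell+y}(x,z)=\pi\beta+\sigma\alpha,
\]
where $\alpha=N_{2^{\ell-k}}(i,j)$, $\beta=N_{2^{\ell-k}+1}(i,j)$, $\pi=N_{y_0}(s,t)$, and $\sigma=\#\{(a_0,b_0):a_0,b_0<2^k,\ a_0+b_0=y_0+2^k,\ s\mathbin{\&}a_0=0,\ t\mathbin{\&}b_0=0\}$ (if $y_0=2^k-1$ the carry case is empty and $\sigma=0$, consistently). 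Unwinding the definitions, $\sigma$ is the untruncated value of $\culam[y](2^k+s,2^k+t)=S(s,t)$, while $\alpha,\beta$ are the untruncated values of $\culam[2^{\ell-k}](i,j),\culam[2^{\ell-k}+1](i,j)$; and the untruncated value $\lambda$ of $\culam[y](2^k+s,t)=L(s,t)$ equals $\pi+\sigma$, since the pairs it counts (which have first coordinate $<2^k$) split according to whether that coordinate is $\le y_0$, giving bijectively the pairs summing to $y_0$ counted by $\pi$, or $>y_0$, giving bijectively the pairs counted by $\sigma$.

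It then remains to reconcile truncation. Using $\min(\min(p,2)\min(q,2),2)=\min(pq,2)$ and $\min(\min(P,2)+\min(Q,2),2)=\min(P+Q,2)$ for nonnegative integers, the right-hand side of the asserted identity at $(x,z)$ equals $\min(\alpha\sigma+\beta\lambda,2)=\min(\alpha\sigma+\beta\pi+\beta\sigma,2)$, whereas the left-hand side is $\min(\pi\beta+\sigma\alpha,2)$. As the former dominates the latter, equality of these truncations reduces to the single implication
\[
\beta\sigma>0\ \Longrightarrow\ \alpha\sigma+\beta\pi\ge 2 .
\]

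I would deduce this from two monotonicity lemmas. First, $\sigma\ge 1\Rightarrow\pi\ge 1$: given $a_0,b_0<2^k$ with $a_0+b_0=y_0+2^k$ meeting the disjointness conditions, one has $a_0\mathbin{\&}b_0\ne 0$ (otherwise $a_0+b_0=a_0\vee b_0<2^k$); letting $p$ be the top bit of $a_0\mathbin{\&}b_0$, the carry into position $p+1$ of the addition $a_0+b_0$ is $1$, and it cannot die below position $k$ (else the sum would have a $0$ in position $k$, impossible as $a_0+b_0\ge 2^k$), so the digits of $a_0+b_0$ --- equivalently of $y_0$ --- in positions $p+1,\dots,k-1$ all vanish, and a short check of carries gives $(a_0\bmod 2^p)+(b_0\bmod 2^p)=y_0$; this last pair has binary digits contained in those of $a_0,b_0$, so it witnesses $\pi\ge 1$. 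Second, $\beta\ge 1\Rightarrow\alpha\ge 1$: in a pair with $a+b=2^{\ell-k}+1$ exactly one of $a,b$ is odd (since $2^{\ell-k}+1$ is), and clearing its last binary digit gives a pair summing to $2^{\ell-k}$ whose digits are still contained in those of $a,b$, so it witnesses $\alpha\ge 1$. Combining: if $\beta\sigma>0$ then $\beta,\sigma\ge 1$, so $\alpha\ge 1$ and $\pi\ge 1$, giving $\alpha\sigma+\beta\pi\ge 2$ as required. The crux---and the only place where binary carries genuinely intervene---is the first lemma: the naive factorization produces $\pi\beta$ rather than the $\culam[2^{\ell-k}+1]\,\widehat{\otimes}\,L$ term demanded by the statement, and it is precisely truncation at $2$, backed by this lemma (and its easy companion), that reconciles the two; the rest is bookkeeping about digit positions.
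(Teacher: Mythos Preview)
Your proof is correct and uses the same carry-at-bit-$k$ decomposition as the paper, splitting the untruncated count as $\pi\beta+\sigma\alpha$. You are, however, more careful on the one point where the paper's own argument is loose. The paper asserts that ``$L$ counts representations of $w(x,y-2^k,z)$,'' i.e.\ identifies $L$ with $\min(\pi,2)$, and on that basis writes the no-carry contribution as $\culam[2^{\ell-k}+1](x',z')\,\widehat{\cdot}\,L(x'',z'')$. But as you correctly compute, $L=\min(\pi+\sigma,2)$: among the pairs counted by $\culam[y](2^k+x,z)$ one finds not only the $\pi$-type pairs (those with $b\ge 2^k$, giving $a+b'=y-2^k$ after stripping the top bit of $b$) but also the $\sigma$-type pairs (both $a,b<2^k$). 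Reconciling the true count $\min(\pi\beta+\sigma\alpha,2)$ with the stated formula $\min(\sigma\alpha+\lambda\beta,2)$ therefore genuinely requires your implication $\beta\sigma>0\Rightarrow\alpha\sigma+\beta\pi\ge 2$. Your first monotonicity lemma ($\sigma\ge1\Rightarrow\pi\ge1$) is exactly the construction in the proof of the paper's \Cref{four-parts-containment}(2): from $(a_0,b_0)$ with $a_0,b_0<2^k$ and $a_0+b_0=y$, pass to residues modulo $2^p$ (with $p$ the top bit of $a_0\mathbin{\&}b_0$) to obtain a pair summing to $y-2^k$. Your second lemma is the easy step of clearing the low bit of the odd summand in a pair for $2^{\ell-k}+1$. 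So in effect you have supplied the bridge that the paper's own proof of this theorem glosses over; the needed fact \emph{is} established elsewhere in the paper, just not invoked at this point. One small remark: your proof of the second lemma uses that $2^{\ell-k}+1$ is odd, hence $\ell>k$; for $\ell=k$ one checks directly that $\culam[2]\equiv 2$ on $B_1$, so $\beta\ge 2$ always and that lemma is not needed.
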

\begin{figure}[H]
    \centering
     \begin{subfigure}[b]{0.55\textwidth}
         \centering
         \includegraphics[width=\textwidth]{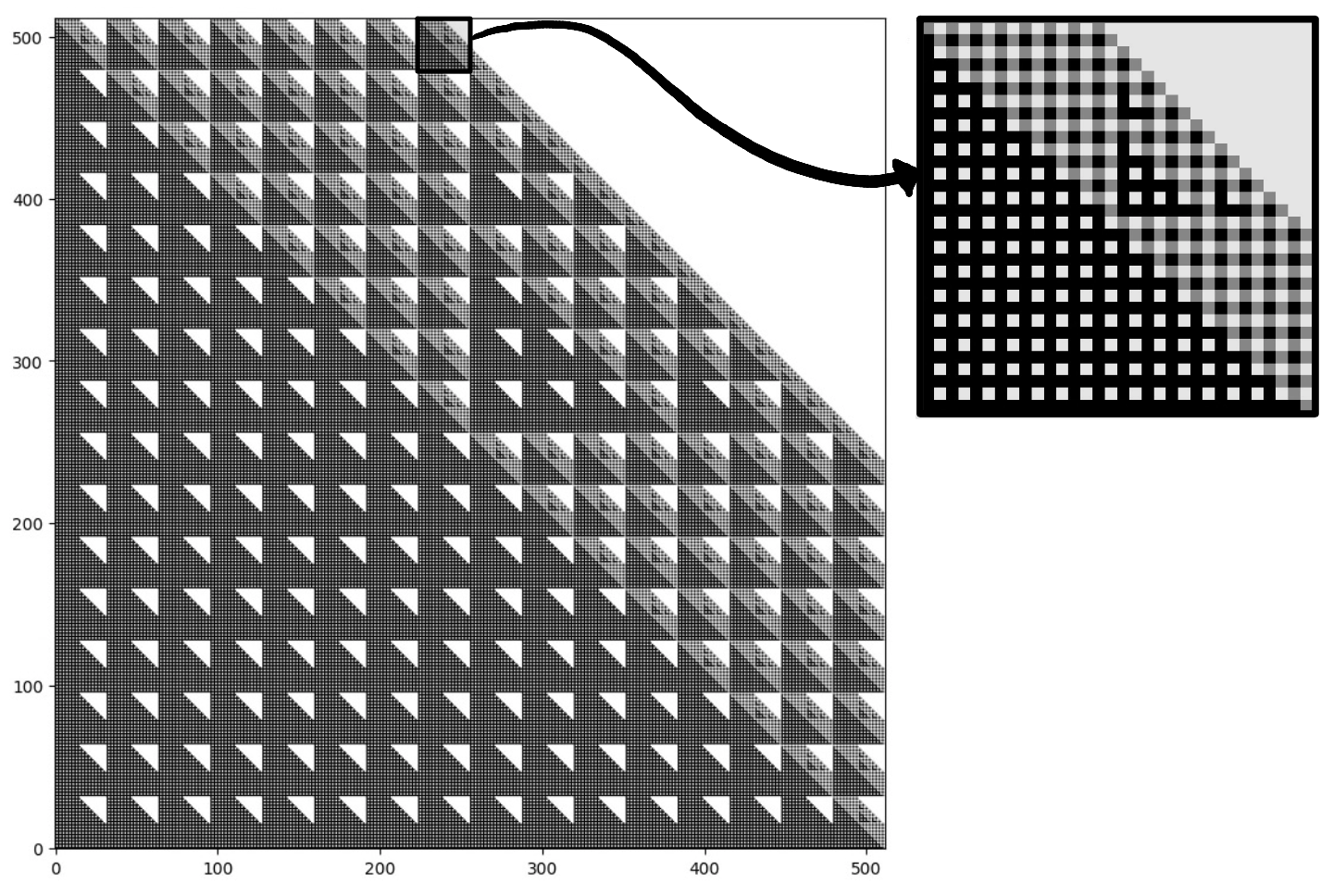}
         \caption{$\culam[273]\quad\quad\quad\quad\qquad\qquad$}
         \label{fig:273}
     \end{subfigure}
     \hspace{0.3in}
    %  \hfill
    %  \hspace{-0.9in}
     \begin{subfigure}[b]{0.35\textwidth}
         \centering
         \includegraphics[width=\textwidth]{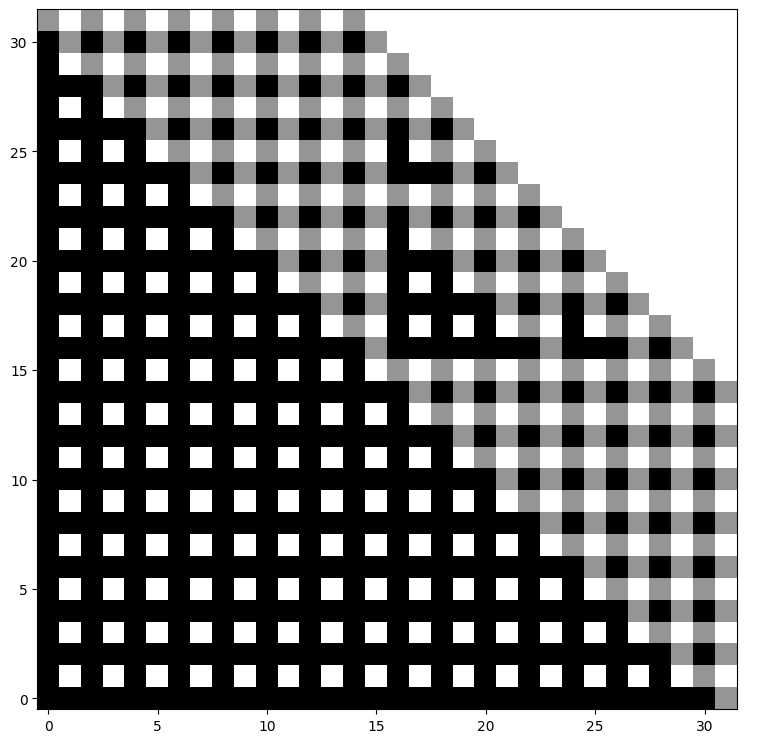}
         \caption{$\culam[17]$}
         \label{fig:17}
     \end{subfigure}
        \caption{Since $273=16*17+1=256+17,$ the resulting labeling $\culam[273]$ has the structure of $\culam[17],$ from both the external and internal viewpoint.}
        \label{fig:double-ended-hierarchy}
\end{figure}
These theorems allow us to also prove several sharp and surprising properties of the structure of $\ulam[y],$ determining simple litmus tests for the set.

\quad
\begin{wrapfigure}{r}{0.4\textwidth}
    \centering
    \vspace{-0.4in}
    \includegraphics[width=0.42549\textwidth]{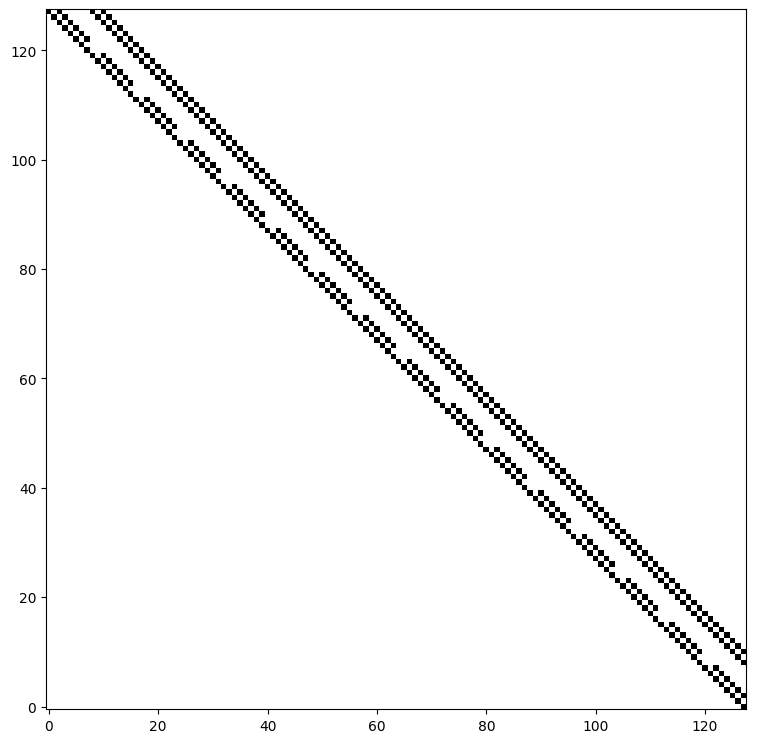}
    \caption{The periodic building block for $\ulam[117].$ Diagonals $(x,z)$ satisfying $x+z=127$ and $x+z=137$ are completely filled, and all other elements are in between them.}
    \label{fig:117}
    \vspace{-0.6in}
\end{wrapfigure}
\vspace{-0.4in}

\begin{prop}\label{consequential-properties}
Suppose $2^k\leq y<2^{k+1},$ and let $x'$ and $z'$ be the reductions of $x$ and $z,$ respectively, modulo $2^{k+1}.$
\begin{enumerate}
    \item If there exist $(x,z)\in\ulam[y]$ with $x+z<2^{k+1}-1,$ then $y$ is a Zumkeller number.\label{not-too-small-improved}
    % 
    % If $y$ is not a Zumkeller number, then $(x,z)\in\ulam[y]$ only if $x+z\geq2^{k+1}-1.$\label{not-too-small-improved}
    \item If $y$ is not a Zumkeller number, then $(x,z)\in\ulam[y]$ only if $2^{k+1}-1\leq x'+z'\leq 2^{k+2}-y-2.$ If $y$ is a Zumkeller number, then the previous inequality is true unless $x+z\in\{y,y+1,y+2\}.$
    \item If $y\not\in\{2^{k+1}-1,2^{k+1}-2\},$ then $(x,z)\in\ulam[y]$ for all $x+z\equiv-1\pmod{2^{k+1}}$ and for all $x'+z'=2\floor{\frac{2^{k+2}-y-3}{2}}+1.$ If $y=2^{k+1}-1$ or $y=2^{k+1}-2,$ then the same result holds except when $x+z=2^{k+1}-1.$
\end{enumerate}
\end{prop}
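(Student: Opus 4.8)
These statements concern where the labeling $\culam[y]$ is supported and where it equals $1$ on one fundamental block $B_{k+1}$, transported to $\ulam[y]$ by \Cref{culam-to-ulam} when $y$ is not Zumkeller and by the explicit description in \Cref{zumkeller} when it is. I would first dispose of the Zumkeller case directly: for each of the three shapes of $\ulam[y]$ listed in \Cref{zumkeller}, parts 1, 2 and 3 are a routine check of congruences modulo $2^{k+1}$ (the band $[\,2^{k+1}-1,\ 2^{k+2}-y-2\,]$, the extra diagonals $x+z\in\{y,y+1,y+2\}$, and the two named diagonals are all read straight off the formula). From now on $y$ is not Zumkeller, so in particular $y\notin\{2^{k+1}-1,2^{k+1}-2\}$, and by \Cref{culam-to-ulam} it is enough to prove: \textbf{(A)} on $B_{k+1}$ the support of $\culam[y]$ lies in $\{\,2^{k+1}-1\le x+z\le 2^{k+2}-y-2\,\}$ — this is part 2, and part 1 follows because $x'+z'\le x+z$; and \textbf{(B)} $\culam[y]\equiv 1$ on the diagonal $x+z=2^{k+1}-1$ and on the topmost odd diagonal of the band, i.e. $x+z=2^{k+2}-y-2$ for odd $y$ and $x+z=2^{k+2}-y-3$ for even $y$ (both equal $2\lfloor(2^{k+2}-y-3)/2\rfloor+1$), each of which is the unique diagonal of $B_{k+1}$ with the prescribed residue — this gives part 3.

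As a warm-up that also serves as a base input, the bottom diagonal in (B) has a short self-contained proof from \Cref{lucas}. By \Cref{culam-biperiodic} we may assume $0\le x,z<2^{k+1}$, so $x+z=2^{k+1}-1$ forces the binary expansions of $x$ and $z$ to be disjoint with union $\{0,\dots,k\}$. Writing $X,Y,Z$ for the bit-sets of $x,y,z$, a representation of $0^x10^y10^z$ as a concatenation of two one-$1$ Ulam words is an $a\in\{0,\dots,y\}$ with $A\cap X=\emptyset$ and $\operatorname{bits}(y-a)\cap Z=\emptyset$; since then $a$ and $y-a$ have disjoint bit-sets (contained in $Z$ and $X$ respectively), there are no carries in $a+(y-a)$, so their union is $Y$ and $A=Y\cap Z$ is forced. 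Hence $\culam[y](x,z)=1$ exactly, and since $2^{k+1}-1$ is odd, \Cref{culam-to-ulam} finishes this diagonal.

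The substance — property (A) and the top diagonal of (B) — I would prove by strong induction on $y$ via the tensor recursion. Every $y\ge 2$ is either $2^dy'$ or $2^dy'+1$ with $y'$ odd, $d\ge 1$, and $y'<y$, and \Cref{inside} writes the fundamental block of $\culam[y]$ as $\culam[y'-1]\,\widehat{\otimes}\,P_1\ \widehat{+}\ \culam[y']\,\widehat{\otimes}\,P_2$ with $(P_1,P_2)=(\eone[d],\etwo[d])$ or $(\oone[d],\otwo[d])$. Under $\widehat{\otimes}$ a diagonal $x''+z''$ of the outer block and a diagonal $x_{\mathrm{in}}+z_{\mathrm{in}}$ of the pattern combine into the diagonal $2^d(x''+z'')+(x_{\mathrm{in}}+z_{\mathrm{in}})$. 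So, from the diagonal profiles of $\eone,\etwo,\oone,\otwo$ (their support band and their value-$1$ diagonals, read off from the construction of the patterns in \Cref{tensor-summands}) together with the inductively known bands of $\culam[y'-1]$ and $\culam[y']$, one computes the supports of the two $\widehat{\otimes}$-summands, checks that they are consecutive or overlapping with union exactly the target band $[\,2^{k+1}-1,\ 2^{k+2}-y-2\,]$ (with the endpoints behaving correctly with the parity of $y$), and checks that on each extreme target diagonal exactly one summand is nonzero and takes the value $1$ there — invoking the inductive ``value $1$ on the extreme diagonals'' hypothesis and the fact that the relevant pattern is $1$ there — so the caveman sum equals $1$ and never $2$. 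Zumkeller indices $y'-1$ or $y'$ arising inside the recursion are supplied by \Cref{zumkeller}, and their extra diagonals $x+z\in\{y',y'+1,y'+2\}$, once tensored with a pattern, either leave $B_{k+1}$ or land among the diagonals already permitted. (\Cref{outside} gives the same conclusions from the external side and can serve as a cross-check.)

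The main obstacle is the bookkeeping of the previous paragraph: pinning down the exact diagonal profile of each of $\eone,\etwo,\oone,\otwo$, and then verifying, case by case on $(P_1,P_2)$ and on the parity of $y$, that the caveman sum of the two shifted-and-scaled bands has no internal gap, does not overrun $2^{k+2}-y-2$, and equals $1$ rather than $2$ on both extreme diagonals — all while correctly tracking how a pattern multiplies the Zumkeller impurities of $\culam[y'-1]$ and $\culam[y']$. Once this is done, the parity phenomenon in part 3 — that the top $\ulam$-diagonal of the block is $x+z=2^{k+2}-y-2$ for odd $y$ but $x+z=2^{k+2}-y-3$ for even $y$ — is immediate from \Cref{culam-to-ulam}, since $2^{k+2}-y-2$ is odd precisely when $y$ is odd.
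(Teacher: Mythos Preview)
Your overall plan---induction on $y$ via the tensor recursion of \Cref{inside}, base at powers of $2$, Zumkeller dealt with separately by \Cref{zumkeller}---is exactly what the paper does, and your direct bit-set argument for the diagonal $x+z=2^{k+1}-1$ is a clean alternative to the paper's induction for that part.

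However, your claim \textbf{(A)} is false as stated, and the induction you sketch for it does not go through. The \emph{support} of $\culam[y]$ on $B_{k+1}$ is not contained in the band $[\,2^{k+1}-1,\,2^{k+2}-y-2\,]$: already for $y=2^k$ one has $\culam_0[2^k]=2\widehat{\cdot}\etwo[k]\equiv 2$, so every point with $x,z<2^k$ lies in the support yet below the band. What must be proved is that the \emph{value-$1$ locus} of $\culam[y]$ lies in the band, equivalently that $\culam[y]\in\{0,2\}$ outside it. This is what the paper inducts on, and the step works because $\{0,2\}$ is closed under both $\widehat{+}$ and $\widehat{\cdot}$ (with anything): if $\culam[y'-1](x',z')\neq 1$ and $\culam[y'](x',z')\neq 1$, then each summand in $\culam[y'-1]\widehat{\cdot}\eone\ \widehat{+}\ \culam[y']\widehat{\cdot}\etwo$ is $\neq 1$, hence so is the caveman sum. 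Your ``supports of the two summands have union exactly the target band, with no internal gap'' is the wrong bookkeeping and would in fact contradict the pictures in \Cref{building-blocks}.

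A smaller point: there is no need to invoke \Cref{zumkeller} for $y'$ or $y'-1$ inside the recursion. \Cref{zumkeller} describes $\ulam$, not $\culam$; the inductive hypothesis at the $\culam$ level is stated and used for \emph{all} smaller indices (Zumkeller or not), with only the base case $y=2^k$ handled separately. Once (A) is corrected to ``$\culam[y]=1$ only in the band'', the rest of your sketch for the top diagonal in (B) lines up with the paper's proof of parts 2 and 4 of \Cref{cute-thm}.
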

\begin{remark}
In particular, \Cref{not-too-small-improved} improves a bound set in \cite{student-paper} (see \Cref{not-too-small}).
\end{remark}
Notably, we show the existence of two full diagonals in the periodic block of $\ulam[y]$---one corresponding to $x+z=2^{k+1}-1$ and one corresponding to $x+z=2^{k+2}-2\ceil{\frac{y+1}{2}}-1,$ and show that the entire contents of the block are in between these two diagonals. However, these theorems do not immediately make obvious the beautifully dual hierarchical nature of Ulam sets. To understand it better, we first need to define exactly what we mean by said hierarchy. The precise definitions themselves are somewhat technical and presented fully in \Cref{internal,external}. But in short, an `internal' $i_{d_1,d_2,\dots,d_s}$ hierarchical structure consists of taking entries in a given labeling and `blowing them up'; based on two previous labelings of a given entry, it is replaced by a new (universal) dyadic pattern of size $2^{d_i}\times 2^{d_i}$; this is done inductively for $i$ from $s$ to $1.$ Almost inversely, an `external' $e_{d_1,d_2,\dots,d_s}$ hierarchical structure consists of taking a given pattern, dividing it into four equal blocks, and tiling a given $2^{d_i}\times 2^{d_i}$ block $B$ with them, one empty pattern, and one pattern filling all non-empty labels in the same (in our case black) way, based on the parity and initial labeling of the entries of $B$; this is done inductively for $i$ from $1$ to $s.$ Interestingly, our $\culam$ labeling presents both of these types of hierarchies!
\begin{theorem}[Hierarchical structure]\label{hierarchy}
Suppose that $$
y=\sum_{i=1}^s2^{d_1+\cdots+d_i}=1\underbrace{00\cdots0}_{d_s-1}1\cdots1\underbrace{0\cdots00}_{d_1},
$$
or $$
y=1+\sum_{i=1}^s2^{d_1+\cdots+d_i}=1\underbrace{00\cdots0}_{d_s-1}1\cdots1\underbrace{0\cdots00}_{d_1-1}1,
$$
where $d_i\geq1$ for all $i.$ Then $\culam[y]$ has both $i_{d_1,d_2,\dots,d_s}$ and $e_{d_1,d_2,\dots,d_s}$ hierarchical structure.
\end{theorem}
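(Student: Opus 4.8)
The plan is to prove the two assertions separately, each by induction on $s$. The internal structure $i_{d_1,\dots,d_s}$ comes from iterating \Cref{inside}, which strips the lowest block of bits off $y$; the external structure $e_{d_1,\dots,d_s}$ comes from iterating \Cref{outside}, which strips the highest bit off $y$. Throughout write $y_A(d_1,\dots,d_s)=\sum_{i=1}^{s}2^{d_1+\cdots+d_i}$ and $y_B(d_1,\dots,d_s)=1+y_A(d_1,\dots,d_s)$ for the two families in the statement, and note that $y_B$ is always odd, which is exactly the hypothesis \Cref{inside} needs.

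For the internal part I would use the factorizations $y_A(d_1,\dots,d_s)=2^{d_1}\,y_B(d_2,\dots,d_s)$ and $y_B(d_1,\dots,d_s)=1+2^{d_1}\,y_B(d_2,\dots,d_s)$, together with $y_B(d_2,\dots,d_s)-1=y_A(d_2,\dots,d_s)$. Applying \Cref{inside} with $d=d_1$ to the odd integer $y_B(d_2,\dots,d_s)$ then gives, simultaneously,
$$\culam[y_A(d_1,\dots,d_s)]=\culam[y_A(d_2,\dots,d_s)]\,\widehat{\otimes}\,\eone[d_1]\ \widehat{+}\ \culam[y_B(d_2,\dots,d_s)]\,\widehat{\otimes}\,\etwo[d_1],$$
and the same identity with $(\eone,\etwo)$ replaced by $(\oone,\otwo)$, which computes $\culam[y_B(d_1,\dots,d_s)]$. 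Hence the ordered pair $\bigl(\culam[y_A(d_1,\dots,d_s)],\culam[y_B(d_1,\dots,d_s)]\bigr)$ is obtained from $\bigl(\culam[y_A(d_2,\dots,d_s)],\culam[y_B(d_2,\dots,d_s)]\bigr)$ by exactly one level-$d_1$ blow-up step --- the step built into the definition of internal structure in \Cref{internal}. Inducting down to the base pair $\bigl(\culam[0],\culam[1]\bigr)=\bigl(\culam[y_A()],\culam[y_B()]\bigr)$ and matching with that definition shows $\culam[y]$ has $i_{d_1,\dots,d_s}$ structure for either form of $y$; the hypothesis $d_i\ge1$ is what makes every step a genuine instance of \Cref{inside}.

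For the external part the relevant decompositions are $y_A(d_1,\dots,d_s)=2^{d_1+\cdots+d_s}+y_A(d_1,\dots,d_{s-1})$ and $y_B(d_1,\dots,d_s)=2^{d_1+\cdots+d_s}+y_B(d_1,\dots,d_{s-1})$. The lower summand $y'$ has bit-length $k'+1$ with $k'=d_1+\cdots+d_{s-1}$, so with $\ell:=d_1+\cdots+d_s$ we get $\ell-k'=d_s\ge1$ and \Cref{outside} applies, yielding
$$\culam[y]\big|_{B_{\ell+1}}=\culam[2^{d_s}]\big|_{B_{d_s+1}}\,\widehat{\otimes}\,S'\ \widehat{+}\ \culam[2^{d_s}+1]\big|_{B_{d_s+1}}\,\widehat{\otimes}\,L',$$
where $L'$ and $S'$ are the off-diagonal and far quarter-blocks of the fundamental period of $\culam[y']$, whose size is pinned down by \Cref{culam-biperiodic}. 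This is precisely the level-$d_s$ step of the external construction of \Cref{external}: a $2^{d_s}$-scale pattern tiled, according to the pair of labelings $\culam[2^{d_s}]$, $\culam[2^{d_s}+1]$, by copies of the quarter-blocks $S'$ and $L'$ of $\culam[y']$ together with an all-zero block and a ``black-filled'' block. Since $\widehat{+}$ is entrywise and $\widehat{\otimes}$ on the right commutes with restriction to an aligned quarter, the inductive hypothesis that $\culam[y']$ has $e_{d_1,\dots,d_{s-1}}$ structure transfers to $L'$ and $S'$, and the displayed identity upgrades it to $e_{d_1,\dots,d_s}$ for $\culam[y]$; the base case $s=1$ just records $\culam[2^{d_1}]$ and $\culam[2^{d_1}+1]$ as the seed patterns, which is \Cref{inside} read from the outside.

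I expect the real work to lie not in these recursions --- which fall straight out of \Cref{inside} and \Cref{outside} --- but in checking that the iterated identities above literally instantiate the formal definitions of $i_{d_1,\dots,d_s}$ and $e_{d_1,\dots,d_s}$ in \Cref{internal,external}: that $\eone[d],\etwo[d],\oone[d],\otwo[d]$ are the universal dyadic tiles of the internal hierarchy, that the quarter-blocks of $\culam[2^d]$ and $\culam[2^d+1]$ (including the all-zero and the black-filled ones) are the tiles of the external hierarchy, and that the A/B bookkeeping stays consistent --- the internal recursion must propagate the whole pair $\bigl(\culam[y_A],\culam[y_B]\bigr)$ because both forms occur in every tensor decomposition, whereas the external recursion propagates a single form but through its quarter-blocks. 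Verifying that the fundamental-period windows $B_{\ell+1},B_{d_s+1},\dots$ line up across the recursion, and that the smallest cases (in particular the scalar quarter-values of $\culam[1]$) match the seeds in the definitions, is where I would budget the most care.
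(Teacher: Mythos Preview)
Your plan is correct and mirrors the paper's own proof: the internal hierarchy is obtained by iterating \Cref{inside} on the pair $(\culam[y_A],\culam[y_B])$ exactly as you describe, and the external hierarchy by iterating \Cref{outside} on the quarter-blocks of $\culam[y']$. The place you flag as ``the real work'' is indeed where the paper spends its effort: to match \Cref{external} one must invoke \Cref{structure-of-power-two} to identify the four quarter-blocks of $\culam[2^{d_s}]$ and $\culam[2^{d_s}+1]$ as the labelings $L^e_{d_s,i},L^o_{d_s,i}$, and \Cref{four-parts-containment} to verify that the six-valued map $I$ really sends each pair of labels to the correct combination $a\widehat{\cdot}\culam_3[y_{j-1}]\widehat{+}b\widehat{\cdot}\culam_2[y_{j-1}]$ (in particular producing the all-zero block when $(a,b)=(0,0)$ and the black-filled block $2\widehat{\cdot}\culam_3$ when $(a,b)=(2,0)$).
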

While the internal hierarchical structure $i_{d_1,d_2,\dots,d_s}$ does make a finite-step fractal shape, after any given step, no label is necessarily fixed, as each small square is blown up to create a large square with a relatively arbitrary labeling. Yet, with the external structure $e_{d_1,d_2,\dots,d_s},$ the situation is different. Instead of being tiled arbitrarily within, the blocks (and their parts) now become the tiles themselves. Continuing this process infinitely, we can in fact make infinitely-hierarchical sets corresponding to \textit{$2$-adic integers}.

Precisely, for any $2$-adic integer $\Tilde{y}\not\in\N$ with approximants $y_i\in\N,$ if we extend the periodic structure of $\culam[y_i]$ into the fourth (or second) quadrant, then they naturally converge to a fractal as $y_i\to_2\Tilde{y}.$ 

Namely, set $Q=\N\times\{-1-\N\}\subset\Z^2$ (where $\{-1-\N\}=\{-1,-2,\cdots\}$). Equip the space $S$ of labelings of $Q$ with a natural metric as in \Cref{metric}. First, suppose $2^k\leq y<2^{k+1},$ and define $\Tilde{U}(y):Q\to\{0,1,2\}$ by $$
\Tilde{U}(y)(x,z)=\culam[y](x,z')
$$ 
for some $z'$ satisfying $z'\equiv z\pmod{2^{k+1}}$ and $z'\geq0$ (this is well-defined because $\culam[y]$ is periodic in any direction with period $2^{k+1}$). For any $\Tilde{y}\in\Z_2\backslash\N,$ define $$
\Tilde{U}(\Tilde{y})(x,z)=\Tilde{U}(y')(x,z)
$$ 
for any $y'\geq x,\abs{z}$ such that $\Tilde{y}\equiv y'\pmod{2^{k+1}}$ for some $2^{k+1}>y'.$ Such a $y'$ can indeed be found because the binary expansion of $\Tilde{y}$ has infinitely many $1$'s.
\begin{theorem}\label{fractal}
    The map $\Tilde{U}:\Z_2\to S$ is well-defined everywhere and continuous at all $y\in\Z_2\backslash\N.$
\end{theorem}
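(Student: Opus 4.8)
The plan is to verify the two assertions — well-definedness everywhere and continuity at every $\tilde y\in\Z_2\setminus\N$ — essentially by unwinding the definitions and leaning on the biperiodicity of $\culam[y]$ (\Cref{culam-biperiodic}) together with the external-hierarchy recursion of \Cref{outside}. First I would treat well-definedness. For $\tilde y\in\N$ this is immediate from \Cref{culam-biperiodic}: if $2^k\le y<2^{k+1}$ then $\culam[y]$ is periodic with period $2^{k+1}$ in each coordinate, so $\culam[y](x,z')$ does not depend on the choice of nonnegative $z'\equiv z\pmod{2^{k+1}}$, and $\tilde U(y)$ is a genuine function on $Q$. For $\tilde y\in\Z_2\setminus\N$, I must show the value $\tilde U(y')(x,z)$ is independent of the chosen finite approximant $y'$. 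The key point is that since $\tilde y$ has infinitely many $1$'s in its binary expansion, for every $N$ there is $y'<2^{k+1}$ (for suitable $k$, in fact $k$ as large as we like) with $\tilde y\equiv y'\pmod{2^{k+1}}$. If $y'_1<2^{k_1+1}$ and $y'_2<2^{k_2+1}$ are two such approximants with $k_1\le k_2$, then $y'_2\equiv y'_1\pmod{2^{k_1+1}}$, so it suffices to show $\culam[y'_1](x,z)=\culam[y'_2](x,z)$ whenever $x,|z|\le y'_1$ and the two arguments agree mod $2^{k_1+1}$. This is exactly the content that should follow from iterating \Cref{outside} (writing $y'_2=2^\ell+r$ with $r$ the truncation toward $y'_1$, so that on the block $B_{\ell-k+1}$ enlarged appropriately the labeling $\culam[y'_2]$ restricted to indices $\le y'_1$ reproduces $\culam[y'_1]$ up to the tensor structure, and in particular the $1$-labels — hence the relevant bottom-left small block — are unchanged). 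I would isolate this as a lemma: \emph{if $\tilde y\equiv y_1\equiv y_2\pmod{2^{m}}$ with $y_1,y_2<2^m$ then $\culam[y_1]$ and $\culam[y_2]$ agree on $B_{?}$}, and prove it by induction on the number of $1$'s added, each step an application of \Cref{outside}.

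Next, continuity at $\tilde y\in\Z_2\setminus\N$. Recall (from \Cref{metric}) that the metric on $S$ makes two labelings close iff they agree on a large finite window $Q_R=\{(x,z)\in Q: x\le R,\ |z|\le R\}$. So I must show: for every $R$ there is $m$ such that whenever $y''\in\Z_2$ satisfies $y''\equiv\tilde y\pmod{2^m}$, the labelings $\tilde U(y'')$ and $\tilde U(\tilde y)$ agree on $Q_R$. Choose a finite approximant $y'$ of $\tilde y$ with $y'\ge R$ and $y'<2^{k+1}$ for some $k$ with $2^{k+1}>R$; set $m=k+1$. By definition $\tilde U(\tilde y)$ on $Q_R$ equals $\tilde U(y')$ on $Q_R$. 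If $y''\equiv\tilde y\pmod{2^m}$, pick a finite approximant $y'''$ of $y''$ (if $y''\notin\N$) or take $y'''=y''$ itself (if $y''\in\N$, enlarging it by the periodicity to be $\ge R$); then $y'''\equiv y'\pmod{2^m}$, and the lemma from the previous paragraph gives $\culam[y''']=\culam[y']$ on the window of indices $\le R$, hence $\tilde U(y'')=\tilde U(y')=\tilde U(\tilde y)$ on $Q_R$. The only care needed is to confirm that the window of agreement supplied by the lemma genuinely contains $Q_R$ — i.e. that $R<2^m$ and $R\le y'$ are the right hypotheses — which is why I build $m$ and $y'$ around $R$ at the start.

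The main obstacle is the lemma itself: making precise, via \Cref{outside}, the claim that congruent-mod-$2^m$ values of $y$ below $2^m$ give $\culam$ labelings that coincide on the bottom-left $2^m\times 2^m$ block (equivalently on indices $<2^m$). \Cref{outside} expresses $\culam[2^\ell+y]$ on a block as a $\widehat\otimes$-combination of $\culam[2^{\ell-k}]$, $\culam[2^{\ell-k}+1]$, $S$ and $L$; I need to track exactly which sub-block of this tensor arrangement corresponds to indices below $2^{k+1}$ and check that there the combining factors from $\culam[2^{\ell-k}],\culam[2^{\ell-k}+1]$ are the constant making the block reproduce $S$ (or $L$) unchanged — so that adding the high bit $2^\ell$ to $y$ does not alter the labeling on the low block. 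Granting the bookkeeping in \Cref{outside} (and the descriptions of $\eone,\etwo,\oone,\otwo,L,S$ that feed it), this should reduce to a finite check on the corner entries of those universal patterns; I would phrase it as "the top-left $2^k\times 2^k$ corner of each pattern $\etwo[\cdot],\otwo[\cdot]$ equals the all-$1$'s... " — actually the cleaner route is to observe directly from the recursion of \Cref{inside}/\Cref{outside} that $\culam[y]\!\restriction_{B_k}$ depends only on $y\bmod 2^k$-type data, and push that through. Once the lemma is in hand, both well-definedness and continuity are immediate as above.
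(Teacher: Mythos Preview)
Your plan is essentially the paper's: prove well-definedness by showing that appending a high bit $2^{k+c}$ to an approximant $y'$ leaves $\tilde U(y')(x,-z)$ unchanged on the relevant window, via the external recursion of \Cref{outside}, and then read off continuity from the definition of $\rho$. Two remarks, though.

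First, your isolated lemma is misstated: if $y_1,y_2<2^m$ and $y_1\equiv y_2\pmod{2^m}$ then $y_1=y_2$, so as written it is vacuous. What you actually need (and what you set up correctly just above it) is: if $2^k\le y'<2^{k+1}$ and $y''=2^{k+c}+y'$ for some $c\ge 1$, then $\tilde U(y'')(x,-z)=\tilde U(y')(x,-z)$ for all $x,z\le y'$.

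Second, the ``finite check on corner entries'' that you defer is the entire content of the argument, and it is a bit more than a one-line observation about $\etwo,\otwo$. The paper plugs $y''=2^{k+c}+y'$ into \Cref{outside}, so that
\[
\culam[y''](x,2^{k+c+1}-z)=\culam[2^c](x',\cdot)\,\widehat\cdot\,S(x'',\cdot)\;\widehat+\;\culam[2^c+1](x',\cdot)\,\widehat\cdot\,L(x'',\cdot),
\]
and then splits into four cases according to whether $x<2^k$ or $x\ge 2^k$ and whether $z\le 2^k$ or $z>2^k$. In each case one computes the pair $\bigl(\culam[2^c](x',z'_\ast),\,\culam[2^c+1](x',z'_\ast)\bigr)$ explicitly (the values are $(1,1)$, $(2,2)$, $(1,0)$, $(1,1)$ at the four relevant corner points), and then uses the relations of \Cref{four-parts-containment}, namely $S\,\widehat+\,L=L$ and $2\,\widehat\cdot\,L=\culam_0[y']$, to collapse the resulting combination of $S$ and $L$ back to the corresponding quadrant of $\culam[y']$. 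This is the step you should actually write out; the rest of your outline, including the continuity paragraph, is fine and matches the paper.
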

\begin{remark}It is additionally possible to naturally extend the definition of external hierarchical structure to an infinite setting. Then, it is not hard to see that for a $2$-adic number $y=\{0,1\}+2^{d_1}+2^{d_1+d_2}+\cdots,$ the set $\Tilde{U}(y)$ possesses $e_{d_1,d_2,\dots}$ hierarchical structure.\end{remark}
\begin{figure}[H]
    \centering
     \begin{subfigure}[b]{0.3\textwidth}
         \centering
         \includegraphics[width=\textwidth]{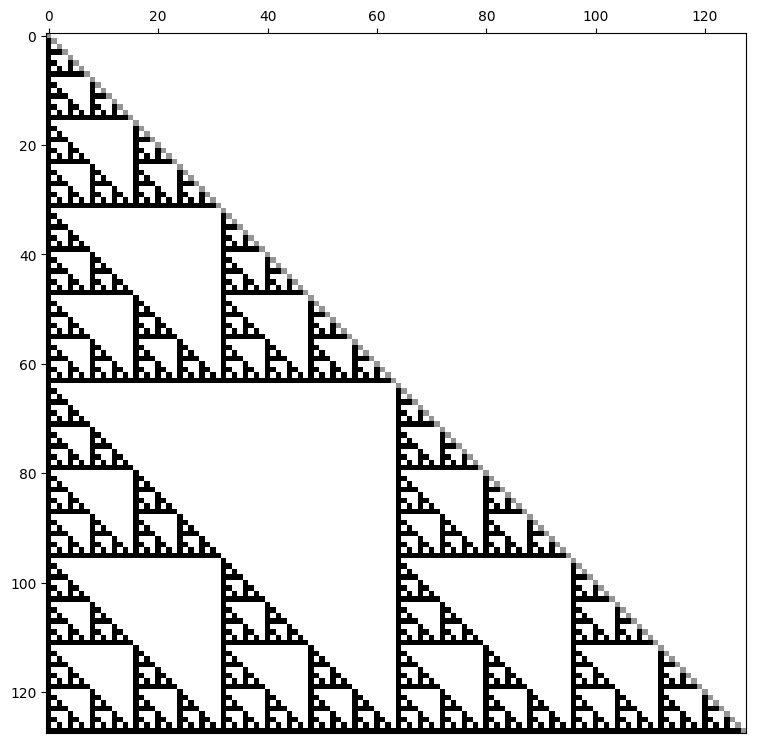}
         \caption{$\Tilde{U}(-1)$}
         \label{fig:-1}
     \end{subfigure}
     \hfill
     \begin{subfigure}[b]{0.3\textwidth}
         \centering
         \includegraphics[width=\textwidth]{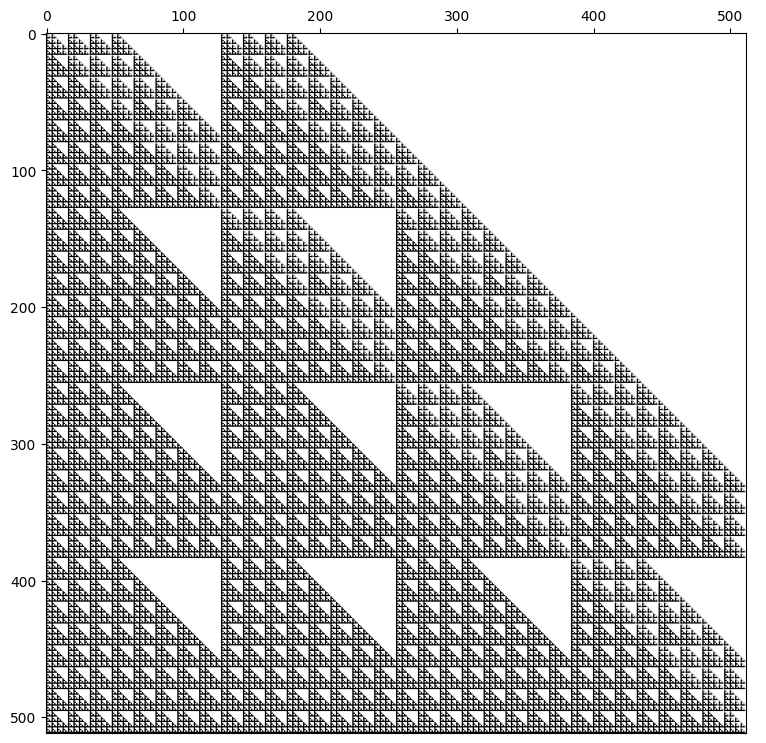}
         \caption{$\Tilde{U}(\sqrt{-7})$}
         \label{fig:sqrt-7}
     \end{subfigure}
     \hfill
    %  \hspace{-0.9in}
     \begin{subfigure}[b]{0.3\textwidth}
         \centering
         \includegraphics[width=\textwidth]{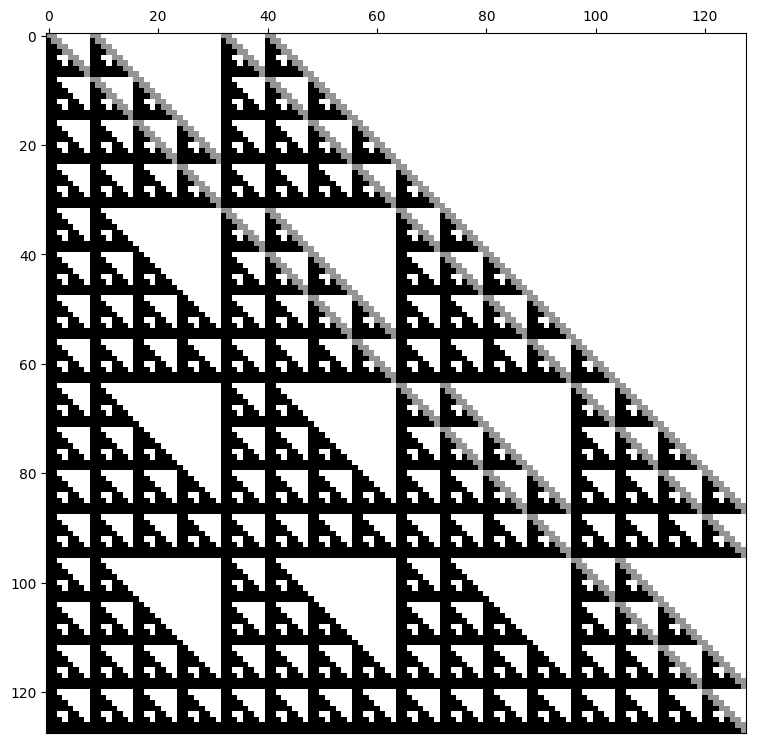}
         \caption{$\Tilde{U}(2/3)$}
         \label{fig:0.66}
     \end{subfigure}
        \caption{Fractal shapes for choices of $y\in\Z_2\backslash\N.$}
        \label{fig:fractals}
\end{figure}
\begin{remark}
The fractal $\Tilde{U}(-1)$ simply gives the label $2$ to all points except the top diagonal, which is labeled with $1,$ of the regular Sierpinski gasket.
\end{remark}

This paper is organized as follows. In \Cref{Background-Section,my-background-section}, we discuss prior relevant results and introduce some definitions. In \Cref{parity-section}, we discuss the curious effect of the update step and how it is essentially a triviality. As a result, this allows us to prove \Cref{culam-to-ulam,odd-even-equal-intro}. However, \Cref{intro-biperiodic} is delayed to \Cref{special-section}, where we fully describe the special case of Zumkeller numbers listed as exceptions in the previous theorems, with our full description given in \Cref{zumkeller}. In \Cref{main-theorem}, we prove the general descriptions of $\culam[y]$ for all $y,$ in particular proving \Cref{inside,outside,hierarchy,fractal}. In \Cref{consequences}, we use the general description to prove several curious facts in \Cref{consequential-properties}.
\section*{Acknowledgements}
The research was conducted at the 2022 University of Minnesota Duluth REU and fully supported by the generosity of Jane Street Capital, the National Security Agency (grant number H98230-22-1-0015), and the Stanford Mathematics Research Center. %{\color{red}(include this?)}.
The author is deeply grateful to Joe Gallian for organizing this REU and for his comments on this paper. A great thank you to Noah Kravitz for suggesting to study Ulam structures and providing a plethora of guidance and support throughout. Thank you also to Amanda Burcroff and Colin Defant, as well as the students and visitors of the Duluth REU, for a great program and research environment. Finally, the author appreciates the suggestions of Stefan Steinerberger and Toufik Mansour on the final version of the paper.
\section{Preliminaries}\label{Background-Section}
We begin with some notation that we will use extensively throughout the paper.
\begin{definition}
For nonnegative integers $x_1,x_2,\dots,x_n,$ let the word $w(x_1,x_2,\dots,x_n)$ consist of $x_1$ $0$'s followed by a $1,$ followed by $x_2$ $0$'s followed by another $1,$ and so on, ending with $x_n$ $0$'s. More concisely, it is $0^{x_1}10^{x_2}1\cdots10^{x_n}.$
\end{definition}
Prior to this paper, Ulam words of type $w(x,y)$ were fully classified, as stated in the introduction: $w(x,y)$ is Ulam if and only if $\binom{x+y}{x}$ is odd. However, this binomial coefficient quickly becomes large and difficult to compute, so we now recall Lucas's criterion:
\begin{prop}[Lucas]\label[proposition]{lucas}
    The binomial coefficient $\binom{x+y}{x}$ is odd if and only if the binary expansions of $x$ and $y$ never both have a $1$ in the same place value.
\end{prop}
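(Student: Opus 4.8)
The plan is to prove the criterion by the ``freshman's dream'' identity in characteristic $2$ and then convert the digit condition it produces into the stated no-common-$1$ condition. First I would pass to the polynomial ring $\F_2[t]$, where the Frobenius identity gives $(1+t)^{2^i}=1+t^{2^i}$ for all $i\ge0$. Writing $n=x+y$ in binary as $n=\sum_i n_i2^i$ with $n_i\in\{0,1\}$, this yields
$$
(1+t)^n=\prod_{i\,:\,n_i=1}\bigl(1+t^{2^i}\bigr)\pmod2 .
$$

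Next I would multiply out the right-hand side: every monomial that appears is $t^m$ with $m=\sum_{i\in T}2^i$ for some $T\subseteq\{i:n_i=1\}$, and distinct subsets $T$ give distinct exponents $m$ by uniqueness of binary representation, so each such $t^m$ occurs exactly once. Comparing with $(1+t)^n=\sum_m\binom{n}{m}t^m$ then shows that $\binom{n}{m}$ is odd if and only if $m_i\le n_i$ for every $i$ --- that is, if and only if the $1$-digits of $m$ form a sub-collection of those of $n$.

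It remains to specialize $n=x+y$, $m=x$ and to check that ``$x_i\le(x+y)_i$ for all $i$'' is the same as ``$x$ and $y$ never both have a $1$ in the same place value.'' One direction is immediate: if $x$ and $y$ share no $1$-digit, then every column of the base-$2$ addition sums to at most $1$, no carry is ever produced, and $(x+y)_i=x_i+y_i\ge x_i$. For the converse, assume $x_i\le(x+y)_i$ for all $i$; then each $c_i:=(x_i+y_i)-(x+y)_i$ lies in $\{-1,0,1\}$, and $\sum_i c_i2^i=\sum_i(x_i+y_i)2^i-\sum_i(x+y)_i2^i=0$. Taking the least index $j$ with $c_j\neq0$ gives $c_j=-\sum_{i>j}c_i2^{\,i-j}$, an odd integer equal to an even one, a contradiction; hence all $c_i=0$, so $x_i+y_i=(x+y)_i\le1$ for every $i$, i.e.\ $x$ and $y$ have no common $1$-digit.

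I expect no genuine obstacle: the argument is short and classical, and is recorded only because this criterion will be invoked throughout. The sole place needing a moment's care is the bridge in the last paragraph --- passing from the digit-domination condition $m_i\le n_i$ to the symmetric carry-free condition on $x$ and $y$ --- but this is the one-line digit count above, and it can also simply be read off from Kummer's theorem, which identifies $v_2\binom{x+y}{x}$ with the number of carries in the base-$2$ addition of $x$ and $y$.
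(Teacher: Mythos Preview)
Your proof is correct. The Frobenius/generating-function argument is a standard route to Lucas's theorem mod $2$, and your bridge from the digit-domination condition $x_i\le(x+y)_i$ to the carry-free condition on $x$ and $y$ is clean; the parity contradiction on the minimal nonzero $c_j$ does the job.

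For comparison with the paper: the paper does not prove this proposition at all. It is stated as a classical fact (``we now recall Lucas's criterion'') and used freely thereafter. So you have supplied strictly more than the paper does here. Your closing remark that Kummer's theorem gives an alternative one-line justification for the last step is also accurate and worth keeping as a pointer.
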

Thus, in reality we can simply look at the binary expansions of $x$ and $y$ to determine whether $w(x,y)$ is Ulam. Notably, if $w(x,y)$ is Ulam and either $x$ or $y$ is odd, then the other is even.

Ulam words $w(x,y,z)$ have also been studied, and the following facts are known.
\begin{lemma}[\cite{student-paper}]\label[lemma]{easy-cases}
    The word $w(x,0,z)$ is Ulam if and only if $x+z$ is odd. The word $w(x,1,z)$ is Ulam if and only if $x+z$ is odd and $x+z>1.$
\end{lemma}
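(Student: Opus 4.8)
The plan is to prove both statements by strong induction on $x+z,$ in each case enumerating the ways to write the word as a concatenation of two distinct Ulam words according to how its two $1$'s are distributed between the two factors. I will freely use the facts already recorded: an all-zeros word is Ulam precisely when it has length $1,$ and, by Lucas's criterion (\Cref{lucas}), $w(a,0)$ and $w(0,b)$ are always Ulam while $w(a,1)$ is Ulam if and only if $a$ is even.

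For $w(x,0,z)=0^x110^z,$ the first factor of a decomposition contains zero, one, or both of the $1$'s; this forces it to be $0$ (leaving suffix $w(x-1,0,z)$), or $0^x1$ (leaving suffix $10^z$), or $0^x110^{z-1}$ (leaving suffix $0$) --- any longer all-zeros prefix fails to be Ulam, and any prefix ending strictly inside the trailing block of zeros would leave an all-zeros suffix of length at least two. These three candidate decompositions are pairwise distinct whenever defined, as one sees by comparing the lengths of the first factors. The middle one always exists once $(x,z)\neq(0,0),$ because its factors $w(x,0)$ and $w(0,z)$ are then distinct and Ulam. Hence $w(x,0,z)$ is Ulam if and only if neither ``peel'' candidate yields a second decomposition, which by the inductive hypothesis holds exactly when $x+z$ is odd; the base case $w(0,0,0)=11$ (no decomposition at all, and $0$ even) is immediate.

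For $w(x,1,z)=0^x1010^z$ the same analysis produces four candidates: $0\cdot w(x-1,1,z),$ $(0^x1)(010^z),$ $(0^x10)(10^z),$ and $w(x,1,z-1)\cdot 0,$ again pairwise distinct when defined. The two ``middle'' candidates survive only conditionally: $(0^x1)(010^z)$ is a legal decomposition precisely when $w(1,z)=010^z$ is Ulam, i.e.\ when $z$ is even, and $(x,z)\neq(1,0)$ (otherwise its two factors coincide); symmetrically $(0^x10)(10^z)$ requires $x$ even and $(x,z)\neq(0,1).$ I would then split on the parity of $x+z.$ If $x+z$ is even, then either $x$ and $z$ are both even --- forcing both middle decompositions to be legal --- or both odd, in which case both ``peel'' decompositions are legal when $x+z\geq4$ (by the inductive hypothesis, since the relevant words have odd index sum $x+z-1\geq3$) and neither is when $x+z=2$; in every sub-case there are either zero or at least two decompositions, so the word is not Ulam. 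If $x+z$ is odd, then exactly one of $x,z$ is even, so exactly one middle decomposition passes the parity condition; it fails the non-degeneracy condition precisely when $x+z=1,$ while the two ``peel'' candidates refer to words of even index sum $x+z-1$ and so are never legal. Hence $w(x,1,z)$ is Ulam if and only if $x+z$ is odd and $x+z>1,$ with the few base cases $x+z\leq1$ --- which also absorb the degenerate coincidences at $(1,0)$ and $(0,1)$ --- checked by hand.

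The step I expect to be the main obstacle is the bookkeeping in the second statement: checking that the four decomposition families are exhaustive and pairwise distinct (in particular pinning down the degenerate coincidences $w(x,0)=w(1,z)$ at $(x,z)=(1,0)$ and $w(x,1)=w(0,z)$ at $(x,z)=(0,1)$), reading the parity constraints off Lucas's criterion without slips, and confirming that the words arising under the recursion --- which have index sum $x+z-1$ but possibly very small individual entries --- are covered either by the inductive hypothesis or by an explicitly verified base case.
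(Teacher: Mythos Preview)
Your proof is correct. The paper does not give its own proof of this lemma; it is quoted from \cite{student-paper} without argument, so there is nothing in the present paper to compare against. Your approach---enumerating the admissible split points, reading off the Ulamness of the one-$1$ factors via Lucas's criterion, and handling the two ``peel'' decompositions by strong induction on $x+z$---is the natural direct argument and matches the style of reasoning used later in the paper (e.g.\ the decomposition analysis opening \Cref{parity-section}). The bookkeeping you flagged as the main risk (exhaustiveness of the four candidate splits for $w(x,1,z)$, the degenerate coincidences at $(1,0)$ and $(0,1)$, and the small base cases $x+z\le 2$) is all handled correctly.
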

\begin{lemma}[\cite{student-paper}]\label[lemma]{not-too-small}
    If $w(x,y,z)$ is Ulam, then $x+z\geq y.$
\end{lemma}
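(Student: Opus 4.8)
The plan is to prove the contrapositive: \emph{if $x+z<y$, then $w(x,y,z)$ is not Ulam}, by strong induction on $x+z$ (equivalently, on the length of the word). Fix $(x,y,z)$ with $x+z<y$ and write $w:=w(x,y,z)=0^x10^y10^z$. The first step is to classify the ways of writing $w=uv$ with $u,v$ distinct Ulam words. Since $w$ has exactly two $1$'s and the only Ulam word consisting entirely of $0$'s is the single letter $0$, the cut must fall into exactly one of three cases: (A) $u=0$ and $v=w(x-1,y,z)$ (possible only if $x\ge1$); (B) the cut lies strictly between the two $1$'s, so that $u=w(x,a)$ and $v=w(b,z)$ for some $a,b\ge0$ with $a+b=y$; or (C) $v=0$ and $u=w(x,y,z-1)$ (possible only if $z\ge1$). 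In case (B), Lucas's criterion (\Cref{lucas}) shows that the pieces $w(x,a)$ and $w(b,z)$ are Ulam exactly when $a\wedge x=0$ and $b\wedge z=0$ (bitwise AND); moreover the two pieces are automatically distinct here, since $w(x,a)=w(b,z)$ would force $a=z$ and $b=x$, hence $x+z=y$, contrary to assumption.

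Next I would dispose of cases (A) and (C) by the induction: since $(x-1)+z<y$ and $x+(z-1)<y$, the words $w(x-1,y,z)$ and $w(x,y,z-1)$ are not Ulam by the inductive hypothesis, so neither (A) nor (C) gives a valid decomposition. Consequently the number of decompositions of $w$ into two distinct Ulam words equals
$$N(x,y,z):=\#\{(a,b)\in\Z_{\ge0}^2:\ a+b=y,\ a\wedge x=0,\ b\wedge z=0\},$$
and everything reduces to the purely combinatorial assertion that \emph{$N(x,y,z)\neq1$ whenever $x+z<y$} (in fact $N$ is then $0$ or at least $2$). Granting this, $w$ has a number of such decompositions different from $1$, hence is not Ulam, and the induction closes.

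To establish $N(x,y,z)\neq1$ I would argue by contradiction, assuming $(a,b)$ is the unique solution. Uniqueness rules out local moves: there can be no bit position $i$ with $x_i=0$, $a_i=0$, $b_i=1$ (else shifting that bit from $b$ to $a$ gives another valid pair), and symmetrically none with $z_i=0$, $b_i=0$, $a_i=1$. Unwinding these constraints, every $1$-bit of $b$ must lie in $x$ or in $a$ and every $1$-bit of $a$ must lie in $z$ or in $b$; writing $I=a\wedge b$ this forces $b=I\,|\,x_1$ and $a=I\,|\,z_1$ with $x_1\preceq x$, $z_1\preceq z$ and $I,x_1,z_1$ pairwise disjoint, so that $y=a+b=2I+x_1+z_1$. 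If $I=0$ this already gives $x+z\ge x_1+z_1=y$, a contradiction. If $I\neq0$, I would examine the carries in the binary addition $a+b=y$: at the top bit $q$ of $I$ both $a$ and $b$ carry a $1$, so a carry leaves position $q$ and, since $a$ and $b$ no longer share a $1$ above $q$, it travels up through a run of ``semi-constrained'' positions until it terminates at some position $r$; a further local modification at $q$ propagated up to $r$ then yields either a second valid pair or the conclusion that position $r$ is ``frozen'' ($x_r=z_r=1$), and summing the forced contributions to $x+z$ contradicts $x+z<y$. The hard part will be exactly this carry bookkeeping in the case $I\neq0$ — in particular, controlling positions where $a$ and $b$ agree on a $1$ and pinning down where the induced carry dies; the remaining steps are routine.
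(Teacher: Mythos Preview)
The paper does not prove this lemma; it is quoted from \cite{student-paper} and used as a black box. So there is no proof in the paper to compare against, and I assess your attempt on its own merits.

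Your reduction is correct. The classification of splittings into cases (A), (B), (C) is right, the inductive elimination of (A) and (C) is valid, and the observation that the two pieces in case (B) are automatically distinct when $x+z<y$ is correct. So everything does reduce to the combinatorial claim $N(x,y,z)\neq1$ whenever $x+z<y$. Your local-move argument pinning down $b\setminus I\preceq x$ and $a\setminus I\preceq z$ is also fine, and the $I=0$ case closes cleanly.

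The $I\neq0$ case, however, is a real gap, not just bookkeeping. Your sketch posits a dichotomy: either a perturbation at the top bit $q$ of $I$ yields a second valid pair, or the carry-termination position is ``frozen'' with $x_r=z_r=1$. This dichotomy is not established. Concretely, let $p>q$ be the least index with $a_p=b_p=0$ (where the carry leaving bit $q$ dies). The moves $(a\pm2^q,\,b\mp2^q)$ are valid precisely when $x_s=0$ (resp.\ $z_t=0$), where $s$ (resp.\ $t$) is the least index above $q$ with $a_s=0$ (resp.\ $b_t=0$). When the run $q{+}1,\dots,p{-}1$ contains both $a$-bits and $b$-bits one has $s,t<p$, and your own constraints already force $x_s=1$ and $z_t=1$: both moves die at \emph{intermediate} positions and say nothing about $x_p,z_p$. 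Even in the ``pure'' cases (all $a$-bits, or all $b$-bits, in that run) only one of $x_p,z_p$ gets pinned down; the dichotomy you want holds only in the degenerate case $p=q{+}1$. Consequently the final step, ``summing the forced contributions to $x+z$ contradicts $x+z<y$,'' is not justified: at bits in $I$ you have $x_j+z_j=0$ while $a_j+b_j=2$, and you have not produced enough compensating $1$'s in $x+z$ elsewhere. A genuinely new ingredient---a more elaborate multi-bit modification, or an auxiliary induction on $y$ splitting on parities---is needed to close this case; the sketch as written does not.
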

% \subsection{Toolbox: basic lemmas}
Our results, particularly those of \Cref{parity-section}, rely repeatedly on basic facts about the Ulamness of some words in relation to others. Precisely, we have the following useful fact:
\begin{lemma}\label[lemma]{ulam-from-less}
    An even number (zero or two) of the words $w(x,y),w(x,y-1),w(x-1,y)$ are Ulam.
\end{lemma}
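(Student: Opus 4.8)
Recall that, by the classification of one-$1$ words together with Lucas's criterion (\Cref{lucas}), the word $w(a,b)$ is Ulam precisely when the binary expansions of $a$ and $b$ share no place value at which both have a $1$ --- equivalently, when adding $a$ and $b$ in base two produces no carries. Write $N(x,y)\in\{0,1,2,3\}$ for the number of Ulam words among $w(x,y)$, $w(x,y-1)$, $w(x-1,y)$ (all three are defined, as we take $x,y\ge1$). I would prove $N(x,y)$ is even by induction on $x+y$, after splitting on the parities of $x$ and $y$, the point being that the least significant bit already decides one of the three words.

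First suppose $x$ and $y$ are not both even. If both are odd, then they share a $1$ in the units place, so $w(x,y)$ is not Ulam; meanwhile the units bits never clash for $(x,y-1)$ nor for $(x-1,y)$, and after discarding those bits one finds that each of $w(x,y-1)$ and $w(x-1,y)$ is Ulam if and only if $\floor{x/2}$ and $\floor{y/2}$ have disjoint binary supports --- one and the same condition. If instead exactly one of $x,y$ is odd, say $y$ odd and $x$ even (the case $x$ odd, $y$ even follows from the reversal symmetry $w(x_1,x_2)\mapsto w(x_2,x_1)$, which preserves Ulamness and hence gives $N(x,y)=N(y,x)$), then $x-1$ and $y$ are both odd, so $w(x-1,y)$ is not Ulam, while the units bits clash in neither $w(x,y)$ nor $w(x,y-1)$, and after discarding them both become the condition ``$x/2$ and $\floor{y/2}$ have disjoint supports.'' In each of these cases exactly one of the three words is automatically non-Ulam and the remaining two stand or fall together, so $N(x,y)\in\{0,2\}$.

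Now suppose $x$ and $y$ are both even (hence both $\ge2$). Since their units bits are $0$, deleting those bits shows that $w(x,y)$, $w(x,y-1)$, $w(x-1,y)$ are Ulam if and only if, respectively, $w(x/2,\,y/2)$, $w(x/2,\,y/2-1)$, $w(x/2-1,\,y/2)$ are Ulam, using $\floor{(y-1)/2}=y/2-1$ and $\floor{(x-1)/2}=x/2-1$. Hence $N(x,y)=N(x/2,y/2)$, and since $x/2+y/2<x+y$ the inductive hypothesis makes this even; the base of the induction ($x=y=1$, or more generally any pair not of the all-even type) is covered by the previous paragraph.

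\textbf{Where the work is.}
There is no real obstacle here --- the argument is elementary bit bookkeeping --- but two small points should be carried out explicitly rather than asserted: in each parity case one must correctly identify which of the three words inherits a carry in the units position (and is therefore not Ulam), and one must verify that, once that word is set aside, the other two impose \emph{literally the same} ``no common $1$'' condition on the halved (or floor-halved) arguments. Checking these two points across the parity cases is essentially the whole proof.
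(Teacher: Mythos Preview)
Your argument is correct: the parity case split together with the induction on $x+y$ in the all-even case does establish the claim, and the bookkeeping you describe checks out in each case.

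However, the paper's proof is a one-liner that bypasses all of this. Rather than working with Lucas's criterion, it goes back to the binomial-coefficient characterization: $w(a,b)$ is Ulam iff $\binom{a+b}{a}$ is odd. Pascal's identity gives
\[
\binom{x+y}{x}=\binom{x+y-1}{x}+\binom{x+y-1}{x-1},
\]
and whenever one integer is the sum of two others, the number of odd integers among the three is necessarily $0$ or $2$. That is the entire proof. Your approach effectively re-derives this parity relation by hand at the level of binary digits, which works but is considerably more labor; the payoff of the paper's route is that it uses the arithmetic identity directly and needs no case analysis or induction.
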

\begin{proof}
The identity $\binom{x+y}{x}=\binom{x+y-1}{x}+\binom{x+y-1}{x-1}$ tells us that either zero or two of these binomial coefficients are odd; now recall that %In particular, since 
$w(a,b)$ is Ulam if and only if $\binom{a+b}{a}$ is odd.%, we see that if one of the terms in the identity is odd, then exactly one of the other two is, implying the conclusion.
\end{proof}
\section{Defining hierarchical structures}\label{my-background-section}
Typically, a self-similar fractal structure features a sequence of scales, at each of which the same pattern is repeated. Here, however, the situation is more complex, so we need to introduce more general definitions to describe the hierarchies inherent in $\culam$ labelings. Essentially, the two below definitions correspond to tiling a set in $\N^2$ either from the inside out or from the outside in, and account for several universal patterns of tiling present in the structure.
\begin{definition}[Internal hierarchy]\label[definition]{internal}
Fix some (labeling) set $T$\footnote{In our case, $T=\{0,1,2\}.$}. For any nonnegative integer $d,$ fix maps $I_d^e,I_d^o:T\times T\to\PP^d,$ where $\PP^d$ is a set of some labelings $L:B_d\to T.$ %Typically, we may assume that the labelings $\PP^d=I(T\times T)$ all come from the same ($d$-independent) dyadic patterns, so all belong to a common family $\PP.$
We may typically assume that the labelings $\PP^d$ all belong to a common family $\PP,$ e.g. as in \Cref{building-blocks}.

For any nonnegative integers $d_1,d_2,\dots,d_s,$ a labeling $\LL:B_{d_1+d_2+\cdots+d_s+1}\to T$ exhibits \textit{$i_{d_1,d_2,\dots,d_s}$ hierarchical structure} with respect to $\PP$ if we have $c_j^e,c_j^o:B_{d_j+d_{j+1}+\cdots+d_s+1}\to T,j=1,\dots,s,$ and $c_{s+1}^e,c_{s+1}^o:B_1\to T$ satisfying the inductive relations \begin{align*}
c_j^e(x,z)=I_{d_j}^e(c_{j+1}^e(x',z'),c_{j+1}^o(x',z'))(x'',z'')\\
c_j^o(x,z)=I_{d_j}^o(c_{j+1}^e(x',z'),c_{j+1}^o(x',z'))(x'',z'')
\end{align*}
and $\LL=c_1^e$ or $c_1^o,$ where $x',z'$ are the first (counting $0$'s) $d_{j+1}+d_{j+2}+\cdots+d_s+1$ digits and $x'',z''$ are the last $d_j$ digits, respectively, of $x,z$ in binary. 
\end{definition}
In our definition, $\LL=c_1^o$ occurs in the case when the final hierarchy is `odd'. Specifically, it occurs in $\culam[y]$ when $y$ is odd.
\begin{definition}[External hierarchy]\label[definition]{external}
Fix some (labeling) set $T$ and `empty' label $t\in T,$ as well as `full' label $f\in T.$ Fix a map $I:T\times T\to\{0,1,2,3,4,5\}.$ Also, for every $d$ and $i\in\{0,1,2,3\}$ fix labelings $L_{d,i}^e,L_{d,i}^o:B_d\to T.$

For any nonnegative integers $d_1,d_2,\dots,d_s,$ a labeling $\LL:B_{d_1+d_2+\cdots+d_s+1}\to T$ exhibits \textit{$e_{d_1,d_2,\dots,d_s}$ hierarchical structure} with respect to $L$ if we have, for $i\in\{0,1,2,3,4,5\},$ labelings $c_j^i:B_{d_1+d_2+\cdots+d_j}\to T$ satisfying $c_j^4(x,z)=t$ and $c_j^5(x,z)=\begin{cases}t&c_j^3(x,z)=t,\\f&c_j^3(x,z)\neq t,\end{cases}$ and, for $i<4,$ we have $$
c_j^i(x,z)=c_{j-1}^{I(L_{d_j,i}^e(x'',z''),L_{d_j,i}^o(x'',z''))}(x',z')
$$ 
and $$
c_s^0(x,z)=\LL(x,z),\quad c_s^1(x,z)=\LL(\overline{1x},z),\quad c_s^2(x,z)=\LL(x,\overline{1z}),\quad c_s^3(x,z)=\LL(\overline{1x},\overline{1z})
$$ 
where $x',z'$ are the trailing $d_1+\cdots+d_{j-1}$ digits and $x'',z''$ are the first $d_j$ digits (counting $0$'s), respectively, of $x,z$ in binary, and $\overline{1x}=2^{d_1+d_2+\cdots+d_s}+x$ (resp., $\overline{1z}=2^{d_1+d_2+\cdots+d_s}+z$) is the binary number defined by inserting a $1$ in the $(d_1+d_2+\cdots+d_s+1)$-th slot from the right of the binary representation of $x.$ 
\end{definition}
In our case, $T$ is simply the set $\{0,1,2\},$ and the `trivial' label is $t=0,$ while the `full' label is $s=2.$ The labelings $\PP^d$ are exactly those in \Cref{building-blocks}, and $L_{d,i}^e$ and $L_{d,i}^o$ each give four labelings of $B_d$ partitioning $\culam[2^d].$ Precisely, they are the fourth, third, third, and second figures, respectively, of the third and fourth columns, respectively, of \Cref{building-blocks}.

Recall from the introduction that \Cref{external} allows us to construct an infinite, self-similar fractal for every $2$-adic integer. However, it is essential that we define a form of convergence for these fractal shapes.
\begin{definition}\label[definition]{metric}
Set $Q=\N\times\{-1-\N\}.$ For a given set $T,$ define a metric $\rho$ on the set of labelings $Q\to T$ as follows: $\rho(L_1,L_2)=\frac{1}{d}$ where $d$ is the largest integer (perhaps $\infty$) such that $L_1(m,n)=L_2(m,n)$ for all $m,\abs{n}\leq d.$
\end{definition}
\section{Parity phenomena in \texorpdfstring{$\ulam[y]$}{}}\label{parity-section}
First, we discuss a more understandable general procedure to determine whether a word of the form $0^x10^y10^z$ is Ulam. Observe that a sequence of $0$'s (or $1$'s) is Ulam if and only if it has length exactly $1.$ An Ulam word must be expressible uniquely as a concatenation of two smaller (Ulam) words, so we must understand in what ways this can happen. The words are determined by `slicing' $w$ between two letters: $$ 
w=00\cdots010\cdots0\mid0\cdots010\cdots0.
$$ 
By the previous reasoning, the slice may not be past the second $0$ but before the first $1,$ or past the second $1$ but before the second-to-last $0.$ Thus, the slice is either directly after the first $0,$ directly before the last $0,$ or somewhere in between the two $1$'s. In the former two cases, the word is formed from a $0$ and another word of the form $0^{x'}10^y10^{z'}$ (where either $x'=x-1$ and $z'=z,$ or $x'=x$ and $z'=z-1$; if $x=0$ or $z=0$ then one or both of these cases may disappear). The latter case, however, is more complicated. There are $y+1$ total positions for the slice, each of which splits the $y$ $0$'s into two groups: one of size $a$ and another of size $y-a$ (where either may be $0$). The corresponding words are then $0^x10^a$ and $0^{y-a}10^z.$ Note also that we require the words to be distinct. If one of the words is $0$ and the other contains both $1$'s, then this is not an issue. However, we may run into problems if $w(x,a)=w(y-a,z),$ which happens if $x=y-a$ and $a=z.$ This implies that $x+z=(y-a)+a=y.$ Note that this is the equality case of \Cref{not-too-small}. It seems from the outset as if both words in the concatenation having a single $1$ dominates the arguments, and indeed it does; the remaining two possible representations can in fact be considered separately. Observe that in all of these representations, either the words in the concatenation representation have one $1$ each, or the number $y$ of $0$'s in between the two $1$'s does not change. Thus, it makes sense to layer Ulam words with two $1$'s based on the distance between the $1$'s; separate layers will not interact whatsoever.

Whether $w(x,y,z)$ is an Ulam word then depends, inductively, only on several previous words: $w(x-1,y,z)$ and $w(x,y,z-1),$ as well as $w(x,a)$ and $w(y-a,z)$ for $0\leq a\leq y.$ Instead of considering all of these words as once, we can split the process of determining Ulam words in a \textit{calculation step} and an \textit{update step}. 

The calculation step will incorporate only the words $w(x,a)$ and $w(y-a,z)$ in the determination. After this, each word $w(x,y,z)$ is expressible as a concatenation in either zero, one, or at least two ways. If there are at least two expressions, then the word can be immediately discarded; there is no chance for it to be Ulam, even after considering $w(x-1,y,z)$ and $w(x,y,z-1).$ If, on the other hand, there is exactly one representation, then the word is Ulam if and only if neither $w(x-1,y,z)$ nor $w(x,y,z-1)$ is Ulam. Finally, if there are no representations, then the word is Ulam if and only if exactly one of $w(x-1,y,z)$ and $w(x,y,z-1)$ is Ulam; we will show, in particular, that this case in fact never occurs. 

All in all, we can therefore create a tentative scheme for Ulam and non-Ulam words:
\begin{definition}\label{types-of-ulam}
The word $w(x,y,z)$ is \textit{non-Ulam} if there are at least two representations prior to considering $w(x-1,y,z)$ and $w(x,y,z-1)$; it is \textit{pseudo-Ulam} if there is exactly one representation, and it is \textit{probably not} \textit{Ulam} if there are no representations. 
\end{definition}
\begin{remark}
The choice to label the final case of words $w(x,y,z)$ with no representations as \textit{probably not} Ulam is due to the fact (that we will prove later) that they are, in fact, never Ulam.
\end{remark}
In the update step, we inductively determine whether $w(x',y,z')$ is Ulam for all $x'\leq x,z'\leq z$ satisfying $x'+z'<x+z,$ and then $w(x,y,z)$ is Ulam if and only if either it is pseudo-Ulam and $w(x-1,y,z)$ and $w(x,y,z-1)$ (if they exist) are non-Ulam, or it is probably not Ulam and exactly one of $w(x-1,y,z)$ and $w(x,y,z-1)$ is Ulam.

The first calculation step of the algorithm unfortunately has the caveat that it has to consider the case when $w(x,y,z)=w(x,z)*w(x,z)$ is split into two equal Ulam words, violating the condition that the words must be distinct, which slightly derails some further arguments. Nevertheless, we can in fact significantly bound the instances when this affects further calculations.
\begin{lemma}\label[lemma]{x+z-case}
Suppose $y=x+z$ is not Zumkeller. If $w(x,z)$ is Ulam, then there are at least two decompositions of $w(x,y,z)$ as a concatenation of distinct Ulam words $w(x,a)*w(y-a,z).$ In particular, $w(x,y,z)$ is not Ulam and $\culam[y](x,z)=2.$
\end{lemma}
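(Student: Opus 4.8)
The plan is to produce, starting from a single decomposition $w(x,y,z)=w(x,a)*w(y-a,z)$ with $w(x,z)$ Ulam (so $a=z$, $y-a=x$, hence $w(x,z)*w(x,z)$), a second \emph{genuine} decomposition into two \emph{distinct} Ulam words of the form $w(x,a')*w(y-a',z)$ with $a'\neq z$, and in fact to exhibit two such distinct $a'$. Recall that $w(x,a')*w(y-a',z)$ is a legitimate distinct-word decomposition precisely when $w(x,a')$ and $w(y-a',z)$ are both Ulam (each has one $1$, so by \Cref{lucas} this is a condition on binary digits) and $(x,a')\neq(y-a',z)$. Since $a=z$ is the \emph{only} value that could cause a coincidence, every other value of $a'$ in $\{0,1,\dots,y\}$ for which both factors are Ulam gives a valid distinct decomposition; so it suffices to find two values $a'\in\{0,\dots,y\}\setminus\{z\}$ with $\binom{x+a'}{x}$ and $\binom{(y-a')+z}{z}$ both odd.

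First I would record the arithmetic setup: write everything $2$-adically. The hypothesis ``$w(x,z)$ is Ulam'' means the binary expansions of $x$ and $z$ have disjoint supports (\Cref{lucas}); equivalently $x+z=x\,\mathrm{OR}\,z$ with no carries, i.e. $y=x+z$ is obtained by OR-ing the digit sets of $x$ and $z$. We are told $y$ is \emph{not} Zumkeller, i.e. the binary expansion of $y$ has at least two $0$'s. The number of $0$-digits of $y$ below its leading bit is split between ``positions where both $x$ and $z$ are $0$'' — call this set $Z_0$ — since positions where exactly one of $x,z$ is $1$ are $1$-digits of $y$. A digit-counting argument then shows $|Z_0|\geq 1$ at least; I will need $|Z_0|\geq 2$, which may fail, so the key structural observation is: the ``free'' positions available to build $a'$ are the $1$-digits of $y$ (where we may send that bit to either the left factor's $a'$ or, complementarily, keep it in $y-a'$) \emph{plus} the zero positions, and these give many choices of $a'$. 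Concretely, for any subset $T$ of the digit-positions of $y$, setting $a'=\sum_{i\in T}2^i$ makes $y-a'=\sum_{i\notin T}2^i$ (no borrows, since $T$ is a sub-multiset of $y$'s digits), and then $\binom{x+a'}{x}$ is odd iff $a'$ avoids the support of $x$, i.e. $T$ avoids $x$'s digits; similarly $\binom{(y-a')+z}{z}$ is odd iff $y-a'$ avoids $z$'s digits, i.e. $T$ \emph{contains} every digit-position of $y$ that lies in $z$'s support. Since $x$ and $z$ have disjoint supports and their OR is $y$, the digit-positions of $y$ partition into $S_x$ (in $x$), $S_z$ (in $z$); the constraints become: $T\cap S_x=\varnothing$ and $T\supseteq S_z$, which forces $T=S_z$ — giving exactly the one ``bad'' decomposition $a'=z$.

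So the $1$-digits of $y$ give no slack, and the entire game is played on the \emph{zero} positions of $y$: we are free to also throw any subset of $Z_0$ (the positions where $x$ and $z$ are both $0$) into $a'$, because adding $2^i$ for $i\in Z_0$ to $a'$ does not hit $\mathrm{supp}(x)$ and correspondingly does not force $y-a'$ to hit $\mathrm{supp}(z)$ — wait, one must be careful that the resulting $a'$ still satisfies $a'\le y$ digit-wise; it does not if $i\notin\mathrm{supp}(y)$. The correct fix is to allow \emph{carries}: here I would instead choose $a'$ slightly larger than $z$ or $z-$something so that $w(x,a')$ and $w(y-a',z)$ are still Ulam — this is where I expect the real work. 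The cleanest route: since $y$ has $\ge 2$ zero bits, pick the lowest zero bit position $j$ of $y$ and the next zero bit position $j'>j$; set $a'_1 = z + 2^j$ and $a'_2 = z + 2^j + 2^{j'} - 2^{j}=z+2^{j'}$ (or similar), and verify via \Cref{lucas} that in each case $a'_m$ avoids $\mathrm{supp}(x)$ (true since $j,j'\notin\mathrm{supp}(x)\cup\mathrm{supp}(z)$ means adding these bits to $z$ causes no carry and stays off $x$) and that $y-a'_m$ avoids $\mathrm{supp}(z)$ (here $y-a'_m=y-z-2^j=x-2^j$, which has a borrow; one checks its support still misses $\mathrm{supp}(z)$ because $x$'s support does and borrowing only turns some $1$'s of $x$ into $0$'s and sets lower bits that were $0$ in $x$ — and those lower bits, being below $j$, we must confirm lie outside $\mathrm{supp}(z)$, which need not hold!).

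Because of that last subtlety, I would actually run the argument the other way and add bits at the \emph{top}: among the at-least-two zero bits of $y$ strictly below its leading bit, adding $2^j$ for such a zero position $j$ to $x$ gives a number $x+2^j$ with no carry (since $j\notin\mathrm{supp}(x)$) and, crucially, $a':=x+2^j$ satisfies $a'\le y$ is false — so instead keep the roles as ``pick a valid $a'$'' and observe: take $a'=z+2^j$ where $j$ is a zero-bit of $y$ below the top; then there is no carry in $z+2^j$, so $\mathrm{supp}(a')=\mathrm{supp}(z)\cup\{j\}$, which is disjoint from $\mathrm{supp}(x)$ (as $j\notin\mathrm{supp}(y)\supseteq\mathrm{supp}(x)$), so $w(x,a')$ is Ulam; and $y-a' = (x+z)-(z+2^j)=x-2^j$. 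The main obstacle, cleanly stated, is verifying $w(y-a',z)=w(x-2^j,z)$ is Ulam, i.e. $\mathrm{supp}(x-2^j)$ misses $\mathrm{supp}(z)$: since $j\notin\mathrm{supp}(x)$, the subtraction $x-2^j$ borrows, clearing the lowest set bit of $x$ above $j$, say at position $p$, and setting bits $j,j+1,\dots,p-1$; all of these positions are $<p\le$(leading bit of $y$) and lie in $\mathrm{supp}(y)$'s ``between'' region — but we need them outside $\mathrm{supp}(z)$. They \emph{are} outside $\mathrm{supp}(z)$ iff $\mathrm{supp}(z)\cap[j,p)=\varnothing$. This fails in general, so the honest plan is to \emph{choose $j$ to be the zero-bit of $y$ immediately below the lowest $1$-bit of $x$} — if $x\ne 0$, its lowest set bit is at some position $p_0$, and if $p_0>0$ then position $p_0-1$ is a $0$-bit of $x$ and of $z$ (disjoint supports) hence of $y$; but $y$ \emph{not} Zumkeller gives us a second zero-bit to play with to handle the case $p_0=0$ and to produce the \emph{second} decomposition. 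I would therefore do a short case analysis on $x,z$ being odd/even and on the location of the lowest two zero-bits of $y$, in each case writing down explicit $a'_1\ne a'_2$ (both $\ne z$) and citing \Cref{lucas} for the three Ulamness checks. The conclusion ``$\ge 2$ distinct decompositions $\Rightarrow$ non-Ulam and $\culam[y](x,z)=2$'' is then immediate from the definitions of $\culam$ and of the calculation step, together with the remark that extra same-$1$-count decompositions can never be cancelled by the update step. I expect the bookkeeping of which zero-bit of $y$ to use — so that the borrow in $x-a'$ does not collide with $\mathrm{supp}(z)$ — to be the one genuinely delicate point, and the non-Zumkeller hypothesis (two zero-bits) to be exactly what guarantees a safe choice together with a second one.
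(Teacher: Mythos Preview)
Your setup is correct and you correctly isolate the crux: with $\mathrm{supp}(x)\sqcup\mathrm{supp}(z)=\mathrm{supp}(y)$, the only ``carry-free'' choice of $a'$ is $a'=z$, so any new decomposition must involve a carry/borrow, and you rightly flag that the borrow in $x-2^j$ can land on $\mathrm{supp}(z)$. But you never actually resolve this; you end by promising ``a short case analysis'' on parities and zero locations without saying what the cases are or how each one produces \emph{two} values of $a'$. The specific fix you float---take $j$ just below the lowest set bit of $x$---does not work in general (that position need not be a zero of $y$, and when it is, it gives only one $a'$; e.g.\ for $y=1001_2$ only the upper zero has a $1$ of $y$ directly above it). So as written the proposal has a genuine gap: the construction of the two alternative $a'$ is missing.

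The paper avoids the borrow problem by a different maneuver: rather than perturbing $a'$ away from $z$ by a single bit, it simultaneously modifies \emph{both} $x$ and $z$ to produce $x',z'$ with $x'+z'=y$, in such a way that $\mathrm{supp}(x)\cap\mathrm{supp}(z')=\varnothing$ and $\mathrm{supp}(x')\cap\mathrm{supp}(z)=\varnothing$; then $a'=z'$ works. Concretely it splits on whether $y$ has two zeros with a $1$ strictly between them, or has two consecutive zeros (with a $1$ to their left). In the first case, at each of the two zero positions one replaces the adjacent $1$-bit of $y$ by a carried $1+1$ from the zero position below (swapping a $1$ for a $0$ in both $x$ and $z$ and inserting a $1$ in both at the zero slot); doing this at each zero yields two distinct $(x',z')$. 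In the consecutive-zeros case one uses the single $1$ immediately to the left of the pair to manufacture two different $(x',z')$ at once. In both cases the disjoint-support checks are immediate because the modifications are symmetric in $x$ and $z$ and confined to the chosen zero positions and their neighbor---no uncontrolled borrow chain appears. This is the idea your outline is missing.
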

\begin{proof}
    As noted above, $x$ and $z$ share no $1$'s in their binary representation. By assumption, $y$ has at least two $0$'s in its binary representation and hence is of the form $$
    y=c_mc_{m-1}\cdots c_10b_\ell b_{\ell-1}\cdots b_10a_ka_{k-1}\cdots a_1
    $$ 
    for some 
    $a_h,b_i,c_j\in\{0,1\}.$ 
    
    Assume first that there are $1$'s between the two $0$'s, so then we may assume without loss of generality that 
    $b_1=c_1=1.$ Observe that $y=x+z$ and the sum of $x$ and $z$ in binary has no carrying. Hence, the entries of $x$ and $z$ at all the (at least two) entries of $y$ that are $0$ are also $0$ (since of the pairs $0,1$; $1,0$; $0,0$ only two $0$'s sum to $0$). Therefore, we can write $$
    x=c_m^xc_{m-1}^x\cdots c_1^x0b_\ell^xb_{\ell-1}^x\cdots b_1^x0a_k^xa_{k-1}^x\cdots a_1^x,\hspace{0.05in}
    z=c_m^zc_{m-1}^z\cdots c_1^z0b_\ell^zb_{\ell-1}^z\cdots b_1^z0a_k^za_{k-1}^z\cdots a_1^z.
    $$ 
    Then consider slightly altering both as follows: {\small$$
    x'=c_m^xc_{m-1}^x\cdots c_2^x01b_\ell^xb_{\ell-1}^x\cdots b_1^x0a_k^xa_{k-1}^x\cdots a_1^x,\hspace{0.02in}
    z'=c_m^zc_{m-1}^z\cdots c_2^z01b_\ell^zb_{\ell-1}^z\cdots b_1^z0a_k^za_{k-1}^z\cdots a_1^z.
    $$}
    
    \noindent Observe that $x'+z'=y$ and $x,z'$ still share no $1$'s in their binary representations, and neither do $x',z.$ Therefore, we have $w(x,y,z)=w(x,z')*w(x',z)$ is a representation of $w(x,y,z)$ as a concatenation of (distinct) Ulam words. Similarly, it is possible to define {\small$$
    x''=c_m^xc_{m-1}^x\cdots c_1^x0b_\ell^xb_{\ell-1}^x\cdots b_2^x01a_k^xa_{k-1}^x\cdots a_1^x,\hspace{0.02in}
    z''=c_m^zc_{m-1}^z\cdots c_1^z0b_\ell^zb_{\ell-1}^z\cdots b_2^z01a_k^za_{k-1}^z\cdots a_1^z
    $$}
    
    \noindent and then $w(x,y,z)=w(x,z'')*w(x'',z)$ is another representation of $w(x,y,z)$ as a concatenation of Ulam words. Hence, in fact the existence of $w(x,y,z)=w(x,z)*w(x,z)$ as a representation in this case is not relevant; regardless, $w(x,y,z)$ is not Ulam (since it has two other representations).
    
    Now, suppose $y$ has two consecutive zeroes together; consider the leftmost such pair so that there is a $1$ to the left of them. We can then write $$
    y=b_\ell b_{\ell-1}\cdots b_2100a_ka_{k-1}\cdots a_1
    $$
    and
    $$
    x=b_\ell^xb_{\ell-1}^x\cdots b_1^x00a_k^xa_{k-1}^x\cdots a_1^x,\hspace{0.05in}
    z=b_\ell^zb_{\ell-1}^z\cdots b_1^z00a_k^za_{k-1}^z\cdots a_1^z.
    $$ 
    Note that $b_1^x+b_1^z=1$ so exactly one of the two is $1.$ Then consider $$
    x'=b_\ell^xb_{\ell-1}^x\cdots b_2^x011a_k^xa_{k-1}^x\cdots a_1^x,\hspace{0.05in}
    z'=b_\ell^zb_{\ell-1}^z\cdots b_2^z001a_k^za_{k-1}^z\cdots a_1^z.
    $$ 
    Observe that $x'+z'=y,$ $x$ and $z'$ share no $1$'s in their binary representations, and neither do $z$ and $x'.$ Thus, $w(x,y,z)=w(x,z')*w(x',z)$ is a representation of $w(x,y,z)$ as a concatenation of Ulam words. Similarly, if we define $$
    x''=b_\ell^xb_{\ell-1}^x\cdots b_2^x001a_k^xa_{k-1}^x\cdots a_1^x,\hspace{0.05in}
    z''=b_\ell^zb_{\ell-1}^z\cdots b_2^z011a_k^za_{k-1}^z\cdots a_1^z,
    $$ 
    then $w(x,y,z)=w(x,z'')*w(x'',z)$ is a second such representation. Hence, by the same reasoning as before, $w(x,y,z)$ is not Ulam (independent of $w(x,z)$ being Ulam).
\end{proof}
The splitting of the algorithm into two steps is particularly useful because, except in one very special case (when $y$ is $2$ less than a power of $2$), the update step is in fact quite simple to describe; it will just remove all pseudo-Ulam words satisfying $x+z\equiv0\pmod2.$ However, in order to understand how this works, we need to understand the fundamental reason why these words are removed.
\begin{lemma}\label[lemma]{pseudo-ulam-even}
Suppose that $x+z\equiv0\pmod2$ and $y\neq2^{k+1}-2.$ If $w(x,y,z)$ is pseudo-Ulam, then the words $w(x-1,y,z)$ and $w(x,y,z-1)$ are both also pseudo-Ulam.
\end{lemma}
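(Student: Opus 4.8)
The plan is to convert ``pseudo-Ulam'' into a bitwise count using Lucas's criterion (\Cref{lucas}) and then compute with generating functions. Since $w(u,v)$ is Ulam exactly when the binary expansions of $u$ and $v$ share no $1$, setting $g_u(t):=\sum_{a:\,u\wedge a=0}t^a=\prod_{i:\,u_i=0}(1+t^{2^i})$ (with $\wedge$ bitwise AND and $u_i$ the $i$-th binary digit), the total number of ways to write $w(x,y,z)=w(x,a)*w(y-a,z)$ with both factors Ulam is $N(x,y,z):=[t^{y}]\,g_x(t)g_z(t)$, of which at most one (the pair $a=z$, which occurs only if $x+z=y$, forcing $y$ even) uses two equal words and is therefore not a legal representation. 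Now $2^{k+1}-2$ is the unique even Zumkeller number in $[2^{k},2^{k+1})$, so for $y\neq 2^{k+1}-2$ the case $x+z=y$ either makes $y$ non-Zumkeller or, since $x+z$ is even, is impossible; in the non-Zumkeller case \Cref{x+z-case} forces $N(x,y,z)\ge 3$ whenever that equal-word pair exists. Hence under the hypotheses ``$w(x,y,z)$ pseudo-Ulam'' $\Leftrightarrow$ ``$N(x,y,z)=1$'', and it suffices to prove $N(x-1,y,z)=1$ and to re-derive the same equivalence for $w(x-1,y,z)$; the statement for $w(x,y,z-1)$ then follows by applying the result to the reversed word $w(z,y,x)$ and reversing back, as reversal preserves pseudo-Ulamness and the parity of $x+z$. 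If $x=0$ there is nothing to prove.

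Next I would record the identities $g_u(t)=(1+t)g_{u/2}(t^{2})$ for $u$ even, $g_u(t)=g_{(u-1)/2}(t^{2})$ for $u$ odd, and consequently $g_{u-1}(t)=(1+t)g_u(t)$ for $u$ odd. Since $x+z$ is even, $x\equiv z\pmod 2$. If $x,z$ are both odd, then $g_x,g_z$ involve only even powers of $t$, so $N(x,y,z)=1$ forces $y$ even, whence $N(x,y-1,z)=0$ and $N(x-1,y,z)=[t^{y}](1+t)g_xg_z=N(x,y,z)+N(x,y-1,z)=1$. If $x,z$ are both even, write $x=2x'$, $z=2z'$: then $g_xg_z=(1+t)^{2}g_{x'}(t^{2})g_{z'}(t^{2})$ and $g_{x-1}g_z=(1+t)g_{x'-1}(t^{2})g_{z'}(t^{2})$, so (using that $N(x,y,z)=1$ forces $y$ even) extracting $[t^{y}]$ gives $N(x,y,z)=N(x',\frac y2,z')+N(x',\frac y2-1,z')$ and $N(x-1,y,z)=N(x'-1,\frac y2,z')$. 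This reduces the problem to the purely formal claim $P(X,Y,Z)$: \emph{if $N(X,Y,Z)+N(X,Y-1,Z)=1$ then $N(X-1,Y,Z)=1$}. When $X$ is odd, $P$ is immediate from $g_{X-1}=(1+t)g_X$. When $X$ is even, the same peeling of a factor $(1+t)$ from $g_X$ and $g_{X-1}$ rewrites both sides and reduces $P(X,Y,Z)$ to $P(X/2,\lfloor Y/2\rfloor,\ast)$; since $x$ is a positive even integer, after finitely many halvings the first coordinate becomes odd and the induction terminates.

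The main obstacle is not the generating-function manipulation but the bookkeeping around it: tracking how the three parities evolve under repeated halving, confirming that ``$N=1$'' keeps forcing the middle coordinate even so the reductions remain valid, and — most delicately — verifying that ``pseudo-Ulam $\Leftrightarrow N=1$'' still holds for $w(x-1,y,z)$ and $w(x,y,z-1)$, whose $x+z$ has the opposite parity and where an accidental equal-word representation (now along $x-1+z=y$, i.e.\ $x+z=y+1$, rather than $x+z=y$) is the only threat. Ruling that out is precisely where the hypothesis $y\neq 2^{k+1}-2$ is essential: for $y=2^{k+1}-2$ one genuinely can have $w(x,y,z)$ pseudo-Ulam with $x+z=y$ while $w(x-1,y,z)$ acquires a second representation and is not pseudo-Ulam (for instance $w(2,6,4)$ versus $w(1,6,4)$), so the excluded value really does break the conclusion and the proof must invoke it to eliminate this diagonal family.
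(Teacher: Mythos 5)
Your proposal is correct, and it takes a genuinely different route from the paper. The paper argues locally: it takes the unique splitting index $a$ of $w(x,y,z),$ splits into cases on the parities of $x,z$ (and then of $a,y-a$), uses \Cref{ulam-from-less} to build one explicit representation of $w(x-1,y,z),$ and rules out any second one by contradiction. You instead encode the entire representation count as a coefficient, $N(x,y,z)=[t^y]g_x(t)g_z(t)$ with $g_u(t)=\prod_{i:\,u_i=0}(1+t^{2^i}),$ so that \Cref{ulam-from-less} becomes the single identity $g_{u-1}=(1+t)g_u$ for odd $u$ and all uniqueness bookkeeping collapses into ``a nonnegative integer combination of coefficients equals $1$.'' I checked the reductions: the both-odd case is immediate; the both-even case correctly yields $N(x,y,z)=N(x',\tfrac y2,z')+N(x',\tfrac y2-1,z')$ and $N(x-1,y,z)=N(x'-1,\tfrac y2,z'),$ and your claim $P$ does close under halving in all parity subcases of $Y$ and $Z$ (the apparent freedom is killed each time by ``the coefficient sum equals $1$,'' which forces the offending terms to vanish), terminating when the first coordinate becomes odd. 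Your handling of the degenerate equal-word representation is also sound and matches the paper's: $x+z=y$ with $x+z$ even forces $y$ even, the only even Zumkeller number in $[2^k,2^{k+1})$ is $2^{k+1}-2,$ and \Cref{x+z-case} then shows the equal-word pair can never coexist with pseudo-Ulamness, so ``pseudo-Ulam $\Leftrightarrow N=1$'' for both $w(x,y,z)$ and (since $y$ is even while $(x-1)+z$ is odd) for $w(x-1,y,z).$ What the two approaches buy: yours is more systematic and makes the uniqueness arguments mechanical, at the cost of an extra descent (claim $P$) on the $2$-adic valuation of $x$ that the paper's one-step local argument avoids; the paper's is more elementary but requires more delicate ``no other representation exists'' contradictions. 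Your counterexample $w(2,6,4)$ versus $w(1,6,4)$ for $y=2^{k+1}-2$ is correct and consistent with \Cref{almost-checkerboard}.
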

\begin{proof}
    The proof fundamentally relies on constructing parity-based bijections between the single representation of $w(x,y,z)$ and the resulting unique representations of $w(x-1,y,z)$ and $w(x,y,z-1).$
    
    % First, by \Cref{not-too-small}, any pseudo-Ulam word $w(x,y,z)$ satisfies $x+z\geq y.$ Since we also have
    First, by the condition that $x+z\equiv0\pmod2,$ the only possibility for $x+z=y$ is if $y$ is even. However, we then see that $y$ is not Zumkeller as by assumption it does not equal $2^{k+1}-2.$ In particular, by \Cref{x+z-case}, a representation of $w(x,y,z)$ as a concatenation of equal Ulam words does not affect the final state of $w(x,y,z).$ If $x+z\neq y,$ then there are no such representations so this is a non-issue regardless. Thus, we will assume without loss of generality that any representation of $w(x,y,z)$ consists of distinct Ulam words.
    
    By assumption, there is exactly one such representation, i.e., $0\leq a\leq y$ such that $w(x,a)$ and $w(y-a,z)$ are distinct Ulam words. Without loss of generality, assume that $a\neq z.$ We now split into cases based on the parity of $x+z.$
    \begin{case}
    If $x,z$ are odd, then observe that $$
    w(x-1,y,z)=w(x-1,a)*w(y-a,z)
    $$
    is a representation as a concatenation of two (distinct since $a\neq z$) Ulam words. Suppose by way of contradiction that there is also some other representation $$
    w(x-1,y,z)=w(x-1,b)*w(y-b,z).
    $$
    Then $b$ must be odd, as otherwise $w(x,b)*w(y-b,z)$ is an additional representation of $w(x,y,z).$ On the other hand, $y-b$ is even because $z$ is odd. Therefore, $y$ is odd and so one of $a,y-a$ is odd. But this is a contradiction since then one of $w(x,a)$ and $w(y-a,z)$ is non-Ulam (because both $x$ and $z$ are odd). Hence, there can be no other representation and so $w(x-1,y,z)$ is pseudo-Ulam. Observe that $x\neq y-a$ as well because otherwise $y-a$ is odd and so $w(y-a,z)$ is not Ulam. Therefore, by similar reasoning $w(x,y,z-1)$ is also pseudo-Ulam.
    \end{case}
    \begin{case}
    Now, suppose instead that $x,z$ are both even. Since there is a unique $a$ such that $w(x,a)$ and $w(y-a,z)$ are Ulam and distinct, we must have that for any other $a'$ the words $w(x,a')$ and $w(y-a',z)$ cannot both be Ulam. If $y$ were odd, then exactly one of $a$ and $y-a$ would be odd, without loss of generality $a,$ so then $$
    w(x,y,z)=w(x,a-1)*w(y-a+1,z)
    $$
    would be an alternative representation as a concatenation of (distinct due to parity) Ulam words, contradicting uniqueness. Thus, $y$ is even. It is now important to consider the parity of $a.$
    \begin{subcase}
    If $a$ and $y-a$ are both even, then observe that $w(y-a+1,z)$ is a valid Ulam word since $z$ is even and $y-a$ is even as well. But $w(x,y,z)=w(x,a-1)*w(y-a+1,z),$ so either the words are the same or one is not Ulam. Trivially the words are distinct because $x$ and $y-a+1$ are of different parities. Therefore, we must have that $w(x,a-1)$ is not Ulam. Hence, by \Cref{ulam-from-less}, $w(x-1,a)$ is Ulam. In particular, we obtain a representation $$
    w(x-1,y,z)=w(x-1,a)*w(y-a,z)
    $$
    as a concatenation of distinct (since $a\neq z$) Ulam words. We now show that no other such representation exists. Supposing we also had $$
    w(x-1,y,z)=w(x-1,b)*w(y-b,z)
    $$
    for some $b\neq a,$ we immediately notice that $b$ must be even (since $x-1$ is odd), and so $y-b$ is as well. But observe that $y-b$ is even and so is $z,$ so $w(y-b+1,z)$ is also Ulam. Again by \Cref{ulam-from-less}, exactly one of $w(x,b)$ and $w(x,b-1)$ is Ulam. In particular, we have either an additional (since $b\neq a$ by assumption and $b-1\neq a$ because $b$ is even) representation $$
    w(x,y,z)=w(x,b)*w(y-b,z)\text{ or }w(x,y,z)=w(x,b-1)*w(y-b+1,z)
    $$ 
    of $w(x,y,z)$ as a concatenation of distinct (by the argument in the second paragraph of the proof) Ulam words, a contradiction. %In the latter representation the words are trivially distinct by a parity argument, and therefore we have an alternate (because $a\neq b-1$ since $a,b$ are both even) representation of $w(x,y,z),$ contradicting the fact that $w(x,y,z)$ is Ulam. In the former case, if the words were the same, then remark that $x$ and $z$ are both even so that $y=x+z$ is even. But $y$ is not $2$ less than a power of $2$ as in our conditions, so it in fact has at least two $0$'s in its binary representation. Therefore, by \Cref{x+z-case}, we have that $w(x,y,z)$ is not (pseudo-)Ulam, contradiction. Therefore, the words are not the same, so this gives us an alternate representation of $w(x,y,z)$ as a concatenation of distinct Ulam words $w(x,y,z)=w(x,b)*w(y-b,z)$ (we assumed that $b\neq a$), also a contradiction. Hence, regardless, we reach a contradiction, implying that there is no other representation for $w(x-1,y,z)$ as a concatenation of two distinct Ulam words with one $1,$ implying that it is pseudo-Ulam and thus by the inductive hypothesis in fact Ulam. Similarly, $w(x,y,z-1)$ is also Ulam. In particular, there are in fact three distinct representations of $w(x,y,z)$ as a concatenation of distinct Ulam words, so it is certainly not Ulam.
    
    Hence, $w(x-1,y,z)$ has a unique representation as a concatenation of distinct Ulam words, so it is pseudo-Ulam. If $x\neq y-a,$ then the same argument as above works to show that $w(x,y,z-1)$ is also pseudo-Ulam. Otherwise, $w(x,y,z)=w(x,a)*w(x,z)$ is the unique representation. In particular, again we see that $w(x,z-1)$ is Ulam. The only issue in the above argument that may arise is if $a=z-1,$ so that the resulting representation $w(x,y,z-1)=w(x,a)*w(x,z-1)$ consists of two equal words. But this is impossible, as both $a$ and $z$ were assumed to be even. Hence, $w(x,y,z-1)$ is still pseudo-Ulam, as desired.
    \end{subcase}
    \begin{subcase}
    If, now, $a$ and $y-a$ are both odd, then observe that $w(y-a-1,z)$ is Ulam since $y-a$ is odd. But since $w(x,y,z)$ is pseudo-Ulam, we cannot have $w(x,y,z)=w(x,a+1)*w(y-a-1,z)$ as an alternate representation of $w(x,y,z)$ as a concatenation of two (distinct by assumption) Ulam words. Therefore, one of the words in the representation must not be Ulam---specifically, $w(x,a+1).$ Hence, by \Cref{ulam-from-less}, $w(x-1,a+1)$ is Ulam. This therefore gives a representation $$
    w(x-1,y,z)=w(x-1,a+1)*w(y-a-1,z).
    $$ 
    The words are distinct because $x-1\neq y-a-1$ by parity. We now show that no other such representation exists. To this end, suppose we have an alternate decomposition $$
    w(x-1,y,z)=w(x-1,b)*w(y-b,z),
    $$ 
    where $b\neq a+1.$ Observe that $b$ is even since $x-1$ is odd, and thus since $y$ is even, we must have that $y-b$ is even as well. But therefore, since $z$ is even, we must have that $w(y-b+1,z)$ is also Ulam. As before, by \Cref{ulam-from-less}, exactly one of $w(x,b)$ or $w(x,b-1)$ is Ulam. In particular, we have either an additional (since $b\neq a+1$ by assumption and $b-1\neq a+1$ by parity) representation $$
    w(x,y,z)=w(x,b)*w(y-b,z)\text{ or }w(x,y,z)=w(x,b-1)*w(y-b+1,z)
    $$
    of $w(x,y,z)$ as a concatenation of Ulam words, in any case a contradiction. %Since $b\neq a+1$ and is even, both are distinct from the representation we already have. We then repeat the previous argument to see that in either of these cases, we will obtain another valid representation of $w(x,y,z)$ as a concatenation of two distinct Ulam words, contradicting its pseudo-Ulamness. Hence, $w(x-1,y,z)$ cannot have any other representations as a concatenation of two Ulam words with one $1,$ so it is pseudo-Ulam and by the inductive hypothesis in fact Ulam. Similarly, $w(x,y,z-1)$ is also Ulam. In particular, $w(x,y,z)$ has three representations as a concatenation of smaller Ulam words, so it is certainly not Ulam. 
    Thus, $w(x-1,y,z)$ has no additional representations, so it must be pseudo-Ulam. Similarly, so is $w(x,y,z-1).$
    \end{subcase}
    \end{case}
    Hence, if $w(x,y,z)$ is pseudo-Ulam, then both $w(x-1,y,z)$ and $w(x,y,z-1)$ are as well, as desired.
\end{proof}
Therefore, if pseudo-Ulam words $w(x,y,z)$ with odd $x+z$ are Ulam, then those with even $x+z$ are not. However, we could still have issues with $w(x,y,z)$ with even $x+z$ that are probably not Ulam. We now show that these are in fact unaffected.
% \todo{insert beforehand lemma about borders being non ulam except for $2^{k+1}-1$}
\begin{lemma}\label[lemma]{checkerboard}
    Suppose $w(x,y,z)$ is Ulam and $y\neq2^{k+1}-2.$ Then $x+z\equiv1\pmod2.$
\end{lemma}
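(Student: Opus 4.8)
The plan is to prove \Cref{checkerboard} by strong induction on $n=x+z$, keeping $y$ fixed throughout (so the exclusion $y\neq 2^{k+1}-2$ is a standing hypothesis). For each $n$ I will establish two facts at once: (a) if $n$ is even, no $w(x,y,z)$ with $x+z=n$ is Ulam; and (b) if $w(x,y,z)$ with $x+z=n$ is \emph{probably not Ulam} (has no representation $w(x,a)*w(y-a,z)$ with both factors distinct Ulam words, in the language of \Cref{types-of-ulam}), then $w(x,y,z)$ is not Ulam. Fact (b) for odd $n$ is immediate from (a) at level $n-1$: by the update step such a word is Ulam only if exactly one of $w(x-1,y,z),w(x,y,z-1)$ is Ulam, and these have the even coordinate sum $n-1$. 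Fact (a) at an even level $n$ follows from (b) at the \emph{same} level: if $w(x,y,z)$ is Ulam with $x+z=n$ even, then it is not pseudo-Ulam (otherwise, being Ulam, the update step would force $w(x-1,y,z)$ and $w(x,y,z-1)$ to be non-Ulam, whereas \Cref{pseudo-ulam-even} says they are pseudo-Ulam; the degenerate case where neither predecessor exists is $w(0,y,0)$, which has $y+1\geq 2$ representations and is non-Ulam), and it is not non-Ulam; hence it is probably not Ulam, and then (b) at level $n$ makes it not Ulam, a contradiction. So the whole content is (b) for even $n$.

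To prove (b) for even $n$, assume $w(x,y,z)$ is Ulam, probably not Ulam, with $x+z=n$ even. The update step says exactly one of $w(x-1,y,z),w(x,y,z-1)$ is Ulam. First note that any boundary index collapses the problem onto the diagonal $x+z=y$: if $x=0$ (resp.\ $z=0$) then $w(x,y,z)$ always admits the representation with $a=y$ (resp.\ $a=0$), so ``probably not Ulam'' forces that representation to be into equal factors, i.e.\ $x+z=y$ with $w(x,z)$ Ulam; but then $y$ is even, hence a non-Zumkeller number since $y\neq 2^{k+1}-2$, and \Cref{x+z-case} produces two distinct representations of $w(x,y,z)$ -- contradicting ``probably not Ulam.'' So $x,z\geq 1$, and by the reversal symmetry of Ulamness we may assume $w(x-1,y,z)$ is Ulam and $w(x,y,z-1)$ is not.

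The crux is the claim that $w(x-1,y,z)$ has no representation into two distinct Ulam words; granting this, $w(x-1,y,z)$ is Ulam and probably not Ulam at level $n-1$, contradicting (b) at level $n-1$, and we are done. Suppose instead $w(x-1,y,z)=w(x-1,a)*w(y-a,z)$ with both factors distinct Ulam words. Since $w(x,y,z)$ is probably not Ulam and $w(y-a,z)$ is Ulam, $w(x,a)$ cannot be Ulam (else $w(x,a)*w(y-a,z)$ would represent $w(x,y,z)$ -- a coincidence of the two factors would again land us in the excluded diagonal case via \Cref{x+z-case}); then \Cref{ulam-from-less} forces $w(x,a-1)$ Ulam. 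Arguing symmetrically on the right (again using $w(y-a,z)$ Ulam), $w(y-a+1,z)$ is not Ulam and \Cref{ulam-from-less} forces $w(y-a+1,z-1)$ Ulam. Hence $w(x,a-1)*w(y-a+1,z-1)$ represents $w(x,y,z-1)$ with distinct factors (equal factors would force $x+z=y+1$, impossible for even $x+z$). Now $w(x,y,z-1)$ is not Ulam but has a representation; its $0$-deletion predecessors have coordinate sum $n-2$ (even), so by (a) at level $n-2$ they are not Ulam, and the update step then forces $w(x,y,z-1)$ to have \emph{two or more} representations. Taking a second representation $w(x,b)*w(y-b,z-1)$ with $b\neq a-1$ and running the two \Cref{ulam-from-less} steps in reverse ($z-1\to z$ on the right, $b\to b+1$ on the left) produces either a representation of $w(x,y,z)$ -- contradicting ``probably not Ulam'' -- or a second representation of $w(x-1,y,z)$ into distinct factors -- making $w(x-1,y,z)$ non-Ulam, contradicting that it is Ulam; every degenerate sub-case in which two factors coincide again forces $x+z=y$ and is dispatched by \Cref{x+z-case} using $y\neq 2^{k+1}-2$. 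This completes (b) for even $n$, hence the induction and \Cref{checkerboard}.

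I expect the main obstacle to be the bookkeeping in the last paragraph: one must keep careful track of \emph{which} of the at most two representations of each word is being shifted by \Cref{ulam-from-less}, verify that none of the shifts runs off the available range of indices, and check that every ``two factors become equal'' or boundary configuration ($a=0$, $x=0$, $z=0$) collapses uniformly into the single diagonal case $x+z=y$, which is killed once and for all by \Cref{x+z-case}. The hypothesis that $x+z$ is even is precisely what makes all the would-be-equal-factor coincidences live on an odd diagonal, and hence be vacuous.
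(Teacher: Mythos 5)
Your proof takes essentially the same route as the paper's: the same induction on $x+z$, the same use of \Cref{pseudo-ulam-even} to dispose of the pseudo-Ulam case, and the same two applications of \Cref{ulam-from-less} to transport representations between $w(x-1,y,z)$ and $w(x,y,z-1)$ (the paper packages this transport as a bijection showing the two predecessors are in the same state, rather than your forward-then-backward contradiction, but the content is identical), with the diagonal $x+z=y$ handled by \Cref{x+z-case} in both cases. One justification needs repair: you dismiss the coincidence $w(x,a-1)=w(y-a+1,z-1)$ on the grounds that it forces $x+z=y+1$, ``impossible for even $x+z$'' --- but $y+1$ is even whenever $y$ is odd, so parity does not rule it out. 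The correct (and sufficient) observation, which the paper makes, is that this coincidence condition ($a=z$ and $x=y-a+1$, i.e.\ $x-1=y-a$) is exactly the coincidence condition for the source representation $w(x-1,a)*w(y-a,z)$, which was assumed to consist of distinct words; distinctness is therefore inherited, not deduced from parity. The same fix is needed for the corresponding step of your reverse map.
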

This means, in particular, that the update step cannot alter the state of any $w(x,y,z)$ with $x+z\equiv1\pmod2,$ as neither $w(x-1,y,z)$ nor $w(x,y,z-1)$ can be Ulam.
\begin{proof}
    Our proof incorporates \Cref{pseudo-ulam-even}, from which it is evident that it suffices by induction (the base case being words of the form $w(0,y,0),$ all of which are trivially not Ulam because of the representations $1*w(y,0)$ and $w(0,y)*1$) to show that probably not Ulam words $w(x,y,z)$ with $x+z\equiv0\pmod2$ are not Ulam. Indeed, assuming this fact, suppose that all words $w(x,y,z)$ with $x+z<n$ and $x+z\equiv0\pmod2$ are not Ulam. If $w(x,y,z)$ with $x+z=n\equiv0\pmod2$ is Ulam, then it must be pseudo-Ulam. But then it is not Ulam since both $w(x-1,y,z)$ and $w(x,y,z-1)$ are pseudo-Ulam and hence, by the inductive hypothesis, Ulam (thereby completing the inductive step). As in the proof of \Cref{pseudo-ulam-even}, we may assume that any potential representation of $w(x,y,z)$ consists of distinct Ulam words.

    % Fix an arbitrary $y\neq2^{k+1}-2,$ and we proceed by induction on $x$ and $z.$ The base case occurs when $x=z=0.$ By \Cref{not-too-small}, if $w(0,y,0)$ is Ulam then $y\leq0$ and thus $y=0.$ But $w(0,0,0)=11$ is trivially not Ulam. Thus, $w(0,y,0)$ is not Ulam and the lemma holds in this case.

    % Now, for a fixed $x$ and $z,$ suppose that for all $x'\leq x,z'\leq z$ with $x'+z'<x+z$ we have that $w(x',y,z')$ is Ulam only 
    % if $x'+z'\equiv1\pmod2.$ In particular, for any $x',z'$ with $x'+z'\equiv1\pmod2,$ the inductive hypothesis gives that $w(x',y,z')$ is Ulam if and only if it is pseudo-Ulam (since neither $w(x'-1,y,z')$ nor $w(x',y,z'-1)$ can be Ulam).
    
    % Suppose first that $x+z\equiv0\pmod2.$ We split into cases on the state of $w(x,y,z)$ prior to the update step. If it is pseudo-Ulam, then \Cref{pseudo-ulam-even} implies that $w(x-1,y,z)$ and $w(x,y,z-1)$ are both pseudo-Ulam. But by the inductive hypothesis, we know that they are both in fact Ulam, implying that we have two more representations $w(x,y,z)=0*w(x-1,y,z)=w(x,y,z-1)*0.$ Therefore, in this case, $w(x,y,z)$ is not Ulam.
    
    Assume, therefore, that $w(x,y,z)$ is probably not Ulam (it cannot be expressed as $w(x,a)*w(y-a,z)$ for any $0\leq a\leq y$). Note that this implies that $x,z>0$; if not, then without loss of generality $z=0,$ so then we have $w(x,y,0)=w(x,0)*w(y,0),$ which is a decomposition of $w(x,y,0)$ into two Ulam words that are distinct as long as $x\neq y.$ Therefore, $w(x,y,0)$ is either pseudo-Ulam or not Ulam. If, on the other hand, $x=y,$ then first recall that $x+z=x\equiv0\pmod2$ (by our assumption), so $x$ is even. In particular, $w(x,1)$ is Ulam, and so we have the decomposition into Ulam words $w(x,x,0)=w(x,1)*w(x-1,0),$ which works as long as $x\neq0,$ which holds because $x=0$ was the base case. Hence, $w(x,x,0)$ is also either pseudo-Ulam or not Ulam. 
    
    Assuming now that $x,z>0,$ we wish to show that among $w(x-1,y,z)$ and $w(x,y,z-1),$ either neither or both are Ulam. By the induction hypothesis, the latter condition is equivalent to these words both being pseudo-Ulam. We in fact construct a bijection between their representations. %We now split into cases based on the parity of $x$ (the same as that of $z$).
    Consider, therefore, any decomposition of $$
    w(x-1,y,z)=w(x-1,a)*w(y-a,z)
    $$
    into (distinct) Ulam words. %Remark that $z$ is odd and thus $y-a$ is even. 
    Now by assumption, there are no representations of $w(x,y,z)$ as a concatenation of Ulam words with one $1.$ %In particular, the only way for the word $w(x,a)$ to be Ulam (and thus for the decomposition $w(x,y,z)=w(x,a)*w(y-a,z)$ to exist) is if $w(x,a)=w(y-a,z),$ so $y=x+z$ and $a=z.$ However, then $x-1+z=y-1<y$ but $w(x-1,y,z)$ is Ulam, contradicting \Cref{not-too-small}. 
    But then $w(x,a)$ cannot be Ulam, as otherwise we would have the (distinct by assumption) representation $w(x,y,z)=w(x,a)*w(y-a,z).$ Thus, by \Cref{ulam-from-less}, $w(x,a-1)$ is Ulam. Moreover, considering the decomposition $w(x,y,z)=w(x,a-1)*w(y-a+1,z),$ we see that $w(y-a+1,z)$ is not Ulam, implying again by \Cref{ulam-from-less} that $w(y-a+1,z-1)$ is Ulam. Then we get the representation $$
    w(x,y,z-1)=w(x,a-1)*w(y-a+1,z-1).
    $$ 
    The words in the decomposition are distinct because (from the above decomposition of $w(x-1,y,z)$) either $x-1\neq y-a$ or $a\neq z.$ In particular, we have constructed a map from representations of $w(x-1,y,z)$ as a concatenation of distinct Ulam words to those of $w(x,y,z-1).$ But the same reverse map can be constructed, and it is easy to see that they are inverses. Thus, decompositions of $w(x-1,y,z)$ and $w(x,y,z-1)$ are in bijection and so they have the same state, as desired.

    In summary, we see that when $x+z\equiv0\pmod2,$ if $w(x,y,z)$ is pseudo-Ulam or probably not Ulam, then after the update step it is simply not Ulam. This completes the inductive step.
    
    If, on the other hand, $x+z\equiv1\pmod2,$ then by the inductive hypothesis we have that neither $w(x-1,y,z)$ nor $w(x,y,z-1)$ is Ulam, so $w(x,y,z)$ is Ulam if and only if it has a unique representation as a concatenation of distinct Ulam words with one $1$---i.e. it is pseudo-Ulam. This also completes the inductive step.
    % 
    % Thus, induction holds and the statement is proved.
    \end{proof}
    % In particular, we are now in shape to prove \Cref{culam-to-ulam}.
    \begin{corollary}[\Cref{culam-to-ulam}]
    If $y$ is not a Zumkeller number, then $\ulam[y]$ is the set of $(x,z)$ such that $x+z\equiv1\pmod2$ and $\culam[y](x,z)=1.$
    \end{corollary}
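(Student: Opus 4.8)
The plan is to deduce this directly from \Cref{checkerboard} together with the inventory of decompositions of $w(x,y,z)$ set up at the beginning of \Cref{parity-section}. First I would record the two features of a non-Zumkeller $y$ that we need: its binary expansion has at least two $0$'s, so in particular $y\neq 2^{k+1}-2$ (whose expansion $\underbrace{1\cdots1}_{k}0$ has exactly one $0$), and hence \Cref{checkerboard} is available for this $y$: every Ulam word $w(x',y,z')$ has $x'+z'$ odd. In one stroke this settles the parity half of the claim and the case $x+z\equiv 0\pmod 2$, where $(x,z)\notin\ulam[y]$, matching the right-hand side.

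It then remains to treat $x+z\equiv 1\pmod 2$ and show $(x,z)\in\ulam[y]\iff\culam[y](x,z)=1$. Recall from \Cref{parity-section} that the only ways to split $w(x,y,z)$ into two shorter Ulam words are the caveman splits $w(x,a)*w(y-a,z)$ with $0\le a\le y$, together with $0*w(x-1,y,z)$ (when $x\ge 1$) and $w(x,y,z-1)*0$ (when $z\ge 1$). When $x+z$ is odd, the outer exponent sums of $w(x-1,y,z)$ and of $w(x,y,z-1)$ are both even, so \Cref{checkerboard} rules those two words out as Ulam words; hence neither boundary split yields a concatenation of Ulam words, and $w(x,y,z)$ is Ulam exactly when it admits a single caveman split into two \emph{distinct} Ulam words, i.e.\ exactly when it is pseudo-Ulam (\Cref{types-of-ulam}) — which is how the proof of \Cref{checkerboard} already leaves matters.

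The remaining point — and the one place where a little care is genuinely needed — is to match ``pseudo-Ulam'' with ``$\culam[y](x,z)=1$''. The label $\culam[y](x,z)$ counts caveman splits into Ulam words (capped at $2$), and the only way it could differ from the pseudo-Ulam count is through the split with $a=z$, $x=y-a$, whose two factors are the equal words $w(x,z)$; this occurs only when $x+z=y$. In that case $y=x+z$ is itself non-Zumkeller, so \Cref{x+z-case} applies: if $w(x,z)$ is Ulam there are at least two caveman splits into distinct Ulam words, so $(x,z)$ is neither pseudo-Ulam nor labeled $1$ (it is labeled $2$); and if $w(x,z)$ is not Ulam, the equal-words split contributes to neither count. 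Hence in all cases $w(x,y,z)$ is pseudo-Ulam iff $\culam[y](x,z)=1$, and combining this with the previous paragraphs yields exactly the asserted description of $\ulam[y]$. I expect the argument to be short; the only real obstacle is this distinctness subtlety, which is precisely what \Cref{x+z-case} was designed to absorb.
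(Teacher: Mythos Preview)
Your proof is correct and follows essentially the same route as the paper's own argument: both invoke \Cref{checkerboard} (available since a non-Zumkeller $y$ cannot equal $2^{k+1}-2$) to force $x+z$ odd and to kill the two boundary splits $0*w(x-1,y,z)$ and $w(x,y,z-1)*0$, and both use \Cref{x+z-case} to absorb the equal-words split $w(x,z)*w(x,z)$ when $x+z=y$, thereby identifying ``pseudo-Ulam'' with $\culam[y](x,z)=1$. The only cosmetic difference is that the paper treats the two implications separately while you give a unified biconditional, but the content is the same.
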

    \begin{proof}
        Suppose $w(x,y,z)$ is Ulam. By \Cref{checkerboard}, we know immediately that $x+z\equiv1\pmod2,$ and by \Cref{x+z-case}, if $x+z=y,$ then $w(x,z)$ is not Ulam. In addition, since $x+z\equiv1\pmod2,$ we now see that neither $w(x-1,y,z)$ nor $w(x,y,z-1)$ is Ulam, again by \Cref{checkerboard}, and so all possible representations of $w(x,y,z)$ as a concatenation of two Ulam words are of the form $w(x,a)*w(b,z).$ Moreover, we know that the words must be distinct since $w(x,z)*w(x,z)$ cannot be a valid representation (even if $x+z=y$). But this means that there are exactly $\culam[y](x,z)$-many representations of $w(x,y,z)$ as a concatenation of two distinct Ulam words, implying since $w(x,y,z)$ is Ulam that $\culam[y](x,z)=1.$ Thus, one direction is proved. 
        
        Conversely, if we do indeed have $x+z\equiv1\pmod2$ and $\culam[y](x,z)=1,$ then again by \Cref{x+z-case}, the word $w(x,z)$ is not Ulam if $x+z=y.$ Thus, there exists a unique representation $w(x,y,z)=w(x,a)*w(b,z),$ and the words must be distinct. Since $x+z\equiv1\pmod2,$ we know that neither $w(x-1,y,z)$ nor $w(x,y,z-1)$ is Ulam. Hence, there exists a unique representation of $w(x,y,z)$ as a concatenation of two distinct Ulam words, and thus $w(x,y,z)$ is indeed Ulam, as desierd.
    \end{proof}
    For $y=2^{k+1}-2,$ things are slightly more complicated. It is Zumkeller, so the case of $x+z=y$ becomes relevant. In particular, in the set of Ulam words $w(x,y,z),$ there are exceptional cases with $x+z$ even.
    \begin{lemma}\label[lemma]{almost-checkerboard}
    Suppose $y=2^{k+1}-2$ for some $k>0$ and say $w(x,y,z)$ is Ulam. Then $x+z\equiv1\pmod2$ unless $x+z=2^{k+1}-2$ or $x+z=2^{k+1}.$ In the former case, $w(x,y,z)$ is Ulam if and only if $x,z\equiv0\pmod2$ and in the latter, if and only if $x,z>0.$
    \end{lemma}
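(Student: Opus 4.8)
The plan is to rerun the inductive argument behind \Cref{checkerboard}, keeping careful track of the single place where it breaks when $y=2^{k+1}-2$. By \Cref{not-too-small} we may assume $x+z\geq y$. Inspecting the proofs of \Cref{pseudo-ulam-even,checkerboard}, the hypothesis $y\neq 2^{k+1}-2$ is used only to dismiss the ``equal-words'' representation $w(x,z)\ast w(x,z)$, which occurs exactly on the diagonal $x+z=y$; everywhere else the parity bijections survive unchanged. So the strategy is: (i) analyze the base diagonal $x+z=y$ directly; (ii) use (i) to pin down the two ``transition'' diagonals $x+z=y+1$ and $x+z=y+2$; (iii) restart the induction of \Cref{checkerboard} from the diagonal $x+z=y+3$ upward, checking that the anomalies on the bottom three diagonals do not cascade. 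Throughout, ``pseudo-Ulam'' and ``probably not Ulam'' count representations $w(x,a)\ast w(y-a,z)$ into \emph{distinct} Ulam words, as in the calculation step; I also use repeatedly that the two predecessors $w(x-1,y,z)$ and $w(x,y,z-1)$ of a point on the diagonal $x+z=m$ lie on the diagonal $x+z=m-1$, and hence are non-Ulam whenever $m-1<y$ by \Cref{not-too-small}.

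\textbf{The diagonal $x+z=y$.} Since $2^{k+1}-2$ has binary digit $1$ in places $1,\dots,k$ and $0$ in place $0$, a one-line Lucas computation (\Cref{lucas}) gives that $w(x,z)$ is Ulam exactly when $x$ and $z$ are both even, in which case they are complementary sub-masks of $y$. I claim that on this diagonal $w(x,y,z)$ is Ulam iff $w(x,z)$ is Ulam, which yields both the parity exception at $x+z=y$ and its ``iff $x,z\equiv0\pmod2$'' description. If $w(x,z)$ is not Ulam there is no equal-words representation, and a carry analysis in the spirit of \Cref{x+z-case} --- now genuinely using that $y$ has its unique $0$ in the units place, so that the ``slide a $1$ past a $0$'' move of that proof is either blocked or can be performed in two ways --- shows the number of distinct-word representations of $w(x,y,z)$ is never exactly $1$; its predecessors being non-Ulam, the update step leaves $w(x,y,z)$ non-Ulam. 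If $w(x,z)$ is Ulam, the same bookkeeping (peeling off the units-place zero) produces exactly one distinct-word representation, so $w(x,y,z)$ is pseudo-Ulam; its predecessors are again non-Ulam, so $w(x,y,z)$ is Ulam.

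\textbf{The diagonals $x+z=y+1$ and $x+z=y+2$.} On $x+z=2^{k+1}-1$ exactly one of $x,z$ is odd, so exactly one predecessor has both coordinates even, hence (by the previous paragraph) is Ulam while the other is not; the update step therefore makes $w(x,y,z)$ Ulam iff it is probably not Ulam. But $2^{k+1}-1$ is all $1$'s in binary, so its additive splittings are carry-free, and a short carry analysis shows one can always split $0^y$ to land disjoint supports on the two sides; thus $w(x,y,z)$ always has a distinct-word representation and is never probably not Ulam, so no point of this diagonal is Ulam. On $x+z=2^{k+1}$: if $x=0$ then $w(0,a)$ is Ulam for every $a$, and $w(y-a,z)$ is Ulam for every $a$ as well (its left exponent is $<2^{k+1}$, disjoint from the single high bit of $z=2^{k+1}$), giving more than two representations, so $w(0,y,z)$ is non-Ulam; likewise when $z=0$. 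If $x,z>0$, a representation count leaves a unique distinct-word representation, so $w(x,y,z)$ is pseudo-Ulam, and since its predecessors lie on the (now understood, empty) diagonal $y+1$ and are non-Ulam, $w(x,y,z)$ is Ulam. This is exactly the ``$x+z=2^{k+1}$, iff $x,z>0$'' clause.

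\textbf{The upper region and the main obstacle.} For $x+z\geq y+3$ the equal-words phantom is absent, so the parity bijections of \Cref{pseudo-ulam-even} and \Cref{checkerboard} apply unchanged and give $w(x,y,z)$ Ulam $\Rightarrow x+z\equiv1\pmod2$, \emph{provided} the induction of \Cref{checkerboard} is correctly seeded at the diagonal $x+z=y+3$. The only thing left to check is that the anomalous diagonal $x+z=2^{k+1}$ feeds the expected parity of Ulam-predecessors into $x+z=2^{k+1}+1$: concretely, that a point with $x+z=2^{k+1}+1$ has an even number of Ulam predecessors when it is pseudo-Ulam and an odd number when it is probably not Ulam --- the bookkeeping of \Cref{pseudo-ulam-even,checkerboard}, but with the predecessor diagonal described by the ``$x,z>0$'' rule of the previous paragraph instead of a generic parity statement. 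I expect this to be the main obstacle, because $2^{k+1}$ is \emph{not} one of the ``guaranteed full'' diagonals $x+z\equiv-1\pmod{2^{k+1}}$, so it cannot simply be quoted, and the check requires a dedicated case split mirroring the three cases of \Cref{pseudo-ulam-even}. Once this seeding is established the induction closes, and together with the three special diagonals this proves the lemma.
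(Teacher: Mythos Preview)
Your overall plan is exactly the paper's: trace the anomaly born from the equal-words representation through the diagonals $x+z=y,\,y+1,\,y+2,\,y+3$, show it dies there, and restart the checkerboard induction. But the proposal leaves as bare assertions precisely the three places where the paper does nontrivial work.

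On $x+z=y$, your ``carry analysis in the spirit of \Cref{x+z-case}'' is not a proof (and \Cref{x+z-case} itself does not apply, since $y$ is Zumkeller). The paper (its Case~1) argues that when $x,z$ are even any distinct-word representation must have $a$ odd, then halves everything twice to reduce to decompositions of $w((x-2)/4,2^{k-1}-1,z/4)$, where a digit-complementarity argument forces a \emph{single} decomposition, which unwinds to the explicit representation $w(x,z+1)\ast w(x-1,z)$. On $x+z=2^{k+1}$ with $x,z>0$, you simply assert uniqueness; the paper (Case~3) writes $x=2^a x',\,z=2^a z'$ with $x',z'$ odd, strips the low $a$ bits, and reduces to the already-established Case~1 uniqueness for the smaller value $2^{k+1-a}-2$, then pins down the low bits separately---this reduction is the content, not a side remark. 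Finally, on $x+z=2^{k+1}+1$, which you correctly flag as the main obstacle and then leave open, the paper (Case~4) shows that for $x,z>1$ there are \emph{zero} representations $w(x,a)\ast w(y-a,z)$ (by reducing via Case~3 and hitting a parity contradiction), while \emph{both} predecessors on $x+z=2^{k+1}$ are Ulam; hence these words are non-Ulam despite being probably-not-Ulam. The boundary points $x\in\{0,1\}$ or $z\in\{0,1\}$ are dispatched separately. This makes the whole diagonal $x+z=2^{k+1}+1$ non-Ulam \emph{regardless of the update step}, which is exactly the clean seeding needed to restart the induction at $x+z=2^{k+1}+2$. Without these three arguments your outline is a correct skeleton but not yet a proof.
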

    \begin{proof}
    In this proof, we quantify the error produced by ignoring representations of words $w(x,y,z)$ as concatenations of equal Ulam words in our proof of \Cref{checkerboard}. We also observe that this error propagates past the case $x+z=y=2^{k+1}-2,$ in fact all the way to $x+z=2^{k+1}.$ As a consequence, we need to investigate each of these cases separately.
    
    % Note that it remains to consider separately the case $y=2^{k+1}-2.$ 
    The only place in our proof where we actually used that $y\neq2^{k+1}-2$ was when arguing that in representations of $w(x,y,z)$ as a concatenation of Ulam words, the words can be assumed to be distinct. This only arises as an issue when: 
    \setcounter{case}{0}
    \begin{case}
    $x+z=y=2^{k+1}-2$ (and note that $w(x,y,z)$ is not Ulam whenever $x+z<y$) and moreover only if both $x$ and $z$ are even; %$w(x,y,z)=w(x,a)*w(y-a,z)$ with $x,a,y-a,z$ all even and $x+z=y=2^{k+1}-2.$ 
    if $x,z$ are both odd, then $w(x,z)$ is not Ulam. If, on the other hand, $x,z$ are both even with $x+z=2^{k+1}-2,$ then we now prove that $w(x,2^{k+1}-2,z)$ is indeed Ulam. To see this, note that $\frac{x}{2}+\frac{z}{2}=2^{k-1}-1.$ In particular, if we write the binary representation of $$
    \frac{x}{2}=a_ka_{k-1}\cdots a_1,
    $$ 
    then $$
    \frac{z}{2}=b_kb_{k-1}\cdots b_1,
    $$
    where $b_i=1-a_i.$ Observe, then, that if we have a decomposition into Ulam words $$
    w(x,2^{k+1}-2,z)=w(x,a)*w(2^{k+1}-2-a,z)
    $$
    where $a$ is even, then $2^{k+1}-2-a$ is even and the words being Ulam is equivalent (since the binary representations of $x,a,2^{k+1}-2-a,z$ all end with $0$) to the words $w(x/2,a/2)$ and $w(2^k-1-a/2,z/2)$ being Ulam. By the same reasoning, we can write the binary representations $$
    \frac{a}{2}=c_kc_{k-1}\cdots c_1
    $$ 
    and $$
    2^k-1-\frac{a}{2}=d_kd_{k-1}\cdots d_1\quad\quad\quad\hspace{0.075in}
    $$
    where $d_i=1-c_i.$ Since $w(x/2,a/2)$ and $w(2^k-1-a/2,z/2)$ are both Ulam, the corresponding two elements share no $1$'s in their binary representations. In particular, for every $i,$ we have that $a_i+c_i\leq1$ and $b_i+d_i=2-a_i-c_i\leq1.$ Thus, both sums in fact equal $1$ and so $a_i=1-c_i=d_i$ and $b_i=1-d_i=c_i.$
    %If $a_i=1,$ then $c_i=0,$ $b_i=1-a_i=0,$ and $d_i=1-c_i=1.$ If $a_i=0,$ then $b_i=1-a_i=1,$ so $d_i=0,$ and thus $c_i=1-d_i=1.$ Regardless, we see that $a_i=d_i$ and $b_i=c_i.$ 
    Hence, $x/2=2^k-1-a/2$ and $a/2=z/2,$ so $x=2^{k+1}-2-a$ and $a=z.$ Thus, the decomposition splits into two equal Ulam words and is not valid. Hence, any valid decomposition into \textit{distinct} Ulam words must have $a$ (and thus $2^{k+1}-2-a$) be odd. 
    
    Say then that $a=b+1$ and so $2^{k+1}-2-a=2^{k+1}-3-b.$ Then we can once again cut off the last digits to see that $w(x,a)$ is Ulam if and only if $w(x/2,b/2)$ is Ulam and $w(2^{k+1}-2-a,z)$ is Ulam if and only if $w(2^k-2-b/2,z/2)$ is Ulam. If both are Ulam, then we have an Ulam decomposition of $w(x/2,2^k-2,z/2)$: $$
    w\left(\frac{x}{2},2^k-2,\frac{z}{2}\right)=w\left(\frac{x}{2},\frac{b}{2}\right)*w\left(2^k-2-\frac{b}{2},\frac{z}{2}\right).
    $$
    Now, observe that $x/2+z/2=2^k-1,$ in particular exactly one of the two is odd, say without loss of generality that it is $x/2.$ Then note that $b/2$ must be even in order for $w(x/2,b/2)$ to be Ulam, and therefore so must $2^k-2-b/2.$ But then observe that $w(x/2,b/2)$ is Ulam if and only if $w((x/2-1)/2,b/4)$ is (since the last two digits are $1$ and $0$ and thus not both $1$) and $w(2^k-2-b/2,z/2)$ is Ulam if and only if $w(2^{k-1}-1-b/4,z/4)$ is. Observe, however, that $$
    w\left(\frac{x-2}{4},2^{k-1}-1,\frac{z}{4}\right)=w\left(\frac{x-2}{4},\frac{b}{4}\right)*w\left(2^{k-1}-1-\frac{b}{4},\frac{z}{4}\right)
    $$ 
    is a decomposition into Ulam words, and thus decompositions of $w((x-2)/4,2^{k-1}-1,z/4)$ into (not necessarily distinct) Ulam words correspond to those of $w(x/2,2^k-2,z/2),$ which in turn correspond to decompositions of $w(x,2^{k+1}-2,z)$ into \textit{distinct} Ulam words (the words are distinct by a parity argument). Recall the binary representations $$
    \frac{x-2}{4}=a_ka_{k-1}\cdots a_2
    $$
    and $$
    \frac{z}{4}=b_kb_{k-1}\cdots b_2,
    $$
    where $b_i=1-a_i.$ If we have a decomposition $$
    w\left(\frac{x-2}{4},2^{k-1}-1,\frac{z}{4}\right)=w\left(\frac{x-2}{4},c\right)*w\left(d,\frac{z}{4}\right),
    $$
    where $$
    c=c_kc_{k-1}\cdots c_2
    $$ and $$
    d=d_kd_{k-1}\cdots d_2,
    $$
    then note that $c+d=2^{k-1}-1$ and thus $c_i=1-d_i.$ By the same reasoning as before, we eventually deduce that in fact, for all $i,$ we have $a_i=d_i$ and $b_i=c_i,$ implying that $\frac{x-2}{4}=d$ and $\frac{z}{4}=c.$ Therefore, the unique decomposition is $$
    w\left(\frac{x-2}{4},2^{k-1}-1,\frac{z}{4}\right)=w\left(\frac{x-2}{4},\frac{z}{4}\right)*w\left(\frac{x-2}{4},\frac{z}{4}\right),
    $$
    % $w((x-2)/4,z/4)*w((x-2)/4,z/4),$ 
    which therefore leads to a unique decomposition of $w(x,2^{k+1}-2,z)$; precisely, in this case---$x\equiv2\pmod4$---the decomposition is $$
    w(x,2^{k+1}-2,z)=w(x,z+1)*w(x-1,z).
    $$
    \end{case}
    From this we deduce that $w(x,2^{k+1}-2,z)$ with $x+z=2^{k+1}-2$ is Ulam, and equivalently by \Cref{not-too-small}, pseudo-Ulam, if and only if $x\equiv z\equiv0\pmod2.$ This discrepancy from what is expected (that $x+z\equiv1\pmod2$), unfortunately, has the adverse effect of propagating slightly further due to the update step. However, we can quantify exactly how far it does so. 
    \begin{case}
    We will now deduce that $w(x,2^{k+1}-2,z)$ with $x+z=2^{k+1}-1$ is not Ulam; in fact, we explicitly construct two representations of it as a concatenation of smaller Ulam words. Luckily, these are not too difficult to construct. Since $x+z$ is odd, exactly one of $x,z$ is, too; say, without loss of generality, that it is $x.$ Then $x-1$ and $z$ are both even and sum to $2^{k+1}-2,$ in particular we know that $w(x-1,2^{k+1}-2,z)$ is Ulam. Thus, we immediately obtain the representation $$
    w(x,2^{k+1}-2,z)=0*w(x-1,2^{k+1}-2,z).
    $$
    The words are trivially distinct, so this is a valid representation. The other representation is also simple; since $x+z=2^{k+1}-1,$ their binary representations are complementary and thus do not share any $1$'s. Hence, $w(x,z)$ is Ulam. Since $x$ is odd, the word $w(x-1,z)$ is also Ulam. Thus, we have the representation $$
    w(x,2^{k+1}-2,z)=w(x,z)*w(x-1,z)
    $$
    as a concatenation of trivially distinct Ulam words.
    \end{case}
    Now, we know that some (in fact, all) words $w(x,2^{k+1}-2,z)$ with $x+z=2^{k+1}-1$ that previously would have been Ulam are not, anymore. Therefore, we have to go one more step further.
    \begin{case}
    Consider words $w(x,2^{k+1}-2,z)$ with $x+z=2^{k+1}.$ First, there are the two cases $x=0$ or $z=0.$ To resolve them, we have the two representations $$
    w(0,2^{k+1}-2,2^{k+1})=w(0,2^{k+1}-2)*w(0,2^{k+1})=w(0,0)*w(2^{k+1}-2,2^{k+1}).
    $$
    These two representations are distinct (since $k>0$), so $w(0,2^{k+1}-2,2^{k+1})$ and, symmetrically, $w(2^{k+1},2^{k+1}-2,0)$ are not Ulam. With this, we may assume that $x,z>0.$ 
    
    Note that $x+z=2^{k+1}$ and neither is $0,$ so both are strictly less than $2^{k+1}.$ In particular, if $2^a$ divides $x,$ then it divides $z=2^{k+1}-x=2^a(2^{k+1-a}-x/2^a),$ and vice versa. Therefore, $x$ and $z$ have the same exponent of $2,$ so say that we can write them as $x=2^ax',z=2^az'$ with $x',z'$ being odd. Observe that any representation as a concatenation of Ulam words $$
    w(x,2^{k+1}-2,z)=w(x,c)*w(d,z)
    $$
    is valid if and only if, correspondingly, $w(x',c')$ and $w(d',z')$ are both Ulam, where $c'=\lfloor\frac{c}{2^a}\rfloor$ and $d'=\lfloor\frac{d}{2^a}\rfloor$ (the last $a$ digits of $x$ and $z$ are $0$ so they are irrelevant; the $i$-th digit from the right cannot be $1$ for both $x$ and $c$ or for both $d$ and $z$). Now, note that $2^ac'\leq c<2^a(c'+1),$ and the same for $d',$ so $2^a(c'+d')\leq 2^{k+1}-2<2^a(c'+d'+2).$ The only way for this to happen is if $c'+d'=2^{k+1-a}-2$ or $2^{k+1-a}-1.$ The latter cannot happen, as $x'$ and $z'$ are both odd, so $c'$ and $d'$ are both even (and so is their sum). Thus, we must have $c'+d'=2^{k+1-a}-2.$ We see that $w(x',c')$ and $w(d',z')$ are then Ulam if and only if, respectively, $w(x'-1,c')$ and $w(d',z'-1)$ are. But observe that $(x'-1)+(z'-1)=c'+d'=2^{k+1-a}-2$ and so we have a decomposition into (not necessarily distinct) Ulam words of $$
    w(x'-1,2^{k+1-a}-2,z'-1)=w(x'-1,c')*w(d',z'-1),
    $$
    where $x'-1,c',d',z'-1$ are even. But we know from the above arguments that only one such decomposition exists---when $c'=z'-1$ and $d'=x'-1.$ Thus, we have fixed all but the last $a$ digits of any $c$ and $d,$ giving a valid decomposition of $w(x,2^{k+1}-2,z)$ into Ulam words with one $1.$ To determine the last $a$ digits, define $c''=c-2^ac'$ and $d''=d-2^ad',$ and observe that these are exactly the last $a$ digits of $c$ and $d,$ respectively. But now, note that $0\leq c'',d''\leq2^a-1$ by definition of $c'$ and $d',$ and we have $$
    c''+d''=(c+d)-2^a(c'+d')=2^{k+1}-2-2^a(x'+z'-2)=2^{k+1}+2^{a+1}-2-(x+z)=2^{a+1}-2.
    $$
    This immediately implies that $c''=d''=2^a-1,$ and so they must be fixed as well. Thus, there is a unique representation of $w(x,2^{k+1}-2,z)$ as a concatenation of (distinct) Ulam words with one $1$---precisely, $$
    w(x,2^{k+1}-2,z)=w(x,z-1)*w(x-1,z);
    $$
    these are trivially distinct. Hence, $w(x,2^{k+1}-2,z)$ is pseudo-Ulam, and, since no words $w(x,2^{k+1}-2,z)$ with $x+z=2^{k+1}-1$ are Ulam, it is in fact Ulam. Note that this holds for all $x,z$ with $x,z>0$ (and $x+z=2^{k+1}$).
    \end{case}
    So far, we now know that there are Ulam words $w(x,2^{k+1}-2,z)$ with either $x+z=2^{k+1}-2$ or $x+z=2^{k+1}.$ One may think that the propagation continues to occur, but luckily, the next step is the last. 
    \begin{case}
    We now prove that there are no representations for $w(x,2^{k+1}-2,z)$ with $x+z=2^{k+1}+1$ and $x,z>1$ as a concatenation of (distinct) Ulam words with one $1.$ We also show that $w(x-1,2^{k+1}-2,z)$ and $w(x,2^{k+1}-2,z-1)$ are both Ulam, and thus $w(x,2^{k+1}-2,z)$ is not Ulam. Thus, the Ulam status of words $w(x,2^{k+1}-2,z)$ with $x+z=2^{k+1}$ does not affect any of these (since they are non-Ulam regardless). However, we must separately consider the case when one of $x,z$ is $0$ or $1.$ Observe that the state of the words $w(0,2^{k+1}-2,2^{k+1}+1)$ and $w(2^{k+1}+1,2^{k+1}-2,0)$ in fact does not matter, since they can only be affected by $w(0,2^{k+1}-2,2^{k+1})$ and $w(2^{k+1},2^{k+1}-2,0),$ respectively, both of which are non-Ulam. Additionally, consider the two representations of $$
    w(2^{k+1},2^{k+1}-2,1)=w(2^{k+1},0)*w(2^{k+1}-2,1)=w(2^{k+1},2^{k+1}-2)*w(0,1)
    $$
    as a concatenation of distinct Ulam words, implying that it is not Ulam, and, symmetrically, neither is $w(1,2^{k+1}-2,2^{k+1}),$ as desired. 
    
    Suppose now that $x,z>1.$ Since $x+z=2^{k+1}+1,$ exactly one of $x,z$ is odd, say without loss of generality it is $x.$ Note that $x,z>1,$ so $x-1,z>0.$ Any valid decomposition of $w(x,2^{k+1}-2,z)=w(x,a)*w(b,z)$ into Ulam words implies that $a$ is even, and thus $w(x-1,a)$ is also Ulam. This, in turn, gives a decomposition $w(x-1,2^{k+1}-2,z)=w(x-1,a)*w(b,z).$ But then the case of $x+z=2^{k+1}$ applies, allowing us to deduce that the only possible valid decomposition of $w(x-1,2^{k+1}-2,z)$ is $$
    w(x-1,2^{k+1}-2,z)=w(x-1,z-1)*w(x-2,z).
    $$
    Yet, this decomposition truly being valid would imply that the original decomposition is $$
    w(x,2^{k+1}-2,z)=w(x,z-1)*w(x-2,z).
    $$
    Since $x$ and $z-1$ are both odd, the word $w(x,z-1)$ is not Ulam, and thus this is not a valid decomposition. In other words, $w(x,2^{k+1}-2,z)$ has no valid decompositions as a concatenation of two Ulam words with one $1.$ On the other hand, both $w(x-1,2^{k+1}-2,z)$ and $w(x,2^{k+1}-2,z-1)$ are Ulam, since $x+z-1=2^{k+1}$ and $x,z>1$ so $x-1,z-1>0.$ 
    \end{case}
    Thus, $w(x,2^{k+1}-2,z)$ is not Ulam regardless of the update step, implying that for any $x,z$ with $x+z>2^{k+1},$ the update step once again begins to behave as it normally would---removing all words $w(x,y,z)$ with $x+z\equiv0\pmod2$ and ignoring words with $x+z\equiv1\pmod2,$ as desired.
\end{proof}
This indicates that the update step is essentially null; unless $y=2^{k+1}-2$ (in which case we can explicitly describe exceptions), we can simply ignore $w(x,y,z)$ with $x+z\equiv0\pmod2,$ as we immediately know that they are not Ulam, and all other words are Ulam if and only if they have exactly one representation as a sum of distinct Ulam words $w(x,a)*w(y-a,z).$

Meanwhile, the calculation step simply examines the interactions of binomial coefficients modulo $2.$ In particular, it will preserve the fascinating phenomenon of biperiodicity.  
\begin{lemma}\label[lemma]{biperiodic}
Suppose $2^k\leq y<2^{k+1}$ and $y$ is not a Zumkeller number. Then $\ulam[y]$ is biperiodic with biperiod $2^{k+1}\times2^{k+1}.$
\end{lemma}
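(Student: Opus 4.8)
The plan is to read off the biperiodicity of $\ulam[y]$ directly from the clean description of this set just established, combined with the (elementary) biperiodicity of the underlying $\culam$ labeling. By \Cref{culam-to-ulam}, the hypothesis that $y$ is not a Zumkeller number gives
\[
\ulam[y]=\bigl\{(x,z)\in\N^2 : x+z\equiv1\pmod 2\ \text{and}\ \culam[y](x,z)=1\bigr\}.
\]
So it suffices to check that both conditions "$x+z$ odd" and "$\culam[y](x,z)=1$" are unchanged when $x$ is replaced by $x+2^{k+1}$ or $z$ by $z+2^{k+1}$; this exhibits $\ulam[y]$ as invariant (as a subset of $\N^2$) under the translations $(x,z)\mapsto(x+2^{k+1},z)$ and $(x,z)\mapsto(x,z+2^{k+1})$, which is exactly biperiodicity with biperiod $2^{k+1}\times2^{k+1}$.

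The parity condition is immediate since $2^{k+1}$ is even. For the condition on $\culam[y]$ I would cite \Cref{culam-biperiodic}; if a self-contained argument is preferred, recall that $\culam[y](x,z)$ equals (capped at $2$) the number of $a$ with $0\le a\le y$ for which both $w(x,a)$ and $w(y-a,z)$ are Ulam, which by Lucas's criterion (\Cref{lucas}) means that the binary expansion of $x$ is disjoint from that of $a$ and the binary expansion of $z$ is disjoint from that of $y-a$. Since $0\le a\le y<2^{k+1}$, both $a$ and $y-a$ have all their $1$'s among the bottom $k+1$ binary positions, so each disjointness condition constrains only the bottom $k+1$ bits of $x$ (respectively $z$); hence adding a multiple of $2^{k+1}$ to $x$ or to $z$ changes neither the set of valid $a$ nor the capped count, so $\culam[y](x,z)$ is unchanged. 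Combining the two invariances proves the lemma.

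There is essentially no real obstacle here: all the substance sits in the preceding parity analysis (\Cref{x+z-case} and \Cref{checkerboard}) that produced the description in \Cref{culam-to-ulam}, and in Lucas's criterion. The one point worth stating carefully is why the biperiod is $2^{k+1}$ and not smaller: the update step converting $\culam[y]$ into $\ulam[y]$ deletes precisely the pseudo-Ulam points with $x+z$ even, and this deletion is compatible only with translations by vectors of even coordinates, so the even period $2^{k+1}$ already present in $\culam[y]$ is the natural one to retain.
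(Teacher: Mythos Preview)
Your proof is correct and follows essentially the same route as the paper: the core is the Lucas-criterion observation that the count of valid $a$ depends only on the bottom $k+1$ bits of $x$ and $z$, combined with the parity result that handles the update step and the distinctness caveat. The only cosmetic difference is packaging---you invoke the already-proven \Cref{culam-to-ulam} to reduce at once to biperiodicity of $\culam[y]$, whereas the paper works directly with pseudo-Ulam words and cites \Cref{checkerboard} and \Cref{x+z-case} separately; and note that your final paragraph does not actually establish minimality of the biperiod (nor does the lemma claim it---the paper relegates that to a remark).
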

\begin{remark}
The question may arise as to whether any smaller biperiod is possible, but in fact $2^{k+1}\times2^{k+1}$ is minimal; for example, it is not hard to see that $w(x,y,0)$ and $w(0,y,z)$ are Ulam if and only if $x,z\equiv-1\pmod{2^{k+1}}.$
\end{remark}
\begin{proof}
    By \Cref{checkerboard}, it suffices to prove biperiodicity following the calculation step. Specifically, we will show that $w(x,y,z)$ for $x+z\equiv1\pmod2$ is Ulam (equivalently, pseudo-Ulam) if and only if its reduction $w(x',y,z')$ is Ulam (pseudo-Ulam), where $x'\equiv x,z'\equiv z\pmod{2^{k+1}}.$
    
    Suppose $\binom{x+a}{x}$ and $\binom{z+y-a}{z}$ are both odd. Also, either $x+z\neq y$ or we also have the condition $x\neq y-a$ (in fact, the case $x+z=y$ is exactly where biperiodicity may break). By \Cref{lucas}, we see that $a$ and $x$ share no $1$'s in their binary representations, and neither do $y-a$ and $z.$ But observe that $a,y-a\leq y<2^{k+1},$ so they can only have $1$'s in the last $k+1$ digits of the binary expansion. In particular, adding or subtracting multiples of $2^{k+1}$ to $x$ and $z$ does not change the condition that $\binom{x+a}{x}$ and $\binom{z+y-a}{z}$ are both odd. Thus, it is true if and only if it is true when $x$ and $z$ are considered modulo $2^{k+1}.$ But pseudo-Ulam and probably not Ulam is exactly counting instances of $a$ such that $\binom{x+a}{x}$ and $\binom{z+y-a}{z}$ are both odd (and said instances occur $1$ and $0$ times, respectively). Since this does not depend on $x,z\pmod{2^{k+1}},$ we therefore can deduce that $w(x,y,z)$ is pseudo-Ulam (resp. probably not Ulam) if and only if its reduction $w(x',y,z')$ is as well, \textit{except} perhaps when $x'+z'=y.$ Precisely, the issue of words needing to be distinct becomes significant when $w(x',z')$ is Ulam (since $w(x',y,z')=w(x',z')*w(x',z')$). However, by \Cref{x+z-case} (noting that $y$ satisfies the lemma conditions), the state of $w(x,y,z)$ is unchanged whether $w(x,z)$ is Ulam or not; relevant representations will still continue to carry over modulo $2^{k+1}.$ In particular, we indeed have that in all cases, $w(x,y,z)$ is in the same state as its reduction $w(x',y,z'),$ and so the desired set is biperiodic.
\end{proof}
This result implies that, in order to more properly understand Ulam words $w(x,y,z),$ in reality we only have to understand those satisfying $x,z\leq2^{k+1}$---the `first square.' %In particular, it may (and indeed turns out to) be of use to graph $(x,z)$ such that $w(x,y,z)$ is Ulam. We may then define $\ulam[y]$ to be precisely the set of such $(x,z).$
Looking more precisely at graphs of $\ulam[y]$ for various $y,$ we notice that increasing even $y$ by $1$ does not change the resulting set, except for the occasional imperfection. We now prove this result.
\begin{lemma}\label[lemma]{odd-even-equal}
If $y$ is odd and not a Zumkeller number, then $\ulam[y]=\ulam[y-1].$
\end{lemma}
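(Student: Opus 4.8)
The plan is to route everything through \Cref{culam-to-ulam} and then match up decompositions directly. First I would observe that the hypotheses pass to $y-1$: since $y$ is odd, its binary expansion ends in $1$, so that of $y-1$ flips this last $1$ to $0$; as $y$ already has at least two $0$'s, $y-1$ has at least three, so $y-1$ is again not a Zumkeller number. Also $y\neq 2^k$ (because $2^0=1$ is Zumkeller while $2^k$ is even for $k\geq1$), so $2^k<y<2^{k+1}$ and hence $2^k\leq y-1<2^{k+1}$ as well. Thus \Cref{culam-to-ulam} applies to both $y$ and $y-1$, and it suffices to prove that $\culam[y](x,z)=\culam[y-1](x,z)$ for every $(x,z)$ with $x+z\equiv1\pmod2$.

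Fix such an $(x,z)$. Reversal preserves Ulamness, so $\culam$ is symmetric in $x$ and $z$, and I may assume $x$ is odd and $z$ is even; write $y=2m+1$. By \Cref{lucas}, $w(x,a)$ can be Ulam only if $x$ and $a$ share no $1$ in binary, and $x$ is odd, so in any decomposition $w(x,y,z)=w(x,a)*w(y-a,z)$ into two Ulam words the index $a$ is even; the identical argument forces an even splitting index in any decomposition of $w(x,y-1,z)$ too. For even $a$ with $0\leq a\leq y$ we automatically have $a\leq y-1$, so both $w(x,a)*w(y-a,z)$ and $w(x,a)*w(y-1-a,z)$ are meaningful, and I claim the first is a valid decomposition of $w(x,y,z)$ exactly when the second is a valid decomposition of $w(x,y-1,z)$. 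Indeed $w(x,a)$ is common to both, and since $a$ is even the numbers $y-a$ and $y-1-a$ agree in every binary digit except the last, where $z$ (being even) has a $0$; so by \Cref{lucas}, $w(y-a,z)$ is Ulam precisely when $w(y-1-a,z)$ is. Hence $a\mapsto a$ is a bijection between the decompositions counted by $\culam[y](x,z)$ and those counted by $\culam[y-1](x,z)$, so these counts (and their truncations at $2$) coincide, which together with the previous paragraph yields $\ulam[y]=\ulam[y-1]$.

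The argument is short given \Cref{culam-to-ulam}; the one place needing care is the parity bookkeeping in the middle paragraph --- that the evenness of the splitting index is genuinely forced on \emph{both} sides (which is precisely why the ``$x$ odd, $z$ even'' normalization is the correct one), and that removing a single $0$ from the middle run does not change which second factors $w(\,\cdot\,,z)$ are Ulam. No distinctness subtlety arises, since we compare the full (untruncated) decomposition counts and the bijection above is literal; the equal-words issue for $x+z=y$ is already handled inside \Cref{culam-to-ulam} via \Cref{x+z-case}.
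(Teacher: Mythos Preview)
Your proof is correct and follows essentially the same route as the paper: reduce via the parity result to matching decomposition counts on the odd-sum locus, normalize to $x$ odd and $z$ even, and use Lucas to see that the splitting index $a$ is forced to be even and that $w(y-a,z)$ and $w(y-1-a,z)$ are simultaneously Ulam, giving the identity bijection $a\mapsto a$. The only cosmetic difference is that you invoke \Cref{culam-to-ulam} explicitly (so your comparison is between $\culam$ values), whereas the paper phrases the same reduction through \Cref{checkerboard} and the pseudo-Ulam language; the underlying argument is identical.
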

\begin{proof}
    Note that $y-1$ only has more $0$'s in its binary representation and thus is also not a Zumkeller number. In particular, by \Cref{checkerboard}, both $\ulam[y]$ and $\ulam[y-1]$ contain only coordinates $(x,z)$ satisfying $x+z\equiv1\pmod2,$ and that $w(x,y,z)$ and/or $w(x,y-1,z)$ satisfying the modular condition are Ulam if and only if they are pseudo-Ulam. It remains to show that these coordinates are the same. To this end, for any $(x,z)$ with $x+z\equiv1\pmod2,$ fix an arbitrary representation of $w(x,y,z)$ as a concatenation of (not necessarily distinct) Ulam words: $w(x,y,z)=w(x,a)*w(y-a,z).$ Without loss of generality, $x$ is odd. Then $a$ is even, and so, since $y$ is odd, $y-a$ must be, too. In particular, $w(y-a-1,z)$ is also Ulam, so we have the representation of $w(x,y-1,z)=w(x,a)*w(y-a-1,z)$ as a concatenation of Ulam words. Since this is true for any $a,$ there are at least as many representations of $w(x,y-1,z)$ as there are of $w(x,y,z).$ Conversely, suppose that we have a representation $w(x,y-1,z)=w(x,a)*w(y-a-1,z).$ Then $a$ is still even since $x$ is odd, and so $y-a-1$ is also even. But $z$ is even, and thus $w(y-a,z)$ is also Ulam. Hence, we obtain a decomposition of $w(x,y,z)=w(x,a)*w(y-a,z).$ In particular, there are at least as many representations of $w(x,y,z)$ as there are of $w(x,y-1,z),$ and thus there are equally many of each. But, since neither $y$ nor $y-1$ are Zumkeller, any representation with the Ulam words in question being equal is irrelevant. This immediately implies that either both have $0$ representations, in which case neither is Ulam, or both have $1,$ in which case both are Ulam, or both have $2$ or more, in which case neither is Ulam. In any case, both are of the same type---so $(x,z)\in\ulam[y]$ if and only if $(x,z)\in\ulam[y-1],$ and thus the two sets are equal, as desired.
\end{proof}
Despite $\ulam[y]$ and $\ulam[y-1]$ having the same structure, the graphs $\pulam[y]$ and $\pulam[y-1]$ (of pseudo-Ulam words $w(\cdot,y,\cdot)$ and $w(\cdot,y-1,\cdot),$ respectively) are still very different---$\pulam[y-1]$ generally has points with an even sum of coordinates that $\pulam[y]$ simply does not have to begin with. These differences begin to play even greater of a role once we attempt to describe $\ulam[2^dy].$ In fact, it becomes important to consider not just coordinates in $\pulam[y],$ but also $(x,z)\in\pulam[y-1]$ corresponding to words $w(x,y,z)$ that have either zero or more than one representation as a concatenation of Ulam words.

\section{Special cases}\label{special-section}
Recall that the aforementioned numbers with at most one zero in their binary representation are known as \textit{Zumkeller} numbers; equivalently, they are the numbers expressible as $2^m-2^k-1$ for some nonnegative integers $m$ and $k<m$ (note that $2^{k+1}-1=2^{k+2}-2^{k+1}-1,$ so it is also expressible in this form) \cite[A089633]{oeis}. As can be seen from the proof of \Cref{biperiodic}, biperiodicity essentially continues to hold even when $y$ is a Zumkeller number; the only impurities arise in the first square of the period---words $w(x,y,z)$ satisfying $x,z<2^{k+1}.$ We will now describe these impurities precisely.
\begin{theorem}[\Cref{zumkeller}]\label{special-case}
\hfill
\begin{enumerate}
    \item The set $\ulam[2^{k+1}-1]$ consists of $(x,z)$ such that $x+z\equiv-1\pmod{2^{k+1}}$ and $x+z\neq2^{k+1}-1.$
    \item The set $\ulam[2^{k+1}-2]$ consists of $(x,z)$ such that either $x+z\equiv-1\pmod{2^{k+1}}$ and $x+z\neq2^{k+1}-1$ or $x+z=2^{k+1}-2$ and $x,z\equiv0\pmod2$ or $x+z=2^{k+1}$ and $x,z>0.$
    \item Suppose $y=2^{k+1}-2^a-1$ is a Zumkeller number for some $0<a<k.$ Then $\ulam[y]$ is the set of $(x,z)$ such that either $x+z=y$ and $x\pmod{2^{a+1}}<2^a$ or the reductions $x'$ and $z',$ respectively, of $x$ and $z$ modulo $2^{k+1}$ satisfy either $x'+z'=2^{k+1}-1$ or $x'+z'=2^{k+1}+2^a-1.$
\end{enumerate}
\end{theorem}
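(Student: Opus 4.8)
The three parts run on a common engine, so the plan is to set them up together and then specialize. By \Cref{lucas}, a slice $w(x,y,z)=w(x,a)\ast w(y-a,z)$ produces two Ulam factors exactly when the binary digits of $a$ avoid the $1$-digits of $x$ and those of $y-a$ avoid the $1$-digits of $z$; since $a,y-a\leq y<2^{k+1}$, this depends only on the residues $x',z'$ of $x,z$ modulo $2^{k+1}$. Running the calculation-step argument in the proof of \Cref{biperiodic} nearly verbatim shows that $\culam[y]$ is biperiodic with period $2^{k+1}$ and that $\ulam[y]$ inherits this periodicity everywhere except where the forbidden equal-factor slice $w(x,z)\ast w(x,z)$ is relevant, i.e.\ except on the anti-diagonal $x+z=y$, which lies inside the first square $x,z<2^{k+1}$; this already gives \Cref{intro-biperiodic} and reduces the task to describing $\ulam[y]$ on and near that anti-diagonal together with the ambient periodic pattern. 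For parts $1$ and $3$ the modulus $y$ is odd (the only even Zumkeller numbers are those of the form $2^{m}-2$), so \Cref{checkerboard} applies and discards every even-coordinate-sum word; for part $2$ the analogous bookkeeping is precisely the content of \Cref{almost-checkerboard}.

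Parts $1$ and $2$ are then short. For $y=2^{k+1}-1$, which is all $1$'s below position $k+1$, in any slice $a$ and $y-a$ are bitwise complementary on positions $0,\dots,k$, so the number of valid slices is $2^{m}$, where $m$ counts the positions $\leq k$ at which both $x'$ and $z'$ vanish; hence there is a unique slice iff $x'+z'=2^{k+1}-1$, its two factors coincide iff $x+z=2^{k+1}-1$, and since the update-step neighbours $w(x-1,y,z),w(x,y,z-1)$ have even coordinate sum and are therefore non-Ulam, $\ulam[2^{k+1}-1]$ is exactly $\{x'+z'=2^{k+1}-1\}\setminus\{x+z=2^{k+1}-1\}$. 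For $y=2^{k+1}-2$, \Cref{almost-checkerboard} has already pinned down the anomalous anti-diagonals $x+z\in\{2^{k+1}-2,2^{k+1}-1,2^{k+1},2^{k+1}+1\}$; for every other point the same complementation count, now carried out on positions $1,\dots,k$, gives a unique slice iff $x'+z'=2^{k+1}-1$, and assembling the pieces yields the stated set.

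Part $3$ is the heart. Write $y=2^{k+1}-2^{a}-1$ with binary expansion $1^{k-a}\,0\,1^{a}$; the sole $0$-digit of $y$, at position $a$, is the only place an addition $a_0+(y-a_0)=y$ can generate a carry, and (using $a<k$) a carry born there must die at position $a+1$. The plan is to count valid slices by a digit-by-digit analysis of this addition subject to the conditions ``$a_0$ avoids the $1$-digits of $x'$'' and ``$y-a_0$ avoids the $1$-digits of $z'$''. Off the anti-diagonal one finds the count equals $1$ precisely on the two lines $x'+z'=2^{k+1}-1$ (the no-carry complementation) and $x'+z'=2^{k+1}+2^{a}-1$ (the carry-through-position-$a$ complementation), and equals $0$ or $\geq 2$ everywhere else; since off the anti-diagonal the two factors of any slice are automatically distinct, these two lines are exactly the Ulam locus there, their update-step neighbours again having even coordinate sum. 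On the anti-diagonal $x+z=y$ one splits on the $2^{a}$-digit of $x$: if it vanishes, then $x+z=y$ carries nowhere, so $x,z$ are bit-disjoint, $w(x,z)$ is Ulam, and the digit analysis shows the slice count is $2$ — the equal-factor slice together with one genuine slice — whence $w(x,y,z)$ is pseudo-Ulam and, its neighbours being even-sum, Ulam; if the $2^{a}$-digit of $x$ equals $1$, then $x+z=y$ does carry, $w(x,z)$ is not Ulam, and the digit analysis yields either two genuine slices or none, so $w(x,y,z)$ is non-Ulam. Combining these with \Cref{not-too-small} (which removes $x+z<y$) produces the claimed description, and \Cref{intro-biperiodic} for this $y$ drops out as well. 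I expect the main obstacle to be exactly this digit-level bookkeeping in part $3$: correctly enumerating how the carry forced at position $a$ interacts with the avoidance constraints imposed by $x'$ and $z'$, and verifying that on the anti-diagonal the number of valid slices is always $2$ or $0$, so that the $2^{a}$-digit of $x$ alone decides Ulamness; the remaining ingredient, the update step, is routine, since for parts $1$ and $3$ the entire phenomenon is odd-coordinate-sum and the anti-diagonal exception does not propagate.
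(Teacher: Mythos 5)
Your parts 1 and 2 are essentially sound and close to the paper's own treatment: part 1 is the same complementation argument (the slice count is $2^m$ with $m$ the number of positions where $x'$ and $z'$ both vanish, or $0$ if they collide), and part 2 correctly delegates the anomalous anti-diagonals to \Cref{almost-checkerboard}; your shortcut for the remaining points works because odd-$a_0$ slices of $w(x,2^{k+1}-2,z)$ force $x$ and $z$ both even, a case \Cref{almost-checkerboard} has already disposed of.

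The gap is in part 3, in the assertion that a carry generated at position $a$ in the addition $a_0+(y-a_0)=y$ ``must die at position $a+1$.'' This is false: the carry dies at position $j$ for any $j$ with $a+1\leq j\leq k$, namely whenever both summands have $1$'s at positions $a,\dots,j-1$ and $0$'s at position $j.$ Concretely, take $y=13=2^4-2^1-1$ (so $k=3,$ $a=1$) and $(x,z)=(9,8).$ The unique valid slice is $w(9,13,8)=w(9,6)*w(7,8),$ and in $6+7=13$ the carry born at position $1$ propagates through position $2$ and is absorbed only at position $3.$ A bookkeeping that only admits carries dying at position $a+1=2$ finds zero valid slices here (the no-carry family is blocked by the collision $x_3=z_3=1,$ and the carry-to-position-$2$ family is blocked by the same collision), so you would wrongly exclude $(9,8)$ from $\ulam[13]$ even though $9+8=17=2^{4}+2^{1}-1$ lies on the second line of the theorem. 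The enumeration can be repaired---one must sum over all carry intervals $\{a,\dots,j-1\},$ and the total is then $1$ exactly on the two stated lines---but this is precisely the ``digit-level bookkeeping'' you identify as the main obstacle, and the version you set up is wrong. It is worth noting that the paper's proof of part 3 sidesteps carries entirely: since $x+z$ is odd, the bottom bit of each factor in any slice is forced by the parities of $x$ and $z,$ so one strips off the lowest bit $a$ times and reduces $w(x,2^{k+1}-2^a-1,z)$ to $w(\lfloor x/2^a\rfloor,2^{k+1-a}-2,\lfloor z/2^a\rfloor),$ landing in the already-proved part 2; the anti-diagonal case is handled by the same reduction. Your treatment of the anti-diagonal (splitting on the $2^a$-digit of $x$) reaches the right conclusions, but it leans on the same flawed slice count.
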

Although the above parts are lumped together for conciseness, each separate part may be viewed as a sub-theorem of its own, and correspondingly we will prove them independently.
\begin{proof}[Proof of part 1]
To begin, suppose that $(x,z)\in\ulam[2^{k+1}-1].$ Then, by \Cref{checkerboard}, there exists a unique representation of $w(x,2^{k+1}-1,z)$ as a concatenation of distinct Ulam words $w(x,a)*w(2^{k+1}-1-a,z).$

We remark that $a,2^{k+1}-1-a\leq2^{k+1}-1<2^{k+1},$ and thus both can only have $1$'s in the last $k+1$ digits of their binary representations. In particular, $x$ shares no $1$'s with $a$ if and only if its reduction modulo $2^{k+1},$ which is equal to $x',$ does, and the same applies to $z$ and $2^{k+1}-1-a,$ with the reduction being $z'.$ Hence, the word $w(x',2^{k+1}-1,z')$ is either pseudo-Ulam or, if the reduction results in identical words, not Ulam. However, the latter case can only happen if $x'+z'=2^{k+1}-1,$ so by \Cref{not-too-small} we have $x'+z'\geq2^{k+1}-1.$ Now, write the binary representation of $a=a_ka_{k-1}\cdots a_1a_0,$ so that $2^{k+1}-1-a=b_kb_{k-1}\cdots b_1b_0,$ where $b_i=1-a_i$ for all $i.$ In particular, exactly one of $a_i,b_i$ is equal to $1,$ so, since $w(x',a)$ and $w(2^{k+1}-1-a,z')$ are Ulam, correspondingly, either the $i$-th digit from the right of the binary expansion of $x'$ or respectively that of $z'$ is $0.$ In particular, we have $(x')_i+(z')_i\leq1$ for all $i.$ But the digits of $x'$ and $z'$ only go up to the $(k+1)$-th spot from the right (since they are residues modulo $2^{k+1}$), so in fact we see that $$
x'+z'=\sum_i2^i((x')_i+(z')_i)\leq\sum_{i=0}^k2^i=2^{k+1}-1.
$$ 
In other words, $x'+z'=2^{k+1}-1.$ Note that this corresponds to $x+z\equiv-1\pmod{2^{k+1}}.$ In this case, we can write $x'=x_kx_{k-1}\cdots x_1x_0$ and $z'=z_kz_{k-1}\cdots z_1z_0$ where $z_i=1-x_i.$ As previous arguments have shown us, the fact that $w(x',a)$ and $w(2^{k+1}-1-a,z')$ are both Ulam implies that $x_i=b_i$ and $a_i=z_i$ for all $i,$ so that the unique representation of $w(x',2^{k+1}-1,z')$ as a concatenation of Ulam words is $w(x',z')*w(x',z').$ This representation gives a valid representation of $w(x,2^{k+1}-1,z)=w(x,z')*w(x',z)$ as a concatenation of \textit{distinct} Ulam words if and only if either $x\neq x'$ or $z\neq z'$ (or both). Thus, unless $x=x'$ and $z=z',$ which corresponds to the case $x+z=2^{k+1}-1,$ the representation is valid and unique, so $w(x,2^{k+1}-1,z)$ is Ulam. If, on the contrary, $x+z=2^{k+1}-1,$ then the representation is still unique but invalid, so $w(x,2^{k+1}-1,z)$ is not Ulam. We therefore obtain precisely the desired description of $\ulam[2^{k+1}-1].$
\end{proof}
Noting that the description of $\ulam[2^{k+1}-2]$ is itself already more involved, it is natural to expect that the proof itself will be as well, but luckily we have \Cref{almost-checkerboard} coming to our rescue, doing most of the work with $x+z\leq2^{k+1}.$ But it does remain to check the generic, periodic case.
\begin{proof}[Proof of part 2]
As mentioned above, \Cref{almost-checkerboard} pinpoints the desired description for $x+z\leq2^{k+1}.$ As for $x+z>2^{k+1},$ the update step is irrelevant, so $w(x,2^{k+1}-2,z)$ is Ulam if and only if it is pseudo-Ulam. Suppose then that $x+z>2^{k+1}$ and there is a unique representation of $w(x,2^{k+1}-2,z)$ as a concatenation of distinct Ulam words $w(x,a)*w(2^{k+1}-2-a,z).$ We split into two cases: $a$ being even and odd. 
    
If $a$ is even, then so is $2^{k+1}-2-a,$ and thus the last digits of $x$ and $z$ in any representation are arbitrary. We can thus cut them off, looking at the representation $w(x',2^k-1,z')=w(x',a/2)*w(2^k-1-a/2,z'),$ where $x'=\lfloor\frac{x}{2}\rfloor$ and $z'=\lfloor\frac{z}{2}\rfloor.$ But now recall from above that a representation exists and is unique, hence giving a unique representation for $w(x,2^{k+1}-2,z)=w(x,a)*w(2^{k+1}-2-a,z)$ with $a$ even, if and only if $x''+z''=2^k-1,$ where $x''=x'\pmod{2^k}$ and $z''=z'\pmod{2^k}.$
    
If, on the other hand, $a$ is odd, then so is $2^{k+1}-2-a,$ and thus the last digits of $x$ and $z$ in any representation are both $0.$ We can still cut the last digits off, remembering the condition that $x$ and $z$ are both even, to obtain the representation $w(x/2,2^k-2,z/2)=w(x/2,(a-1)/2)*w(2^k-2-(a-1)/2,z/2).$ Thus, this case adds the number of representations of $w(x/2,2^k-2,z/2)$ as a concatenation of Ulam words with one $1$ to the number of such representations of $w(x,2^{k+1}-2,z).$
    
Now, a simple inductive argument allows us to conclude that $w(x,2^{k+1}-2,z)$ has exactly one representation as a concatenation of (not necessarily distinct) Ulam words with one $1$ if and only if $x+z\equiv-1\pmod{2^{k+1}}.$ The base case considers all $(x,z)$ with $x+z\leq2^k,$ discussed above. Now, assume that all $(x',z')$ with $x'+z'<x+z$ have one representation if and only if $x'+z'\equiv-1\pmod{2^{k+1}}.$ If, now, $x+z\equiv-1\pmod{2^{k+1}},$ then first observe that $x'+z'\equiv-1\pmod{2^k}$ and so $x''+z''=2^k-1.$ In particular, there exists a unique representation of $w(x,2^{k+1}-2,z)=w(x,a)*w(2^{k+1}-2-a,z)$ for $a$ even. Moreover, there are no representations for $a$ odd, as this would imply that $x$ and $z$ are both even. Hence, $w(x,2^{k+1}-2,z)$ has a unique representation, as desired. Suppose now that $x+z\not\equiv-1\pmod{2^{k+1}}.$ Then observe by the inductive hypothesis that there are $0$ or at least $2$ representations of $w(x,2^{k+1}-2,z)=w(x,a)*w(x,2^{k+1}-2-a,z)$ with $a$ odd, and the same with $a$ even. Combining these, we see that there are $0$ or at least $2$ representations of $w(x,2^{k+1}-2,z)$ total, so it is not (pseudo-)Ulam, completing the inductive step.
    
In particular, for any $x,z$ with $x+z\geq2^{k+1},$ we see that $(x,z)\in\ulam[2^{k+1}-2]$ if and only if $w(x,2^{k+1}-2,z)$ is pseudo-Ulam, i.e., if and only if by the above $x+z\equiv-1\pmod{2^{k+1}},$ as desired.
\end{proof}
The final description, and proof as well, of $\ulam[2^{k+1}-2^a-1]$ is in fact remarkably similar in essence to that of $\ulam[2^{k+1}-2],$ which is due to the fact that both are spawned by `similar types' of Zumkeller numbers: the ones with exactly one zero in them. For the reader's convenience, we include it in the appendix.
\begin{corollary}[\Cref{intro-biperiodic}]
Suppose that $2^k\leq y<2^{k+1}.$
\begin{itemize}
    \item The set $\ulam[y]$ is biperiodic if and only if $y$ is not a Zumkeller number, and in this case its biperiod is $2^{k+1}\times2^{k+1}.$
    \item If $y$ is a Zumkeller number, then $\ulam[y]$ is eventually biperiodic with biperiod $2^{k+1}\times2^{k+1}$; moreover, the only impurities are in the bottom-left block: $(x,z)$ satisfying $x,z<2^{k+1}.$
\end{itemize}
\end{corollary}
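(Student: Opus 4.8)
The plan is to derive this corollary by assembling the two results it subsumes: \Cref{biperiodic} settles all non-Zumkeller $y$, while \Cref{special-case} supplies the complete picture for Zumkeller $y$. Essentially nothing new needs to be proved; what remains is bookkeeping plus a short argument that a bounded set of ``impurities'' genuinely prevents periodicity. For non-Zumkeller $y$, \Cref{biperiodic} already gives that $\ulam[y]$ is biperiodic with biperiod $2^{k+1}\times2^{k+1}$, so the only missing point is minimality. I would obtain this by restricting $\ulam[y]$ to the bottom edge $z=0$: by the remark following \Cref{biperiodic}, $w(x,y,0)$ is Ulam if and only if $x\equiv-1\pmod{2^{k+1}}$, so this slice is the indicator of a single residue class modulo $2^{k+1}$, whose minimal period is exactly $2^{k+1}$. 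Hence any horizontal period of $\ulam[y]$ is a multiple of $2^{k+1}$, which together with \Cref{biperiodic} pins the minimal horizontal period at $2^{k+1}$; the left edge $x=0$ handles the vertical period in the same way.

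For Zumkeller $y$ I would first recall, as noted at the start of \Cref{special-section}, that a Zumkeller number in $[2^k,2^{k+1})$ is exactly one of $2^{k+1}-1$, $2^{k+1}-2$, or $2^{k+1}-2^a-1$ with $0<a<k$, so \Cref{special-case} parts (1)--(3) is exhaustive. In each of these three cases I would exhibit a genuinely $2^{k+1}$-biperiodic set $P$ --- namely $\{(x,z):x+z\equiv-1\pmod{2^{k+1}}\}$ in the first two cases, and $\{x'+z'=2^{k+1}-1\}\cup\{x'+z'=2^{k+1}+2^a-1\}$ (with $x',z'$ the residues of $x,z$ mod $2^{k+1}$) in the third --- and check directly from the explicit description in \Cref{special-case} that $\ulam[y]\mathbin{\triangle}P\subseteq\{x,z<2^{k+1}\}$. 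This holds because every coordinate at which the description of $\ulam[y]$ departs from $P$ lies on a line $x+z=c$ with $c\le2^{k+1}$ (the deleted diagonal $x+z=2^{k+1}-1$; the added pieces $x+z=2^{k+1}-2$ and $x+z=2^{k+1}$ in case (2); the added line $x+z=y$ in case (3)), which forces $x,z<2^{k+1}$. Consequently $\ulam[y]$ agrees with the biperiodic set $P$ outside the bottom-left block, so it is eventually biperiodic with biperiod $2^{k+1}\times2^{k+1}$ and all its impurities lie in $\{x,z<2^{k+1}\}$ --- this is precisely the second bullet.

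Finally, to see that $\ulam[y]$ is not \emph{exactly} biperiodic when $y$ is Zumkeller --- which also gives the ``only if'' direction of the first bullet --- I would restrict once more to the slice $z=0$. In each of the three cases this slice is the $2^{k+1}$-periodic sequence $\{x\equiv-1\pmod{2^{k+1}}\}$ altered on a nonempty finite subset of $[0,2^{k+1})$ (position $2^{k+1}-1$ removed in case (1); positions $2^{k+1}-2$ added and $2^{k+1}-1$ removed in case (2); position $y$ added in case (3)). If this slice had a period $b\ge1$, then comparing its values at positions $x\ge2^{k+1}$, where it coincides with the pure $2^{k+1}$-periodic sequence, forces $2^{k+1}\mid b$; but then an altered position $p\in[0,2^{k+1})$ and the unaltered position $p+b\ge2^{k+1}$ would carry opposite labels, a contradiction. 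Hence $\ulam[y]$ is not horizontally periodic and therefore not biperiodic.

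The main obstacle is purely organizational: one must carefully verify, case by case in \Cref{special-case}, that every deviation from a $2^{k+1}$-biperiodic pattern is confined to $\{x,z<2^{k+1}\}$, and carry out the small period-divisibility argument in the last step. No genuinely new mathematical difficulty is involved, since \Cref{biperiodic} and \Cref{special-case} already do all the substantive work.
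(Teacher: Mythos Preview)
Your proposal is correct and follows the same route as the paper: invoke \Cref{biperiodic} for the non-Zumkeller case and \Cref{special-case} for the Zumkeller case. The paper's own proof is literally the single sentence ``This is an immediate consequence of \Cref{biperiodic,special-case},'' so you have in fact supplied considerably more detail than the original---minimality of the period via the $z=0$ slice, explicit confinement of the impurities to the bottom-left block in each of the three Zumkeller subcases, and the non-periodicity argument for Zumkeller $y$. One tiny imprecision: your clause ``lies on a line $x+z=c$ with $c\le 2^{k+1}$\ldots which forces $x,z<2^{k+1}$'' is not literally true at $c=2^{k+1}$ (one could have $x=2^{k+1},z=0$), but in case~(2) the extra constraint $x,z>0$ rules that out, so the conclusion stands.
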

\begin{proof}
This is an immediate consequence of \Cref{biperiodic,special-case}.
\end{proof}
Visually, in these cases $\ulam[y]$ is either a set of checkerboard-like segments on one extra diagonal line inserted into an infinite biperiodic array of squares with two diagonal segments each, or it is one diagonal removed from an infinite periodic array with only one segment per square (as is in the case of $y=2^{k+1}-1$), or it is that same diagonal removed but some of the points in the two adjacent ones added (as is in the case of $y=2^{k+1}-2$):
\begin{figure}[H]
    \centering
     \begin{subfigure}[b]{0.32\textwidth}
         \centering
         \includegraphics[width=\textwidth]{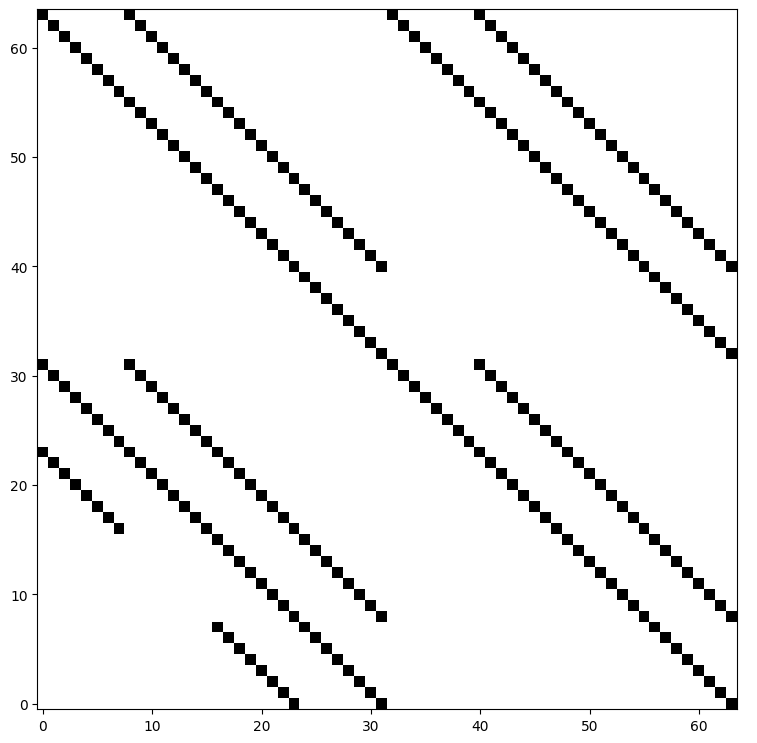}
         \caption{$\ulam[23]$}
         \label{fig:23}
     \end{subfigure}
     \hfill
    %  \hspace{-0.9in}
     \begin{subfigure}[b]{0.32\textwidth}
         \centering
         \includegraphics[width=\textwidth]{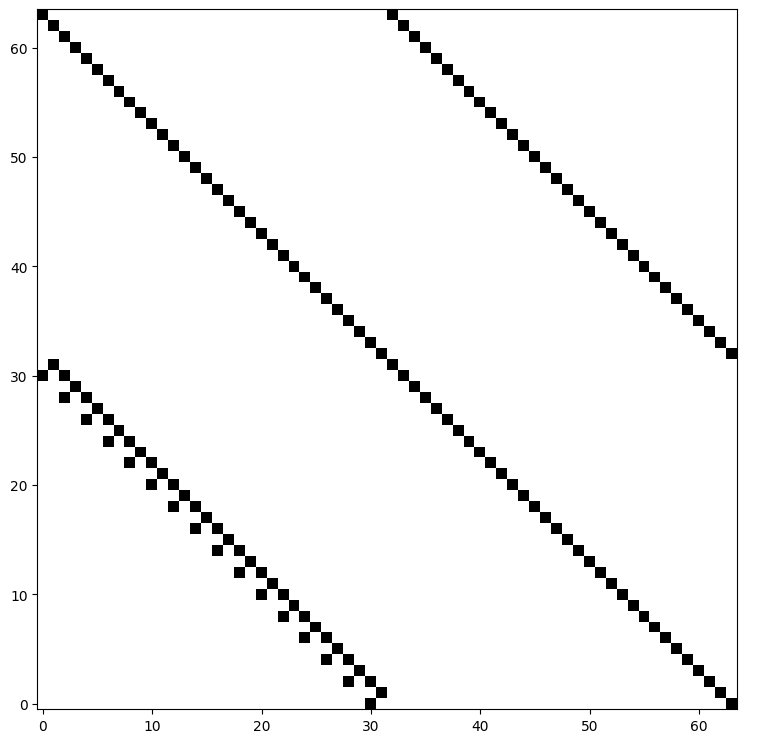}
         \caption{$\ulam[30]$}
         \label{fig:30}
     \end{subfigure}
     \hfill
    %  \hspace{-0.9in}
     \begin{subfigure}[b]{0.32\textwidth}
         \centering
         \includegraphics[width=\textwidth]{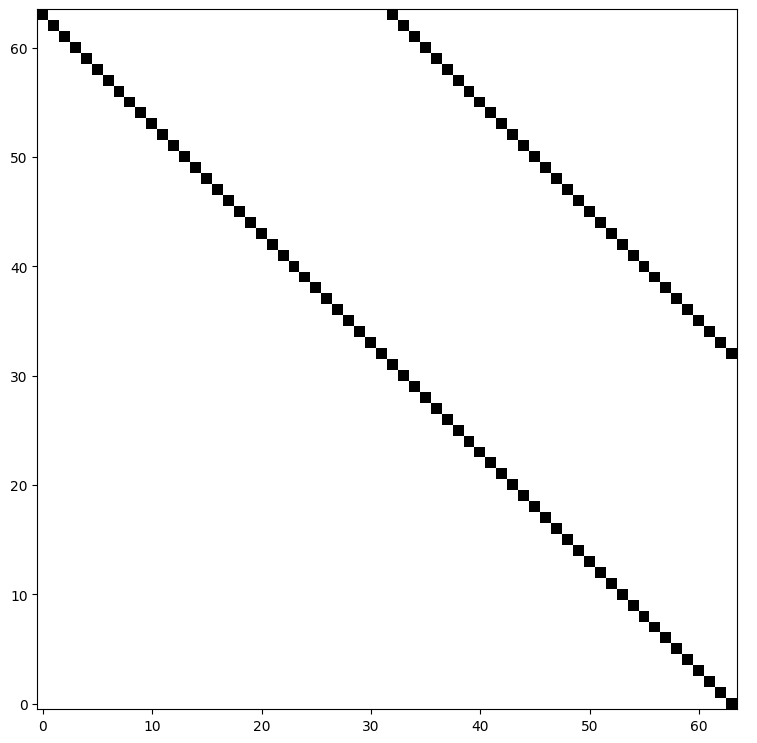}
         \caption{$\ulam[31]$}
         \label{fig:31}
     \end{subfigure}
        \caption{$\ulam[y]$ for (a) $y=23,$ (b) $y=30,$ (c) $y=31.$}
        \label{fig:imperfect graphs}
\end{figure}
\section{Hierarchical and fractal descriptions}\label{main-theorem}
\begin{figure}[H]
    \centering
     \begin{subfigure}[b]{0.32\textwidth}
         \centering
         \includegraphics[width=\textwidth]{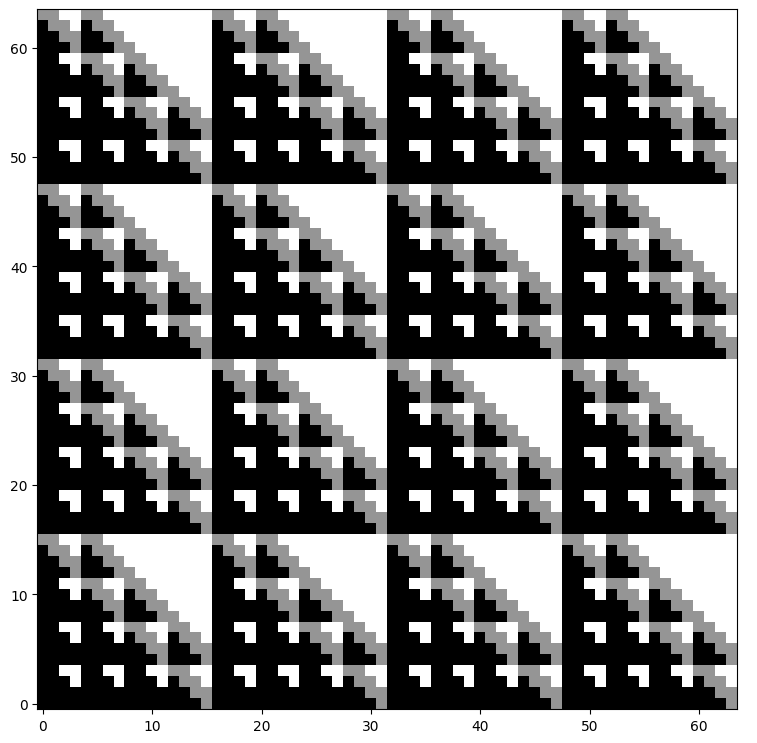}
         \caption{$\culam[10]$}
         \label{fig:10}
     \end{subfigure}
     \hfill
    %  \hspace{-0.9in}
     \begin{subfigure}[b]{0.32\textwidth}
         \centering
         \includegraphics[width=\textwidth]{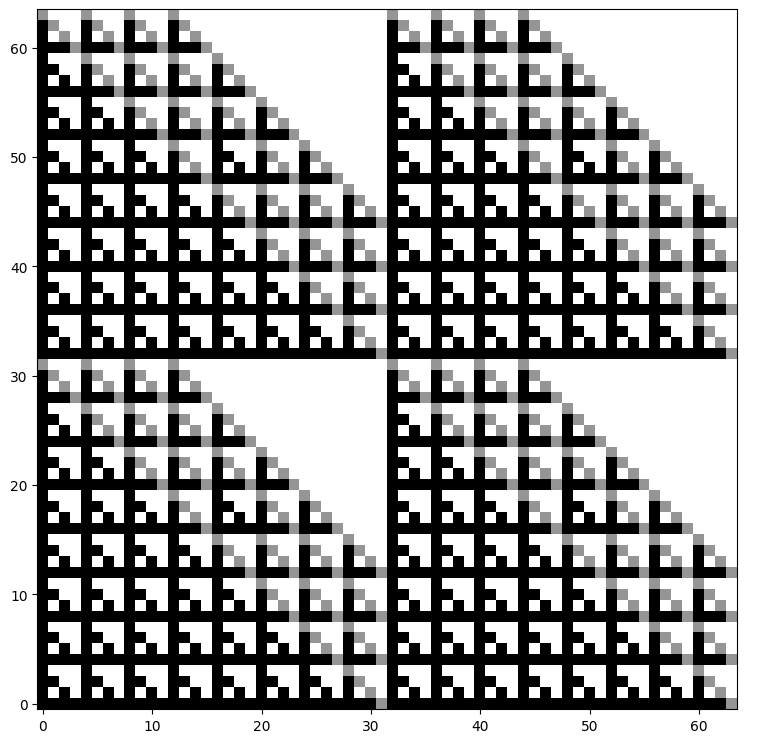}
         \caption{$\culam[19]$}
         \label{fig:19}
     \end{subfigure}
     \hfill
    %  \hspace{-0.9in}
     \begin{subfigure}[b]{0.32\textwidth}
         \centering
         \includegraphics[width=\textwidth]{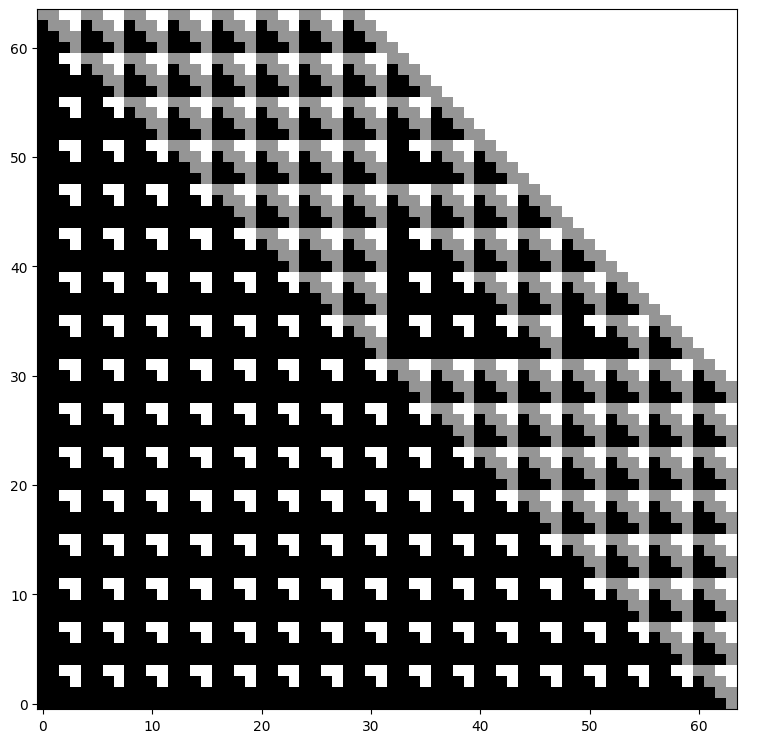}
         \caption{$\culam[34]$}
         \label{fig:34}
     \end{subfigure}
        \caption{$\culam[y]$ for (a) $y=10,$ (b) $y=19,$ (c) $y=34.$}
        \label{fig:caveman graphs}
\end{figure}
We observe that these representations seem quite regular and periodic-looking. At the same time, the coloring for $19,$ an odd number, seems much less dense than the coloring for $10$ and $34.$ Looking through more of these images, we begin to notice more of a pattern: the graphs $\culam[2^dy]$ and $\culam[2^dy+1],$ where $y$ is odd, are tiled by $2^d\times2^d$ blocks whose coloring is determined by the colorings of corresponding squares in $\culam[y-1]$ and $\culam[y].$ Precisely, we in fact have the general conversion table shown in \Cref{building-blocks}.

In fact, all of these tables are simply linear combinations of two tiles each for $2^dy$ and $2^dy+1$---one grey square (with black and white squares interspersed in the case of $2^dy+1$) and one fractal pattern. Corresponding squares are summed and multiplied in the `caveman' way as described in \Cref{caveman-operations}. We can therefore find an inductive algorithm to determine $\culam[y]$ for any $y,$ by successively reducing the number of $1$'s in its binary expansion. But first, we need to explicitly define these blocks and prove that they take the desired form.
\begin{definition}
For every positive integer $d,$ define labelings $\eone[d],\etwo[d],\oone[d],\otwo[d]:\{0,1,2,\dots,2^d-1\}^2\to\{0,1,2\}$ as follows:
\begin{itemize}
    \item If $x+z\geq2^d-1,$ let $\eone[d](x,z)=0.$ Otherwise, suppose that $e$ is the largest $e<d$ satisfying $x,z\pmod{2^{e+1}}<2^e$ (which must exist). If it also satisfies $x\pmod{2^e}+z\pmod{2^e}<2^e-1,$ then $\eone[d](x,z)=2.$ Otherwise, $\eone[d](x,z)=1.$ 
    \item Let $\etwo[d](x,z)=1$ for all $x$ and $z.$
    \item If $x+z\geq2^d-2,$ let $\oone[d](x,z)=0.$ Otherwise, if $x+z$ is even, then $\oone[d](x,z)=0$ if $x$ is odd and $\oone[d](x,z)=2$ if $x$ is even. Finally, if $x+z<2^d-2$ and is odd, then once again define $e$ as in the case of $\eone.$ If $x\pmod{2^e}+z\pmod{2^e}<2^e-1,$ then $\oone[d](x,z)=2.$ Otherwise, $\oone[d](x,z)=1.$
    \item If $x+z$ is even, then $\otwo[d](x,z)=0$ if $x$ is odd and $\otwo[d](x,z)=2$ if $x$ is even. Otherwise, $\otwo[d](x,z)=1.$
\end{itemize}
\end{definition}
Recall that these labelings are visualized in \Cref{odd-even-patterns}. While $\etwo$ and $\otwo$ have very simple definitions, $\eone$ and $\oone$ seem at first glance to be complicated and are fractal-like. However, they actually have very simple descriptions in terms of $\culam$ labelings of powers of $2$:
\begin{lemma}\label[lemma]{culam-power-of-two}
Define the `modified' labelings $\culam[2^d]',\culam[2^d+1]':\{0,1,2,\dots,2^d-1\}^2\to\{0,1,2\}$ as follows. The value $\culam[2^d]'(x,z)$ counts (in the caveman style) representations $$
w(x,2^d,z)=w(x,a)*w(b,z)
$$
satisfying $a,b>0.$ Meanwhile, $\culam[2^d+1]'(x,z)$ counts representations $$
w(x,2^d+1,z)=w(x,a)*w(b,z)
$$
satisfying $a,b>1.$ Then we have \begin{enumerate}
    \item $\culam[2^d]'=\eone[d]$ and
    \item $\culam[2^d+1]'=\oone[d].$
\end{enumerate}
\end{lemma}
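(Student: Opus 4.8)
The plan is to reduce everything to Lucas's criterion (\Cref{lucas}), according to which $w(c,d)$ is Ulam precisely when $c\wedge d=0$ (bitwise AND), and then to count the admissible middle terms digit by digit. Write $n_i$ for the $i$-th binary digit of $n$. Since the concatenation $w(x,a)*w(b,z)$ has length $x+a+b+z+2$, every representation counted by $\culam[2^d]'(x,z)$ has $a+b=2^d$, and every one counted by $\culam[2^d+1]'(x,z)$ has $a+b=2^d+1$; in both cases the pair $(a,b)$ is determined by $a$, so the count is the (capped) number of admissible values of $a$.

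For part (1), I would first parametrize the pairs with $a+b=2^d$ and $0<a,b<2^d$: such $a,b$ share a common number $t<d$ of trailing zeros, satisfy $a_t=b_t=1$, and have $a_i=1-b_i$ for $t<i<d$; conversely any $t\in\{0,\dots,d-1\}$ together with a subset $S\subseteq\{t+1,\dots,d-1\}$ produces such a pair via $a=2^t+\sum_{i\in S}2^i$, $b=2^t+\sum_{i\notin S}2^i$. Under this bijection $w(x,a)$ and $w(b,z)$ are simultaneously Ulam exactly when $x_t=z_t=0$ and, for each $i$ with $t<i<d$, either $i\in S$ and $x_i=0$ or $i\notin S$ and $z_i=0$; hence for a fixed $t$ with $x_t=z_t=0$ the number of admissible $S$ is $2^{\#\{i\,:\,t<i<d,\ x_i=z_i=0\}}$ when no $i\in\{t+1,\dots,d-1\}$ has $x_i=z_i=1$, and $0$ otherwise. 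In the regime $x+z\ge2^d-1$ there are no admissible pairs (an admissible pair forces $x+a=x\vee a\le2^d-1$ and $z+b=z\vee b\le2^d-1$, so $x+z=(x+a)+(z+b)-2^d\le2^d-2$), matching $\eone[d]=0$.

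Now assume $x+z\le2^d-2$. The index $e$ from the definition of $\eone[d]$ is the largest $e<d$ with $x_e=z_e=0$; it exists (otherwise every $i<d$ has $x_i=1$ or $z_i=1$, so $x+z\ge2^d-1$), and for $e<i<d$ exactly one of $x_i,z_i$ is $1$ (at least one by maximality of $e$; if some such $i$ had both then $x+z\ge(2^d-2^{e+1})+2^{i}\ge2^d$). Thus $t=e$ is always admissible and contributes exactly $2^0=1$, while any admissible $t<e$ contributes at least $2^1=2$, since the position $e$ lies in its range $\{t+1,\dots,d-1\}$. So the capped count is $1$ when $e$ is the only admissible base and $2$ otherwise, and it remains to show that an admissible base $t<e$ exists if and only if $x\bmod2^e+z\bmod2^e\le2^e-2$ — exactly the $2$-vs-$1$ alternative in the definition of $\eone[d]$. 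Forward: an admissible $t<e$ has $x_t=z_t=0$ and $x_i+z_i\le1$ for $t<i<e$, so $x\bmod2^e+z\bmod2^e\le2(2^t-1)+(2^e-2^{t+1})=2^e-2$. Converse: if $x\bmod2^e+z\bmod2^e\le2^e-2$ then the largest $t<e$ with $x_t=z_t=0$ exists (else the digits below $e$ force the sum to be $\ge2^e-1$) and is admissible (no position in $\{e,\dots,d-1\}$ carries two $1$'s, and if some $i$ with $t<i<e$ had $x_i=z_i=1$ then $x\bmod2^e+z\bmod2^e\ge(2^e-2^{t+1})+2^{i}\ge2^e$). This proves $\culam[2^d]'=\eone[d]$.

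For part (2), I would induct on $d$ (the base $d=1$ is immediate) and split a representation $w(x,2^d+1,z)=w(x,a)*w(b,z)$, $a+b=2^d+1$, $a,b\ge2$, by which of $a,b$ is odd. If $a=1+2a'$ and $b=2b'$, so $a',b'\ge1$ and $a'+b'=2^{d-1}$, then \Cref{lucas} gives that $w(x,a)$ is Ulam iff $x$ is even and $w(\lfloor x/2\rfloor,a')$ is Ulam, and $w(b,z)$ is Ulam iff $w(b',\lfloor z/2\rfloor)$ is Ulam; the case "$b$ odd" is symmetric. Letting $M$ be the uncapped number of pairs $(a',b')$ with $a'+b'=2^{d-1}$, $a',b'\ge1$, and $w(\lfloor x/2\rfloor,a'),w(b',\lfloor z/2\rfloor)$ both Ulam — whose cap is $\culam[2^{d-1}]'(\lfloor x/2\rfloor,\lfloor z/2\rfloor)=\eone[d-1](\lfloor x/2\rfloor,\lfloor z/2\rfloor)$ by part (1) — the uncapped count defining $\culam[2^d+1]'(x,z)$ equals $(\,[x\text{ even}]+[z\text{ even}]\,)\,M$. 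If $x,z$ are both odd this is $0$, as is $\oone[d]$; if both even it is $2M$, which is $0$ iff $M=0$ iff $\lfloor x/2\rfloor+\lfloor z/2\rfloor\ge2^{d-1}-1$ iff $x+z\ge2^d-2$, and $\ge2$ otherwise, matching $\oone[d]\in\{0,2\}$; and if exactly one of $x,z$ is even, so $x+z$ is odd, the count is $M$ and the digit-shift identities $e=e'+1$, $x\bmod2^e+z\bmod2^e=1+2(\lfloor x/2\rfloor\bmod2^{e'}+\lfloor z/2\rfloor\bmod2^{e'})$, and $x+z\ge2^d-2\iff\lfloor x/2\rfloor+\lfloor z/2\rfloor\ge2^{d-1}-1$ (using $x+z$ odd) make $\oone[d](x,z)=\eone[d-1](\lfloor x/2\rfloor,\lfloor z/2\rfloor)=\min(2,M)$, as needed. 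The main obstacle is the digit-level bookkeeping of part (1) — especially the equivalence "admissible base below $e$ exists $\iff x\bmod2^e+z\bmod2^e\le2^e-2$", which is the source of the fractal-looking shape of $\eone[d]$; once part (1) is in hand, part (2) is a routine parity split plus the digit-shift identities.
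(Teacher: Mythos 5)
Your proof is correct, and part (1) is in substance the paper's own argument: both proofs key on the common $2$-adic valuation of $a$ and $b$ (your $t$, the paper's $e$), the complementary-digit structure forced by $a+b=2^d$, and the observation that the largest index with $x_e=z_e=0$ contributes exactly one representation while any admissible smaller index contributes at least two; your subset-$S$ parametrization with the count $2^{\#\{i:\,x_i=z_i=0\}}$ just makes the bookkeeping more explicit than the paper's reduction to $w(x'',2^{d-e-1}-1,z'')$. Where you genuinely diverge is part (2). The paper splits on the parity of $x+z$: for $x+z$ even it performs the same halving reduction you do, obtaining $\oone[d](x,z)=\eone[d-1](x/2,z/2)\,\widehat{+}\,\eone[d-1](x/2,z/2)$, but for $x+z$ odd it instead invokes \Cref{odd-even-equal} together with the identification of $\eone[d]$ and $\oone[d]$ with sub-blocks of $\culam[2^d]$ and $\culam[2^d+1]$ --- an identification that is only established later (\Cref{structure-of-power-two}, via \Cref{adding-to-right}), so the paper's appendix argument for the odd case is indirect and leans on downstream material. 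You handle all parities uniformly by the split on which of $a,b$ is odd, reducing to $\eone[d-1]$ at half scale and transporting the definition of $\oone[d]$ through the digit-shift identities $e=e'+1$ and $x\bmod 2^e+z\bmod 2^e=1+2\bigl(\lfloor x/2\rfloor\bmod 2^{e'}+\lfloor z/2\rfloor\bmod 2^{e'}\bigr)$. This buys a self-contained proof that depends only on \Cref{lucas} and part (1), at the cost of a little extra digit arithmetic; the paper's route is shorter on the page but borrows facts proved elsewhere. (One cosmetic remark: you announce an induction on $d$ in part (2), but your argument only ever uses part (1) at scale $d-1$, so no inductive hypothesis for part (2) is actually needed.)
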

\begin{proof}[Proof of part 1]
Fix some representation $$
w(x,2^d,z)=w(x,a)*w(b,z)
$$
with $a,b>0.$ It is not hard to see that the number of $2$'s dividing $a$ is the same as that for $b,$ i.e., we can write $a=2^ea'$ and $b=2^eb',$ where $a'$ and $b'$ are both odd. Then observe that $w(x,a)$ and $w(b,z)$ are both Ulam if and only if, respectively, both $w(x',a')$ and $w(b',z')$ are, where $x'=\lfloor\frac{x}{2^e}\rfloor,z'=\lfloor\frac{z}{2^e}\rfloor$ (since the last $e$ digits of both $a$ and $b$ are $0$ and cannot interfere with the corresponding ones of $x$ and $z$). Now, both $a'$ and $b'$ are odd, so both $x'$ and $z'$ must be even, which is exactly equivalent to $x,z\pmod{2^{e+1}}<2^e.$ Assuming this, we see that $w(x',a')$ and $w(b',z')$ are both Ulam if and only if $w(x'',a'')$ and $w(b'',z'')$ are, where $$
x''=\frac{x'}{2}=\floor{\frac{x}{2^{e+1}}},z''=\frac{z'}{2}=\floor{\frac{z}{2^{e+1}}},a''=\frac{a'-1}{2}=\left\lfloor\frac{a}{2^{e+1}}\right\rfloor,b''=\frac{b'-1}{2}=\left\lfloor\frac{b}{2^{e+1}}\right\rfloor.
$$
Observe that $a''+b''=\frac{a'+b'}{2}-1=\frac{a+b}{2^{e+1}}-1=2^{d-e-1}-1.$ Hence, $w(x,2^d,z)$ has the representation if and only if both $x,z\pmod{2^{e+1}}<2^e$ and we have the representation $$
w(x'',2^{d-e-1}-1,z'')=w(x'',a'')*w(b'',z'').
$$
Say, now, that $x_i,a_i,b_i,z_i$ represent the $(i+1)$-th digit from the right of the binary expansions of $x'',a'',b'',z'',$ respectively. Then note that $x_i+a_i\leq1,b_i+z_i\leq1$ for all $i.$ Additionally, we have $a''+b''=2^{d-e-1}-1,$ from which it is easy to conclude that $a_i+b_i=1$ for all $i<d-e-1.$ Thus, since $x_i+a_i+b_i+z_i\leq2,$ we have $x_i+z_i\leq1$ for all $i,$ and therefore $w(x'',z'')$ is Ulam. Observe that the representation is unique if and only if there is a unique choice of $a'',b''$ and $e.$ But observe that we have {\small$$
w(x'',2^{d-e-1}-1,z'')=w(x'',z'')*w(2^{d-e-1}-1-z'',z'')=w(x'',2^{d-e-1}-1-x'')*w(x'',z''),
$$}

\noindent so the representation is evidently only unique if $z''=2^{d-e-1}-1-x''.$ Thus, $w(x,2^d,z)$ has a representation if there exists a value $e$ such that $x,z\pmod{2^{e+1}}<2^e$ and $w(x'',z'')$ is Ulam. If $e$ is unique and in addition satisfies $x''+z''=2^{d-e-1}-1,$ then the representation is unique.

We see that such an $e$ does not exist if $x+z\geq2^d-1$; for if it did, then we would have $x''+z''\leq2^{d-e-1}-1$ and in particular $x<2^e(x'+1)=2^{e+1}x''+2^e,$ i.e., $x\leq2^{e+1}x''+2^e-1,$ and similarly $z\leq2^{e+1}z''+2^e-1.$ Thus, $$
x+z\leq2^{e+1}(x''+z'')+2^{e+1}-2=2^d-2<2^d-1.
$$
If, on the other hand, we do have $x+z<2^d-1,$ then as in the statement choose the largest $e<d$ satisfying $x,z\pmod{2^{e+1}}<2^e.$ It is not hard to see that we must have $x''_e+z''_e=2^{d-e-1}-1,$ giving at least one representation of $w(x,2^d,z).$ In order for this representation to be unique, for any other $f$ satisfying $x,z\pmod{2^{f+1}}<2^f,$ we observe that $w(x''_f,z''_f)$ must not be Ulam. Hence, there must exist an index $I$ with $f<I<e$ such that $x_I=z_I=1.$ It is not hard to see that these two conditions are equivalent to $x\pmod{2^e}+z\pmod{2^e}\geq2^e-1$; the forward direction is true because if such an $I$ exists, then $$
x\hspace{-0.15in}\pmod{2^e}+z\hspace{-0.15in}\pmod{2^e}\geq\sum\limits_{i=I}^{e-1}(x_i+z_i)2^i\geq2^{I+1}+\sum\limits_{i=I+1}^{e-1}2^i=2^e
$$ 
and the backward direction is true because if $x_f=z_f=0$ and $x_i+z_i\leq1$ for all $i$ in between $f$ and $e,$ then \begin{align*}
x\hspace{-0.15in}\pmod{2^e}+z\hspace{-0.15in}\pmod{2^e}&=\sum\limits_{i=0}^{e-1}(x_i+z_i)2^i=\sum\limits_{i=0}^{f-1}(x_i+z_i)2^i+\sum\limits_{i=f+1}^{e-1}(x_i+z_i)2^i\\
&\leq\sum\limits_{i=0}^{f-1}2^{i+1}+\sum\limits_{i=f+1}^{e-1}2^i=2^{f+1}-2+2^e-2^{f+1}=2^e-2.
\end{align*}
Note that the case $x\pmod{2^e}+z\pmod{2^e}=2^e-1$ is separate because $x_i+z_i=1$ for all $i<e,$ so no $f<e$ satisfying $x_f=z_f=0$ exists. Thus, in this case, $e$ is trivially unique.

Hence, $w(x,2^d,z)$ has a representation as a concatenation of Ulam words if and only if $x+z<2^d-1,$ and this representation is unique if and only if the largest $e$ satisfying $x,z\pmod{2^{e+1}}<2^e$ also satisfies $x\pmod{2^e}+z\pmod{2^e}\geq2^e-1,$ as desired.
\end{proof}
The second part is very similar in essence to the first, so we move its proof to the appendix for readability.

Now, we can inductively add $1$'s to the right of the binary expansion of $y$ as follows:
\begin{theorem}[\Cref{inside}]\label{adding-to-right}
Suppose $y$ is odd and $d$ is a nonnegative integer. For any $x,z,$ suppose that $x'=\lfloor\frac{x}{2^d}\rfloor,z'=\lfloor\frac{z}{2^d}\rfloor,$ and $x''=x\pmod{2^d},z''=z\pmod{2^d}.$ Then {\small$$
\culam[2^dy](x,z)=\culam[y-1](x',z')\widehat{\cdot}\eone[d](x'',z'')\widehat{+}\culam[y](x',z')\widehat{\cdot}\etwo[d](x'',z''),
$$}

\noindent or, concisely, $$
\culam[2^dy]=\culam[y-1]\widehat{\otimes}\eone[d]\widehat{+}\culam[y]\widehat{\otimes}\etwo[d].
$$
Similarly, we have {\small$$
\culam[2^dy+1](x,z)=\culam[y-1](x',z')\widehat{\cdot}\oone[d](x'',z'')\widehat{+}\culam[y](x',z')\widehat{\cdot}\otwo[d](x'',z''),
$$}

\noindent in other words $$
\culam[2^dy+1]=\culam[y-1]\widehat{\otimes}\oone[d]\widehat{+}\culam[y]\widehat{\otimes}\otwo[d].
$$
\end{theorem}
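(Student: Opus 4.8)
The plan is to reduce everything to counting, via Lucas's criterion. By \Cref{lucas} a word $w(p,q)$ is Ulam exactly when the binary expansions of $p$ and $q$ share no $1$, so $\culam[y](x,z)=\min\bigl(2,N_y(x,z)\bigr)$, where $N_y(x,z)$ denotes the number of integers $0\le a\le y$ such that $x,a$ share no $1$ and $z,y-a$ share no $1$. I would prove the identity by computing $N_{2^dy}$ and $N_{2^dy+1}$ directly in terms of lower levels and the patterns $\eone,\etwo,\oone,\otwo$, and then checking that passing from the exact counts to the capped (caveman) expression changes nothing.

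For the even case, write $x=2^dx'+x''$ and $z=2^dz'+z''$ with $0\le x'',z''<2^d$, and for a candidate $a$ in $w(x,2^dy,z)=w(x,a)*w(2^dy-a,z)$ write $a=2^d\alpha+a''$ with $0\le a''<2^d$. If $a''=0$ then $2^dy-a=2^d(y-\alpha)$, the low-order sharing conditions are vacuous, and $(\alpha,y-\alpha)$ is an arbitrary level-$y$ pair for $(x',z')$; these $a$ contribute $N_y(x',z')$, which matches the $\culam[y]\widehat{\otimes}\etwo[d]$ summand since $\etwo[d]\equiv1$. If $0<a''<2^d$ then $2^dy-a=2^d(y-1-\alpha)+(2^d-a'')$, and the condition splits into a level-$(y-1)$ condition on $(\alpha,y-1-\alpha)$ against $(x',z')$ together with: $a''$ shares no $1$ with $x''$ and $2^d-a''$ shares no $1$ with $z''$. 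The number of such $a''$ is exactly the count of decompositions of $w(x'',2^d,z'')$ into two nonempty Ulam words, i.e.\ $\culam[2^d]'(x'',z'')=\eone[d](x'',z'')$ by the first part of \Cref{culam-power-of-two}. Hence $N_{2^dy}(x,z)=N_y(x',z')+N_{y-1}(x',z')\cdot M$ with $\min(2,M)=\eone[d](x'',z'')$, and it remains only to invoke the elementary capping lemma below, which gives the first formula.

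The odd case is parallel, but $a$ now falls into three regimes according to whether $a''=0$, $a''=1$, or $1<a''<2^d$. If $a''=0$ the tail of $2^dy+1-a$ is $1$, forcing $z$ even; if $a''=1$ the tail of $a$ is $1$, forcing $x$ even; in each unobstructed case the contribution is $N_y(x',z')$, so the two regimes together contribute $N_y(x',z')$ multiplied by the number of even entries among $x''$ and $z''$. I would then observe that this multiplier equals $\otwo[d](x'',z'')$ by checking the four parity patterns against the definition of $\otwo$. The regime $1<a''<2^d$ decouples just as before, with the level-$(y-1)$ factor $N_{y-1}(x',z')$ and a tail factor counting decompositions of $w(x'',2^d+1,z'')$ into two Ulam words each of length at least $2$, i.e.\ $\culam[2^d+1]'(x'',z'')=\oone[d](x'',z'')$ by the second part of \Cref{culam-power-of-two}. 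Hence $N_{2^dy+1}(x,z)=N_y(x',z')\cdot\otwo[d](x'',z'')+N_{y-1}(x',z')\cdot M'$ with $\min(2,M')=\oone[d](x'',z'')$, and the same lemma yields the second formula.

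The work here is essentially bookkeeping rather than a hard idea: one must keep straight that $\alpha$ ranges up to $y$ in the $a''\in\{0\}$ (resp.\ $\{0,1\}$) regime and only up to $y-1$ otherwise, keep the exact ranges of $a''$ straight, and verify the small capping lemma that for nonnegative integers $P_1,Q_1,P_2,Q_2$ one has $\min\bigl(2,\,P_1Q_1+P_2Q_2\bigr)=\bigl(\min(2,P_1)\widehat{\cdot}\min(2,Q_1)\bigr)\widehat{+}\bigl(\min(2,P_2)\widehat{\cdot}\min(2,Q_2)\bigr)$, which one checks by cases on whether each factor and each product is $0$, $1$, or at least $2$. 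Once \Cref{culam-power-of-two} and \Cref{lucas} are in hand, no further structural facts about Ulam words are needed, so that verification of the caveman arithmetic is the only genuinely fiddly point.
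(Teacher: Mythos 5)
Your proposal is correct and follows essentially the same route as the paper's proof: both split the candidate cut positions $a$ by their residue $a''$ modulo $2^d$ (the regimes $a''=0$, resp.\ $a''\in\{0,1\}$, versus the rest), use Lucas to decouple the high and low bits, and identify the low-bit counts with $\eone[d],\oone[d]$ via \Cref{culam-power-of-two} and the parity factor with $\otwo[d]$. The only cosmetic difference is that you work with exact counts and cap at the end via the semiring-homomorphism property of $n\mapsto\min(n,2)$, whereas the paper phrases the same bookkeeping directly as correspondences between representations.
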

Note that the starred addition and multiplication above are in the caveman style.
\begin{proof}
\setcounter{case}{0}
\begin{case}
Suppose we have a decomposition $$
w(x,2^dy,z)=w(x,a)*w(2^dy-a,z).
$$ 
We have two cases (and these two cases are exactly the ones giving the two summands); either $a\equiv2^dy-a\equiv0\pmod{2^d}$ or both are nonzero. Regardless, define $a'=\lfloor\frac{a}{2^d}\rfloor,b'=\lfloor\frac{2^dy-a}{2^d}\rfloor$ and $a''=a\pmod{2^d},b''=(2^dy-a)\pmod{2^d}.$ In the former case, observe that $a''=b''=0,$ whereas in the latter we have $b''=2^d-a''.$
\begin{subcase}
Suppose first that $a''=b''=0.$ Then, as before, $w(x,a)$ and $w(2^dy-a,z)$ are Ulam if and only if, respectively, $w(x',a')$ and $w(b',z')$ are. %(since the last $d$ binary digits of both $a$ and $2^dy-a$ are $0,$ and therefore do not interfere with the corresponding digits of $x$ and $z$). 
Also, we see that $a'+b'=\frac{a+b}{2^d}=y.$ Thus, representations of $w(x,2^dy,z)$ with $a\equiv0\pmod{2^d}$ are in correspondence with those of $$
w(x',y,z')=w(x',a')*w(b',z').
$$
In particular, there are $\culam[y](x',z')=\culam[y](x',z')\widehat{\cdot}\etwo[d](x'',z'')$-many of them.
\end{subcase}
\begin{subcase}
Now, suppose that $a'',b''>0,$ so that $b''=2^d-a''.$ Then observe that $$
2^dy=a+b=(2^da'+a'')+(2^db'+b'')=2^d(a'+b')+2^d,
$$
from which we get $a'+b'=y-1.$ Now, $w(x,a)$ and $w(2^dy-a,z)$ are Ulam if and only if, respectively, $w(x',a')$ and $w(x'',a''),$ and $w(b',z')$ and $w(b'',z'')$ are. But then observe that we have the representations $$
w(x',y-1,z')=w(x',a')*w(b',z');\quad w(x'',2^d,z'')=w(x'',a'')*w(b'',z'').
$$
Thus, representations of $w(x,2^dy,z)$ with $a\not\equiv0\pmod{2^d}$ are in correspondence with pairs of representations of $w(x',y-1,z')$ and $w(x'',2^d,z''),$ where the latter satisfies $0<a'',b''.$ Hence, we have, by \Cref{culam-power-of-two}, $\culam[y-1](x',z')\widehat{\cdot}\eone[d](x'',z'')$ total representations of $w(x,2^dy,z)$ in this case.
\end{subcase}
In particular, in total there are $$
\culam[2^dy](x,z)=\culam[y](x',z')\widehat{\cdot}\etwo[d](x'',z'')\widehat{+}\culam[y-1](x',z')\widehat{\cdot}\eone[d](x'',z'')
$$
many representations of $w(x,2^dy,z)$ as a concatenation of Ulam words with one $1,$ as desired.
\end{case}
\begin{case}
Suppose, now, that we have a decomposition $$
w(x,2^dy+1,z)=w(x,a)*w(2^dy-a+1,z).
$$
We have two cases; either $a\equiv1-(2^dy-a+1)\equiv0,1\pmod{2^d}$ or both are nonzero and not equal to $1.$ Once again, we can define $a'=\lfloor\frac{a}{2^d}\rfloor,b'=\lfloor\frac{2^dy-a+1}{2^d}\rfloor$ and $a''=a\pmod{2^d},b''=(2^dy-a+1)\pmod{2^d}.$ In the former case, observe that $a''=1-b''=0$ or $1,$ whereas in the latter we have $b''=2^d-a''+1.$ 
\begin{subcase}
Suppose first that $a''=1-b''=0$ or $1.$ Observe, if $a$ is odd, that $x$ must be even, and vice versa if $b$ is odd, then $z$ must be even. In either case, $w(x,a)$ and $w(2^dy-a+1,z)$ are Ulam if and only if $w(x',a')$ and $w(b',z')$ are, and we see that $a'+b'=\frac{a+b-1}{2^d}=y.$ Hence, representations of $w(x,2^dy+1,z)$ with $a\equiv0\pmod{2^d}$ are in correspondence with those of $$
w(x',y,z')=w(x',a')*w(b',z'),
$$
and so are those of $w(x,2^dy+1,z)$ with $a\equiv1\pmod{2^d}.$ In particular, we observe that if $x,z$ are both odd, then there are no such representations. If exactly one of $x,z$ is odd, then representations are in correspondence with those of $w(x',y,z'),$ so there are $\culam[y](x',z')$-many of them. if both $x$ and $z$ are even, then there are two pairs of correspondences with representations of $w(x',y,z'),$ so there are $\culam[y](x',z')\widehat{\cdot}2$ total representations of $w(x,2^dy+1,z)$ with $a\equiv0,1\pmod{2^d}.$ Combining all three of these cases, we see exactly that there are $\culam[y](x',z')\widehat{\cdot}\otwo[d](x,z)$ representations of $w(x,2^dy+1,z)$ in this case.
\end{subcase}
\begin{subcase}
Suppose now that $a'',b''>1$ and so $b''=2^d-a''+1.$ Then observe that $$
2^dy+1=a+b=(2^da'+a'')+(2^db'+b'')=2^d(a'+b')+2^d+1,
$$
from which we get $a'+b'=y-1.$ We observe that $w(x,a)$ and $w(2^dy-a+1,z)$ are Ulam if and only if $w(x',a')$ and $w(x'',a''),$ and $w(b',z')$ and $w(b'',z'')$ are. But we have the representations $$
w(x',y-1,z')=w(x',a')*w(b',z');\quad w(x'',2^d+1,z'')=w(x'',a'')*w(b'',z'').
$$
Thus, representations of $w(x,2^dy+1,z)$ with $a\not\equiv0,1\pmod{2^d}$ are exactly in correspondence with pairs of representations of $w(x',y-1,z')$ and $w(x'',2^d+1,z'')$ where the latter satisfies $0,1<a'',b''.$ Hence, by \Cref{culam-power-of-two}, we have $\culam[y-1](x',z')\widehat{\cdot}\oone[d](x'',z'')$ total representations of $w(x,2^dy+1,z)$ in this case.
\end{subcase}
\noindent In particular, in total there are {\small$$
\culam[2^dy+1](x,z)=\culam[y](x',z')\widehat{\cdot}\otwo[d](x'',z'')\widehat{+}\culam[y-1](x',z')\widehat{\cdot}\oone[d](x'',z'')
$$}

\noindent many representations of $w(x,2^dy+1,z)$ as a concatenation of Ulam words with one $1,$ as desired. 
\end{case}
The corresponding tensor product representations of $\culam[2^dy]$ and $\culam[2^dy+1]$ are trivial by definition of tensor products.
\end{proof}
Alternatively, we can in fact add $1$'s to the left of the binary expansion of $y$ as well. However, for that, we need to use the whole structure of $\culam[2^k]$ and $\culam[2^k+1],$ rather than simply parts of it. As it turns out, however, the entire structure is easy to deduce from $\eone,\etwo,\oone,\otwo$; it is simply a linear combination of them. It is also useful to define `parts' of a periodic block split into fourths:
\begin{definition}\label[definition]{dividing-block-into-parts}
Say that $2^k\leq y<2^{k+1}.$ Then we can partition the periodic block of $\culam[y]$ into four parts: $\culam_0[y],\culam_1[y],\culam_2[y],\culam_3[y]:B_k\to\{0,1,2\},$ defined by
\begin{align*}
\culam_0[y](x,z)&=\culam[y](x,z)&\\
\culam_1[y](x,z)&=\culam[y](2^k+x,z)&\\
\culam_2[y](x,z)&=\culam[y](x,2^k+z)&\\
\culam_3[y](x,z)&=\culam[y](2^k+x,2^k+z).&
\hspace{1.5in}
\end{align*}
\end{definition}
\begin{wrapfigure}{r}{0.33\textwidth}
    \centering
    \vspace{-1.37in}
    \includegraphics[width=0.33\textwidth]{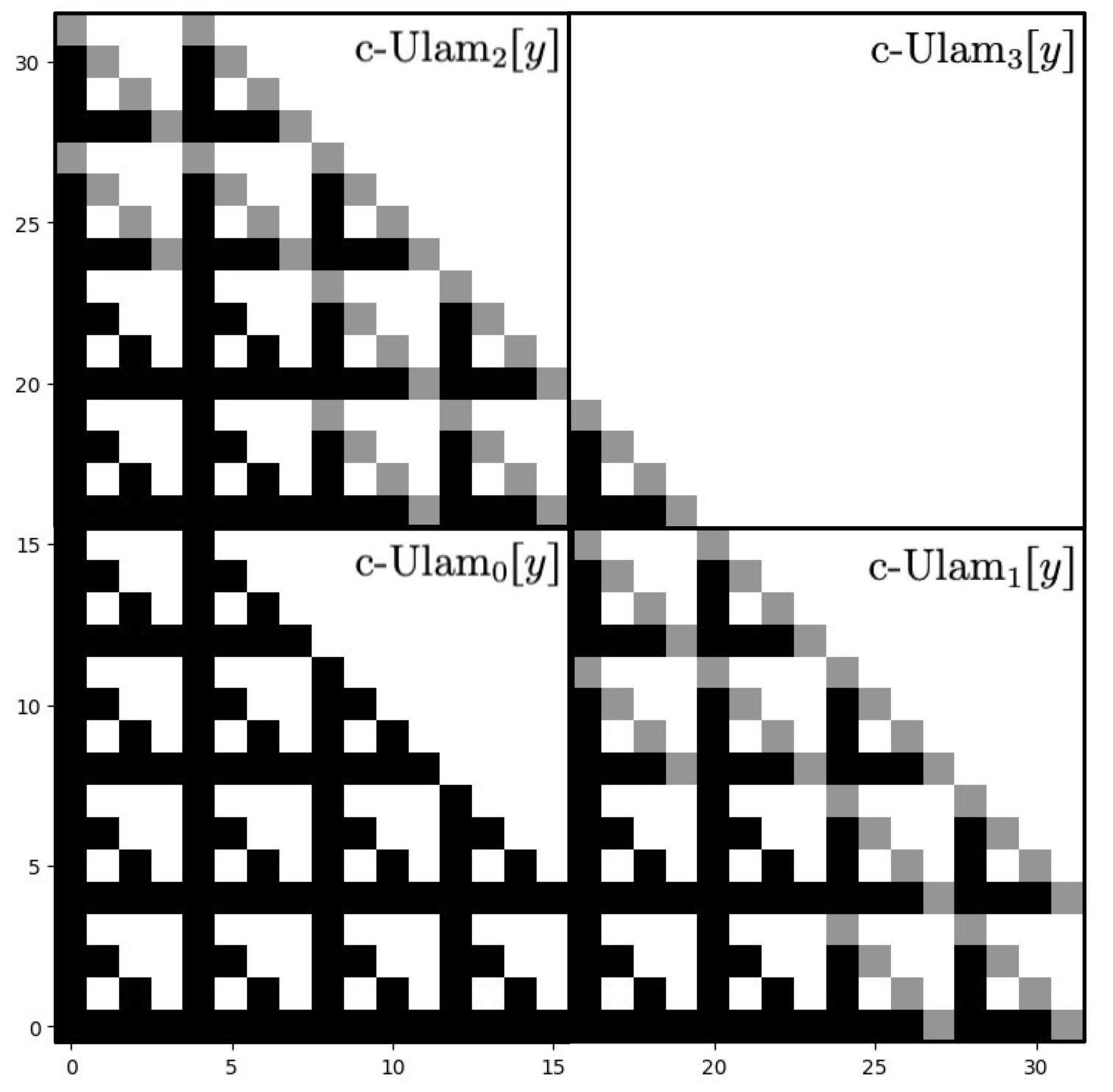}
    \caption{The four parts of $\culam[y],y=27$}
    \label{four-parts}
    \vspace{-0.9in}
\end{wrapfigure}
% \vspace{-0.1in}
We then deduce the following lemma about the parts of $\culam[2^k]$ and $\culam[2^k+1].$
\begin{lemma}\label[lemma]{structure-of-power-two}
We have \begin{flalign*}
    \culam_0[2^k]&=2\widehat{\cdot}\etwo[k]=\eone[k]+2\widehat{\cdot}\etwo[k],\\
    \culam_1[2^k]&=\culam_2[2^k]=\eone[k]\widehat{+}\etwo[k],\\
    \culam_3[2^k]&=\eone[k].
    \hspace{3.775in}
\end{flalign*}
Similarly, \begin{flalign*}
    \culam_0[2^k+1]&=2\widehat{\cdot}\otwo[k]=\oone[k]+2\widehat{\cdot}\otwo[k],\\
    \culam_1[2^k+1]&=\culam_2[2^k+1]=\oone[k]\widehat{+}\otwo[k],\\
    \culam_3[2^k+1]&=\oone[k].
\end{flalign*}
\end{lemma}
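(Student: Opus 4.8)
The plan is to read each of the four parts straight off the definition of the $\culam$ labeling, splitting the count of decompositions into an \emph{interior} piece that is common to all four parts (and handled by \Cref{culam-power-of-two}) and a short \emph{boundary} piece (handled by \Cref{lucas}). Recall that for any point $(\xi,\zeta)$ the value $\culam[y](\xi,\zeta)$ is, capped at $2$, the number of indices $a\in\{0,1,\dots,y\}$ for which $w(\xi,a)$ and $w(y-a,\zeta)$ are both Ulam, i.e.\ for which $\xi$ shares no binary digit with $a$ and $\zeta$ shares no binary digit with $y-a$. In our situation $\xi\in\{x,2^k+x\}$ and $\zeta\in\{z,2^k+z\}$ with $x,z<2^k$, and which of the four parts we are in is exactly the choice of whether a $2^k$-bit is appended to $x$ and/or to $z$.

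First I would isolate the interior indices: for $y=2^k$ these are $1\le a\le 2^k-1$ (hence also $1\le 2^k-a\le 2^k-1$), and for $y=2^k+1$ these are $2\le a\le 2^k-1$ (hence also $2\le 2^k+1-a\le 2^k-1$) --- precisely the ranges defining $\culam[2^k]'$ and $\culam[2^k+1]'$. For such $a$ the companion $b=y-a$ is also $<2^k$, so neither $a$ nor $b$ carries a $2^k$-bit; hence whether $w(\xi,a)$ is Ulam does not see the $2^k$-bit of $\xi$, and likewise $w(b,\zeta)$ does not see the $2^k$-bit of $\zeta$. Therefore the interior contribution is \emph{identical} in all four parts, and equals $\culam[2^k]'(x,z)=\eone[k](x,z)$ (resp.\ $\culam[2^k+1]'(x,z)=\oone[k](x,z)$) by \Cref{culam-power-of-two}.

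It then remains to count the boundary indices --- $a\in\{0,2^k\}$ for $y=2^k$, and $a\in\{0,1,2^k,2^k+1\}$ for $y=2^k+1$ --- where the answer genuinely depends on the part and on the parities of $x,z$; this is a finite check with \Cref{lucas}. For $\culam_3$, every boundary index forces a $2^k$-bit clash (e.g.\ $w(2^k,2^k+z)$ and $w(2^k+x,2^k)$ are never Ulam), so the boundary contributes $0$ and $\culam_3[2^k]=\eone[k]$, $\culam_3[2^k+1]=\oone[k]$. For $\culam_0[2^k]$ both $a=0$ and $a=2^k$ always yield valid decompositions (since $x,z<2^k$), so the count is $\ge 2$ and $\culam_0[2^k]$ is the constant $2$, which equals $2\widehat{\cdot}\etwo[k]$ (and $\eone[k]\widehat{+}2\widehat{\cdot}\etwo[k]$). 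For $\culam_1[2^k]$ only $a=0$ survives ($a=2^k$ gives the clashing $w(2^k+x,2^k)$), so the boundary is $1$ and $\culam_1[2^k]=\etwo[k]\widehat{+}\eone[k]$; the mirror computation gives the same for $\culam_2[2^k]$. For $y=2^k+1$ one checks among $a\in\{0,1,2^k,2^k+1\}$ that, up to $2^k$-clashes, an index survives iff $z$ is even (the two with a $2^k+1$-part against a plain $z$, namely $a=0$ and $a=2^k$) or iff $x$ is even (the two with a plain $x$ against a $2^k+1$-part, namely $a=1$ and $a=2^k+1$); in the shifted parts $\culam_1,\culam_2$ exactly two of the four survive (one governed by the parity of $x$, one by that of $z$), in $\culam_0$ all four can, and in $\culam_3$ none do. Tallying these parity counts (after the caveman cap) reproduces $2\widehat{\cdot}\otwo[k]$ for $\culam_0$ and $\otwo[k]\widehat{+}\oone[k]=\oone[k]\widehat{+}\otwo[k]$ for $\culam_1$ and $\culam_2$. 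The coincidence $\culam_1=\culam_2$ in each case also follows abstractly from $\culam[y](\xi,\zeta)=\culam[y](\zeta,\xi)$ (Ulamness is invariant under word reversal), which swaps the two parts.

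The only real friction I expect is the $2^k+1$ bookkeeping: one must keep the four boundary values of $a$ distinct (fine for $k\ge 1$), allow the interior range $\{2,\dots,2^k-1\}$ to be empty (the case $k=1$, consistent with $\oone[1]\equiv 0$), and compare the boundary parity counts with $\otwo[k]$ and $2\widehat{\cdot}\otwo[k]$ only \emph{after} applying the caveman cap at $2$ --- in particular, when $x$ and $z$ are both odd the boundary count is $0$ and $\oone[k]$ also vanishes there, which is exactly what $2\widehat{\cdot}\otwo[k]=0$ records. Everything else reduces to routine digit checks via \Cref{lucas}.
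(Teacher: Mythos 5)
Your argument is correct and is in substance the same as the paper's: your interior/boundary split of the decomposition index $a$ (according to whether $a$ and $y-a$ avoid the $2^k$-bit) is exactly the two subcases inside the proof of \Cref{adding-to-right}, and the paper obtains the lemma by specializing that theorem to $y=1$, $d=k$, where your boundary parity counts are precisely the values $\culam[1](x',z')\in\{0,1,2\}$ on the four quadrants and your interior term is $\culam[0]\equiv1$ times $\eone[k]$ (resp.\ $\oone[k]$) via \Cref{culam-power-of-two}. The only difference is packaging---you re-derive the special case directly rather than citing the general tensor theorem---and your digit checks, including the both-odd and $k=1$ edge cases, come out right.
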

\vspace{-0.2in}
\begin{proof}
The proof essentially follows from \Cref{adding-to-right}. Observe that we have $$
\culam[2^k]=\culam[0]\widehat{\otimes}\eone[k]\widehat{+}\culam[1]\widehat{\otimes}\etwo[k].
$$
It is not hard to see that $\culam[0]=1$ and $$
\culam[1](x,z)=(x+1)\pmod2+(z+1)\pmod2.
$$
Thus, $\eone[k]$ is added once to each of $\culam_0[2^k],\culam_1[2^k],\culam_2[2^k],\culam_3[2^k],$ whereas $\etwo[k]$ is added twice to $\culam_0[2^k],$ once to $\culam_1[2^k]$ and $\culam_2[2^k],$ and not at all to $\culam_3[2^k],$ as desired. The proof for $\culam[2^k+1]$ is similar.
\end{proof}
\begin{theorem}[\Cref{outside}]\label{adding-to-left}
Suppose $2^k\leq y<2^{k+1}.$ Consider labelings $L,S:B_k\to\{0,1,2\}$ defined by $$
L(x,z):=\culam_1[y](x,z)=\culam_2[y](x,z)
$$ 
and $$
S(x,z):=\culam_3[y](x,z).
$$ 
For any $x,z<2^{\ell+1},$ let $x'=\floor{\frac{x}{2^k}},z'=\floor{\frac{z}{2^k}},$ and $x''=x\pmod{2^k},z''=z\pmod{2^k}.$ Then {\small$$
\culam[2^\ell+y](x,z)=\culam[2^{\ell-k}](x',z')\widehat{\cdot}S(x'',z'')\widehat{+}\culam[2^{\ell-k}+1](x',z')\widehat{\cdot}L(x'',z''),
$$}

\noindent or equivalently, $$
\culam[2^\ell+y]_{\big|_{B_{\ell+1}}}=\culam[2^{\ell-k}]\widehat{\otimes}S\widehat{+}\culam[2^{\ell-k}+1]\widehat{\otimes}L.
$$
\end{theorem}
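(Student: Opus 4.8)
The plan is to mirror the proof of \Cref{adding-to-right}: fix $(x,z)$ with $x,z<2^{\ell+1}$ and classify the decompositions $w(x,2^\ell+y,z)=w(x,a)\ast w(b,z)$, $a+b=2^\ell+y$, sorting them into the two summands. Write $y=2^k+y_0$ with $0\le y_0<2^k$, so that $2^\ell+y=2^k(2^{\ell-k}+1)+y_0$; I assume $\ell\ge k+1$, which is the substantive range (for $\ell=k$ the block $B_{\ell+1}$ is only part of the period of $\culam[2^\ell+y]$, but the same argument applies on it). Split $a=2^k\alpha+a''$ and $b=2^k\beta+b''$ with $a'',b''<2^k$. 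By \Cref{lucas}, $w(x,a)$ is Ulam iff both $w(x',\alpha)$ and $w(x'',a'')$ are, and $w(b,z)$ is Ulam iff both $w(\beta,z')$ and $w(b'',z'')$ are; so every decomposition factors into an \emph{outer} part $(\alpha,\beta)$ and an \emph{inner} part $(a'',b'')$, and the number of decompositions of any prescribed type is caveman-multiplicative over this factorization.

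\textbf{The naive formula.} Since $a''+b''\equiv y_0\pmod{2^k}$ with $a'',b''<2^k$, exactly one of two cases holds: (A) $a''+b''=y_0$ (equivalently $a''\le y_0$), forcing no carry at bit $k$ and hence $\alpha+\beta=2^{\ell-k}+1$; or (B) $a''+b''=y$ (equivalently $a''>y_0$), forcing a carry and hence $\alpha+\beta=2^{\ell-k}$. In case (A) the outer parts are the decompositions of $w(x',2^{\ell-k}+1,z')$ and the inner parts those of $w(x'',y_0,z'')$, contributing $\culam[2^{\ell-k}+1](x',z')\,\widehat{\cdot}\,\culam[y_0](x'',z'')$. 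In case (B) the outer count is $\culam[2^{\ell-k}](x',z')$, while the inner parts are the decompositions $w(x'',y,z'')=w(x'',a'')\ast w(b'',z'')$ with $a'',b''<2^k$; since then $a''\in(y_0,2^k)$, a short bit count identifies these with $S(x'',z'')=\culam_3[y](x'',z'')$ (the indices $c\in(y_0,2^k)$ are exactly those surviving in $\culam[y](2^k+x'',2^k+z'')$). Summing the two cases gives, on $B_{\ell+1}$,
\[
\culam[2^\ell+y]=\culam[2^{\ell-k}+1]\,\widehat{\otimes}\,\culam[y_0]\ \widehat{+}\ \culam[2^{\ell-k}]\,\widehat{\otimes}\,S.
\]

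\textbf{Reconciliation with the stated form.} Run the same bit count on $L=\culam_1[y]$: its decompositions $w(2^k+x'',y,z'')=w(2^k+x'',c)\ast w(y-c,z'')$ split according to whether $c\in[0,y_0]$ or $c\in(y_0,2^k)$, and these pieces count $\culam[y_0](x'',z'')$ and $S(x'',z'')$, so $L=\culam[y_0]\,\widehat{+}\,S$ on $B_k$. Since $\widehat{\cdot}$ distributes over $\widehat{+}$ on $\{0,1,2\}$, the claimed right-hand side equals the displayed expression plus an extra term $\culam[2^{\ell-k}+1]\,\widehat{\otimes}\,S$; the inequality ``$\le$'' is therefore free, and the reverse reduces to the pointwise estimate
\[
\culam[2^{\ell-k}+1](p)\,\widehat{\cdot}\,S(q)\ \le\ \culam[2^{\ell-k}+1](p)\,\widehat{\cdot}\,\culam[y_0](q)\ \widehat{+}\ \culam[2^{\ell-k}](p)\,\widehat{\cdot}\,S(q).
\]
For this I would invoke two support facts: from a decomposition $w(u,2^m+1,v)=w(u,a)\ast w(b,v)$ (where exactly one of $a,b$ is odd), decreasing the odd one by $1$ gives a decomposition of $w(u,2^m,v)$, so $\culam[2^m+1]\ge 1$ implies $\culam[2^m]\ge 1$; and --- the crux --- $S(x'',z'')\ge 1$ implies $\culam[y_0](x'',z'')\ge 1$. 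Granting both, a finite check over values in $\{0,1,2\}$ finishes.

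\textbf{The main obstacle.} I expect this last implication to be the main obstacle. Unwound, it asserts: whenever $c+d=2^k+y_0$ with $0\le c,d<2^k$, there are submasks $e\subseteq c$ and $f\subseteq d$ (as bit patterns) with $e+f=y_0$; applied to the $c,d$ realizing $S(x'',z'')\ge 1$, the pair $(e,f)$ is then a decomposition of $w(x'',y_0,z'')$. I would prove the submask claim by induction on $y_0$: if $y_0=0$ take $e=f=0$; otherwise inspect the binary addition $c+d=2^k+y_0$ at and below the lowest set bit $j$ of $y_0$. If no carry reaches bit $j$, then $c$ and $d$ both vanish below bit $j$ and exactly one of them has bit $j$ set; delete that bit and apply the inductive hypothesis to $y_0-2^j$. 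If a carry does reach bit $j$, let $p<j$ be the lowest carry-generating position; there both $c$ and $d$ have bit $p$ set; delete it from each and apply the hypothesis to $y_0-2^{p+1}$. Either way the deleted bits are restored to $e,f$ at the end. The remaining ingredients --- the \Cref{lucas} factorization at bit $k$, the exhaustiveness and exclusivity of cases (A) and (B), the identification of the inner counts with $S$ and $L$, and the caveman bookkeeping --- are elementary bit manipulations like those already used in \Cref{culam-power-of-two} and \Cref{structure-of-power-two}.
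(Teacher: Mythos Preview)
Your two-case split (Case (A): $a''+b''=y_0$ giving outer sum $2^{\ell-k}+1$; Case (B): $a''+b''=y$ giving outer sum $2^{\ell-k}$) is exactly the paper's argument, and your ``naive formula'' $\culam[2^{\ell-k}+1]\widehat{\otimes}\culam[y_0]\ \widehat{+}\ \culam[2^{\ell-k}]\widehat{\otimes}S$ is what the paper's case analysis actually establishes.

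Where you diverge is in the treatment of $L$. The paper asserts in its opening paragraph that ``similarly, $L$ counts representations of $w(x,y-2^k,z)$,'' i.e.\ $L=\culam[y_0]$, and then substitutes this directly into Case~1. That identification is not correct in general: for instance at $y=5$, $k=2$, one has $L(1,0)=\culam[5](5,0)=2$ while $\culam[1](1,0)=1$. Your decomposition $L=\culam[y_0]\,\widehat{+}\,S$ is the right one, and your reconciliation step --- reducing equality of the two formulas to the pointwise implication $AD\ge 1\Rightarrow AC\widehat{+}BD=2$, then discharging it via (i) $\culam[2^m+1]\ge 1\Rightarrow\culam[2^m]\ge 1$ and (ii) $S\ge 1\Rightarrow\culam[y_0]\ge 1$ --- is what the paper's proof is missing. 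Your submask induction for (ii) is sound: in the no-carry case you peel off bit $j$ from whichever of $c,d$ carries it and recurse on $y_0-2^j$; in the carry case the lowest nonzero position $p<j$ must have $c_p=d_p=1$ (else the sum bit there would be $1$), you peel $2^p$ from both and recurse on $y_0-2^{p+1}\ge 0$; in either branch $c,d<2^k$ is preserved and the restored $e,f$ remain submasks. Implication (i) is the one-line parity trick you describe.

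So: same skeleton as the paper, but your version actually closes a gap the paper leaves open.
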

\begin{proof}
First, we wish to better understand the labelings $L$ and $S.$ Observe first that $S(x,z)$ exactly counts representations of $w(2^k+x,y,2^k+z)$ as a concatenation of Ulam words $w(2^k+x,a)$ and $w(b,2^k+z).$ Note then that $w(x,a)$ and $w(b,z)$ are also both Ulam, as we are simply decreasing the number of $1$'s in the binary representations. Also note that both $2^k+x$ and $2^k+z$ have a $1$ entry in the $(k+1)$'th entry from the right, hence, correspondingly, $a$ and $b$ must not. But $a+b=y<2^{k+1},$ from which we get that $a,b<2^k.$ Alternatively, if $a,b<2^k$ and $w(x,a)$ and $w(b,z)$ are both Ulam, then so are $w(2^k+x,a)$ and $w(b,2^k+z).$ Thus, $S$ exactly counts representations of $w(x,y,z)$ as concatenations of Ulam words $w(x,a)*w(b,z)$ with $a,b<2^k.$ Similarly, $L$ counts representations of $w(x,y-2^k,z)$ as concatenations of Ulam words $w(x,a)*w(b,z).$

Suppose we have a decomposition $$
w(x,2^\ell+y,z)=w(x,a)*w(b,z).
$$
Define $a'=\lfloor\frac{a}{2^k}\rfloor,b'=\lfloor\frac{b}{2^k}\rfloor$ and $a''=a\pmod{2^k},b''=b\pmod{2^k}.$ There are two cases; either $a''+b''<2^k$ or $a''+b''\geq2^k.$ Observe in the former case that $a''+b''=y-2^k,$ whereas in the latter exactly one of $a''+b''=y.$
\setcounter{case}{0}
\begin{case}
If $a''+b''=y-2^k,$ then we have $$
2^\ell+y=a+b=(2^ka'+a'')+(2^kb'+b'')=2^k(a'+b')+y-2^k,
$$
so $a'+b'=2^{\ell-k}+1.$ Now, $w(x,a)$ and $w(b,z)$ are Ulam if and only if $w(x',a')$ and $w(x'',a''),$ and $w(b',z')$ and $w(b'',z'')$ are. But then we have the representations $$
w(x',2^{\ell-k}+1,z')=w(x',a')*w(b',z');\quad w(x'',y-2^k,z'')=w(x'',a'')*w(b'',z'').
$$
Thus, representations of $w(x,2^\ell+y,z)$ with $a''+b''<2^k$ are exactly in correspondence with pairs of representations of $w(x',2^{\ell-k}+1,z')$ and $w(x'',y-2^k,z'').$ Thus, there are $\culam[2^{\ell-k}+1](x',z')\widehat{\cdot}L(x'',z'')$ total representations of $w(x,2^\ell+y,z)$ in this case.
\end{case}
\begin{case}
If $a''+b''=y,$ then we have $$
2^\ell+y=a+b=(2^ka'+a'')+(2^kb'+b'')=2^k(a'+b')+y,
$$
so $a'+b'=2^{\ell-k}.$ Similarly to the previous case, representations of $w(x,2^\ell+y,z)$ with $a''+b''\geq2^k$ are exactly in correspondence with pairs of representations of $w(x',2^{\ell-k},z')$ and $w(x'',y,z''),$ where the latter additionally satisfies $a'',b''<2^k.$ Thus, there are $\culam[2^{\ell-k}](x',z')\widehat{\cdot}S(x'',z'')$ total representations of $w(x,2^\ell+y,z)$ in this case.
\end{case}
\noindent In particular, in total there are $$ \culam[2^\ell+y](x,z)=\culam[2^{\ell-k}+1](x',z')\widehat{\cdot}L(x'',z'')\widehat{+}\culam[2^{\ell-k}](x',z')\widehat{\cdot}S(x'',z'')
$$
representations of $w(x,2^\ell+y,z)$ as a concatenation of Ulam words with one $1,$ as desired.
The corresponding tensor product representation is simple to deduce.
\end{proof}
While these two results do give an inductive way to see that $\culam[y]$ possesses at least some sort of tiled structure, they do not illuminate the full extent of this structure. We now present it. However, in order to understand external structure, we first need to prove a result regarding the interaction between the four equal parts dividing $\culam[y].$
\begin{lemma}\label[lemma]{four-parts-containment}
We have \begin{enumerate}
    \item $\culam_1[y]=\culam_2[y],$
    \item $\culam_2[y]\geq2\widehat{\cdot}\culam_3[y],$ and 
    \item $\culam_0[y]=2\widehat{\cdot}\culam_2[y].$
\end{enumerate}
\end{lemma}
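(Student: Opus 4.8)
The plan is to prove all three parts simultaneously by strong induction on $y$, peeling off the lowest run of binary digits of $y$ via the tensor identities of \Cref{adding-to-right} and using \Cref{structure-of-power-two} for the base cases. Throughout I use two elementary facts about caveman arithmetic (\Cref{caveman-operations}): the operations $\widehat{+},\widehat{\cdot}$ are monotone (hence so is $A\mapsto A\widehat{\otimes}N$ in its first argument), and the operator $X\mapsto 2\widehat{\cdot}X$ commutes through $\widehat{\otimes}$ and $\widehat{+}$ — concretely $2\widehat{\cdot}(m\widehat{\cdot}n)=(2\widehat{\cdot}m)\widehat{\cdot}n$ and $2\widehat{\cdot}(m\widehat{+}n)=(2\widehat{\cdot}m)\widehat{+}(2\widehat{\cdot}n)$ for all $m,n\in\{0,1,2\}$, since in each identity both sides lie in $\{0,2\}$ and vanish under the same condition.

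\emph{Base cases: $y=2^k$ and $y=2^k+1$.} Here \Cref{structure-of-power-two} gives $\culam_0[y],\culam_1[y],\culam_2[y],\culam_3[y]$ explicitly, so part~1 is immediate. For $y=2^k$: since $\etwo[k]\equiv 1$ we get $\culam_2[2^k]=\eone[k]\widehat{+}\etwo[k]\ge 1$ pointwise while $2\widehat{\cdot}\culam_3[2^k]=2\widehat{\cdot}\eone[k]\in\{0,2\}$, giving part~2; and part~3 is the observation that $\culam_0[2^k]=2\widehat{\cdot}\etwo[k]$ and $2\widehat{\cdot}\culam_2[2^k]=2\widehat{\cdot}(\eone[k]\widehat{+}\etwo[k])$ are both identically $2$. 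For $y=2^k+1$ the only extra ingredient is the pointwise implication $\oone[k]=1\Rightarrow\otwo[k]=1$ (read off the definitions: $\oone[k]$ can equal $1$ only where $x+z$ is odd, and there $\otwo[k]=1$), together with $\otwo[k]=0\Rightarrow\oone[k]=0$; parts~2 and~3 then follow exactly as before. These are finite checks.

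\emph{Inductive step.} Suppose $y$ is neither a power of two nor one more than a power of two, with $2^k\le y<2^{k+1}$. If $y$ is even, write $y=2^d v$ with $v$ odd and $d\ge 1$, put $u=v-1$ and $(P,R)=(\eone[d],\etwo[d])$; if $y$ is odd, write $y-1=2^d w$ with $w$ odd and $d\ge 1$, put $v=w$, $u=w-1$, $(P,R)=(\oone[d],\otwo[d])$. In either case the hypothesis on $y$ forces $v\ge 3$, so $2\le u,v<y$, and moreover $u$ and $v$ both lie in $[\,2^{k-d},2^{k-d+1})$ (this is where "$y$ not a base case" is used: it makes $v\ge 2^{k-d}+1$ and hence $u=v-1\ge 2^{k-d}$). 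Thus $\culam[u]$ and $\culam[v]$ have period exactly $2^{k-d+1}$ and \Cref{adding-to-right} yields $\culam[y]=\culam[u]\widehat{\otimes}P\widehat{+}\culam[v]\widehat{\otimes}R$ on the fundamental block $B_{k+1}$. Since quarter $j$ of $B_{k+1}$ is, under the $2^d\times 2^d$ tiling, precisely the union of the tiles indexed by quarter $j$ of $B_{k-d+1}$, restricting this identity to each quarter gives
\[
\culam_j[y]=\culam_j[u]\widehat{\otimes}P\;\widehat{+}\;\culam_j[v]\widehat{\otimes}R\qquad(j=0,1,2,3).
\]
Now feed in the inductive hypothesis. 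Part~1 for $u,v$ gives $\culam_1[y]=\culam_2[u]\widehat{\otimes}P\widehat{+}\culam_2[v]\widehat{\otimes}R=\culam_2[y]$. Part~3 for $u,v$ gives $\culam_0[y]=(2\widehat{\cdot}\culam_2[u])\widehat{\otimes}P\widehat{+}(2\widehat{\cdot}\culam_2[v])\widehat{\otimes}R=2\widehat{\cdot}\big(\culam_2[u]\widehat{\otimes}P\widehat{+}\culam_2[v]\widehat{\otimes}R\big)=2\widehat{\cdot}\culam_2[y]$, using the commuting identities. Part~2 for $u,v$, together with monotonicity, gives $\culam_2[y]\ge(2\widehat{\cdot}\culam_3[u])\widehat{\otimes}P\widehat{+}(2\widehat{\cdot}\culam_3[v])\widehat{\otimes}R=2\widehat{\cdot}\culam_3[y]$.

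The one genuine obstacle is purely organizational: the recursion must never reach $\culam[0]$, because $\culam[0]\equiv 1$ and the analogue of part~2 would then falsely read $\culam_2[0]=1\ge 2=2\widehat{\cdot}\culam_3[0]$. This is exactly why the base cases are taken to be \emph{all} numbers $2^k$ and $2^k+1$ — peeling the lowest run of bits always lands on such a number before one could reach $0$ — and why the hypothesis "$y$ not a base case" must be invoked to keep $u=v-1$ inside the dyadic window, so that $\culam[u]$ has full period and \Cref{adding-to-right} applies verbatim. Everything else (the two caveman identities, the quarter-wise restriction of the tensor identity, and the base-case verifications from \Cref{structure-of-power-two}) is routine. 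As a sanity check, part~1 also admits a direct proof: by \Cref{lucas} and the definition, $\culam[y](x,z)$ is the capped number of pairs $(a,b)$ with $a+b=y$ and $a$ (resp. $b$) sharing no binary $1$ with $x$ (resp. $z$); no such pair has both $a\ge 2^k$ and $b\ge 2^k$, and the map $s\mapsto (y-2^k)-s$ is a bijection between the pairs with $a=2^k+s\ge 2^k$ and the pairs with $b=2^k+s\ge 2^k$ (swapping the two disjointness conditions), so $\culam_1[y]$ and $\culam_2[y]$ count the same total.
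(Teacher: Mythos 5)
Your proof is correct, but it takes a genuinely different route from the paper's. The paper proves all three parts directly, by manipulating representations $w(\cdot,a)*w(b,\cdot)$: part~1 via an explicit bijection that moves the high bit $2^k$ between $a$ and $b$, and parts~2 and~3 by locating the largest index $I$ with $a_I=b_I$ (necessarily $a_I=b_I=1$, by a size estimate) and truncating $a,b$ modulo $2^I$ to manufacture a second representation. Your induction on $y$ through \Cref{adding-to-right} and \Cref{structure-of-power-two} avoids that combinatorial construction entirely; the points you need to be careful about --- that $u=v-1$ and $v$ land in the same dyadic window $[2^{k-d},2^{k-d+1})$ precisely because $y$ is not of the form $2^m$ or $2^m+1$, so that the tensor identity restricts quarter-by-quarter; that $2\widehat{\cdot}(\cdot)$ distributes over $\widehat{+}$ and $\widehat{\cdot}$; and that the recursion bottoms out at $2^k$ or $2^k+1$ rather than at $\culam[0]$ --- are all handled correctly, and there is no circularity since both cited results precede \Cref{four-parts-containment} in the paper and do not use it. Your approach is more mechanical once the tensor machinery exists; the paper's is self-contained and exhibits the extra representation explicitly. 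One small slip: in the base case $y=2^k$, the sentence ``$\culam_2[2^k]\ge 1$ pointwise while $2\widehat{\cdot}\culam_3[2^k]\in\{0,2\}$'' does not by itself yield part~2 --- you also need the (equally trivial) pointwise check that $\eone[k]\ge 1$ forces $\eone[k]\widehat{+}\etwo[k]=2$, parallel to the implication $\oone[k]=1\Rightarrow\otwo[k]=1$ that you do spell out for $y=2^k+1$. Your closing direct bijection for part~1 is essentially the paper's.
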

Note also that $S_y=\culam_3[y]$ and $L_y=\culam_1[y]=\culam_2[y]$ (due to which the labels $S$ and $L$ make sense).
\begin{proof}\let\qed\relax
All of these results are deduced by constructing relevant injections or bijections.
\begin{enumerate}
    \item The first equality $\culam[y](2^k+x,z)=\culam[y](x,2^k+z)$ is evident from the bijection of representations $$
    w(2^k+x,a)*w(b,z)\Longleftrightarrow\begin{cases}w(x,a)*w(b,2^k+z)&a,b<2^k,\\
    w(x,2^k+a)*w(b-2^k,2^k+z)&b\geq2^k.\end{cases}
    $$
    \item As for the containment, suppose we have a representation $w(2^k+x,y,2^k+z)=w(2^k+x,a)*w(b,2^k+z)$ as a concatenation of Ulam words. We wish to construct two representations of $w(x,y,2^k+z)$ using this. First, we trivially have the representation $w(x,a)*w(b,2^k+z).$ As for the second representation, however, we have to work harder. Say that the binary expansions of $a,b$ are $$
    a=a_{k-1}\cdots a_1a_0;\quad b=b_{k-1}\cdots b_1b_0.
    $$
    Then let $I$ be the largest index such that $a_I=b_I.$ Then observe that we must in fact have $a_i=b_i=1,$ as assuming the contrary, we get $$
    2^k\leq y=a+b=\sum_{i=0}^{k-1}(a_i+b_i)2^i\leq\sum_{i=0}^{I-1}2^{i+1}+\sum_{i=I+1}^{k-1}2^i=2^k-2^{I+1}+2^{I+1}-2=2^k-2,
    $$
    a contradiction. In particular, we have $a_I=b_I=1$ and $a_i+b_i=1$ for all $i>I,$ so that $$
    \sum_{i=I}^{k-1}(a_i+b_i)2^i=2^{I+1}+\sum_{i=I+1}^{k-1}2^i=2^k.
    $$
    Define $a'=a\pmod{2^I},b'=b\pmod{2^I},$ and then observe that $a+b=a'+b'+2^k=y.$ From here it is easy for us to deduce the validity of the additional representation $$
    w(x,y,2^k+z)=w(x,2^k+a')*w(b',2^k+z),
    $$
    and so we have $\culam[y](x,2^k+z)\geq2\widehat{\cdot}\culam[y](2^k+x,2^k+z).$ 
    \item As for the final equality, suppose that $w(x,y,2^k+z)=w(x,a)*w(b,2^k+z)$ is a representation. Then we wish to construct two representations of $w(x,y,z).$ Suppose then that $w(x,y,2^k+z)=w(x,a)*w(b,2^k+z)$ is a representation as a concatenation of Ulam words. Then observe that if $a,b<2^k,$ then as before we have an additional representation $w(x,y,2^k+z)=w(x,2^k+a')*w(b',2^k+z),$ and then, correspondingly, we have two representations $$
    w(x,y,z)=w(x,a)*w(b,z)=w(x,2^k+a')*w(b',z).
    $$
    If $a\geq2^k,$ then we have two representations $$
    w(x,y,z)=w(x,a)*w(b,z)=w(x,a-2^k)*w(2^k+b,z).
    $$
    Regardless, we have two representations of $w(x,y,z),$ as desired. Conversely, suppose we have a representation of $w(x,y,z)=w(x,a)*w(b,z).$ If $b<2^k,$ then this also gives a representation $w(x,y,2^k+z)=w(x,a)*w(b,2^k+z).$ If, however, $b\geq2^k,$ then $a=y-b<2^k$ so that we have the representation $w(x,y,2^k+z)=w(x,2^k+a)*w(b-2^k,2^k+z).$ Regardless, we have at least one representation of $w(x,y,2^k+z).$ Thus, $\culam[y](x,z)=2\widehat{\cdot}\culam[y](x,2^k+z),$ as desired.\hspace*{0em plus 1fill}\makebox{\qedsymbol}
\end{enumerate}
\end{proof}
We can now prove both general hierarchical structures.
\begin{theorem}[\Cref{hierarchy}]
Suppose that either $$
y=1\underbrace{00\cdots0}_{d_s-1}1\cdots1\underbrace{0\cdots00}_{d_1},
$$
or $$
y=1\underbrace{00\cdots0}_{d_s-1}1\cdots1\underbrace{0\cdots00}_{d_1-1}1,
$$
where $d_i\geq1$ for all $i.$ Then $\culam[y]$ exhibits both $i_{d_1,d_2,\dots,d_s}$ and $e_{d_1,d_2,\dots,d_s}$ hierarchical structure.
\end{theorem}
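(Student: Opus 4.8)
The plan is to run two independent inductions on $s$, one for each hierarchy, with \Cref{adding-to-right} supplying the inductive step for the internal structure and \Cref{adding-to-left} (together with \Cref{four-parts-containment} and \Cref{structure-of-power-two}) supplying it for the external one. The even-form $y$ will yield the `even' final hierarchy ($\LL=c_1^e$ in \Cref{internal}) and the odd-form $y$ the `odd' one ($\LL=c_1^o$).

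\textbf{Internal structure.} Here \Cref{adding-to-right} \emph{is} the inductive relation of \Cref{internal}. If $y$ has even form with parameters $(d_1,\dots,d_s)$ then $y=2^{d_1}\tilde y$, where $\tilde y=1+\sum_{i=2}^{s}2^{d_2+\cdots+d_i}$ has odd form with parameters $(d_2,\dots,d_s)$ and $\tilde y-1$ has even form with the same parameters; if $y$ has odd form then instead $y=2^{d_1}\tilde y+1$ with the same $\tilde y$. So I would set $c_j^e:=\culam[y_j^{\mathrm e}]$ and $c_j^o:=\culam[y_j^{\mathrm o}]$ for $j=1,\dots,s$, where $y_j^{\mathrm e},y_j^{\mathrm o}$ are the integers of even, resp.\ odd, form with parameters $(d_j,\dots,d_s)$, and take base labelings $c_{s+1}^e:=\culam[0]|_{B_1}$ (constant $1$) and $c_{s+1}^o:=\culam[1]|_{B_1}$. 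With the universal maps $I_d^e(u,v)(p,q):=u\,\widehat{\cdot}\,\eone[d](p,q)\,\widehat{+}\,v\,\widehat{\cdot}\,\etwo[d](p,q)$ and $I_d^o(u,v)(p,q):=u\,\widehat{\cdot}\,\oone[d](p,q)\,\widehat{+}\,v\,\widehat{\cdot}\,\otwo[d](p,q)$ (whose images lie in the tile family of \Cref{building-blocks} by construction), both identities of \Cref{adding-to-right}, applied with $d=d_j$ to $\tilde y=y_{j+1}^{\mathrm o}$ (so that $\tilde y-1=y_{j+1}^{\mathrm e}$), are precisely the two recursions of \Cref{internal}; and the digit split $x=x'x''$ there matches $x'=\lfloor x/2^{d_j}\rfloor$, $x''=x\bmod 2^{d_j}$. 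This half is bookkeeping once the dictionary $j\leftrightarrow(d_j,\dots,d_s)$ is in place.

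\textbf{External structure.} For even-form $y$ with $s\ge 2$ blocks, write $y=2^{\ell}+y''$ with $\ell=d_1+\cdots+d_s$ and $y''$ of even form with parameters $(d_1,\dots,d_{s-1})$, so that $\ell-k''=d_s$ where $2^{k''}\le y''<2^{k''+1}$. Then \Cref{adding-to-left} gives $\culam[y]|_{B_{\ell+1}}=\culam[2^{d_s}]\,\widehat{\otimes}\,\culam_3[y'']\,\widehat{+}\,\culam[2^{d_s}+1]\,\widehat{\otimes}\,\culam_1[y'']$, where I have used $\culam_1[y'']=\culam_2[y'']$ (\Cref{four-parts-containment}) to identify the `$L$' summand. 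Restricting to the four quadrants of \Cref{dividing-block-into-parts} and using that $\culam_i[2^{d_s}]$ is the corresponding quadrant of $\culam[2^{d_s}]$, one gets: for $i\in\{0,1,2,3\}$ the block of $c_s^i:=\culam_i[y]$ indexed by the top $d_s$ digits $(p,q)$ equals $\culam_i[2^{d_s}](p,q)\,\widehat{\cdot}\,\culam_3[y'']\,\widehat{+}\,\culam_i[2^{d_s}+1](p,q)\,\widehat{\cdot}\,\culam_1[y'']$. This forces $L_{d,i}^e:=\culam_i[2^d]$ and $L_{d,i}^o:=\culam_i[2^d+1]$ (exactly the tiles tabulated after \Cref{external}, via \Cref{structure-of-power-two}), and the universal map $I$ is read off the collapse lemma that is the crux of the argument: using only $\culam_1[y'']=\culam_2[y'']$, $\culam_0[y'']=2\,\widehat{\cdot}\,\culam_1[y'']$ and $\culam_2[y'']\ge 2\,\widehat{\cdot}\,\culam_3[y'']$ from \Cref{four-parts-containment}, every caveman combination $a\,\widehat{\cdot}\,\culam_3[y'']\,\widehat{+}\,b\,\widehat{\cdot}\,\culam_1[y'']$ with $a,b\in\{0,1,2\}$ equals one of the six labelings $\culam_0[y'']$, $\culam_1[y'']$, $\culam_2[y'']$, $\culam_3[y'']$, the all-zero labeling, and the `filled' labeling ($2$ where $\culam_3[y'']\ne 0$, else $0$)---i.e.\ $c_{s-1}^0,\dots,c_{s-1}^5$---with the index a function of $(a,b)$ only. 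Granting this, the recursion of \Cref{external} closes and iterates down the block decomposition $y\to y''\to\cdots$ to the innermost atom $\culam[2^{d_1}]$ (or $\culam[2^{d_1}+1]$ in the odd-form case), whose quadrants \Cref{structure-of-power-two} gives explicitly in terms of $\eone[d_1],\etwo[d_1]$; this determines the scalar base labelings $c_0^i$. The levels $c_j^4=0$ and $c_j^5=f\,\widehat{\cdot}\,c_j^3$ ride along automatically, since forming a caveman combination and then filling coincides with filling the summands. The odd-form case of $y$ is identical, only with $2^{d_1}+1$ in place of $2^{d_1}$ as the atom.

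\textbf{Main obstacle.} The internal half is routine relabeling; all the difficulty is in the external half. The delicate points are: (i) establishing the collapse lemma with a \emph{single} universal $I$ that is correct simultaneously at every level and at the base---where $\culam_3[2^{d_1}]=\eone[d_1]$ has no further internal structure---so that the reverse-engineered $c_0^i$ are consistent with $c_1^i=\culam_i[2^{d_1}]$; and (ii) checking that the reindexing of the four quadrant identities coming out of \Cref{adding-to-left}, with the leading bit of each quadrant absorbed, lines up digit-for-digit with the split into top $d_j$ digits and trailing $d_1+\cdots+d_{j-1}$ digits that \Cref{external} demands. It is exactly here that the containments of \Cref{four-parts-containment} are indispensable: without $\culam_2[y'']\ge 2\,\widehat{\cdot}\,\culam_3[y'']$ the combinations $a\,\widehat{\cdot}\,\culam_3\,\widehat{+}\,b\,\widehat{\cdot}\,\culam_1$ would not collapse onto only six patterns, and the external tiling would not be finitely generated.
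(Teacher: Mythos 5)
Your proposal is correct and follows essentially the same route as the paper: the internal half is exactly the relabeling of \Cref{adding-to-right} with $c_j^e=\culam[y_j],c_j^o=\culam[y_j+1]$ and base $\culam[0],\culam[1]$, and the external half uses \Cref{adding-to-left} with $L_{d,i}^e=\culam_i[2^d]$, $L_{d,i}^o=\culam_i[2^d+1]$ (via \Cref{structure-of-power-two}) together with the same collapse of $a\,\widehat{\cdot}\,\culam_3\,\widehat{+}\,b\,\widehat{\cdot}\,\culam_2$ onto six patterns via \Cref{four-parts-containment}, which is precisely the paper's table for $I.$
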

\begin{proof}
First, we wish to define all relevant maps and sets for our hierarchical structures. For both structures, let $T=\{0,1,2\}.$ For the external structure, also define the trivial label $t=0$ and the full label $f=2.$ 

We first demonstrate the internal structure. Let $$
I_d^e(a,b)=a\widehat{\cdot}\eone[d]\widehat{+}b\widehat{\cdot}\etwo[d]
$$
and similarly $$
I_d^o(a,b)=a\widehat{\cdot}\oone[d]\widehat{+}b\widehat{\cdot}\otwo[d].
$$
Define also $c_j^e=\culam[y_j]$ and $c_j^o=\culam[y_j+1],$ where $$
y_j=1\underbrace{00\cdots0}_{d_s-1}1\cdots1\underbrace{0\cdots00}_{d_j}
$$
is the `cut-off' of $y$ at the $j$-th step. Finally, let $c_{s+1}^e=\culam[0]$ and $c_{s+1}^o=\culam[1].$ Now, we see that $\culam[y]=c_1^e$ if $y$ is of the first form (even) and $c_1^o$ otherwise.
Moreover, for $j=1,\dots,s,$ we have, by \Cref{adding-to-right}, that {\small\begin{align*}
c_j^e(x,z)&=\culam[y_j](x,z)\\
&=\culam[y_{j+1}](x',z')\widehat{\cdot}\eone[d_j](x'',z'')\widehat{+}\culam[y_{j+1}+1](x',z')\widehat{\cdot}\etwo[d_j](x'',z'')\\
&=I_{d_j}^e(\culam[y_{j+1}](x',z'),\culam[y_{j+1}+1](x',z'))(x'',z'')\\
&=I_{d_j}^e(c_{j+1}^e(x',z'),c_{j+1}^o(x',z'))(x'',z''),
\end{align*}}

\noindent where $x',z'$ are all but the last $d_j$ digits of $x,z,$ respectively, and $x'',z''$ are the last $d_j$ digits. In a similar fashion, we have $$
c_j^o(x,z)=I_{d_j}^o(c_{j+1}^e(x',z'),c_{j+1}^o(x',z'))(x'',z'').
$$
As for the external structure, define $I$ as follows. Let \begin{align*}
I(0,0)=4,I(0,1)=2,I(0,2)=0,\\
I(1,0)=3,I(1,1)=2,I(1,2)=0,\\
I(2,0)=5,I(2,1)=2,I(2,2)=0.
\end{align*}
Now, define \begin{align*}
L_{d,0}^e=2\widehat{\cdot}\etwo[d],L_{d,1}^e=L_{d,2}^e=\eone[d]\widehat{+}\etwo[d],L_{d,3}^e=\eone[d]\\
L_{d,0}^o=2\widehat{\cdot}\otwo[d],L_{d,1}^o=L_{d,2}^o=\oone[d]\widehat{+}\otwo[d],L_{d,3}^o=\oone[d].
\end{align*}
Finally, let $c_j^i=\culam_i[y_j]$ for $i\in\{0,1,2,3\},$ where we now define either $$
y_j=1\underbrace{00\cdots0}_{d_j-1}1\cdots1\underbrace{0\cdots00}_{d_1}
$$
or $$
y_j=1\underbrace{00\cdots0}_{d_j-1}1\cdots1\underbrace{0\cdots00}_{d_1-1}1.
$$
Then we first have that $c_s^i=\culam_i[y],$ and it is easy to see that this can be rewritten in the form described in \Cref{external}. Now, \Cref{four-parts-containment} implies that $c_j^{I(a,b)}=a\widehat{\cdot}\culam_3[y_j]\widehat{+}b\widehat{\cdot}\culam_2[y_j].$ In particular, we have, by \Cref{adding-to-left} and \Cref{structure-of-power-two}, that \begin{align*}
    c_j^i(x,z)&=\culam_0[y_j](x,z)\\
    &=\culam_i[2^{d_j}](x'',z'')\widehat{\cdot}\culam_3[y_{j-1}](x',z')\\
    &\qquad\widehat{+}\culam_i[2^{d_j}+1](x'',z'')\widehat{\cdot}\culam_2[y_{j-1}](x',z')\\
    &=c_{j-1}^{I(\culam_i[2^{d_j}](x'',z''),\culam_i[2^{d_j}+1](x'',z''))}(x',z')\\
    &=c_{j-1}^{I(L_{d_j,i}^e(x'',z''),L_{d_j,i}^o(x'',z''))}(x',z'),
\end{align*}

\noindent as desired.
\end{proof}
Finally, we wish to impose fractal structures on the labelings $\culam[y].$ The hierarchies demonstrate that $\culam[y]$ is generated from a finite series of embeddings, but we wish to extend this infinitely. To do so, we use the map $\Tilde{U}$ from the $2$-adics to the set of all labelings of $Q=\N\times\{-1-\N\},$ as described in the introduction. We now prove that it is well-defined and continuous.
\begin{theorem}[\Cref{fractal}]
The map $\Tilde{U}:\Z_2\to S$ is well-defined everywhere and continuous at all $y\in\Z_2\backslash\N.$
\end{theorem}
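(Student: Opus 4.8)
The plan is to reduce the whole statement to a single \emph{stabilization lemma}: appending a $1$ far to the left of the binary expansion of $y$ does not alter $\Tilde{U}$ on the part of $Q$ where it is already defined. Precisely, I first prove that if $2^k\le y<2^{k+1}$ and $\ell>k$, then for every $(x,z)\in Q$ with $x,\abs{z}\le y$ one has $\Tilde{U}(2^\ell+y)(x,z)=\Tilde{U}(y)(x,z)$, equivalently $\culam[2^\ell+y](x,2^{\ell+1}+z)=\culam[y](x,2^{k+1}+z)$. To prove this I apply \Cref{adding-to-left} with these $k,\ell$: writing $x_h=\floor{x/2^k}$, $z_h=\floor{z/2^k}$ (so $x_h\in\{0,1\}$ and $z_h\in\{-2,-1\}$, from $0\le x\le y<2^{k+1}$ and $-2^{k+1}<z\le-1$) and $x_l,z_l$ for the residues, the tensor formula gives
$$\culam[2^\ell+y](x,2^{\ell+1}+z)=P\,\widehat{\cdot}\,S(x_l,z_l)\,\widehat{+}\,P'\,\widehat{\cdot}\,L(x_l,z_l),$$
where $P=\culam[2^{\ell-k}](x_h,2^{\ell-k+1}+z_h)$, $P'=\culam[2^{\ell-k}+1](x_h,2^{\ell-k+1}+z_h)$, $L=\culam_1[y]=\culam_2[y]$ and $S=\culam_3[y]$. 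Since $\ell-k\ge1$ the outer argument $(x_h,2^{\ell-k+1}+z_h)$ always lies in the $\culam_2$-quadrant of $B_{\ell-k+1}$, so by \Cref{structure-of-power-two} the values $P,P'$ are obtained from the explicit formulas for $\eone,\etwo,\oone,\otwo$; running through the four possibilities for $(x_h,z_h)$ one gets $(P,P')=(1,1),(2,2),(1,0),(1,1)$, while the target value $\culam[y](x,2^{k+1}+z)$ (read off from the relevant quadrant of $\culam[y]$) equals $L,\culam_0[y],S,L$ in the same order. The containments of \Cref{four-parts-containment} --- namely $\culam_0[y]=2\widehat{\cdot}\culam_2[y]$ and $\culam_2[y]\ge2\widehat{\cdot}\culam_3[y]$ --- then give $S\widehat{+}L=L$, $2\widehat{\cdot}S\,\widehat{+}\,2\widehat{\cdot}L=\culam_0[y]$, and $S\widehat{+}0=S$, so in each of the four cases the two sides agree.

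Iterating the lemma is then routine. If $y_2>y_1$, $2^\kappa\le y_1<2^{\kappa+1}$, and $y_2\equiv y_1\pmod{2^{\kappa+1}}$, write $y_2-y_1=2^{j_1}+\dots+2^{j_r}$ with $\kappa<j_1<\dots<j_r$ and put $y^{(i)}=y_1+2^{j_1}+\dots+2^{j_i}$; the passage $y^{(i-1)}\mapsto y^{(i)}$ is an instance of the stabilization lemma (the bit-length exponent of $y^{(i-1)}$ is $<j_i$, and $y^{(i-1)}\ge y_1$), so $\Tilde{U}(y_1)(x,z)=\Tilde{U}(y_2)(x,z)$ for all $(x,z)\in Q$ with $x,\abs{z}\le y_1$. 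Well-definedness on $\Z_2\backslash\N$ is immediate from this: for $\Tilde{y}\notin\N$ and a fixed $(x,z)\in Q$, any two admissible witnesses $y_1'=\Tilde{y}\bmod 2^{k_1+1}$, $y_2'=\Tilde{y}\bmod 2^{k_2+1}$ (both $\ge\max(x,\abs{z})$, with say $k_1\le k_2$) satisfy $y_2'\ge y_1'$ and $y_2'\equiv y_1'\pmod{2^{\kappa+1}}$ where $2^\kappa\le y_1'<2^{\kappa+1}$, so the iterated lemma forces $\Tilde{U}(y_1')(x,z)=\Tilde{U}(y_2')(x,z)$; and an admissible witness exists because $\Tilde{y}$ has infinitely many binary $1$'s.

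For continuity at $\Tilde{y}\in\Z_2\backslash\N$: given $\e>0$, choose $k$ with $2^k>1/\e$ and with the $k$-th binary digit of $\Tilde{y}$ equal to $1$, set $y^*=\Tilde{y}\bmod 2^{k+1}\in[2^k,2^{k+1})$, and take $\delta=2^{-(k+1)}$. If $\abs{y'-\Tilde{y}}_2<\delta$, then $y'\equiv y^*\pmod{2^{k+1}}$; replacing $y'$ by a natural-number witness if $y'\notin\N$, we obtain a natural number $\bar{y}\ge y^*$ with $\bar{y}\equiv y^*\pmod{2^{k+1}}$, and since $y^*\ge2^k$ bounds both coordinates of every point of the box of radius $2^k$ the iterated lemma yields $\Tilde{U}(y')(m,n)=\Tilde{U}(\bar{y})(m,n)=\Tilde{U}(y^*)(m,n)=\Tilde{U}(\Tilde{y})(m,n)$ there. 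Hence $\rho(\Tilde{U}(y'),\Tilde{U}(\Tilde{y}))\le2^{-k}<\e$, as required.

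The real work is the stabilization lemma, and the likely sticking point is the bookkeeping in \Cref{adding-to-left}: one must pin down exactly which quadrant of the $2^{\ell-k+1}\times2^{\ell-k+1}$ outer block and which quadrant of the $2^{k+1}\times2^{k+1}$ inner block the coordinates $(x,z)$ and their images fall into, and then check that the caveman operations $\widehat{\cdot},\widehat{+}$, together with the three containments among $\culam_0[y],\dots,\culam_3[y]$, collapse the two-term tensor sum back onto the single correct quadrant of $\culam[y]$. Everything after that --- the telescoping, the well-definedness, and the extraction of $\delta$ from $\e$ --- is a mechanical unwinding of the $2$-adic metric and of $\rho$.
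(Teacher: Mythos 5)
Your proposal is correct and follows essentially the same route as the paper: the ``stabilization lemma'' is exactly the paper's reduction to adding a single power of $2$, proved via \Cref{adding-to-left} with the same four quadrant cases, the same values $(P,P')=(1,1),(2,2),(1,0),(1,1)$, and the same collapse via \Cref{four-parts-containment}; the telescoping, well-definedness, and $\e$--$\delta$ continuity arguments also match the paper's.
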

\begin{proof}
To prove well-definedness, we need to show that $\Tilde{U}(\Tilde{y})(x,-z)=\Tilde{U}(y')(x,-z)$ is independent of the choice of $y'.$ To this end, suppose that we have two $y',y''\geq x,z.$ Say without loss of generality that $y''>y',$ and that $2^k\leq y'<2^{k+1}.$ In particular, we know that $\Tilde{y}\equiv y'\equiv y''\pmod{2^{k+1}}.$ In particular, we can write in binary \begin{align*}
y'&=1y_{k-1}\cdots y_1y_0,\\
y''&=1\underbrace{0\cdots0}_{c_\ell-1}1\cdots1\underbrace{0\cdots0}_{c_1-1}1y_{k-1}\cdots y_1y_0.
\end{align*}
By induction, it suffices to assume that $\ell=1,$ as we can continue adding greater powers of $2$ while maintaining the condition $y\geq x,z.$ Thus, we suppose that $y''=2^{k+c}+y.$ We now apply \Cref{adding-to-left}. Precisely, we have \begin{align*}
\Tilde{U}(y'')(x,-z)=&\culam[y''](x,2^{k+c+1}-z)\\
=&\culam[2^c](x',(2^{k+c+1}-z)')\widehat{\cdot}S(x'',(2^{k+c+1}-z)'')\widehat{+}\\
\widehat{+}&\culam[2^c+1](x',(2^{k+c+1}-z)')\widehat{\cdot}L(x'',(2^{k+c+1}-z)''),
\end{align*}
where $$
x'=\floor{\frac{x}{2^k}},(2^{k+c+1}-z)'=\floor{\frac{2^{k+c+1}-z}{2^k}}
$$
and $$
x''=x\pmod{2^k},(2^{k+c+1}-z)''=(2^{k+c+1}-z)\pmod{2^k}=(-z)\pmod{2^k}.
$$
Noting $x,z\leq y<2^{k+1}$ and $z>0$ (noting that $(x,-z)\in Q$), we must now split into cases. 
\setcounter{case}{0}
\begin{case}
If $x<2^k$ and $z\leq2^k,$ then $x'=0,(2^{k+c+1}-z)'=2^{c+1}-1,$ and $x''=x,(2^{k+c+1}-z)''=2^k-z.$ But it is easy to see that $\culam[2^c](0,2^{c+1}-1)=\culam[2^c+1](0,2^{c+1}-1)=1$ and thus we simply have, by \Cref{four-parts-containment}, \begin{align*}
\Tilde{U}(y'')(x,-z)&=S(x,2^k-z)\widehat{+}L(x,2^k-z)=L(x,2^k-z)\\
&=\culam[y'](x,2^{k+1}-z)=\Tilde{U}(y')(x,-z),
\end{align*}
as desired.
\end{case}
\begin{case}
If $x<2^k$ and $z>2^k,$ then $x'=0,(2^{k+c+1}-z)'=2^{c+1}-2,$ and $x''=x,(2^{k+c+1}-z)''=2^{k+1}-z.$ It is again easy to see that $\culam[2^c](0,2^{c+1}-2)=\culam[2^c+1](0,2^{c+1}-2)=2$ and thus we have, once again by \Cref{four-parts-containment}, \begin{align*}
\Tilde{U}(y'')(x,-z)&=2\widehat{\cdot}S(x,2^{k+1}-z)\widehat{+}2\widehat{\cdot}L(x,2^{k+1}-z)=2\widehat{\cdot}L(x,2^{k+1}-z)\\
&=2\widehat{\cdot}\culam_2[y'](x,2^{k+1}-z)=\culam(_0)[y'](x,2^{k+1}-z)=\Tilde{U}(y')(x,-z),
\end{align*}
as desired.
\end{case}
\begin{case}
If $x\geq2^k$ and $z\leq2^k,$ then $x'=1,(2^{k+c+1}-z)'=2^{c+1}-1,$ and $x''=x-2^k,(2^{k+c+1}-z)''=2^k-z.$ But $\culam[2^c](1,2^{c+1}-1)=1$ and $\culam[2^c+1](1,2^{c+1}-1)=0$ and thus we have \begin{align*}
\Tilde{U}(y'')(x,-z)&=S(x-2^k,2^k-z)=\culam[y'](x,2^{k+1}-z)=\Tilde{U}(y')(x,-z),
\end{align*}
as desired.
\end{case}
\begin{case}
If $x\geq2^k$ and $z>2^k,$ then $x'=1,(2^{k+c+1}-z)'=2^{c+1}-2,$ and $x''=x-2^k,(2^{k+c+1}-z)''=2^{k+1}-z.$ But now $\culam[2^c](1,2^{c+1}-2)=\culam[2^c+1](1,2^{c+1}-2)=1$ and thus we have \begin{align*}
\Tilde{U}(y'')(x,-z)&=S(x-2^k,2^{k+1}-z)\widehat{+}L(x-2^k,2^{k+1}-z)=L(x-2^k,2^{k+1}-z)\\
&=\culam[y'](x,2^{k+1}-z)=\Tilde{U}(y')(x,-z),
\end{align*}
as desired.
\end{case}
Thus, in all cases we have $\Tilde{U}(y'')(x,-z)=\Tilde{U}(y')(x,-z),$ from which we conclude that $\Tilde{U}(\Tilde{y})$ is well-defined for any $2$-adic integer $\Tilde{y}.$ 

Now, continuity at any $y\in\Z_2\backslash\N$ is evident; for any $\e>0,$ choose $y'$ such that $2^k\leq y'<2^{k+1}$ and $2^k>\frac{1}{\e},$ and $\Tilde{y}\equiv y'\pmod{2^{k+1}}.$ Then for any $Y\in\Z_2$ such that $\norm{Y-y}_2\leq\frac{1}{2^{k+1}},$ we have $Y\equiv y\equiv y'\pmod{2^{k+1}}$ and thus for all $x,\abs{z}\leq2^k,$ definitionally, $\Tilde{U}(Y)(x,z)=\Tilde{U}(y')(x,z)=\Tilde{U}(y)(x,z).$ In particular, the distance $\rho(\Tilde{U}(y),\Tilde{U}(Y))\leq\frac{1}{2^k}<\e,$ as desired.
\end{proof}
\section{Sharp bounds}\label{consequences}
We conclude by showing the power of the general reductive theorems of \Cref{main-theorem} in proving several unexpected deterministic properties of Ulam words with two $1$'s.
\begin{theorem}\label{cute-thm}
Suppose $2^k\leq y<2^{k+1},$ and let $\Tilde{x}$ and $\Tilde{z}$ be the reductions of $x$ and $z,$ respectively, modulo $2^{k+1}.$
\begin{enumerate}
    \item If $y$ is not a Zumkeller number, then $(x,z)\in\ulam[y]$ only if $x+z\geq2^{k+1}-1.$
    \item If $y$ is not a Zumkeller number, then $(x,z)\in\ulam[y]$ only if $2^{k+1}-1\leq \Tilde{x}+\Tilde{z}\leq 2^{k+2}-y-2.$ If $y$ is a Zumkeller number, then this is true unless $x+z\in\{y,y+1,y+2\}.$\label{helpful-upper-bound}
    \item If $y\not\in\{2^{k+1}-1,2^{k+1}-2\},$ then $(x,z)\in\ulam[y]$ for all $x+z\equiv-1\pmod{2^{k+1}}.$ If $y=2^{k+1}-1$ or $y=2^{k+1}-2,$ then the same result holds except when $x+z=2^{k+1}-1.$
    \item For all $y,$ we have $(x,z)\in\ulam[y]$ whenever either $\Tilde{x}+\Tilde{z}=2^{k+2}-y-2$ or $\Tilde{x}+\Tilde{z}=2^{k+2}-y-3,$ whichever is odd.
\end{enumerate}
\end{theorem}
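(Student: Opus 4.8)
The plan is to translate all four assertions into statements about the finite labeling $\culam[y]$ on one fundamental block $B_{k+1}$; to settle part~2 (and the ``existence'' halves of parts~3 and~4) by a generating-function identity; and to settle the remaining ``(non-)uniqueness'' content — part~1, and the ``at most one representation'' halves of parts~3 and~4 — by strong induction on $y$ via the recursion of \Cref{adding-to-right}.

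\textbf{Reduction and the upper assertions.} For $y$ not Zumkeller, \Cref{culam-to-ulam} together with the biperiodicity of \Cref{culam-biperiodic} identifies $\ulam[y]$ with $\{(x,z):x+z\text{ odd and }\culam[y](\tilde x,\tilde z)=1\}$, where $\tilde x,\tilde z\in\{0,\dots,2^{k+1}-1\}$; moreover $x+z\equiv-1\pmod{2^{k+1}}$ forces $\tilde x+\tilde z=2^{k+1}-1$, so part~3 is just ``$\culam[y]\equiv1$ on the antidiagonal $x+z=2^{k+1}-1$.'' For Zumkeller $y$ one reads all four claims straight off the explicit description in \Cref{zumkeller} (this is exactly where the exceptional antidiagonals $x+z\in\{y,y+1,y+2\}$ of part~2 and the excluded $y\in\{2^{k+1}-1,2^{k+1}-2\}$ of parts~3--4 come from, via \Cref{almost-checkerboard}). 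Now \Cref{lucas} shows $\culam[y](x,z)=\min(2,N)$ with $N=\#\{a:0\le a\le y,\ a\text{ and }x\text{ bit-disjoint},\ y-a\text{ and }z\text{ bit-disjoint}\}$, and the Vandermonde-type convolution $\sum_{a+b=y}\binom{x+a}{a}\binom{b+z}{b}=\binom{x+y+z+1}{y}$ gives $N\equiv\binom{x+y+z+1}{y}\pmod2$. Hence $\culam[y](x,z)=1$ forces $N$ odd, so by \Cref{lucas} the binary digits of $y$ and of $x+z+1$ are disjoint; since $y<2^{k+1}$, a digit count gives $\tilde x+\tilde z+1\le 2^{k+1}+(2^{k+1}-1-y)$, i.e.\ $\tilde x+\tilde z\le 2^{k+2}-y-2$, which is part~2. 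Running the same bookkeeping forwards, on $x+z=2^{k+1}-1$ one has $x+z+1=2^{k+1}$, and on the top odd antidiagonal $x+z=2\floor{\tfrac{2^{k+2}-y-3}{2}}+1$ one has $x+z+1=2^{k+1}+\overline{y}$ (if $y$ odd) or $2^{k+1}+\overline{y+1}$ (if $y$ even), with $\overline{\;\cdot\;}$ the complement inside positions $0,\dots,k$; in all cases the digits of $y$ and $x+z+1$ are disjoint, so $N$ is odd, hence $N\ge1$ — the ``existence'' halves of parts~3 and~4 are free.

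\textbf{Induction for the remaining content.} It remains to prove (i) $N\neq1$ whenever $x+z<2^{k+1}-1$ (with $y$ not Zumkeller), and (ii) $N\le1$ on the two boundary antidiagonals. Both go by strong induction on $y$, with base cases $y\le1$ (\Cref{easy-cases}), $y\in\{2^d,2^d+1\}$ (\Cref{culam-power-of-two}, \Cref{structure-of-power-two}), and Zumkeller $y$ (\Cref{zumkeller}). In the step write $y=2^dy'$ or $y=2^dy'+1$ with $y'$ odd and $d\ge1$ (so $y'<y$), put $(x',z')=(\floor{x/2^d},\floor{z/2^d})$, $(x'',z'')=(x\bmod2^d,z\bmod2^d)$, $2^m\le y'<2^{m+1}$, and apply \Cref{adding-to-right}: $\culam[y]=\culam[y'-1]\widehat{\otimes}\eone[d]\widehat{+}\culam[y']\widehat{\otimes}\etwo[d]$ (or the odd analogue). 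On $x+z=2^{k+1}-1$ one is forced into $x''+z''=2^d-1$ and $x'+z'=2^{m+1}-1$, and on the top odd antidiagonal of $y$ into a small explicit set of values of $(x'',z'')$ and onto the top odd antidiagonal of $y'$; since $\eone[d]=\oone[d]=0$ when $x''+z''\ge2^d-1$, $\etwo[d]\equiv1$, and $\otwo[d]=1$ on odd-sum entries, the caveman tensor sum collapses to the single outer value $\culam[y'](x',z')$, which is $\le1$ by induction — this is (ii). For (i): if $x+z<2^{k+1}-1$, then either $x'+z'$ lies strictly below $y'$'s bottom antidiagonal, in which case the inductive (i) forces $\culam[y'](x',z')=\culam[y'-1](x',z')=0$ and the tensor formula gives $\culam[y](x,z)=0$; or $x'+z'=2^{m+1}-1$, which forces $x''+z''<2^d-1$, the inductive (ii) together with the ``existence'' part forces both outer values to equal $1$, and inspecting $\eone[d],\etwo[d]$ (resp.\ $\oone[d],\otwo[d]$) on $\{x''+z''<2^d-1\}$ gives $\culam[y](x,z)=2$. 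Either way $\culam[y](x,z)\neq1$.

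\textbf{Main obstacle.} The delicate point is that peeling off a low bit does not preserve ``non-Zumkeller'': $y'$ or $y'-1$ may be Zumkeller, where the analogues of parts~1, 3 and 4 carry the exceptional antidiagonals $x'+z'\in\{y',y'+1,y'+2\}$ of \Cref{zumkeller}. One must therefore run the induction with a single statement valid for every $y$ (the non-Zumkeller band together with the \Cref{zumkeller} exceptions) and verify that those exceptional antidiagonals, after tensoring with $\eone[d],\etwo[d],\oone[d],\otwo[d]$, neither produce a stray label $1$ in the range $x+z<2^{k+1}-1$ nor spoil the two boundary antidiagonals. Pinning down the exact landing spot of the top antidiagonal $x+z=2^{k+2}-y-2$ (and of its parity twin) under bit-peeling — which splits according to whether $x''+z''$ carries past $2^d$ — is where the bulk of the casework lies; alternatively one may peel the \emph{highest} bit using \Cref{adding-to-left} and \Cref{structure-of-power-two}, at the cost of a parallel but equally finicky analysis of that top antidiagonal.
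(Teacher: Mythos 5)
Your proposal is correct in substance and overlaps heavily with the paper's proof, but one ingredient is genuinely different and worth highlighting. The paper proves all four parts by the same mechanism: reduce to a statement about $\culam[y]$ on one periodic block, then induct on $y$ by peeling low-order bits via \Cref{adding-to-right} (with base cases $y=2^k$ from \Cref{structure-of-power-two} and the Zumkeller exceptions quarantined by \Cref{special-case}). You do the same for part~1 and for the ``at most one representation'' halves of parts~3--4, but for the upper bound of part~2 and the ``at least one representation'' halves of parts~3--4 you instead invoke the convolution $\sum_{a+b=y}\binom{x+a}{a}\binom{b+z}{b}=\binom{x+y+z+1}{y}$, so that the representation count $N$ satisfies $N\equiv\binom{x+y+z+1}{y}\pmod 2$ and \Cref{lucas} turns ``$N$ odd'' into bit-disjointness of $y$ and $x+z+1$. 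This is a clean closed-form parity criterion the paper never states; it buys you the bound $\Tilde{x}+\Tilde{z}\le 2^{k+2}-y-2$ and the nonemptiness of both boundary antidiagonals with no induction at all, at the price that parity alone cannot distinguish $N=1$ from $N\ge 3$, which is exactly why you still need the tensor-recursion induction for the remaining content. Two small points. First, in your step ``the inductive (i) forces $\culam[y'](x',z')=\culam[y'-1](x',z')=0$'' the conclusion should be ``$\neq 1$'' rather than ``$=0$'' (below the bottom antidiagonal the label is frequently $2$, e.g.\ $\culam[y](0,0)=2$); the argument survives because caveman products and sums of labels in $\{0,2\}$ stay in $\{0,2\}$, which is how the paper phrases it. Second, your explicit split of the case $x'+z'=2^{m+1}-1$ inside the part~1 induction (resolved by noting both outer labels equal $1$ and the inner factors force a label $2$) is actually more careful than the paper's own write-up, which applies the inductive hypothesis at a point where the quotient sum can land exactly on the full antidiagonal. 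Your identification of the main residual difficulty --- formulating the induction uniformly over all $y$ so that Zumkeller and power-of-two quotients $y'$, $y'-1$ are legitimate base cases --- matches where the paper's casework actually lives.
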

All of these results follow from relatively simple arguments by induction, as we will now show. In the context of $\culam[y]$ for a given $y,$ let $\Tilde{x},\Tilde{z}$ always denote the reductions of $x$ and $z$ modulo $2^{k+1},$ the first power of $2$ greater than $y.$
\begin{proof}[Proof of part 1]
First, since $y$ is not a Zumkeller number, we know that $(x,z)\in\ulam[y]$ if and only if $\culam[y](x,z)=1$ and $x+z\equiv1\pmod2.$ We now prove that this modified fact, in fact, simply the condition $\culam[y](x,z)=1,$ implies $x+z\geq2^{k+1}-1$ by induction for all $y.$ This evidently implies the conclusion. 
    
The base cases $y=2^k$ follow from a simple calculation; if $x+z<2^{k+1}-1,$ then if $x,z<2^k,$ we have by \Cref{structure-of-power-two} that $$
\culam[2^k](x,z)=\culam_0[2^k](x,z)=2\widehat{\cdot}\etwo[k](x,z)\neq1
$$
and otherwise exactly one of $x,z$ is still less than $2^k$; say it is $x,$ so that $$
\culam[2^k](x,z)=\culam_2[2^k](x,z-2^k)=\eone[k](x,z-2^k)\widehat{+}\etwo[k](x,z-2^k).
$$
But $x+z-2^k<2^k-1,$ and thus $\eone[k](x,z-2^k)\geq1$ and $\etwo[k](x,z-2^k)=1,$ so $\culam[2^k](x,z)=2.$ Thus, regardless, $\culam[2^k](x,z)\neq1$ if $x+z<2^{k+1}-1.$
    
Now, suppose $\culam[Y](x,z)=1,$ implies $x+z\geq2^{K+1}-1$ for all $Y<y,$ where $K=\floor{\log_2(Y)},$ and assume that $y$ is not a power of $2.$ Note that if $y$ is odd, then $\culam[y](x,z)=1$ only if $\culam[y-1](x,z)=1$ and then we proceed by induction. Thus, we can assume without loss of generality that $y$ is even. Consider any $x,z$ such that $x+z<2^{k+1}-1.$ We can write $y=2^dy'$ for some odd $y'$ and $d>0.$ Then, by \Cref{adding-to-right}, we have {\small$$
\culam[y](x,z)=\culam[y'-1](x',z')\widehat{\cdot}\eone[d](x'',z'')\widehat{+}\culam[y'](x',z')\widehat{\cdot}\etwo[d](x'',z''),
$$}
    
\noindent where $x'=\lfloor\frac{x}{2^d}\rfloor,z'=\lfloor\frac{z}{2^d}\rfloor,$ and $x''=x\pmod{2^d},z''=z\pmod{2^d}.$ Since $x+z<2^{k+1}-1,$ we have $$
x'+z'\leq\frac{x}{2^d}+\frac{z}{2^d}<\frac{2^{k+1}-1}{2^d}<2^{k-d+1}.
$$ 
Since $2^{k-d}\leq y'<2^{k-d+1},$ we know from our inductive hypothesis that $\culam[y'](x',z')\neq1.$ It is then evident that $\culam[y'](x',z')\widehat{\cdot}\etwo[d](x'',z'')\neq1$ as well. Additionally, we have $2^{k-d}\leq y'-1<2^{k-d+1}$ as well, since $y,$ and thus $y',$ are not powers of $2.$ Hence, we also have $\culam[y'-1](x',z')\neq1$ and thus $\culam[y'-1](x',z')\widehat{\cdot}\eone[d](x'',z'')\neq1.$ From this, we immediately obtain that $\culam[y](x,z)\neq1.$
    
Hence, in any case, if $x+z<2^{k+1}-1,$ then $\culam[y](x,z)\neq1,$ completing the inductive step.
\end{proof}
Note that the part 2 incorporates part 1 within its statement. Naturally, then, its proof will use the already obtained result.
\begin{proof}[Proof of part 2]
Translating to $\culam[y],$ we recall that we can have exceptional aperiodic points $(x,z)\in\ulam[y]$ only if $x+z\in\{y,y+1,y+2\},$ hence the second part of the statement. Otherwise, we once again know and assume that $(x,z)\in\ulam[y]$ if and only if $\culam[y](x,z)=1$ and $x+z\equiv1\pmod2.$ It then suffices to prove specifically that $\culam[y](x,z)=\culam[y](\Tilde{x},\Tilde{z})=1$ implies $\Tilde{x}+\Tilde{z}\leq2^{k+2}-y-2,$ by induction. Note that we have already proved the other inequality $2^{k+1}-1\leq\Tilde{x}+\Tilde{z}$ in the first part. 
    
The base cases $y=2^k$ for some $k$ are trivial; the upper bound for $2^k$ is $2^{k+2}-2^k-2,$ which can only be violated if $\Tilde{x},\Tilde{z}\geq2^k,$ in which case we can set $x'=\Tilde{x}-2^k,z'=\Tilde{z}-2^k$ so that $x'+z'>2^k-2.$ Then we recall by \Cref{structure-of-power-two} that $$
\culam[2^k](x,z)=\culam[2^k](\Tilde{x},\Tilde{z})=\culam_3[2^k](x',z')=\eone[k](x',z')=0.
$$
Suppose now that $\culam[Y](x,z)=1$ implies $\Tilde{x}+\Tilde{z}\leq2^{K+2}-Y-2$ for all $Y<y$ (where $K=\floor{\log_2(Y)}$). Now, fix $x$ and $z$ and say that $\Tilde{x}+\Tilde{z}>2^{k+2}-y-2.$ If $y$ is odd and $\culam[y](x,z)=1,$ then $x+z$ must be odd and also $\culam[y-1](x,z)=1,$ so for any such $x,z,$ we have $$
\Tilde{x}+\Tilde{z}\leq2^{k+2}-(y-1)-2=2^{k+2}-y-1.
$$
But since $\Tilde{x}+\Tilde{z}$ is odd and $2^{k+2}-y-1$ is even, we must in fact have $\Tilde{x}+\Tilde{z}\leq2^{k+2}-y-2,$ contradiction. Thus, we may assume that $y=2^dy'$ is even. Then we have {\small$$
\culam[y](x,z)=\culam[y'-1](x',z')\widehat{\cdot}\eone[d](x'',z'')\widehat{+}\culam[y'](x',z')\widehat{\cdot}\etwo[d](x'',z'').
$$}
    
\noindent Now, observe that $2^{k-d}\leq y'<2^{k-d+1}.$ Moreover, we also have $2^{k-d}\leq y'-1<2^{k-d+1},$ since $y'$ is not a power of $2.$ Since $$
\Tilde{x}+\Tilde{z}=2^d(\Tilde{x'}+\Tilde{z'})+x''+z''>2^{k+2}-y-2,
$$
we must have either $\Tilde{x'}+\Tilde{z'}>2^{k-d+2}-y'-1$ or $x''+z''>2^d-2.$ If, indeed, the first inequality holds, then by the inductive hypothesis we in fact have that neither $\culam[y'-1](x',z')$ nor $\culam[y'](x',z')$ can equal $1.$ Thus, neither can the respective summands, nor their sum ($1$ is only a product of two $1$'s and only a sum of a $1$ and a $0$). Hence, in this case $\culam[y](x,z)\neq1.$ 
    
If $\Tilde{x'}+\Tilde{z'}\leq2^{k-d+2}-y'-2,$ then we see that $x''+z''>2^{d+1}-2,$ which is impossible since $x'',z''<2^d.$ Thus, the only remaining case is that $\Tilde{x'}+\Tilde{z'}=2^{k-d+2}-y'-1$ and then correspondingly $x''+z''=2^d-2.$ It is easy to see from here, since $\Tilde{x'}+\Tilde{z'}>2^{k-d+2}-y'-2,$ that $\culam[y'](x',z')\neq1.$ Also, $\Tilde{x'}+\Tilde{z'}$ is even, and thus $\culam[y'-1](x',z')\neq1$ as well. Thus, the same conclusion as before holds, implying that $\culam[y](x,z)\neq1.$
    
In particular, we have $\culam[y](x,z)\neq1$ whenever $x+z>2^{k+2}-y-2$ and thus the inductive step is proved.
\end{proof}
The next two results, while simple to state, end up being nontrivial consequences of the general tensor structure theorem, as seen further.
\begin{proof}[Proof of part 3]
The result follows in the exact stated form for all Zumkeller $y$ by \Cref{special-case}. Thus, we may assume that $y$ is not Zumkeller, so $(x,z)\in\ulam[y]$ if and only if $\culam[y](x,z)=1$ and $x+z$ is odd. Since we are assuming $x+z\equiv-1\pmod{2^{k+1}},$ the odd condition is tautological. Thus, it suffices to prove, for any $x+z\equiv-1\pmod{2^{k+1}},$ that $\culam[y](x,z)=1.$ Once again, we prove this by induction.
    
First, note that $x+z\equiv-1\pmod{2^{k+1}}$ is in fact equivalent to $\Tilde{x}+\Tilde{z}=2^{k+1}-1.$ We will use this instead.
    
The base cases are $y=2^k$ for any $k.$ Note that we must have either $\Tilde{x}\geq2^k$ or $\Tilde{z}\geq2^k,$ but not both. Without loss of generality, the second is true, so we have by \Cref{structure-of-power-two} that \begin{align*}
\culam[2^k](x,z)&=\culam[2^k](\Tilde{x},\Tilde{z})=\culam_2[2^k](\Tilde{x},\Tilde{z}-2^k)\\
&=\eone[k](\Tilde{x},\Tilde{z}-2^k)\widehat{+}\etwo[k](\Tilde{x},\Tilde{z}-2^k).
\end{align*}
But we know that $\etwo[k](\Tilde{x},\Tilde{z}-2^k)=1$ and $\eone[k](\Tilde{x},\Tilde{z}-2^k)=0$ because $\Tilde{x}+\Tilde{z}-2^k=2^k-1.$ Thus, $\culam[2^k](x,z)=1,$ as desired.
    
Now, suppose that $\culam[Y](x,z)=1$ whenever $\Tilde{x}+\Tilde{z}=2^{K+1}-1$ for all $Y<y$ (where $K=\floor{\log_2(Y)}$). Fix $x$ and $z$ such that $\Tilde{x}+\Tilde{z}=2^{k+1}-1.$ If $y$ is odd, then $$
\culam[y](x,z)=\culam[y-1](x,z)=1,
$$
so we may assume without loss of generality that $y=2^dy'$ is even. Then {\small$$
\culam[y](x,z)=\culam[y'-1](x',z')\widehat{\cdot}\eone[d](x'',z'')\widehat{+}\culam[y'](x',z')\widehat{\cdot}\etwo[d](x'',z'').
$$}
    
\noindent Now, we have $$
\Tilde{x}+\Tilde{z}=2^d(\Tilde{x'}+\Tilde{z'})+x''+z''=2^{k+1}-1,
$$
from which it is easy to deduce that $\Tilde{x'}+\Tilde{z'}=2^{k-d+1}-1$ and $x''+z''=2^d-1.$ Therefore, since $2^{k-d}\leq y'-1,y<2^{k-d+1}$ (as before), we see that $\culam[y'-1](x',z')=\culam[y'](x',z')=1.$ Additionally, $\eone[d](x'',z'')=0$ and $\etwo[d](x'',z'')=1,$ from which we immediately obtain $\culam[y](x,z)=1,$ as desired.
\end{proof}
\begin{proof}[Proof of part 4]
Once again, the result holds for all Zumkeller $y$ by \Cref{special-case}. Thus, we may assume that $(x,z)\in\ulam[y]$ if and only if $\culam[y](x,z)=1$ and $x+z$ is odd. Since we are assuming $\Tilde{x}+\Tilde{z}=2\ceil{2^{k+1}-\frac{y}{2}-1}-1,$ the odd condition is tautological. Thus, it suffices to prove, for any $\Tilde{x}+\Tilde{z}=2\ceil{2^{k+1}-\frac{y}{2}-1}-1,$ that $\culam[y](x,z)=1.$ We prove this by induction. We need to in fact strengthen our inductive statement---that also $\culam[y](x,z)=1$ if $y$ is even and $x+z=2^{k+2}-y-2$ (if $y$ is odd, then $2^{k+2}-y-2=2\ceil{2^{k+1}-\frac{y}{2}-1}-1$).
    
The base cases are once again $y=2^k$ for any $k.$ We may assume $k>0,$ as the case $k=0$ devolves into a triviality. Now, we have $\Tilde{x}+\Tilde{z}\in\{2^{k+2}-2^k-3,2^{k+2}-2^k-2\}.$ Note that we must have at least one of $\Tilde{x}\geq2^k$ or $\Tilde{z}\geq2^k.$ If only one is true, without loss of generality the second, then we have \begin{align*}
\culam[2^k](x,z)&=\culam[2^k](\Tilde{x},\Tilde{z})=\culam_2[2^k](\Tilde{x},\Tilde{z}-2^k)\\
&=\eone[k](\Tilde{x},\Tilde{z}-2^k)\widehat{+}\etwo[k](\Tilde{x},\Tilde{z}-2^k).
\end{align*}
But $\Tilde{x}+\Tilde{z}-2^k\geq2^{k+1}-3\geq2^k-1$ (since $k>0$), and thus $\eone[k](\Tilde{x},\Tilde{z}-2^k)=0.$ But $\etwo[k]=1$ and therefore indeed $\culam[2^k](x,z)=1.$ If, on the other hand, $\Tilde{x},\Tilde{z}\geq2^k,$ then {\small$$
\culam[2^k](x,z)=\culam[2^k](\Tilde{x},\Tilde{z})=\culam_3[2^k](\Tilde{x}-2^k,\Tilde{z}-2^k)=\eone[k](\Tilde{x}-2^k,\Tilde{z}-2^k).
$$}
    
\noindent Now, we have $(\Tilde{x}-2^k)+(\Tilde{z}-2^k)\in\{2^k-3,2^k-2\}$ (which is notably impossible for $k=1$). It thus remains to show that $\eone[k](\Tilde{x}-2^k,\Tilde{z}-2^k)=1,$ which is relatively tedious, but we provide the proof here for completeness. Let $x'=\Tilde{x}-2^k=x_{k-1}x_{k-2}\cdots x_1x_0$ in binary and $z'=\Tilde{z}-2^k=z_{k-1}z_{k-2}\cdots z_1z_0,$ and say that $e<k$ is the largest satisfying $x',z'\pmod{2^{e+1}}<2^e.$ Then $x_i+z_i=1$ for all $i>e,$ and $x_e+z_e=0.$ Say $x''=x\pmod{2^e}$ and $z''=z\pmod{2^e}.$ Then we have $$
2^k-3\leq x'+z'=2^{k-1}+\cdots+2^{e+1}+x''+z'',
$$
so $x''+z''\geq2^{e+1}-3\geq2^e-1$ unless $e=0,$ in which case $2^e-1=0$ and still $x''+z''\geq2^e-1.$ In particular, $\eone[k](x',z')=1,$ and so $\culam[2^k](x,z)=1,$ as desired.
    
Now, suppose that $\culam[Y](x,z)=1$ whenever $\Tilde{x}+\Tilde{z}=2\ceil{2^{K+1}-\frac{Y}{2}-1}-1$ or $2^{K+2}-Y-2$ for all $Y<y$ (where $K=\floor{\log_2(Y)}$). Fix $x$ and $z$ such that $\Tilde{x}+\Tilde{z}=2\ceil{2^{k+1}-\frac{y}{2}-1}-1$ or $2^{k+2}-y-2.$ If $y$ is odd, then $\Tilde{x}+\Tilde{z}=2\ceil{2^{k+1}-\frac{y-1}{2}-1}-1$ as well and so by induction $$
\culam[y](x,z)=\culam[y-1](x,z)=1,
$$
so we may assume without loss of generality that $y=2^dy'$ is even. Then {\small$$
\culam[y](x,z)=\culam[y'-1](x',z')\widehat{\cdot}\eone[d](x'',z'')\widehat{+}\culam[y'](x',z')\widehat{\cdot}\etwo[d](x'',z'').
$$}
    
\noindent Now, we have $$
\Tilde{x}+\Tilde{z}=2^d(\Tilde{x'}+\Tilde{z'})+x''+z''\in\{2^{k+2}-y-3,2^{k+2}-y-2\},
$$
from which we either have $\Tilde{x'}+\Tilde{z'}=2^{k-d+2}-y'-2$ and $x''+z''\in\{2^{d+1}-3,2^{d+1}-2\}$ or $\Tilde{x'}+\Tilde{z'}=2^{k-d+2}-y'-1$ and $x''+z''\in\{2^d-3,2^d-2\}.$ In both cases, note that $2^{k-d}\leq y'-1,y<2^{k-d+1}$ (as before). In the former case, we see that, since $y'$ is odd, $\culam[y'](x',z')=\culam[y'-1](x',z')=1.$ Moreover, $2^{d+1}-2,2^{d+1}-3\geq2^d-1$ (since $d\geq1$) and so $\eone[d](x'',z'')=0.$ Since $\etwo[d]=1,$ we immediately obtain $\culam[y](x,z)=1\widehat{\cdot}0\widehat{+}1\widehat{\cdot}1=1.$ In the latter case, we use \Cref{helpful-upper-bound} to deduce that $\culam[y'](x',z')=0$ and the inductive step to see that $\culam[y'-1](x',z')=1.$ Moreover, we showed above that $\eone[d](x'',z'')=1$ in either case ($x''+z''$ is either $2^d-3$ or $2^d-2$) and so $\culam[y](x,z)=1\widehat{\cdot}1\widehat{+}0=1,$ completing the inductive step.
\end{proof}
\appendix
\section{Appendix}\label{app}
\subsection{Proof of part 3 of \Cref{zumkeller}}
Suppose that $(x,z)\in\ulam[2^{k+1}-2^a-1].$ As in the case of $2^{k+1}-1,$ \Cref{checkerboard} applies, so we simply need to determine which words $w(x,2^{k+1}-2^a-1,z)$ have a unique representation as a concatenation of \textit{distinct} Ulam words $w(x,b)*w(2^{k+1}-2^a-1-b,z).$ Note that we may separately consider the case $x+z=2^{k+1}-2^a-1$ and otherwise remove the condition that the words must be distinct. Note that since $2^{k+1}-2^a-1$ is odd, in order for $w(x,2^{k+1}-2^a-1,z)$ to have a \textit{unique} representation we must have by an argument in \Cref{checkerboard} that $x+z$ is odd.
    
Hence, suppose first that $x+z\neq2^{k+1}-2^a-1.$ Then exactly one of $x,z$ is odd, say without loss of generality that it is $x.$ Now observe that any representation $w(x,2^{k+1}-2^a-1,z)=w(x,b)*w(2^{k+1}-2^a-1-b,z)$ must satisfy that $b$ is even, and thus $2^{k+1}-2^a-1-b$ is odd. Hence, the last digits of the binary expansions of each term are uniquely determined, and so we may chop them off. We then obtain the respective representation $w((x-1)/2,2^k-2^{a-1}-1,z/2)=w((x-1)/2,b/2)*w(2^k-2^{a-1}-b/2-1,z/2).$ In other words, $w(x,2^{k+1}-2^a-1,z)$ has a unique representation if and only if $w((x-1)/2,2^k-2^{a-1}-1,z/2)$ does, and continuing this repetitive operation (noting at each step that there is a unique representation only if exactly one of $x',z'$ is odd), if and only if $w(\Tilde{x},2^{k+1-a}-2,\Tilde{z})$ does, where $\Tilde{x}=\lfloor\frac{x}{2^a}\rfloor$ and $\Tilde{z}=\lfloor\frac{z}{2^a}\rfloor.$ But we already discussed above that $w(\Tilde{x},2^{k+1-a}-2,\Tilde{z})$ has a unique representation (as a concatenation of not necessarily distinct Ulam words with one $1$) if and only if the reductions $\Tilde{x}',\Tilde{z}'$ of $\Tilde{x},\Tilde{z}$ modulo $2^{k+1-a}$ satisfy $\Tilde{x}'+\Tilde{z}'\in\{2^{k+1-a}-1,2^{k+1-a}\}.$ But observe that $x+z=2^a(\Tilde{x}+\Tilde{z})+2^a-1,$ and so if $x',z'$ are the respective reductions of $x,z$ modulo $2^{k+1},$ then either $x'+z'=2^{k+1}-1$ or $x'+z'=2^{k+1}+2^a-1,$ as desired.
    
Now, in the special case that $x+z=2^{k+1}-2^a-1,$ we note that issues may arise only if $w(x,z)$ is Ulam, as then we have the extra representation $$
w(x,2^{k+1}-2^a-1,z)=w(x,z)*w(x,z).
$$
Then $w(x,z)$ is Ulam if and only if there are no carries in adding $x,z$ in binary, i.e., if and only if the entries of $x+z$ that are $0$ are also $0$ in both $x$ and $z.$ Note that the only $0$ in $x+z$ is in the $(a+1)$-th entry from the right, so the only requirement on $x$ and $z$ is that their $(a+1)$-th entry from the right is $0.$ Note also that in all other slots exactly one of $x$ and $z$ has a $1$ in its respective entry (since $x+z$ does). If this is not the case, observe that $x+z\neq2^{k+1}-1$ or $2^{k+1}+2^a-1$ and so $w(x,2^{k+1}-2^a-1,z)$ is not Ulam. If, on the other hand, $x$ has a $0$ in its $(a+1)$-th entry, then observe that we can in fact still apply the same reduction as in the case when $x+z\neq2^{k+1}-2^a-1.$ Therefore, the number of representations of $w(x,2^{k+1}-2^a-1,z)$ as a concatenation of (not necessarily distinct) Ulam words with one $1$ is equal to the same number for $w(\Tilde{x},2^{k-a+1}-2,\Tilde{z}),$ where $\Tilde{x}=\lfloor\frac{x}{2^a}\rfloor$ and $\Tilde{z}=\lfloor\frac{z}{2^a}\rfloor.$ Observe that the condition on the $(a+1)$-th entry from the right in the binary representations of $x$ and $z$ being $0$ is equivalent to the last digits of $\Tilde{x}$ and $\Tilde{z}$ being $0,$ i.e., that both are even. As above, we also know that in every other slot exactly one of $\Tilde{x}$ and $\Tilde{z}$ has its entry equal to $1.$ In particular, $\Tilde{x}+\Tilde{z}=2^{k-a+1}-2.$ But we know by our above characterization that $w(\Tilde{x},2^{k-a+1}-2,\Tilde{z})$ has exactly two representations as a concatenation of Ulam words with one $1$ (with one of the two representations having equal words) whenever $\Tilde{x}+\Tilde{z}=2^{k-a+1}-2$ and $\Tilde{x}$ (hence also $\Tilde{z}$) is even. Thus, $w(x,2^{k+1}-2^a-1,z)$ also does. One of the representations is $w(x,z)*w(x,z),$ hence, there is exactly one representation as a concatenation of \textit{distinct} Ulam words. In other words, when $x+z=2^{k+1}-2^a-1,$ we see that $w(x,2^{k+1}-2^a-1,z)$ is Ulam if and only if $x,z$ have a $0$ in the $(a+1)$-th entry from the right of their binary representation, i.e., $x\pmod{2^{a+1}}<2^a,$ as desired.
\subsection{Proof of part 2 of \Cref{culam-power-of-two}}
Here, we simplify our casework by using \Cref{odd-even-equal}. From this, we observe that if $x+z$ is odd, then $\ulam[2^d+1](x,z)=\ulam[2^d](x,z)$ unless $d=1,$ in which case there are exceptions when $x,z<4.$ However, the periodic blocks are always the same and, correspondingly, since the update step does not affect the state of coordinates with odd sum, we know that $\culam[2^d+1](x,z)=\culam[2^d](x,z)$ as well. It is not hard to see that $\eone[d]$ is simply the description of $\culam[2^d]$ on the square $\{2^d,2^d+1,\dots,2^{d+1}-1\}$ and $\oone[d]$ is simply the description of $\culam[2^d+1]$ on the same square. Thus, if $x+z$ is odd, then the description of $\oone[d](x,z)$ is exactly the same as that of $\eone[d](x,z)$ (which is to be expected). Thus, it remains to consider the case of $x+z$ being even.

Fix a representation $$
w(x,2^d+1,z)=w(x,a)*w(b,z)
$$
with $a,b>1$ and $x+z\equiv0\pmod2.$ Observe that $a+b=2^d+1,$ therefore, exactly one of $a,b$ is odd. In particular, we cannot have both $x$ and $z$ be odd, as otherwise one of $w(x,a)$ or $w(b,z)$ must not be Ulam. In particular, observe that $\oone[d](x,z)=0$ whenever $x$ and $z$ are both odd. Since $x+z$ is even, we therefore have that $x$ and $z$ must both be even.

If $a$ is odd, then observe that $w(x,a)$ is Ulam if and only if $w(x,a-1)$ is, so these representations are in bijection with those of $$
w(x,2^d,z)=w(x,a')*w(b,z)
$$
with $a',b,x,z$ all even, and $a',b>0$ (since $b$ is even, we must have $b>1$ automatically). These, in turn, are in bijection with those of $$
w(x/2,2^{d-1},z/2)=w(x/2,a'/2)*w(b/2,z/2)
$$
with $a'/2,b/2>0,$ i.e., giving the labeled set $\eone[d-1].$

If, instead, $b$ is odd, then our representations are once again in bijection with those of $\eone[d-1].$ In other words, we see that $\oone[d](x,z)=\eone[d-1](x/2,z/2)\widehat{+}\eone[d-1](x/2,z/2).$ If $\frac{x}{2}+\frac{z}{2}\geq2^{d-1}-1,$ or, equivalently, $x+z\geq2^d-2,$ then we know that $\eone[d-1](x/2,z/2)=0,$ and thus $\oone[d](x,z)=0.$ If, on the other hand, $x+z<2^d-2,$ then we know that $\eone[d-1](x/2,z/2)\geq1$ and thus $\oone[d](x,z)\geq2$ and thus is equal to $2.$

In particular, going through all of our cases, we see that if $x+z\geq2^d-2,$ then $\oone[d](x,z)=0.$ If $x+z<2^d-2$ and is even, then if $x$ is odd, we have $\oone[d](x,z)=0,$ and if $x$ is even, then $\oone[d](x,z)=2.$ Finally, if $x+z<2^d-2$ and is odd, then $\oone[d](x,z)=\eone[d](x,z)$ and hence has the same description as $\eone[d](x,z)$ (as is stated in the definition), as desired.
\unappendix
\bibliographystyle{alpha}

% \bibliography{biblio}

\end{document}